\documentclass[10pt]{article}
\usepackage{amsthm}
\usepackage{amsfonts, amssymb, amsmath, amscd, latexsym}
\usepackage{mathrsfs,epsfig}
\usepackage[latin1]{inputenc}
\usepackage[all]{xy}
\usepackage{cite}
\usepackage{subcaption}
\usepackage[dvipsnames]{xcolor}





\def\today{\ifcase\month\or
January\or February\or March\or April\or May\or June\or July \or
August\or September\or October\or November\or December\fi
\space\NNumber\day, \NNumber\year}
\numberwithin{equation}{section}

\def\eu{ {\, \textrm{\rm e} }}

\def\sbwd{\sigma^{\textrm{bwd}}}

\def\sfwd{\sigma^{\textrm{fwd}}}
\def\sbwd{\sigma^{\textrm{bwd}}}
\def\sTfwd{\Sigma^{\textrm{fwd}}}
\def\sTbwd{\Sigma^{\textrm{bwd}}}
\def\dist{{\mathscr{D}}}
\def\al{\alpha}

\def\ga{\vec{\gamma}}
\def\de{\delta}
\def\ep{\varepsilon}
\def\la{\lambda}

\def\Om{\Omega}
\def\na{\vec{\nabla}}

\def\N{{\mathbb N}}

\def\R{{\mathbb R}}

\def\I{{\mathbb I}}

\def\FF{{\mathcal F}}

\def\MM{{\mathcal M}}

\def\DDD{\mathscr{D}}
\def\TTT{\mathscr{T}}
\def\PPP{\mathscr{P}}

\def\l1{\Lambda^{1}}

 \def\PPs{\vec{\pi}_s}
 \def\PPu{\vec{\pi}_u}

\def\Q{\vec{Q}}
\def\P{\vec{P}}

\def\zz{\vec{\zeta}}
\def\xxi{\vec{\xi}}

\def\x{\vec{x}}
\def\y{\vec{y}}
\def\f{\vec{f}}
\def\g{\vec{g}}
\def\h{\vec{h}}

\def\gx{\boldsymbol{g_x}}
\newcommand{\lit}{{\lim_{t \to +\infty}}}
\newcommand{\lito}{{\lim_{t \to -\infty}}}

\def\Ae(t){\boldsymbol{A^\ep}}

\def\Xe{\boldsymbol{X^{\pm,\ep}}}

\def\Pe{\boldsymbol{P^{\pm,\ep}}}
\def\({\left(}
\def\){\right)}

\DeclareMathOperator{\tr}{tr}
\newcommand\bs[1]{{\boldsymbol{#1}}}
\newcommand\ev[1]{\textcolor{black}{#1}}

\def\und{\underline}
\def\ov{\overline}


\newcommand{\mat}[1]{\textcolor{black}{#1}} 

\newcommand\assump[1]{{\rm\textbf{#1}}}
\def\inn{\textrm{in}}
\def\outt{\textrm{out}}
\def\Di{\textrm{D}}
%
\newtheorem{theorem}{Theorem}[section]
\newtheorem{proposition}[theorem]{Proposition}
\newtheorem{lemma}[theorem]{Lemma}

\theoremstyle{definition}

\theoremstyle{remark}
\newtheorem{remark}[theorem]{Remark}

\newcommand\michal[1]{\textcolor{black}{#1}}


\begin{document}
\title{On the dynamics of non-autonomous systems in a~neighborhood of a~homoclinic trajectory}
\pagestyle{myheadings}
\date{}
\markboth{}{Non-autonomous homoclinic trajectories}

\author{	
A. Calamai\thanks{Dipartimento di Ingegneria Civile, Edile e Architettura,
Universit\`a Politecnica delle Marche, Via Brecce Bianche 1, 60131 Ancona -
Italy. Partially supported by G.N.A.M.P.A. - INdAM (Italy) and
PRIN 2022 - Progetti di Ricerca di rilevante Interesse Nazionale, \emph{Nonlinear differential
problems with applications to real phenomena} (Grant Number: 2022ZXZTN2).
},
M. Franca\thanks{Dipartimento di Scienze Matematiche, Universit\`a di Bologna, 40126 Bologna, Italy.
 Partially supported by G.N.A.M.P.A. - INdAM (Italy) and PRIN 2022 - Progetti di Ricerca di rilevante Interesse Nazionale, \emph{Nonlinear
 differential
problems with applications to real phenomena} (Grant Number: 2022ZXZTN2). },
M. Posp\' i\v sil\thanks{Department of Mathematical Analysis and Numerical Mathematics, Faculty of Mathematics, Physics and Informatics,
Comenius University in Bratislava,
Mlynsk\'a dolina, 842 48 Bratislava, Slovakia; Mathematical Institute, Slovak Academy of Sciences, \v Ste\-f\'a\-ni\-ko\-va 49, 814 73
Bratislava, Slovakia.
Partially supported by the Slovak Research and Development Agency under the Contract no. APVV-23-0039, and by the Grants VEGA 1/0084/23 and
VEGA-SAV 2/0062/24.}
}

\maketitle
\begin{abstract}
This article is devoted to the study of a $2$-dimensional piecewise smooth (but possibly) discontinuous dynamical system, subject
to a non-autonomous perturbation;
we assume that the unperturbed system admits a homoclinic trajectory $\ga(t)$. Our aim is to analyze the dynamics in a neighborhood of $\ga(t)$
as the perturbation is turned on, by defining a Poincar\'e map and evaluating fly time and space displacement  of trajectories
 performing a loop close to $\ga(t)$.

Besides their intrinsic mathematical interest, these results can be thought of as a first step in the analysis of several interesting problems, such as the stability of
a homoclinic trajectory of a non-autonomous ODE and a possible extension of Melnikov chaos to a discontinuous setting.
\end{abstract}

{\bf Keywords: piecewise smooth systems,   homoclinic trajectories, non-autonomous dynamical systems, exponential dichotomy,
non-autonomous perturbation}

{\bf 2020 AMS Subject Classification:  Primary   34C37, 34A36, 34D10, 37G20; Secondary   37C60. }

\vskip 24bp


 \section{Introduction}
This article is devoted to the study of a $2$-dimensional piecewise smooth (but possibly) discontinuous dynamical system, subject
to a non-autonomous perturbation,
namely
\begin{equation}\label{eq-smooth}
	\dot{\x}=\f(\x)+\ep\g(t,\x,\ep),
\end{equation}
where $\f$ and $\g$ are  bounded together with their derivatives up to the $r$-th order, $r>1$, and $\ep \ge 0$ is a small parameter.

In particular we assume that the origin $\vec 0= (0,0)$ is a critical point for \eqref{eq-smooth} for any $\ep$ and that there is a trajectory $\ga(t)$ homoclinic to $\vec 0$ when $\ep=0$.

 In this context, classical Melnikov theory allows us to define explicitly a function $\MM(\tau)$, the \textit{Melnikov function}, see \eqref{melni-disc}, such that the existence of non-degenerate zeros of $\MM(\tau)$  is a sufficient condition  for the persistence of the homoclinic trajectory, say $\x_b(t,\ep)$,
 and the insurgence of chaotic phenomena if we also assume that $\g$ is periodic in $t$.
 In fact, after the pioneering work of Melnikov \cite{Me}, in the smooth case, the problem of detecting the occurrence of chaotic solutions for non-autonomous dynamical systems is now well studied, see e.g.\ \cite{Pa84, GH, WigBook, SSTC}.
 In this paper we want to deepen our knowledge of the dynamics close to $\ga(t)$ when the perturbation is turned on.
  Namely, let $L^0$ be a curve transversal  to $  \{ \ga(t) \mid t \in \R \}$, our purpose is
 to define a Poincar\'e map from  a compact connected subset $A^{\textrm{fwd},+}(\tau)$ of $L^0$ back to $L^0$ and to evaluate space displacement
 and fly time when $\ep \ne 0$.

 Our work takes motivation and inspiration from the study of the asymptotic stability of periodic and homoclinic trajectories of an ordinary    differential equation (ODE).
 Borg, Hartman and Leonov (among others), see e.g.\ \cite{Bo, Ha, Leo}, have given necessary and sufficient conditions for the existence, the uniqueness and the asymptotic stability
 of a periodic orbit of an autonomous $n \ge 2$ dimensional ODE, and criteria to determine their basin of attraction. More recently this kind of study has been addressed in a non-autonomous context, extending Borg's criterion to almost periodic trajectories of an  almost periodic differential equation, see \cite{Gi1,Gi2, GiRa}:
 in all these problems the study of the Poincar\'e map in a neighborhood of   the periodic solutions plays a key role.

  The case of homoclinic trajectories is more tricky. Consider \eqref{eq-smooth} when $\ep=0$: the problem of the stability of the homoclinic trajectory $\ga(t)$
 has been completely solved by the so called Dulac sequence, see e.g.\ \cite[\S 13]{SSTC} and references therein. That is, if
 $\operatorname{div} [\bs{f_x}(\vec{0})]<0$, or  $\operatorname{div} [\bs{f_x}(\vec{0})]=0$ and $\int_{\ga}\operatorname{div} [\bs{f_x}]ds<0$, then $\ga(t)$ is stable from inside,
 while if $\operatorname{div} [\bs{f_x}(\vec{0})]>0$, or $\operatorname{div}[\bs{f_x}(\vec{0})]=0$ and $\int_{\ga}\operatorname{div} [\bs{f_x}]ds>0$, then $\ga(t)$ is unstable from inside,
 see \cite[\S 13]{SSTC} for more details.
 Once again the study of the Poincar\'e map ``inside'' $\bs{\Gamma}=\{\ga(t) \mid t \in \R \} \cup \{\vec{0}\}$ is crucial for this analysis; as far as the authors are aware there have been no attempts
 to extend the theory in a non-autonomous context: with  this paper we try to fill this gap.

 The first difficulty one faces in doing so is that we do not have anymore a proper Poincar\'e map, but we are forced to consider a map starting from
 a time dependent transversal,  $A^{\textrm{fwd},+}(\tau)$, which has as endpoint the intersection $\vec{P}_s(\tau)$ between the stable leaf $\tilde{W}^s(\tau)$
 and the transversal $L^0$, to end up close to $\vec{P}_u(\tau)$, the intersection of the unstable leaf $\tilde{W}^u(\tau)$ and $L^0$.
Further we have to split our argument in the analysis of four different subpaths: firstly from $L^0$ to a transversal   to $\tilde{W}^s(\tau)$ close to the origin, denoted by $\tilde{S}^+$, secondly  from $\tilde{S}^+$ to a curve $\Om^0$ passing through the origin, thirdly  from $\Om^0$ to a transversal   to $\tilde{W}^u(\tau)$ close to the origin,
$\tilde{S}^-$,
finally from $\tilde{S}^-$ back to $L^0$. In the first path we use a (nontrivial) fixed point argument which reminds the one used in \cite{Ha} for periodic trajectories,
in the second one we decompose the trajectory as a stable part, a linear one and a remainder and we use a (tricky) fixed point argument which combines exponential dichotomy
with some ideas borrowed from
\cite[\S 13]{SSTC}, and we estimate fly time  space displacement adapting in a nontrivial way \cite[\S 13]{SSTC}. Then we conclude with an inversion of time argument.

 This contribution   will be essential in proving some results concerning the analogues of the \emph{stability from inside} of $\bs{\Gamma}$
in a non-autonomous context which will be addressed in a forthcoming paper.
\\
Further we plan to use these ideas to study the possibility to establish a sub-harmonic theory for Melnikov, i.e., the possibility that
we may get a large number of homoclinic trajectories for $\ep \ne 0$ even if we have just one nondegenerate zero of $\MM(\tau)$.
\\
Moreover we believe this result could be used in a system characterized  by the presence of a chaotic pattern to find \emph{safe regions}, i.e., subsets
of initial conditions in a neighborhood of  $\bs{\Gamma}$, which cannot exhibit chaotic behavior.  See   \S \ref{s.open} for
more details concerning these three possible developments.

In fact all the results of this paper are obtained in a discontinuous piecewise smooth setting, i.e., we consider
\begin{equation}\label{eq-disc}\tag{PS}
	\dot{\x}=\f^\pm(\x)+\ep\g(t,\x,\ep),\quad \x\in\Om^\pm ,
\end{equation}
where $\Om^{\pm} = \{ \x\in\Om \mid \pm
G(\x)>0\}$, $\Om^{0}= \{ \x\in\Om \mid G(\x) = 0 \}$, $\Omega \subset \R^2$ is an open set, $G$ is a $C^{r}$-function on $\Om$ with $r> 1$
such that $0$ is a regular value of
$G$. Next, $\ep\in\R$ is a small parameter, and $\f^\pm \in C^{r}_b(\Om^\pm \cup \Omega^0, \R^2)$, $\g\in C_b^r(\R\times\Om\times\R,\R^2)$ and
$G\in C_b^r(\Om,\R)$, i.e., the
derivatives of $\f^\pm$, $\g$ and $G$ are uniformly continuous and bounded up to the $r$-th order, respectively,
 if $r\in \N$, and up to $r_0$ if $r=r_0+r_1$ with $r_0 \in \N$ and $0< r_1 <1$ and the $r_0$-th derivatives are $r_1$
H\"older continuous.
Further, we assume that both the systems
$\dot{\vec{x}}=\f^{\pm}(\x)$
admit the origin $\vec{0} \in \R^2$ as a fixed point,
and that $\vec{0}$ lies
on the discontinuity level $\Omega^0$.

Even in this discontinuous framework, the existence of a non-degenerate zero of the Melnikov function $\MM(\tau)$ guarantees the persistence
to perturbation of the homoclinic trajectory, see \cite{CaFr}.
Nevertheless,  a geometrical obstruction
forbids chaotic phenomena whenever we have sliding close to the origin, see \cite{FrPo}: this is indeed quite unexpected since this condition,
together with periodicity in $t$ of $\g$, are enough to guarantee the existence of a chaotic pattern in  a smooth context, see e.g.\ \cite{Pa84}, and in
a piecewise smooth context if $\vec{0} \not\in \Om^0$, see e.g.~\cite{BF10, BF11, BF12}.
 We plan to use   Theorems \ref{key} and \ref{keymissed} in a forthcoming paper to show that the usual Melnikov conditions guarantees chaos as in the smooth setting
  if the  geometrical obstruction is removed.

 Discontinuous problems are motivated by several physical applications, for instance mechanical systems with impacts, see e.g.~\cite{B99},
power electronics when we
have state dependent switches \cite{BV01}, walking machines \cite{GCRC98}, relay feedback systems \cite{BGV02}, biological systems \cite{PK};
see also \cite{DE13, LSZH16} and
the references therein.
Further they are also a  good source of examples since it is somehow easy to produce piecewise linear systems exhibiting an explicitly known
homoclinic
trajectory, so giving rise to chaos if subject to perturbations, whereas this is not an easy task in general for the smooth case (especially
if
the system
is not Hamiltonian).

But why shall one insist on studying the case where the critical point lies on the discontinuity surface $\Om^0$?
 In many real applications this is what really happens, e.g.\ in systems with dry friction.

 The paper is divided as follows: in \S \ref{S.WuWs} we collect the main assumptions used in the paper and we define
 the stable and unstable leaves; in \S \ref{S.prel} we construct the Poincar\'e map made up by trajectories of the perturbed problem performing a
 loop close to $\ga(t)$; in \S \ref{s.lemmakey} we state the main results of the paper, i.e., Theorems~\ref{key} and \ref{keymissed}, which are proved in \S \ref{S.key}.
 In \S \ref{s.open} we  give some hints concerning future development of this work which are in preparation.
 In the Appendix we sketch the proof of two auxiliary lemmas which are used to construct the Poincar\'e map.

\section{Preliminary constructions and notation}\label{S.WuWs}
First we give a notion of solution and we collect the basic assumptions which we assume through the whole paper.
By a \emph{solution} of
\eqref{eq-disc} we mean a continuous, piecewise $C^r$ function
$\x(t)$ that satisfies
\begin{align}
\dot{\x}(t)=\f^+(\x(t)) + \ep \g(t,\x(t),\ep), \quad \textrm{whenever } \x(t) \in \Om^+,  \label{eq-disc+} \tag{PS$+$}\\
\dot{\x}(t)=\f^-(\x(t)) + \ep \g(t,\x(t),\ep), \quad \textrm{whenever } \x(t) \in \Om^-. \label{eq-disc-}\tag{PS$-$}
\end{align}
 Moreover, if  $\x(t_{0})$ belongs to
$\Om^{0}$ for some $t_0$,   then  we assume
either $\x(t)\in\Om^{-}$ or $\x(t)\in\Om^{+}$  for $t$ in some left neighborhood of
$t_{0}$, say $]t_{0}-\tau,t_{0}[$ with $\tau>0$.
In the first case, the
left derivative of $\x(t)$ at $t=t_0$ has to satisfy $\dot
\x(t_{0}^{-}) = \f^{-}(\x(t_0))+ \ep \g(t_0,\x(t_0),\ep)$; while in the
second case, $\dot \x(t_{0}^{-}) = \f^{+}(\x(t_0))+ \ep
\g(t_0,\x(t_0),\ep)$. A~similar condition is required for the right
derivative $\dot \x(t_{0}^{+})$.
We stress that, in this paper, we do not consider solutions of equation
\eqref{eq-disc} that belong to $\Om^{0}$ for $t$ in some
nontrivial interval, i.e., sliding solutions.

\subsection*{Notation}
Throughout the paper we will use the following notation. We denote scalars by small letters,
e.g.\ $a$, vectors in $\R^2$ with an arrow, e.g.\ $\vec{a}$, and $n \times n$ matrices by bold letters, e.g.\ $\bs{A}$.
By $\vec{a}^*$ and $\bs{A}^*$ we mean the transpose of the vector $\vec{a}$ and of the matrix $\bs{A}$, resp.,
so that $\vec{a}^*\vec{b}$ \michal{or $\langle\vec{a},\vec{b}\rangle$} denotes the scalar product of the vectors $\vec{a}$, $\vec{b}$.
We denote by $\|\cdot\|$ the Euclidean norm in $\R^2$, while for matrices
we use the functional norm $\|\bs{A}\|= \sup_{\|\vec{w}\| \le 1} \|\bs{A} \vec{w}\|$.
We will use the shorthand notation $\bs{f_x}=\bs{\frac{\partial f}{\partial x}}$ unless this may cause confusion.

In the whole paper we denote by
$\x(t,\tau;\P)$ the trajectory of \eqref{eq-disc} passing through $\P$ at $t=\tau$ (evaluated at $t$).
If $D$ is a set and $\delta>0$, we
define
\begin{equation*}
  \begin{split}
     B(D, \delta):= &  \{ \vec{Q} \in \R^2 \mid   \exists \vec{P} \in D : \, \|\vec{Q}-\vec{P} \| <  \delta \}= \cup \{ B(\vec{P}, \delta)
 \mid \vec{P} \in D \}.
  \end{split}
\end{equation*}

We list here some hypotheses which we assume through the whole paper.

\begin{description}
	\item[\assump{F0}] We have $\vec{0}\in\Omega^0$,  $\f^\pm(\vec{0})=\vec{0}$, and the eigenvalues $\la_s^\pm$, $\la_u^\pm$ of
$\bs{f_x^\pm}(\vec{0})$ are such that
$\la_s^\pm<0<\la_u^\pm$.
\end{description}

Denote by $\vec{v}_s^{\pm}$, $\vec{v}_u^{\pm}$ the normalized eigenvectors of $\bs{f_x^\pm}(\vec{0})$ corresponding to $\la_s^\pm$,
$\la_u^\pm$.
Let us set
 \begin{equation}\label{cucs}
   c_u^{\perp,\pm}= [\na G(\vec{0})]^*\vec{v}^{\pm}_u \, , \quad  c_s^{\perp,\pm}= [\na G(\vec{0})]^*\vec{v}^{\pm}_s.
 \end{equation}
 We assume that the eigenvectors $\vec{v}_s^{\pm}$, $\vec{v}_u^{\pm}$ are not orthogonal to $\na G(\vec{0})$, i.e., the constants
 $c_u^{\perp,\pm}$ and $c_s^{\perp,\pm}$ are nonzero. To fix the ideas we require
\begin{description}
	\item[\assump{F1}]  $c_u^{\perp,-}<0<c_u^{\perp,+} \, , \quad  c_s^{\perp,-}<0<c_s^{\perp,+}$.
\end{description}

 Moreover we require a further condition on the mutual positions
 of the
directions spanned by
$\vec{v}_s^{\pm}$, $\vec{v}_u^{\pm}$.

More precisely, set $\mathcal{T}_u^{\pm}:=\{ c\vec{v}_u^{\pm} \mid c \ge 0 \}$, and
denote by $\Pi_u^1$ and $\Pi_u^2$ the disjoint open sets
in which $\R^2$ is divided by the polyline $\mathcal{T}^u:=\mathcal{T}_u^+ \cup \mathcal{T}_u^-$.
We require that $\vec{v}_s^+$ and $\vec{v}_s^-$ lie on ``opposite sides'' with respect to
	$\mathcal{T}^u$.
Hence, to fix the ideas, we assume: \begin{description}
	\item[\assump{F2}]   $\vec{v}_s^+ \in \Pi_u^1$ and $\vec{v}_s^- \in \Pi_u^2$.
\end{description}

We emphasize that if \assump{F2} holds, there is no sliding on $\Om^0$ close to $\vec{0}$.
On the other hand, sliding \michal{might occur} when both  $\vec{v}_s^{\pm}$ lie in
$\Pi_u^1$, or they both lie in  $ \Pi_u^2$, see \cite[\S 3]{FrPo}.

\begin{remark} \label{rem:settings}
We point out that it is the \emph{mutual position of the eigenvectors} that plays a role in the argument. In the paper we fix a particular
situation
for definiteness; however, by
reversing all the directions, one may obtain equivalent results.
Moreover, in the continuous case, i.e., for equation \eqref{eq-smooth},
then $\mathcal{T}^u$ is a line and $\Pi_u^1$, $\Pi_u^2$ are halfplanes.
In fact, all
smooth systems satisfy \assump{F2}.
On the other hand, if assumption \textbf{F2} is replaced with the opposite condition, that is $\vec{v}_s^+$ and $\vec{v}_s^-$ lie ``on the
same side'' with respect to
$\mathcal{T}^u$, then it was shown in \cite{FrPo} that \michal{generically} chaos cannot occur,  while new bifurcation
phenomena, involving continua of sliding homoclinic trajectories, may arise.
\end{remark}
\begin{description}
	\item[\assump{K}] For $\ep=0$ there is  a unique solution $\ga(t)$ of \eqref{eq-disc} homoclinic to the origin such that
	$$\ga(t)\in\begin{cases}\Om^-,& t<0,\\\Om^0,&t=0,\\\Om^+,&t>0.\end{cases}$$
	Furthermore, $(\na G(\ga(0)))^*\f^\pm(\ga(0))>0$.
\end{description}
Recalling  the orientation of $\vec{v}_s^{\pm}$, $\vec{v}_u^{\pm}$ chosen in \assump{F1}, we assume w.l.o.g.\  that
\begin{equation}\label{ass.scenario}
\lito\frac{\dot{\ga}(t)}{\|\dot{\ga}(t)\|}=\vec{v}_u^-
\quad \mbox{ and } \quad
\lit\frac{\dot{\ga}(t)}{\|\dot{\ga}(t)\|}=-\vec{v}_s^+.
\end{equation}

Concerning the perturbation term $\g$, we assume the following:
\begin{description}
	\item[\assump{G}] $\g(t,\vec{0},\ep)=\vec{0}$ for any $t,\ep\in\R$.
\end{description}
Hence, the origin is a critical point for the perturbed problem too.

\begin{figure}[t]
\centerline{\epsfig{file=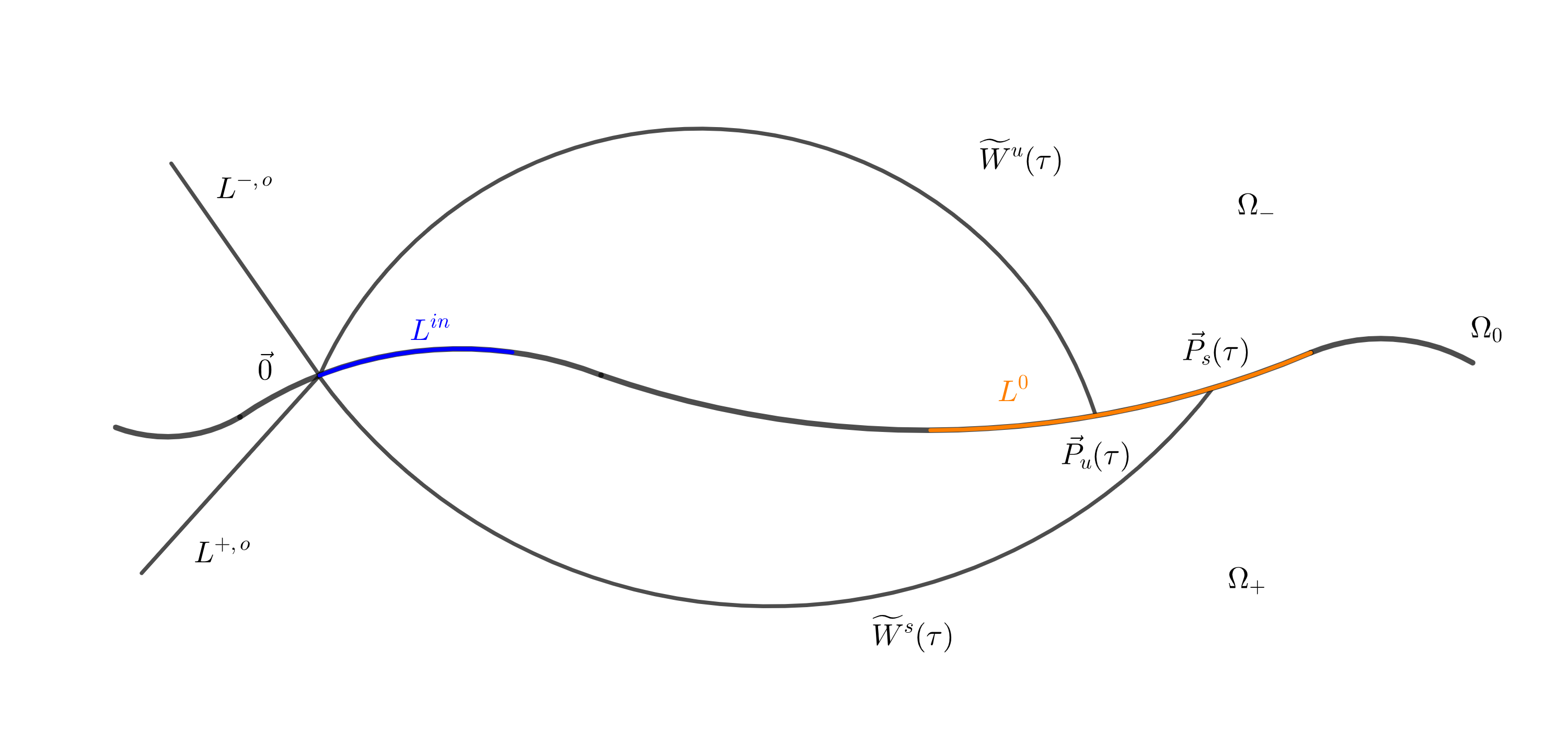, width = 10 cm} }
\caption{Stable and unstable leaves (the superscript ``$^{\textrm{out}}$"
	is denoted as ``$^{\textrm{o}}$" for short).}
\label{LinL0}
\end{figure}
We recall that
$\bs{\Gamma}:= \{\ga(t) \mid t \in \R \} \cup \{\vec{0}\}$; let us denote  by $E^{\textrm{in}}$ the open set enclosed by $\bs{\Gamma}$,
and by $E^{\textrm{out}}$ the open set complementary to $E^{\textrm{in}}\cup\bs{\Gamma}$.

Further, for any fixed $\delta>0$, we set (see Figure \ref{LinL0}):
 \begin{equation}\label{L0}
 \begin{split}
   L^0 =  L^0(\delta) & := \{ \vec{Q} \in \Omega^0 \mid \|\vec{Q}- \ga(0) \| <\delta \}, \\
   L^{\textrm{in}}=  L^{\textrm{in}}(\delta) &:= \{ \vec{Q} \in (\Omega^0 \cap E^{\textrm{in}}) \mid \|\vec{Q} \| <\delta\}, \\
   L^{-,\textrm{out}}=L^{-,\textrm{out}}(\delta) &:= \{ \vec{Q}= d (\vec{v}_u^-+  \vec{v}_s^-) \mid 0 \le d \le \delta \},\\
   L^{+,\textrm{out}}= L^{+,\textrm{out}}(\delta) &:= \{ \vec{Q}= d (\vec{v}_u^++  \vec{v}_s^+) \mid 0 \le d \le \delta \}.
  \end{split}
\end{equation}

Now, we define  the stable and the unstable leaves $W^s(\tau)$ and $W^u(\tau)$ of \eqref{eq-disc}.

Assume first for simplicity that the system is
\textit{smooth}; that is consider \eqref{eq-smooth} and suppose that \assump{F0} holds true.
 Then, following \cite[Appendix]{mFaS}, which is based on \cite[Theorem 2.16]{Jsell},  we can define local and global stable and unstable
 leaves as follows:
\begin{equation}\label{mm}
\begin{array}{c}
W^u_{loc}(\tau) := \{ \P \in B(\vec{0},\delta) \mid \x(t,\tau;\P) \in  B(\vec{0},\delta) \text{ for }t \le \tau, \, \lito
\x(t,\tau;\P)=\vec{0} \}, \\
W^s_{loc}(\tau) := \{ \P \in  B(\vec{0},\delta) \mid \x(t,\tau;\P) \in  B(\vec{0},\delta) \text{ for }t \ge \tau, \, \lit
\x(t,\tau;\P)=\vec{0} \}, \\
W^u(\tau) := \{ \P \in \R^2 \mid    \lito \x(t,\tau;\P)=\vec{0} \}, \\
W^s(\tau) := \{ \P \in \R^2 \mid   \lit  \x(t,\tau;\P)=\vec{0} \}.
\end{array}
\end{equation}
From  \cite[Theorem 2.16]{Jsell} it follows that
 $W^u_{loc}(\tau)$ and $W^s_{loc}(\tau)$ are $C^r$ embedded $1$-dimensional manifolds
if $\delta>0$ is small enough, while $W^u(\tau)$ and $W^s(\tau)$ are $C^r$ immersed $1$-dimensional manifolds,
i.e., they are the image of $C^r$ curves. Note that they also depend on $\ep$ but we leave this dependence unsaid.
The manifolds $W^u(\tau)$ and $W^s(\tau)$ are the sets of all the initial conditions of the trajectories
 converging to the origin in the past and in the future, respectively, and
they are not invariant for the flow of \eqref{eq-smooth}. However, if $\P \in W^u(\tau)$ then $\x(t,\tau;\P)\in W^u(t)$
for any $t, \tau \in \R$. Analogously for $W^s(\tau)$.

We emphasize that, choosing $\delta>0$ small enough, we can assume that $W^u_{loc}(\tau)$ and $W^s_{loc}(\tau)$
are graphs on the respective tangent spaces; further
$W^u(\tau)$ and $W^s(\tau)$ are constructed from $W^u_{loc}(\tau)$ and
 $W^s_{loc}(\tau)$ using the flow of \eqref{eq-smooth} as follows
\begin{equation}\label{ifneeded}
\displaystyle  \begin{split}
  W^u(\tau)= & \cup_{T \le \tau}\{ x(\tau, T; \Q) \mid \Q \in W^u_{loc}(T) \},  \\
   W^s(\tau)= & \cup_{T \ge \tau}\{ x(\tau, T; \Q) \mid \Q \in W^s_{loc}(T) \}.
   \end{split}
\end{equation}
Denote by
$W^{u,\pm}_{loc}(\tau)=W^u_{loc}(\tau) \cap (\Omega^{\pm} \cup \Omega^0)$ and by
$W^{s,\pm}_{loc}(\tau)=W^s_{loc}(\tau) \cap (\Omega^{\pm} \cup \Omega^0)$.
Assume further \assump{F1}, \assump{K} and follow $W^u(\tau)$ (respectively $W^s(\tau)$) from the origin towards
$L^0(\sqrt{\ep})$: then it intersects $L^0(\sqrt{\ep})$ transversely in a point denoted by $\P_u(\tau)$
(respectively by $\P_s(\tau)$).
In fact, $\P_u(\tau)$ and $\P_s(\tau)$ are $C^r$ functions of $\ep$ and $\tau$; hence
$\P_u(\tau)=\P_s(\tau)= \ga(0)$ if $\ep=0$ for any $\tau \in \R$.

We denote by $\tilde{W}^u(\tau)$ the branch of $W^u(\tau)$ between the origin and $\P_u(\tau)$
(a path), and by $\tilde{W}^s(\tau)$ the branch of $W^s(\tau)$ between the origin and $\P_s(\tau)$, in both the cases including the endpoints.
Since $\tilde{W}^u(\tau)$ and $\tilde{W}^s(\tau)$ coincide with $\bs{\Gamma} \cap (\Om^- \cup \Om^0)$
and $\bs{\Gamma} \cap (\Om^+ \cup \Om^0)$ if $\ep=0$, respectively, and vary in a $C^r$ way, see \cite[Theorem 2.16]{Jsell}
or  \cite[Appendix]{mFaS},
 we find $\tilde{W}^u(\tau)\subset  (\Om^- \cup \Om^0)$ and
$\tilde{W}^s(\tau)\subset  (\Om^+ \cup \Om^0)$, for any $\tau \in \R$ and any $0\le \ep \le \ep_0$.
Further
\begin{equation}\label{proprieta}
\begin{gathered}
 \Q_u \in \tilde{W}^u(\tau)   \Rightarrow  \x(t,\tau; \Q_u) \in \tilde{W}^u(t) \subset (\Om^- \cup \Om^0) \quad \textrm{for any $t\le
 \tau$},\\
\Q_s \in \tilde{W}^s(\tau) \Rightarrow  \x(t,\tau; \Q_s) \in \tilde{W}^s(t) \subset (\Om^+ \cup \Om^0) \quad \textrm{for any $t\ge \tau$}.
\end{gathered}
\end{equation}

Now, we go back to the general case where \eqref{eq-disc} is piecewise smooth but discontinuous.
Using  the previous construction, we can define also in this case $W^{u,\pm}_{loc}(\tau)$, $W^{s, \pm}_{loc}(\tau)$,
$\tilde{W}^{u}(\tau)$ and  $\tilde{W}^{s}(\tau)$,
and they are all $C^r$.
\begin{remark}\label{Wloc}
Consider \eqref{eq-disc} and assume
 \assump{F0}, \assump{F1},  \assump{F2}.
   Then the manifolds $W^u_{loc}(\tau)$ and
 $W^s_{loc}(\tau)$ defined as in \eqref{mm} are such that $W^u_{loc}(\tau)=
 W^{u,+}_{loc}(\tau) \cup W^{u,-}_{loc}(\tau)$,
 $W^s_{loc}(\tau)=
 W^{s,+}_{loc}(\tau) \cup W^{s,-}_{loc}(\tau)$. Hence, $W^u_{loc}(\tau)$ and
 $W^s_{loc}(\tau)$ are piecewise $C^r$ manifolds (they might lose smoothness in the origin).
 Further $\tilde{W}^u(\tau)$ and $\tilde{W}^s(\tau)$ are again smooth and
 have the property \eqref{proprieta}.

 Moreover, if \textbf{K} holds then
  $\P_u(\tau)$ and $\P_s(\tau)$ are again $C^r$ in $\ep$ and $\tau$,
 and $\P_u(\tau)=\P_s(\tau)= \ga(0)$ if $\ep=0$ for any $\tau \in \R$.
\end{remark}
We set
\begin{equation}\label{tildeW}
\tilde{W}(\tau):= \tilde{W}^u(\tau) \cup \tilde{W}^s(\tau).
\end{equation}
 We will see that $\tilde{W}(\tau) \subset B(\bs{\Gamma}, \bar{c}^* \ep)$ for a suitable $\bar{c}^*>0$,  see Remark \ref{rMelnibis}
below.

\begin{figure}[t]
\begin{center}
	\epsfig{file=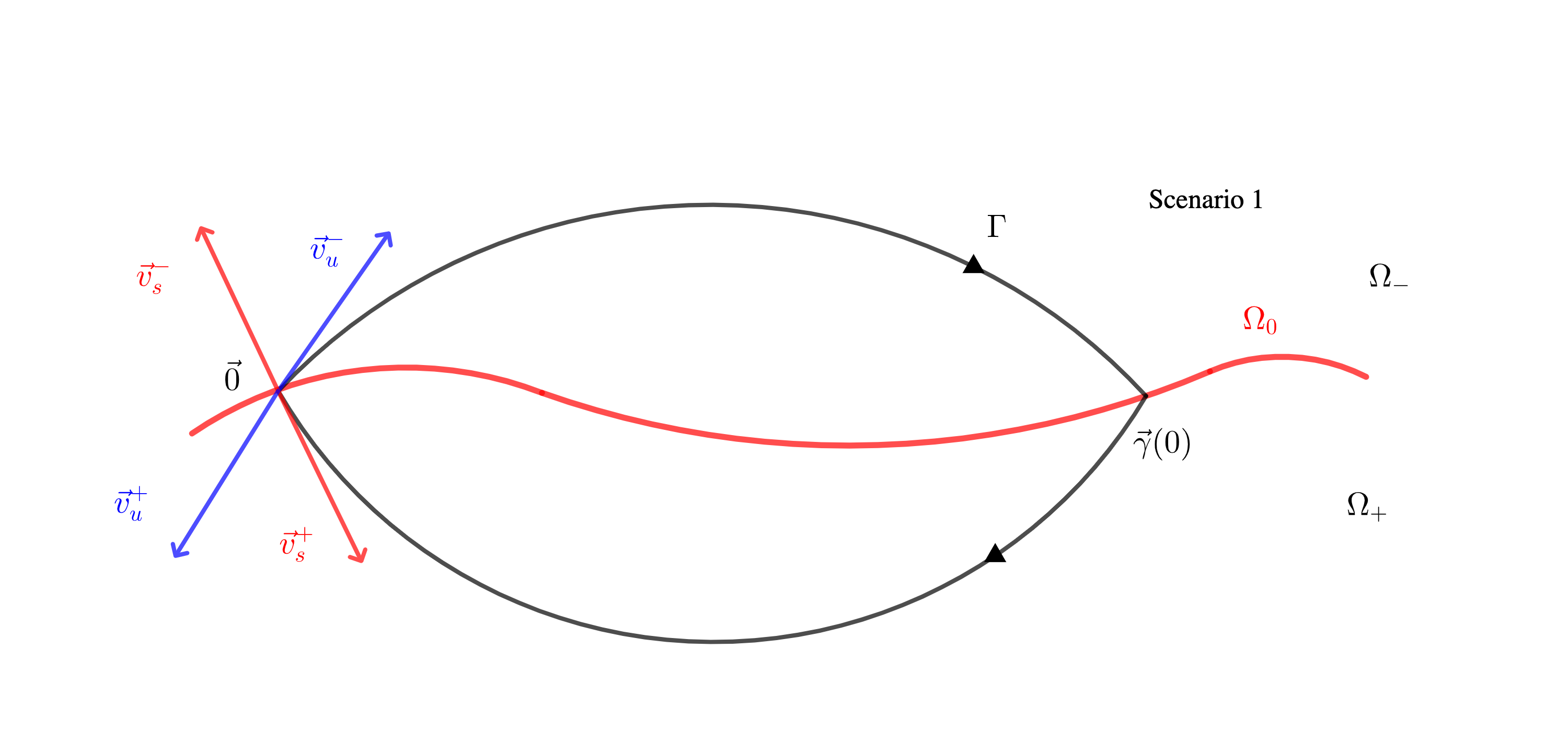, width = 8 cm}\\
	\epsfig{file=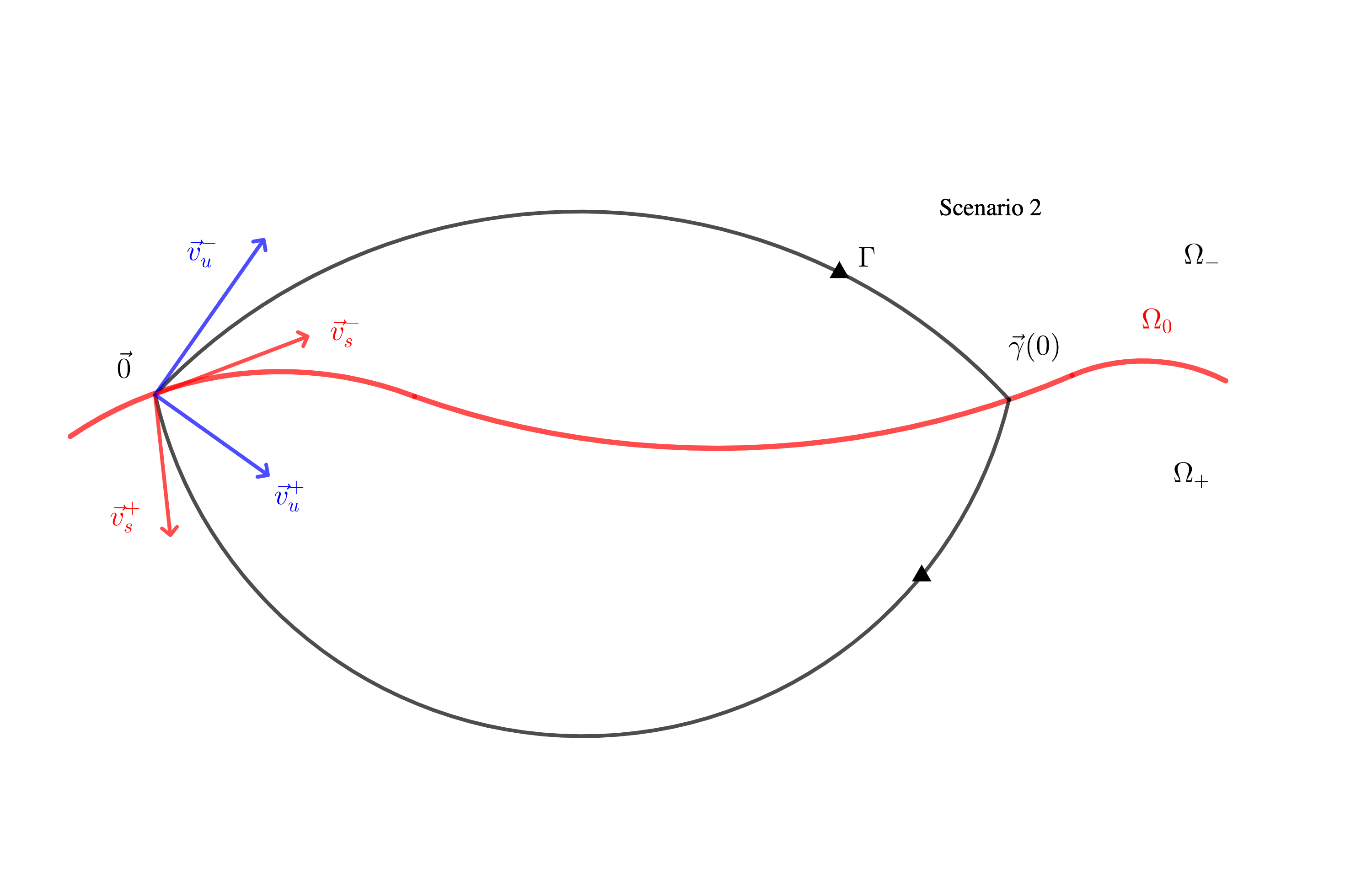, width = 8 cm}
\end{center}
\caption{Scenarios 1 and 2. In   these two settings there is no sliding close to the origin. Further Melnikov theory guarantees persistence of the homoclinic trajectories
  \cite{CaFr}, and we conjecture that we may have chaotic phenomena.
}
\label{scenario123}
\end{figure}

 Let us define the Melnikov function $\mathcal{M}:\R\to\R$  which, for planar
 piecewise smooth systems as \eqref{eq-disc}, takes the following form, see e.g.\ \cite{CaFr}:

\begin{equation}\label{melni-disc}
\begin{split}
   \mathcal{M}(\al) &= \int_{-\infty}^{0} \eu^{-\int_0^t
			\tr\boldsymbol{\f_x^-}(\ga(s))ds} \f^-(\ga(t))
		\wedge \g(t+\al,\ga(t),0) dt\\
		&\quad {}+  \int_{0}^{+\infty} \eu^{-\int_0^t
			\tr\boldsymbol{\f_x^+}(\ga(s))ds} \f^+(\ga(t))
		\wedge \g(t+\al,\ga(t),0) dt,
\end{split}
	\end{equation}
where ``$\wedge$" is the wedge product in $\R^2$ defined by $\vec{a} \wedge \vec{b}= a_1b_2-a_2b_1$
for any vectors $\vec{a}=(a_1,a_2)$, $\vec{b}=(b_1,b_2)$.
In fact, also in the piecewise smooth case the function $\mathcal{M}$ is $C^r$.

At this point, we need to distinguish between four possible scenarios, see Figures~\ref{scenario123} and \ref{scenario45}.
 \begin{description}
\item[Scenario 1] Assume \assump{K} and that there is $\rho>0$ such that $d\vec{v}_u^+ \in E^{\textrm{out}}$, $d\vec{v}_s^- \in
    E^{\textrm{out}}$ for any $0<d<\rho$.
\item[Scenario 2] Assume \assump{K} and that there is $\rho>0$ such that $d\vec{v}_u^+ \in E^{\textrm{in}}$, $d\vec{v}_s^- \in
    E^{\textrm{in}}$ for any $0<d<\rho$.
\item[Scenario 3]
Assume \assump{K} and that there is $\rho>0$ such that $d\vec{v}_u^+ \in E^{\textrm{in}}$, $d\vec{v}_s^- \in E^{\textrm{out}}$ for any
$0<d<\rho$, so
    \assump{F2} does not hold.
\item[Scenario 4]
Assume \assump{K} and that there is $\rho>0$ such that $d\vec{v}_u^+ \in E^{\textrm{out}}$, $d\vec{v}_s^- \in E^{\textrm{in}}$ for any
$0<d<\rho$, so
    \assump{F2} does not hold.
\end{description}
Notice that  \assump{F2} holds in both Scenarios 1 and 2, and our results apply to both the cases. Further, in Scenario 1
$W^{u,+}_{loc}(\tau)$ and $W^{s,-}_{loc}(\tau)$ both lie in $E^{\textrm{out}}$ for any $\tau \in \R$,
while in Scenario 2   $W^{u,+}_{loc}(\tau)$ and $W^{s,-}_{loc}(\tau)$ both lie in $E^{\textrm{in}}$ for any $\tau \in
    \R$.

    In Scenarios 3 and 4 sliding \michal{generically occurs} in $\Om^0$ close to the origin and \assump{F2} does not hold.
    Notice that Scenarios 1 and 2 have a smooth counterpart while Scenarios 3 and 4 may take place just if the system is discontinuous.
     We recall once again that
    in all the four scenarios the existence of a non-degenerate zero of the Melnikov function guarantees the persistence of the homoclinic
    trajectory, cf. \cite{CaFr}, but chaos is \michal{generically} not possible in Scenarios 3 and 4. We conjecture that chaos is still possible in Scenarios 1 and 2: this will
    be the object of  a future investigation in which the result of this article will be crucial.

    \textbf{In this paper we will just consider Scenario 1 to fix the ideas, even though Scenario 2 can be handled in a similar way.}

\begin{figure}[t]
\begin{center}
	\epsfig{file=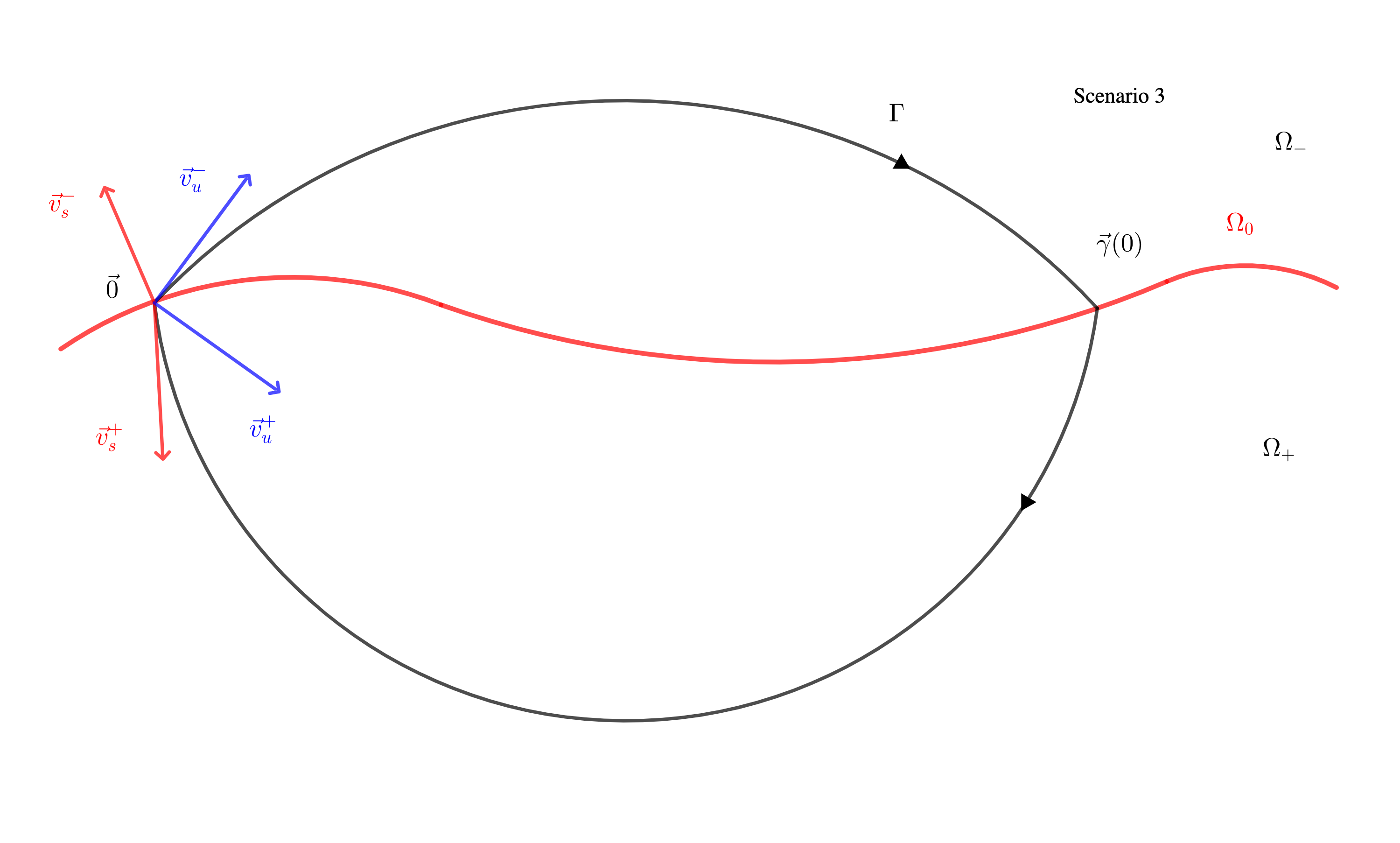, width = 8 cm}\\
	\epsfig{file=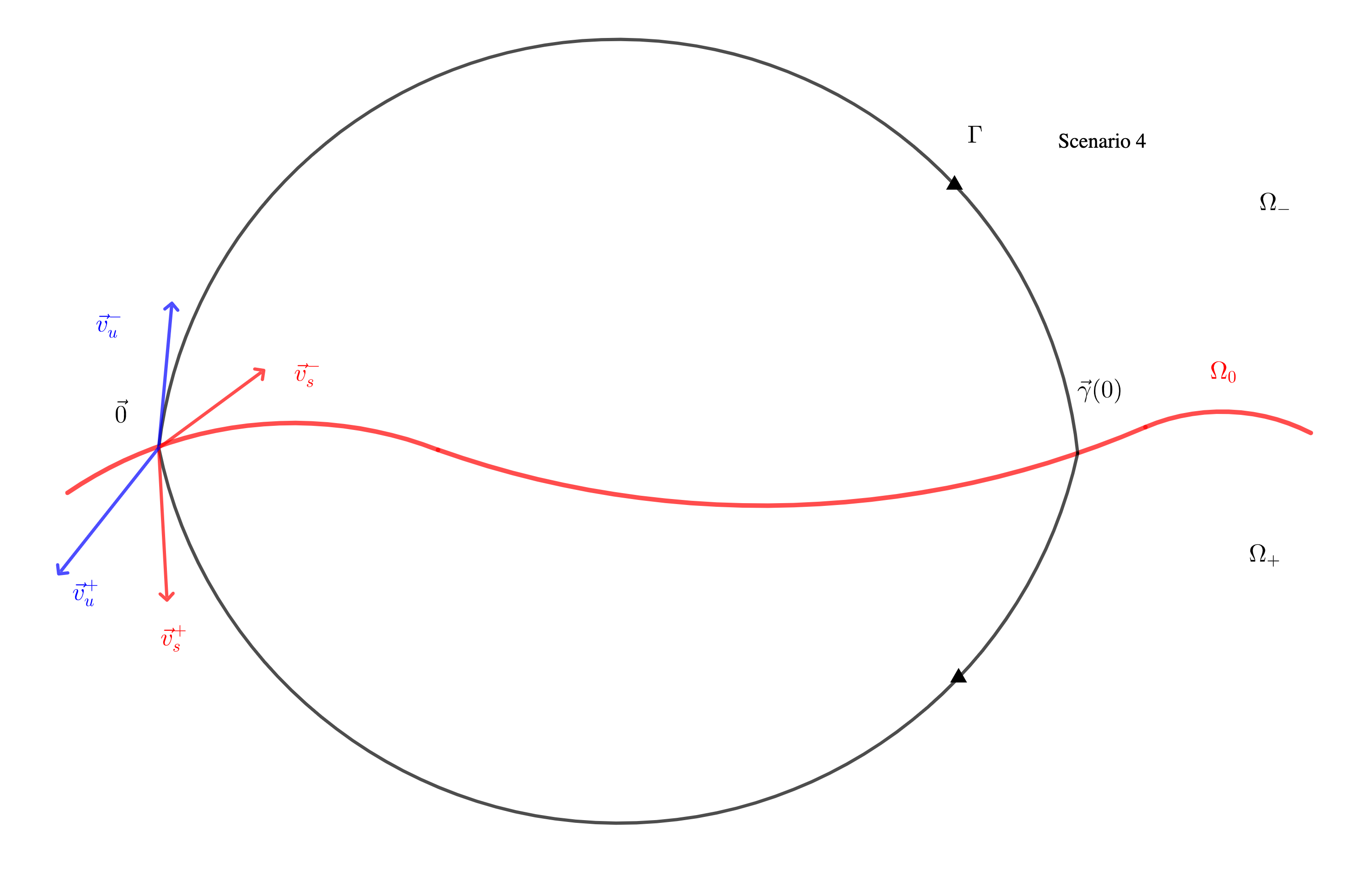, width = 8 cm}
\end{center}
\caption{Scenarios 3 and 4: in these settings we have persistence of the homoclinic trajectories   but sliding \michal{might occur} close to the origin. Here our analysis does not apply directly. Further Melnikov theory guarantees persistence of the homoclinic trajectories
  \cite{CaFr}, but chaos is forbidden \cite{FrPo} \michal{in general}.}
\label{scenario45}
\end{figure}

We collect here  for convenience of the reader and future reference,  the main constants which
will play a role in our argument:
\begin{equation}\label{defsigma}
\displaystyle
\begin{array}{ccc}
\sfwd_+= \frac{|\la_s^+|}{\la_u^++|\la_s^+|}, & \sfwd_-= \frac{\la_u^-+|\la_s^-|}{\la_u^-}, &
\sfwd=  \sfwd_+ \sfwd_-,  \\
  \sbwd_+=   \frac{1}{\sfwd_+}, & \sbwd_- =\frac{1}{\sfwd_-}, &  \sbwd=   \sbwd_+ \sbwd_-, \\
 \underline{\sigma} = \min \{ \sfwd_+ , \sbwd_-  \} , &  & \overline{\sigma}= \max \{ \sfwd_+ , \sbwd_-  \},
  \end{array}
\end{equation}
 \begin{equation*}
\displaystyle
\begin{array}{ccc}
  \sTfwd_+= \frac{1}{\la_u^++|\la_s^+|}, & & \sTbwd_- = \frac{1}{\la_u^- +|\la_s^-|},
  \\
\sTfwd=\frac{\la_u^-+|\la_s^+|}{\la_u^-(\la_u^++|\la_s^+|)} ,&  & \sTbwd= \frac{\la_u^{-}+|\la_s^+|}{|\la_s^+|(\la_u^-+|\la_s^-|)},\\
\underline{\Sigma}= \min \{ \sTfwd , \sTbwd  \} , &  & \overline{\Sigma}= \max \{ \sTfwd , \sTbwd  \},\\
\und{\la}= \min \{ \la_u^-; \la_u^+ ;|\la_s^-|;|\la_s^+| \} , & & \qquad \ov{\la}= \max \{ \la_u^-; \la_u^+ ;|\la_s^-|;|\la_s^+| \}.
\end{array}
\end{equation*}
\begin{remark}
Notice that in the smooth setting  we have $\la_u^+=\la_u^-$, and $\la_s^+= \la_s^-$ so we have the following simplifications
\begin{equation}\label{defsigmabis}
\begin{array}{c}
\sfwd= \frac{|\la_s|}{\la_u}, \quad \sbwd= \frac{\la_u}{|\la_s|}; \qquad  \sTfwd= \frac{1}{\la_u} , \quad \sTbwd= \frac{1}{|\la_s|}.
  \end{array}
\end{equation}
\end{remark}

\section{Construction of the Poincar\'e map} \label{S.prel}

Fix $\tau \in \R$ and consider $L^0(\delta)$  given by \eqref{L0}, where $\delta>0$  will be
chosen below.

The point $\P_s(\tau)$ splits $L^0$ in two parts, say $A^{\textrm{fwd},+}(\tau)$ and $B^{\textrm{fwd},+}(\tau)$, respectively
 ``inside'' and ``outside'', see
Figure~\ref{Kforward}.
The purpose of this section is to construct a Poincar\'e map
using the flow of \eqref{eq-disc} from $L^0$ back to itself remaining close to $\bs{\Gamma}$; i.e.\
$\mathscr{P}^{\textrm{fwd}}(\cdot, \tau): A^{\textrm{fwd},+}(\tau) \to L^0$ and a time map $\mathscr{T}^{\textrm{fwd}}(\cdot,\tau):
A^{\textrm{fwd},+}(\tau) \to \R$ such that
for any $\P \in A^{\textrm{fwd},+}(\tau)$ the trajectory
$\x(t,\tau; \P)$ will stay in a neighborhood of $\bs{\Gamma}$  (in fact in a neighborhood of $\tilde{W}(t)$) for any
$t \in [\tau, \mathscr{T}^{\textrm{fwd}}(\P,\tau)]$ and it will cross transversely
$L^0$ for the first time at $t=\mathscr{T}^{\textrm{fwd}}(\P,\tau)>\tau$ in the point $\mathscr{P}^{\textrm{fwd}}(\P, \tau)$. Further, we want
to show that
both the maps are $C^r$.
\\
Moreover, if $\P \in B^{\textrm{fwd},+}(\tau)$ then there is some
 $\mathscr{T}^{\textrm{out}}=\mathscr{T}^{\textrm{out}}(\P,\tau)>\tau$ such that the trajectory will leave a neighborhood of $\bs{\Gamma}$ at
 $t \ge
 \mathscr{T}^{\textrm{out}}$.

Similarly,   the point $\P_u(\tau)$ splits $L^0$ in two parts, say $A^{\textrm{bwd},-}(\tau)$ and $B^{\textrm{bwd},-}(\tau)$, respectively
 ``inside'' and
``outside'', see
Figure~\ref{Kbackward}.
Using the flow of \eqref{eq-disc} (but now going backward in time) we can construct a  $C^r$ Poincar\'e map
$\mathscr{P}^{\textrm{bwd}}(\cdot, \tau): A^{\textrm{bwd},-}(\tau) \to L^0$ and a $C^r$ time  map $\mathscr{T}^{\textrm{bwd}}(\cdot,\tau):
A^{\textrm{bwd},-}(\tau) \to \R$
such that
for any $\P \in A^{\textrm{bwd},-}(\tau)$ the trajectory
$\x(t,\tau; \P)$ will stay in a neighborhood of $\bs{\Gamma}$ (in fact in a neighborhood of $\tilde{W}(t)$) for any
$t \in [\mathscr{T}^{\textrm{bwd}}(\P,\tau), \tau]$ and it will cross transversely
$L^0$ for the first time at $t=\mathscr{T}^{\textrm{bwd}}(\P,\tau)<\tau$ in the point $\mathscr{P}^{\textrm{bwd}}(\P, \tau)$.
\\
Moreover if $\P \in B^{\textrm{bwd},-}(\tau)$ then there is some $\mathscr{T}^{\textrm{out}}(\P,\tau)<\tau$
 such that the trajectory will leave a neighborhood of $\bs{\Gamma}$ at $t \le \mathscr{T}^{\textrm{out}}$.

Hereafter it is convenient to denote $ \ga^-(t):=\ga(t)$ when $t \le 0$ and $ \ga^+(t):=\ga(t)$ when $t \ge 0$.

\begin{remark}\label{est.ga}
  Assume \assump{F0}, \assump{K}, then there is  a constant $c_0^*>0$ such that
  $\|\ga^-(t)\| \le \frac{c_0^*}{4} \eu^{\la_u^- t}$ for any $t \le 0$ and
  $\|\ga^+(t)\| \le \frac{c_0^* }{4}\eu^{\la_s^+ t}$ for any $t \ge 0$.
\end{remark}

We state now a classical result  concerning the possibility to estimate the position of the trajectories of the unstable manifold
$\tilde{W}^u(\tau)$ and of the stable manifold $\tilde{W}^s(\tau)$ using the homoclinic trajectory $\ga(t)$.
The proof is omitted, see, e.g., the nice introduction of \cite{JPY},  or \cite[\S 4.5]{GH}.
\begin{remark}\label{rMelnibis}
 Assume \assump{K} and \assump{F1}, then there is $\ep_0>0$ such that for any $0< \ep \le \ep_0$ we have the following. There is $\bar{c}^*>0$
 such that
 \begin{equation}\label{trajWuWs}
   \begin{split}
      \| \x(t,\tau ; \P_u(\tau)) -\ga^-(t-\tau) \| \le \bar{c}^* \ep & \qquad \qquad \textrm{for any $t \le \tau$}, \\
        \| \x(t,\tau ; \P_s(\tau)) -\ga^+(t-\tau) \| \le \bar{c}^* \ep & \qquad \qquad \textrm{for any $t \ge \tau$}.
   \end{split}
 \end{equation}
\end{remark}

By a slight modification of \cite[Lemmas 6.4, 6.7]{FrPo} and the argument of \cite[\S 6.2.2]{FrPo} we can construct
 two auxiliary curves $Z^{\textrm{fwd},\textrm{in}}$, $Z^{\textrm{fwd},\textrm{out}}$ with the following properties, see Figure
 \ref{Kforward},
 and also Figures \ref{fig-Zui},   \ref{fig-Zuo}.
These curves are useful to build up our Poincar\'e map in forward time.

\begin{figure}[t]
\centerline{\epsfig{file=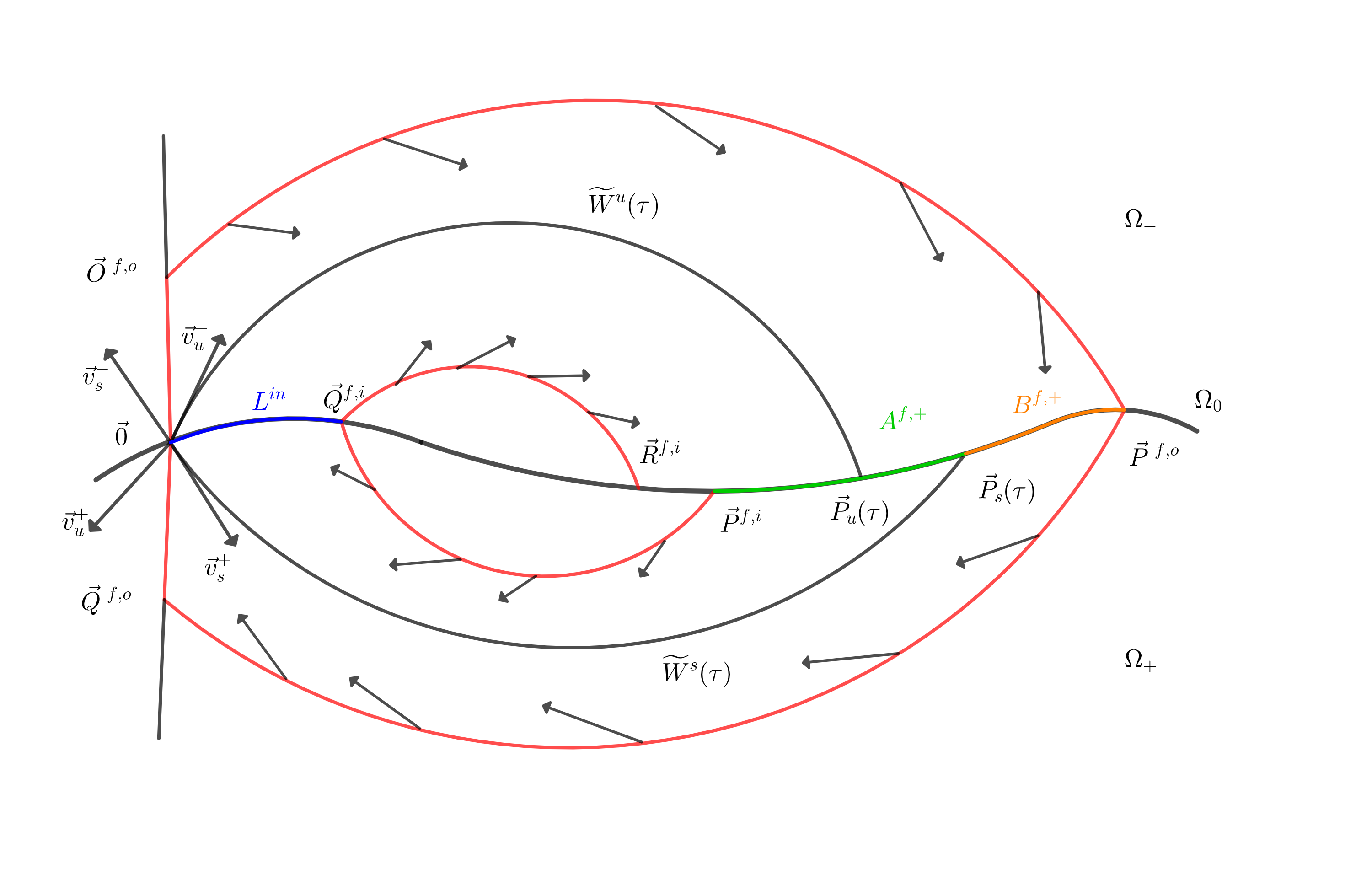, width = 8 cm}}
\caption{Construction of the set $K^{\textrm{fwd}}$ (the superscripts ``$^{\textrm{fwd}}$", ``$^{\textrm{in}}$", ``$^{\textrm{out}}$"
are denoted as ``$^{\textrm{f}}$", ``$^{\textrm{i}}$", ``$^{\textrm{o}}$", respectively, for short).
This picture enables us to control the trajectories of \eqref{eq-disc} in a neighborhood of $ \bs{\Gamma}$ in forward time.}
\label{Kforward}
\end{figure}

The curve $Z^{\textrm{fwd},\textrm{in}}$ is a continuous, piecewise--$C^r$ path starting from a point
$\vec{P}^{\textrm{fwd},\textrm{in}} \in (L^{0} \cap E^{\textrm{in}})$, passing through a point $\vec{Q}^{\textrm{fwd},\textrm{in}} \in
L^{\textrm{in}}$ and ending in
a point $\vec{R}^{\textrm{fwd},\textrm{in}} \in (L^{0} \cap E^{\textrm{in}})$,
 see \eqref{L0} and the beginning of Section \ref{S.WuWs}.

 The branch from $\vec{P}^{\textrm{fwd},\textrm{in}}$ to
$\vec{Q}^{\textrm{fwd},\textrm{in}}$ is
in $\Omega^+$,
while the branch from $\vec{Q}^{\textrm{fwd},\textrm{in}}$ to $\vec{R}^{\textrm{fwd},\textrm{in}}$ is in $\Omega^-$ and they are both $C^r$.

Analogously,
the curve $Z^{\textrm{fwd},\textrm{out}}$ is a continuous, piecewise--$C^r$ path starting from a point
$\vec{O}^{\textrm{fwd},\textrm{out}} \in L^{-,\textrm{out}}$, passing through a point $\vec{P}^{\textrm{fwd},\textrm{out}} \in (L^{0} \cap
E^{\textrm{out}})$ and ending in
a point $\vec{Q}^{\textrm{fwd},\textrm{out}} \in L^{+,\textrm{out}}$. The branch from $\vec{O}^{\textrm{fwd},\textrm{out}}$ to
$\vec{P}^{\textrm{fwd},\textrm{out}}$ is in
$\Omega^-$,
while the branch from $\vec{P}^{\textrm{fwd},\textrm{out}}$ to $\vec{Q}^{\textrm{fwd},\textrm{out}}$ is in $\Omega^+$ and they are both $C^r$.
Further    $Z^{\textrm{fwd},\textrm{in}}\subset E^{\textrm{in}}$ while
$Z^{\textrm{fwd},\textrm{out}}\subset E^{\textrm{out}}$.

We denote by $K^{\textrm{fwd}}$ the compact set enclosed by $Z^{\textrm{fwd},\textrm{out}}$, $L^{+,\textrm{out}}$,
$L^{-,\textrm{out}}$, $Z^{\textrm{fwd},\textrm{in}}$ and the path  of
$\Om^0$ between $\vec{P}^{\textrm{fwd},\textrm{in}}$ and $\vec{R}^{\textrm{fwd},\textrm{in}}$. We denote by $K^{\textrm{fwd},\pm}=
K^{\textrm{fwd}} \cap \Omega^{\pm}$.
Notice that by construction $Z^{\textrm{fwd},\textrm{in}}$ and  $Z^{\textrm{fwd},\textrm{out}}$ do not intersect each other or
themselves and that $\bs{\Gamma}
\subset K^{\textrm{fwd}}$.

 In order to construct the auxiliary curves $Z^{\textrm{fwd},\textrm{in}}$ and  $Z^{\textrm{fwd},\textrm{out}}$,
 we need to introduce a small parameter $\beta$, which however has to be larger than $\ep$, namely:
\begin{equation}\label{def-sifb}
\beta \ge \ep^{\frac{\sigma^{\textrm{fb}}}{2}}\, , \qquad \sigma^{\textrm{fb}}=\min \{ \sfwd, \sbwd    \}<1.
\end{equation}
We   introduce a further parameter $\mu$, whose size has to be small, independently of~$\ep$.
Roughly speaking, $\mu$  will play the role of controlling the errors in evaluating the positions of the ``barriers" $Z^{\textrm{fwd,in}}$ and
similar, just below,  and the space displacement $\PPP$ with respect to $\tilde{W}$, defined in the main results, Theorems~\ref{key} and \ref{keymissed} below.

Let us    define
\begin{equation}\label{mu0}
\mu_0= \frac{1}{4} \min \left\{ \sTfwd_+ , \sTbwd_- ,  \und{\sigma}^2    \right\}
\end{equation}

Then we have the following.

\begin{lemma}\label{lemma0s}
Assume \assump{F0}, \assump{F1}, \mat{\assump{F2}}, \assump{K}, \assump{G}.
We can choose  $\ep_0$ and $\beta_0$ such that for any $0< \ep \le \ep_0$, any $0< \beta \le \beta_0$ with $\beta \ge \ep^{\frac{\sigma^{\textrm{fb}}}{2}}$ we can define  $Z^{\textrm{fwd},\textrm{in}}$, $Z^{\textrm{fwd},\textrm{out}}$ so that the flow of \eqref{eq-disc} on
$Z^{\textrm{fwd},\textrm{in}} \setminus \{\P^{\textrm{fwd},\textrm{in}}, \vec{R}^{\textrm{fwd},\textrm{in}}\}$ and on
$Z^{\textrm{fwd},\textrm{out}} \setminus \{\vec{O}^{\textrm{fwd},\textrm{out}}, \vec{Q}^{\textrm{fwd},\textrm{out}}\}$
 aims towards the interior of $K^{\textrm{fwd}}$.
Then,
for any  $0< \mu \le \mu_0  $ we find
\begin{equation}\label{constant.s}
 \begin{gathered}
  \begin{array}{cc}
    \|\P^{\textrm{fwd},\textrm{in}}-\ga(0)\|= \beta , & \|\vec{P}^{\textrm{fwd},\textrm{out}}-\ga(0)\|=
    \beta , \\
        \beta^{ \sfwd_{+}+\mu} \le\|\Q^{\textrm{fwd},\textrm{in}}\|\le
          \beta^{   \sfwd_{+} -\mu} ,   &
          \beta^{    \sfwd_{+} +\mu} \le
   \|\vec{Q}^{\textrm{fwd},\textrm{out}}\| \le   \beta^{ \sfwd_{+} -\mu},   \end{array}
        \\
       	\beta^{\sfwd +\mu} \le   \|\vec{R}^{\textrm{fwd},\textrm{in}}-\ga(0)\|\le  \beta^{\sfwd -\mu},
        \\  \beta^{ \sbwd_{-} +\mu}\le\|\vec{O}^{\textrm{fwd},\textrm{out}}\|   \le \beta^{ \sbwd_{-} -\mu}.
	\end{gathered}
\end{equation}
 Finally we can assume that
 $ K^{\textrm{fwd}}   \subset  B(\bs{\Gamma}, \beta^{ \underline{\sigma}  -\mu})$,
 and that there is $c>0$ such that $B(\bs{\Gamma}, c \ep) \cap ( Z^{\textrm{fwd},\textrm{in}}\cup Z^{\textrm{fwd},\textrm{out}})=\emptyset$.
\end{lemma}
 A sketch of the proof of both Lemma \ref{lemma0s} and Lemma \ref{lemma0u} below is postponed to Appendix~\ref{A.technic}.

\begin{remark}
  We invite the reader not to focus on the precise values of the exponents appearing in \eqref{constant.s}.
  Notice that if $\la_u^+=\la_u^-=|\la_s^+|=|\la_s^-|$ as, e.g., in the smooth Hamiltonian case
 inequalities
  \eqref{constant.s} simplify as follows
  \begin{equation}\label{easyconstant.s}
\begin{gathered}
  \begin{array}{cc}
    \|\P^{\textrm{fwd},\textrm{in}}-\ga(0)\|= \beta , & \|\vec{P}^{\textrm{fwd},\textrm{out}}-\ga(0)\|= \beta , \\
        \beta ^{\frac{1}{2}+\mu} \le\|\Q^{\textrm{fwd},\textrm{in}}\|\le
          \beta ^{\frac{1}{2}-\mu} , &    \beta ^{\frac{1}{2}+\mu} \le
   \|\vec{Q}^{\textrm{fwd},\textrm{out}}\| \le   \beta ^{\frac{1}{2}-\mu}
      ,   \end{array}
        \\
       	\beta  ^{1+\mu} \le   \|\vec{R}^{\textrm{fwd},\textrm{in}}-\ga(0)\|\le  \beta ^{1-\mu},
        \\
   \beta ^{\frac{1}{2}+\mu}\le\|\vec{O}^{\textrm{fwd},\textrm{out}}\|   \le \beta ^{\frac{1}{2}-\mu}.
	\end{gathered}
\end{equation}
 Further, taking into account that $0\le \mu \le 1/16$, we get  $K^{\textrm{fwd}}   \subset  B(\bs{\Gamma}, \beta^{1/4})$.
\end{remark}

Similarly, we   construct some other auxiliary curves useful to build up the Poincar\'e map in backward time.
By a modification of \cite[Lemmas 6.4, 6.7]{FrPo} and from \cite[\S 6.2.2]{FrPo} we construct
the continuous, piecewise--$C^r$ curves $Z^{\textrm{bwd},\textrm{in}}$, $Z^{\textrm{bwd},\textrm{out}}$ with the following properties,
see Figure \ref{Kbackward},  see also Figures \ref{fig-Zsi} and \ref{fig-Zso}.

\begin{figure}[t]
\centerline{\epsfig{file=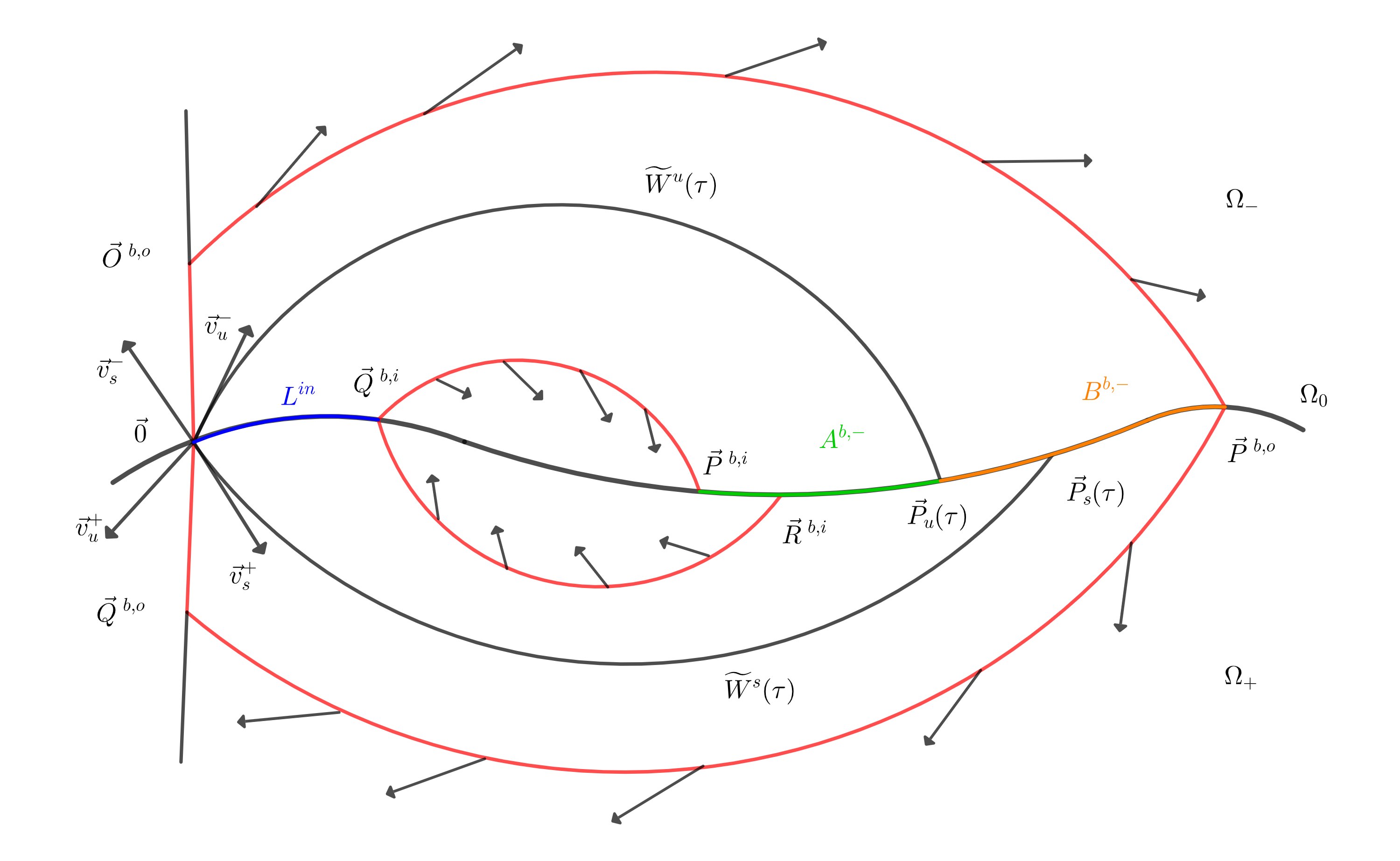, width = 8 cm}}
\caption{Construction of the set $K^{\textrm{bwd}}$ (the the superscripts ``$^{\textrm{bwd}}$", ``$^{\textrm{in}}$",
``$^{\textrm{out}}$" are denoted as ``$^{\textrm{b}}$", ``$^{\textrm{i}}$", ``$^{\textrm{o}}$", respectively, for short).
This picture enables us to control the trajectories of \eqref{eq-disc} in a neighborhood of $\bs{\Gamma}$ in backward time.}
\label{Kbackward}
\end{figure}

The curve $Z^{\textrm{bwd},\textrm{in}}$  starts from a point
$\vec{P}^{\textrm{bwd},\textrm{in}} \in (L^{0} \cap E^{\textrm{in}})$, passes through a point
 $\vec{Q}^{\textrm{bwd},\textrm{in}} \in L^{\textrm{in}}$ and ends in a point
$\vec{R}^{\textrm{bwd},\textrm{in}} \in (L^{0} \cap E^{\textrm{in}})$, so that the branch from $\vec{P}^{\textrm{bwd},\textrm{in}}$ to
$\vec{Q}^{\textrm{bwd},\textrm{in}}$ is
in \michal{$\Omega^-$},
while the branch from $\vec{Q}^{\textrm{bwd},\textrm{in}}$ to $\vec{R}^{\textrm{bwd},\textrm{in}}$ is in \michal{$\Omega^+$}, and they are both $C^r$.

Analogously,
the curve $Z^{\textrm{bwd},\textrm{out}}$  starts from a point
$\vec{O}^{\textrm{bwd},\textrm{out}} \in L^{-,\textrm{out}}$, passes through a point
 $\vec{P}^{\textrm{bwd},\textrm{out}} \in (L^{0} \cap E^{\textrm{out}})$ and ends in a point
$\vec{Q}^{\textrm{bwd},\textrm{out}} \in L^{+,\textrm{out}}$, so that the branch from $\vec{O}^{\textrm{bwd},\textrm{out}}$ to
$\vec{P}^{\textrm{bwd},\textrm{out}}$ is in
$\Omega^-$,
while the branch from $\vec{P}^{\textrm{bwd},\textrm{out}}$ to $\vec{Q}^{\textrm{bwd},\textrm{out}}$ is in $\Omega^+$, and they are both
$C^r$.
Further   $Z^{\textrm{bwd},\textrm{in}}\subset E^{\textrm{in}}$ while
$Z^{\textrm{bwd},\textrm{out}} \subset E^{\textrm{out}}$.

We denote by $K^{\textrm{bwd}}$ the compact set enclosed by $Z^{\textrm{bwd},\textrm{out}}$, $L^{+,\textrm{out}}$,
$L^{-,\textrm{out}}$, $Z^{\textrm{bwd},\textrm{in}}$ and the segment of
$\Om^0$ between $\vec{P}^{\textrm{bwd},\textrm{in}}$ and $\vec{R}^{\textrm{bwd},\textrm{in}}$. We denote by $K^{\textrm{bwd},\pm}=
K^{\textrm{bwd}} \cap \Omega^{\pm}$. Again,
by construction, $Z^{\textrm{bwd},\textrm{in}}$ and  $Z^{\textrm{bwd},\textrm{out}}$ do not intersect
each other or themselves and $\bs{\Gamma} \subset K^{\textrm{bwd}}$.

\begin{lemma}\label{lemma0u}
Assume \assump{F0}, \assump{F1}, \mat{\assump{F2}}, \assump{K}, \assump{G}.
We can choose  $\ep_0$ and $\beta_0$ such that for any $0< \ep \le \ep_0$ and any $0< \beta \le \beta_0$ with $\beta \ge \ep^{\frac{\sigma^{\textrm{fb}}}{2}}$ we can define
  $Z^{\textrm{bwd},\textrm{in}}$, $Z^{\textrm{bwd},\textrm{out}}$ so that the flow of \eqref{eq-disc} on
$Z^{\textrm{bwd},\textrm{in}} \setminus \{\P^{\textrm{bwd},\textrm{in}}, \vec{R}^{\textrm{bwd},\textrm{in}}\}$ and on
$Z^{\textrm{bwd},\textrm{out}} \setminus \{\vec{O}^{\textrm{bwd},\textrm{out}}, \vec{Q}^{\textrm{bwd},\textrm{out}}\}$
 aims towards the exterior of $K^{\textrm{bwd}}$.
Then,
for any  $0< \mu \le \mu_0  $ we find
\begin{equation}\label{constant.u}
	\begin{gathered}
  \begin{array}{cc}
       \|\P^{\textrm{bwd},\textrm{in}}-\ga(0)\|= \beta , & \|\vec{P}^{\textrm{bwd},\textrm{out}}-\ga(0)\|=
       \beta , \\
        \beta^{  \sbwd_{-}+\mu} \le\|\Q^{\textrm{bwd},\textrm{in}}\|\le
          \beta^{ \sbwd_{-} -\mu} ,   &
          \beta^{ \michal{\sfwd_{+}} +\mu} \le
   \|\vec{Q}^{\textrm{bwd},\textrm{out}}\| \le  \beta^{ \michal{\sfwd_{+}}  -\mu},   \end{array}
        \\
       	\beta^{ \sbwd  +\mu} \le   \|\vec{R}^{\textrm{bwd},\textrm{in}}-\ga(0)\|\le  \beta^{\sbwd
       -\mu},
        \\  \beta^{ \sbwd_{-} +\mu}\le \|\vec{O}^{\textrm{bwd},\textrm{out}}\|   \le \beta^{ \sbwd_{-} -\mu}.
  \end{gathered}
\end{equation}
 Finally, we can assume that
 $ K^{\textrm{bwd}}   \subset  B(\bs{\Gamma}, \beta^{ \underline{\sigma}  -\mu})$,
 and that there is $c>0$ such that $B(\bs{\Gamma}, c \ep) \cap ( Z^{\textrm{bwd},\textrm{in}}\cup Z^{\textrm{bwd},\textrm{out}})=\emptyset$.
\end{lemma}

  Again, the precise values of the exponents appearing in \eqref{constant.u}  are not so relevant in this paper, and
    if $\la_u^+=\la_u^-=|\la_s^+|=|\la_s^-|$,   \eqref{constant.u} simplifies analogously to \eqref{easyconstant.s}.

We continue with further constructions illustrated again by Figure \ref{Kforward}.

\medskip

 From Lemmas \ref{lemma0s} and \ref{lemma0u}, we know that
 $\tilde{W} (\tau) \subset K^{\textrm{fwd}}\cap K^{\textrm{bwd}}$. Further  $\tilde{W} (\tau)$ splits the compact set
$K^{\textrm{fwd}}(\tau)$ ($K^{\textrm{bwd}}(\tau)$) in two components, denoted by $K^{\textrm{fwd}}_A(\tau)$ and $K^{\textrm{fwd}}_B(\tau)$
($K^{\textrm{bwd}}_A(\tau)$ and
$K^{\textrm{bwd}}_B(\tau)$), respectively
in the interior and in the exterior  of the bounded set enclosed by
$\tilde{W}(\tau)$ and  the branch of $\Omega^0$ between $\vec{P}_u(\tau)$ and $\vec{P}_s(\tau)$.

We denote by $A^{\textrm{fwd},+}(\tau)$ the open branch (i.e.\ without endpoints) of $\Omega^0$ between $\vec{P}^{\textrm{fwd},\textrm{in}}$
and $\vec{P}_s(\tau)$,
 and by
  $A^{\textrm{fwd},-}(\tau)$ the open branch of $\Omega^0$ between $\vec{R}^{\textrm{fwd},\textrm{in}}$ and $\vec{P}_u(\tau)$.
Let $\vec{Q} \in A^{\textrm{fwd},+}(\tau)$ and follow $\x(t,\tau;\Q)$ forward  in $t$ for $t \ge \tau$;
    from Lemma \ref{lemma0s} we get that  $\x(t,\tau;\Q)$ will stay close to $\tilde{W} (t)$
    until it completes a loop and reaches $A^{\textrm{fwd},-}(\mathscr{T}^{\textrm{fwd}}(\Q,\tau))$ at
    $t=\mathscr{T}^{\textrm{fwd}}(\Q,\tau)>\tau$.

Similarly, we denote by $A^{\textrm{bwd},-}(\tau)$ the open branch   of $\Omega^0$ between $\vec{\michal{P}}^{\textrm{bwd},\textrm{in}}$ and
$\vec{P}_u(\tau)$ and
by $A^{\textrm{bwd},+}(\tau)$ the open branch of $\Omega^0$ between $\vec{\michal{R}}^{\textrm{bwd},\textrm{in}}$ and $\vec{P}_s(\tau)$.
Let $\vec{Q} \in A^{\textrm{bwd},-}(\tau)$ and follow $\x(t,\tau;\Q)$ backward  in $t$ for $t \le \tau$;
    from Lemma \ref{lemma0u} we know that  $\x(t,\tau;\Q)$ will stay close to $\tilde{W} (t)$
    until it completes a loop and reaches $A^{\textrm{bwd},+}(\mathscr{T}^{\textrm{bwd}}(\Q,\tau))$ at a suitable
    $t=\mathscr{T}^{\textrm{bwd}}(\Q,\tau)<\tau$.

    Let $K^{\textrm{fwd},\pm}_A(\tau)= K^{\textrm{fwd}}_A (\tau) \cap \Omega^{\pm}$,
     $K^{\textrm{bwd},\pm}_A(\tau)= K^{\textrm{bwd}}_A (\tau) \cap \Omega^{\pm}$,
then we have the following.
   \begin{lemma}\label{L.loop}
  Assume \assump{F0}, \assump{F1}, \mat{\assump{F2}}, \assump{K}, \assump{G}.
    Let $\Q \in A^{\textrm{fwd},+}(\tau)$. Then there are $\mathscr{T}^{\textrm{fwd}}(\Q,\tau)>\tau_1(\Q,\tau)>\tau$ such that
    the trajectory $\x(t,\tau; \Q) \in K^{\textrm{fwd},+}_A(t)$ for any $\tau<t<\tau_1(\Q,\tau)$, $\x(t,\tau; \Q) \in K^{\textrm{fwd},-}_A(t)$
    for any
    $\tau_1(\Q,\tau)<t<\mathscr{T}^{\textrm{fwd}}(\Q,\tau)$,
    and it crosses transversely $\Om^0$ at $t\in\{\tau, \tau_1(\Q,\tau), \mathscr{T}^{\textrm{fwd}}(\Q,\tau)\}$.
     Hence,
     $$\mathscr{P}^{\textrm{fwd}}_+(\Q,\tau):=\x(\tau_1(\Q,\tau),\tau; \Q) \in L^{\inn},$$
     $$\mathscr{P}^{\textrm{fwd}}(\Q,\tau):=\x(\mathscr{T}^{\textrm{fwd}}(\Q,\tau),\tau; \Q) \in
     A^{\textrm{fwd},-}(\mathscr{T}^{\textrm{fwd}}(\Q,\tau)).$$

    Analogously, let $\Q \in A^{\textrm{bwd},-}(\tau)$. Then there are $\mathscr{T}^{\textrm{bwd}}(\Q,\tau)<\tau_{-1}(\Q,\tau)<\tau$ such that
    $\x(t,\tau; \Q) \in K^{\textrm{bwd},-}_A(t)$ for any $\tau_{-1}(\Q,\tau)<t<\tau$, $\x(t,\tau; \Q) \in K^{\textrm{bwd},+}_A(t)$ for any
     $\mathscr{T}^{\textrm{bwd}}(\Q,\tau)<t<\tau_{-1}(\Q,\tau)$,
    and it crosses transversely $\Om^0$ at $t\in\{\tau, \tau_{-1}(\Q,\tau), \mathscr{T}^{\textrm{bwd}}(\Q,\tau)\}$. Hence
     $$ \mathscr{P}^{\textrm{bwd}}_-(\Q,\tau):=\x(\tau_{-1}(\Q,\tau),\tau; \Q) \in L^{\inn},$$
     $$ \mathscr{P}^{\textrm{bwd}}(\Q,\tau):=\x(\mathscr{T}^{\textrm{bwd}}(\Q,\tau),\tau; \Q) \in
     A^{\textrm{bwd},+}(\mathscr{T}^{\textrm{bwd}}(\Q,\tau)).$$
  \end{lemma}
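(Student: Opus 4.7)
The plan is to exploit Lemma~\ref{lemma0s} to confine the trajectory $\x(t,\tau;\Q)$ inside the compact set $K^{\textrm{fwd}}_A(t)$, and then to show that the barrier structure forces it to leave only through two transverse crossings of $\Om^0$, first at $L^{\inn}$ (giving $\tau_1$) and then at $A^{\textrm{fwd},-}$ (giving $\mathscr{T}^{\textrm{fwd}}$). I would start with the initial transversality: for $\Q \in A^{\textrm{fwd},+}(\tau)$, continuity with respect to the base point $\ga(0)$, assumption \assump{K} and the smallness of $\ep$ and $\de$ yield $(\na G(\Q))^*[\f^+(\Q)+\ep\g(\tau,\Q,\ep)]>0$, so $\x(t,\tau;\Q)$ crosses $\Om^0$ transversely at $t=\tau$ and enters $\Om^+$, landing initially in $K^{\textrm{fwd},+}_A(t)$.

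For the trapping step, I would examine the four pieces of $\partial K^{\textrm{fwd},+}_A(t)$: (i) the branch of $\tilde{W}^s(t)$ from $\P_s(t)$ toward $\vec{0}$, (ii) the $\Om^+$-portion of $Z^{\textrm{fwd},\textrm{in}}$, (iii) the segment of $\Om^0$ between $\P^{\textrm{fwd},\textrm{in}}$ and $\P_s(t)$, and (iv) the segment of $\Om^0\cap L^{\inn}$ from $\Q^{\textrm{fwd},\textrm{in}}$ toward $\vec{0}$. The invariance property \eqref{proprieta} and uniqueness of solutions rule out crossing (i); Lemma~\ref{lemma0s} tells us the flow on (ii) aims strictly into $K^{\textrm{fwd}}$; and on (iii) the flow points into $\Om^+$, so the trajectory cannot reenter $\Om^0$ from there. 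Therefore the only admissible exit from $\Om^+$ is through (iv), and this defines $\tau_1(\Q,\tau)$ as the first crossing of $L^{\inn}$.

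For finiteness of $\tau_1-\tau$: since $\Q\notin \tilde{W}^s(\tau)$, the trajectory is not asymptotic to $\vec{0}$; the hyperbolicity at $\vec{0}$ (assumption \assump{F0}) together with local exponential dichotomy of the perturbed flow (\cite[Theorem 2.16]{Jsell}) forces the trajectory to leave any sufficiently small ball $B(\vec{0},\eta)$ in finite time along the unstable direction, while outside $B(\vec{0},\eta)$ the vector field has a uniform lower bound on $K^{\textrm{fwd},+}_A(t)$, so $\tau_1-\tau<\infty$. Next, for transversality at $\tau_1$: at $\P_1:=\x(\tau_1,\tau;\Q)\in L^{\inn}$ the departure from $\Om^+$ and immediate entry into $\Om^-$ is guaranteed by assumption \assump{F2} and the Scenario~1 configuration, which exclude sliding near $\vec{0}$ and force the signs of $(\na G(\P_1))^*\f^\pm(\P_1)$ to agree (a property that persists after adding the $\ep\g$ correction for $\ep$ small). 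The second leg in $\Om^-$ is handled by a verbatim repetition of the trapping/finiteness/transversality arguments with $\tilde{W}^u(t)$ and the $\Om^-$-branches replacing the $+$ objects: the trajectory stays in $K^{\textrm{fwd},-}_A(t)$ until it exits through $A^{\textrm{fwd},-}$ at a finite time $\mathscr{T}^{\textrm{fwd}}$, with transverse crossing near $\ga(0)$ again by \assump{K}. The backward statement follows by time reversal $t\mapsto -t$: the reversed system satisfies analogous assumptions with $\tilde{W}^u$ and $\tilde{W}^s$ interchanged, and Lemma~\ref{lemma0u} replaces Lemma~\ref{lemma0s}.

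The main obstacle I anticipate is precisely the finiteness step: as $\Q\to \P_s(\tau)$ the trajectory passes arbitrarily close to the saddle $\vec{0}$ and the transit time $\tau_1-\tau$ blows up, so a quantitative handle on the unstable expansion off the stable leaf is needed. For the present qualitative lemma, finiteness for each fixed $\Q$ is sufficient and follows from the local hyperbolicity argument combined with compactness of $K^{\textrm{fwd},+}_A(t)$; however, the explicit blow-up rate of $\tau_1$ as a function of $\|\Q-\P_s(\tau)\|$ is really the key phenomenon that the subsequent Theorems~\ref{key} and \ref{keymissed} will quantify in detail.
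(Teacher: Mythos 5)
Your proposal is correct and follows essentially the argument the paper intends: the paper gives no separate written proof of Lemma \ref{L.loop}, deriving it exactly as you do from the barrier curves of Lemmas \ref{lemma0s} and \ref{lemma0u} (the flow pointing towards the interior of $K^{\textrm{fwd}}$ on $Z^{\textrm{fwd},\textrm{in}}$ and $Z^{\textrm{fwd},\textrm{out}}$), the invariance property \eqref{proprieta} of $\tilde{W}^s(t)$ and $\tilde{W}^u(t)$, the transversality in \assump{K} and \assump{F1}--\assump{F2} excluding sliding, and hyperbolicity of the origin for finiteness of the transit times, with the backward half handled symmetrically. Your closing remark that the quantitative blow-up of $\tau_1-\tau$ as $\Q\to\P_s(\tau)$ is what Theorems \ref{key} and \ref{keymissed} later quantify is precisely the paper's organization.
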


From the smoothness of the flow of \eqref{eq-disc} it follows that all the functions defined in Lemma \ref{L.loop} are $C^r$: we add a sketch
of the proof for completeness.

\begin{remark}\label{r.smooth}
The functions   $\mathscr{P}^{\textrm{fwd}}_+(\Q,\tau)$, $\tau_1(\Q,\tau)$, $\mathscr{P}^{\textrm{fwd}}(\Q,\tau)$,
$\mathscr{T}^{\textrm{fwd}}(\Q,\tau)$, and
$\mathscr{P}^{\textrm{bwd}}_-(\vec{R},\tau)$, $\tau_{-1}(\vec{R},\tau)$ and
$\mathscr{P}^{\textrm{bwd}}(\vec{R},\tau)$, $\mathscr{T}^{\textrm{bwd}}(\vec{R},\tau)$ constructed via
Lemma \ref{L.loop} are $C^r$ in both the variables, respectively
 for $\Q \in A^{\textrm{fwd},+}(\tau)$, $\vec{R} \in A^{\textrm{bwd},-}(\tau)$ and $\tau \in \R$.
\end{remark}
\begin{proof}[Proof of Remark \ref{r.smooth}]
The smoothness of $\mathscr{P}^{\textrm{fwd}}_+(\Q,\tau)$, $\tau_1(\Q,\tau)$ follows from the smoothness of the flow of  \eqref{eq-disc}
on $\Omega^+$.
Let $\Q $ be a point in the branch of $L^{\inn}$ between the origin and $\Q^{\textrm{fwd},\inn}$, and $\tau \in\R$; let   us denote by
$\tau_2(\Q,\tau)$ the time such that
$\x( t  ,\tau; \Q) \in \Om^-$ for any $\tau<t< \tau_2(\Q,\tau)$ and it crosses
transversely $\Om^0$ at $t= \tau_2(\Q,\tau)$. Then $\tau_2(\Q,\tau)$ and $\mathscr{P}^{\textrm{fwd}}_-(\Q,\tau):=\x( \tau_2(\Q,\tau)  ,\tau;
\Q) $
are $C^r$ in both the variables due to the smoothness of the flow of  \eqref{eq-disc}  on $\Omega^-$.

Hence, the maps
\begin{gather*}
	\mathscr{T}^{\textrm{fwd}}(\Q,\tau)= \tau_1(\Q,\tau)+ \tau_2( \mathscr{P}^{\textrm{fwd}}_+(\Q,\tau), \tau_1(\Q,\tau)),\\
	\mathscr{P}^{\textrm{fwd}}(\Q,\tau)= \mathscr{P}^{\textrm{fwd}}_-( \mathscr{P}^{\textrm{fwd}}_+(\Q,\tau), \tau_1(\Q,\tau))
\end{gather*}
are $C^r$
since
they are obtained as a sum and   compositions of   smooth maps.

The smoothness of  $\mathscr{P}^{\textrm{bwd}}_-(\vec{R},\tau)$, $\tau_{-1}(\vec{R},\tau)$ and  $\mathscr{P}^{\textrm{bwd}}(\vec{R},\tau)$,
$\mathscr{T}^{\textrm{bwd}}(\vec{R},\tau)$ can be shown similarly.
 \end{proof}

In fact, using the argument of Remark \ref{r.smooth} with some minor changes we can show the following.
 \begin{remark}\label{data.smooth}
  Let $A$ be an open, connected and bounded subset of $\Omega$, let $\tau_2>\tau_1$ and
  denote by
  $$B(t)= \{ \x(t,\tau_1; \Q) \mid \Q \in A \}.$$
  Assume that in $\Omega^0 \cap B(t)$ there are no sliding phenomena for any $t \in [\tau_1, \tau_2]$.
  Then the functions
  \begin{gather*}
  	\Phi_{\tau_2,\tau_1} : A \to B(\tau_2), \qquad \quad  \Phi_{\tau_1,\tau_2} :   B(\tau_2)\to A,\\
  	\Phi_{\tau_2,\tau_1}(\Q)= \x(\tau_2,\tau_1; \Q)
  ,\qquad \Phi_{\tau_1,\tau_2}=\Phi_{\tau_2,\tau_1}^{-1}
  \end{gather*}
  are  homeomorphisms.

  Assume further that $A \cap \Omega^0=\emptyset$, $B(\tau_2) \cap \Om^0= \emptyset$, and
  that for any $\Q\in A$, if $\x(\bar{t},\tau_1; \Q) \in \Om^0$ for some $\bar{t} \in (\tau_1, \tau_2)$, then it crosses $\Om^0$
  transversely. Then $\Phi_{\tau_2,\tau_1} $ and $\Phi_{\tau_1,\tau_2}$  are $C^r$ diffeomorphisms.
\end{remark}

\begin{remark}\label{r.in1}
Let us denote by $\tilde{V}(\tau)$ the compact connected set enclosed by
$\tilde{W}(\tau)$ and by the branch of $\Om^0$ between $\P_u(\tau)$ and $\P_s(\tau)$.
If $\Q \in A^{\textrm{fwd},+}(\tau)$, then $\x(t,\tau; \Q) \in \tilde{V}(t)$ for any
$t \in [\tau, \TTT^{\textrm{fwd}}(\Q,\tau)]$.
\\
Analogously if $\Q \in A^{\textrm{bwd},-}(\tau)$, then $\x(t,\tau; \Q) \in \tilde{V}(t)$ for any
$t \in [ \TTT^{\textrm{bwd}}(\Q,\tau), \tau]$.
\end{remark}

\begin{remark}\label{out}
Denote by $B^{\textrm{fwd},+}(\tau)$ the branch of $\Omega^0$ between    $\vec{P}_s(\tau)$ and $\vec{P}^{\textrm{fwd,out}}$ and
by $B^{\textrm{bwd},-}(\tau)$ the branch of $\Omega^0$ between    $\vec{P}_u(\tau)$ and $\vec{P}^{\textrm{bwd,out}}$.

  	Observe that, if $\Q \in B^{\textrm{fwd},+}(\tau)$, there is $\tau_1(\Q)>\tau$ such that
    $\x(t,\tau; \Q) \in K^{\textrm{fwd},+}_B(t)$ for any $\tau<t<\tau_1(\Q)$, it crosses transversely $L^{+,\textrm{out}}$ at $t=\tau_1(\Q)$
    and leaves a neighborhood of $\bs{\Gamma}$ at some $t>\tau_1(\Q)$.

    Analogously, if $\Q \in B^{\textrm{bwd},-}(\tau)$,  there is $\tau_{-1}(\Q)<\tau$ such that
    $\x(t,\tau; \Q) \in K^{\textrm{bwd},-}_B(t)$ for any $\tau_{-1}(\Q)<t<\tau$,
    it crosses transversely $L^{-,\textrm{out}}$ at $t=\tau_{-1}(\Q)$
    and leaves a neighborhood of $\bs{\Gamma}$ at some $t<\tau_{-1}(\Q)$.
\end{remark}

 \section{Statement of the main results}\label{s.lemmakey}

In this section we need to measure the distance between the points on $\Om^0$, and to be able to determine their mutual positions.
Since $\Om^0$ is a regular curve we can define a directed distance for points in $\Om^0$ by arc length, once an orientation is fixed.
We choose as the positive orientation on $\Om^0$ the one that goes from the origin to $\ga(0)$.
So, for any $\Q\in L^0$ we define
$\ell(\Q)=\int_{\Om^0(\vec{0},\Q)}ds>0$ where $\Om^0(\vec{0},\Q)$ is the (oriented) path of $\Om^0$ connecting $\vec{0}$ with $\Q$,
and we define the directed distance
\begin{equation}\label{dist}
    \dist(\Q,\P):= \ell(\P)-\ell(\Q)
\end{equation}
for $\Q,\P\in L^0$.
Notice that $\dist(\Q,\P)>0$
 means that $\Q$ lies on $\Om^0$ between $\vec{0}$ and $\P$.
Now, we introduce some further crucial notation.

 \textbf{Notation.}
We denote by $\Q_s(d,\tau)$ the point in $A^{\textrm{fwd},+}(\tau)$ such that
\[
\dist(\Q_s(d,\tau), \P_s(\tau))=d>0,
\]
and by $\Q_u(d,\tau)$ the point in $A^{\textrm{bwd},-}(\tau)$ such that
\[
\dist(\Q_u(d,\tau), \P_u(\tau))=d>0.
\]

 We introduce a further small parameter $\delta>0$. This parameter can be chosen
independently of $\ep>0$, but we   need to set $\delta< \frac{\beta}{2}$.
Then we will always assume $0<d \le \delta$.

\begin{remark}\label{RJ0}
	We explicitly notice that, if $0<d \le \delta$  then $\Q_s(d,\tau)\in A^{\textrm{fwd},+}(\tau)$,
	i.e.,  the open branch of $\Omega^0$ between $\vec{P}^{\textrm{fwd},\textrm{in}}$ and $\vec{P}_s(\tau)$. Indeed
	$$\|\Q_s(d,\tau)-\P_s(\tau)\|\leq |\dist(\Q_s(d,\tau),\P_s(\tau))|=d
	\leq \delta < \beta/2;$$
	further from $\|\P_s(\tau)-\ga(0)\|=O(\ep)$ one can see that
	\begin{equation*}
		\|\Q_s(d,\tau)- \ga(0)\|\le \|\P_s(\tau)-\ga(0)\|+\|\Q_s(d,\tau)-\P_s(\tau)\|< \beta,
	\end{equation*}
 	since $\ep \ll \beta$;   while by construction
	 $\|\P^{\textrm{fwd},\textrm{in}}-\ga(0)\|=  \beta$.
	Hence for $\ep$ small enough $\Q_s(d,\tau)\in A^{\textrm{fwd},+}(\tau)$.
\\
 Similarly we see that if $0<d \le \delta$ then $\Q_u(d,\tau)\in A^{\textrm{bwd},-}(\tau)$,
	 i.e., $\Q_u(d,\tau)$ lies on the open branch of $\Omega^0$ between $\vec{P}^{\textrm{bwd},\textrm{in}}$ and $\vec{P}_u(\tau)$.
\end{remark}

From Lemma \ref{L.loop}, we see that for any $\tau \in \R$ and any    $0<d \le \delta$
we can define the maps
\begin{equation}\label{Q1T1}
\begin{gathered}
\mathscr{T}_1(d,\tau):=   \mathscr{T}^{\textrm{fwd}}(\Q_s(d,\tau),\tau) \, , \qquad \mathscr{P}_1(d,\tau):=
\mathscr{P}^{\textrm{fwd}}(\Q_s(d,\tau),\tau),\\
\mathscr{T}_{-1}(d,\tau):=   \mathscr{T}^{\textrm{bwd}}(\Q_u(d,\tau),\tau) \, , \qquad \mathscr{P}_{-1}(d,\tau):=
\mathscr{P}^{\textrm{bwd}}(\Q_u(d,\tau),\tau).
\end{gathered}
\end{equation}
Sometimes we will also make use of the maps
\begin{equation}\label{Q1T1half}
\begin{gathered}
\mathscr{T}_{\frac{1}{2}}(d,\tau):=   \tau_1(\Q_s(d,\tau),\tau) \, , \qquad \mathscr{P}_{\frac{1}{2}}(d,\tau):=
\mathscr{P}^{\textrm{fwd}}_+(\Q_s(d,\tau),\tau),\\
\mathscr{T}_{-\frac{1}{2}}(d,\tau):=   \tau_{-1}(\Q_u(d,\tau),\tau) \, , \qquad \mathscr{P}_{-\frac{1}{2}}(d,\tau):=
\mathscr{P}^{\textrm{bwd}}_-(\Q_u(d,\tau),\tau).
\end{gathered}
\end{equation}

Notice that by construction the trajectory $\x(t,\tau; \Q_s(d,\tau))$ is in $K^{\textrm{fwd},+}_A(t)$ for any $\tau<t<
\mathscr{T}_{\frac{1}{2}}(d,\tau)$ and it is
in
$ K^{\textrm{fwd},-}_A(t)$ for any $\mathscr{T}_{\frac{1}{2}}(d,\tau)<t< \mathscr{T}_{1}(d,\tau)$; further
  it intersects transversely $L^{\inn}$ at $t= \mathscr{T}_{\frac{1}{2}}(d,\tau)$ and      $A^{\textrm{fwd},-}(\mathscr{T}_1(d,\tau))\subset
  L^0$ at $t=
  \mathscr{T}_1(d,\tau)$.
Similarly, the trajectory $\x(t,\tau; \Q_u(d,\tau))$ is in $K^{\textrm{bwd},-}_A(t)$ for any $
\mathscr{T}_{-\frac{1}{2}}(d,\tau)<t<\tau$
and it is in $K^{\textrm{bwd},+}_A(t)$ for any $\mathscr{T}_{-1}(d,\tau)<t< \mathscr{T}_{-\frac{1}{2}}(d,\tau)$; further
  it intersects transversely $L^{\inn}$ at $t= \mathscr{T}_{-\frac{1}{2}}(d,\tau)$ and  $A^{\textrm{bwd},+}(\mathscr{T}_{-1}(d,\tau))\subset
  L^0$ at $t=
  \mathscr{T}_{-1}(d,\tau)$.

 \begin{theorem}\label{key}
Assume \assump{F0}, \assump{F1}, \mat{\assump{F2}}, \assump{K}, \assump{G}  and    let $\f^\pm$ and $\g$ be $C^r$ with $r>1$.
We can find $\ep_0>0$, $\delta>0$,  such that for any $0<\ep \le \ep_0$,
the functions $\TTT_{\pm 1}(d,\tau)$, $\PPP_{\pm 1}(d,\tau)$  are $C^r$ when
	$0<d \le \delta$ and $\tau \in \R$.
 Further, for any $0< \mu <\mu_0$ we get
	\begin{equation}\label{D1T1}
	\begin{split}
	 	   d^{\sfwd +\mu} & \le \dist (\PPP_1(d,\tau),\P_u(\TTT_1(d,\tau)))
 \le d^{\sfwd-\mu}, \\
	 d^{\sbwd+\mu} & \le \dist (\PPP_{-1}(d,\tau), \P_s (\TTT_{-1}(d,\tau)))
 \le d^{\sbwd-\mu},\\
 \|\PPP_{\frac{1}{2}}(d,\tau)\| & \le d^{\sfwd_+ -\mu}     , \qquad    \|\PPP_{-\frac{1}{2}}(d,\tau)\| \le d^{\sbwd_- -\mu},
	\end{split}
	\end{equation}
	\begin{equation}\label{T1T-1}
	\begin{split}
	 	 \left[\sTfwd -\mu \right] |\ln(d)|& \le  (\TTT_1(d,\tau)-\tau)    \le   \left[\sTfwd+\mu \right] |\ln(d)|, \\
	 \left[\sTbwd -\mu \right] |\ln(d)|
& \le  \tau-\TTT_{-1}(d,\tau)    \le    \left[\sTbwd+\mu \right] |\ln(d)|, \\
 \left[\sTfwd_+ -\mu \right] |\ln(d)|& \le  (\TTT_\frac{1}{2}(d,\tau)-\tau)   \le   \left[\sTfwd_++\mu \right] |\ln(d)|, \\
	 \left[\sTbwd_- - \mu \right] |\ln(d)|
& \le  \tau -\TTT_{-\frac{1}{2}}(d,\tau)     \le    \left[\sTbwd_-+\mu \right] |\ln(d)|,
	\end{split}
	\end{equation}
	and all the expressions in \eqref{D1T1} are uniform with respect to any   $\tau \in \R$ and $0< \ep \le \ep_0$.
\end{theorem}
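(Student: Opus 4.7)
It suffices to prove the forward estimates $\TTT_1, \PPP_1, \TTT_{\frac{1}{2}}, \PPP_{\frac{1}{2}}$; the backward ones follow by applying the same argument to the time-reversed system, which swaps $\Om^+\leftrightarrow\Om^-$ and $\la_u^\pm\leftrightarrow|\la_s^\mp|$, turning $\sfwd,\sTfwd$ into $\sbwd,\sTbwd$. By Remark~\ref{r.smooth} the $C^r$ smoothness of $\TTT_{\pm 1},\PPP_{\pm 1}$ (and of $d_{\pm 1}$, defined via the left-hand sides of \eqref{D1T1}) is already settled, so only the quantitative bounds require work. I split each loop at $L^{\inn}$ and treat the two half-passages separately: $L^0\to L^{\inn}$ through $\Om^+$, yielding $\TTT_{\frac{1}{2}},\PPP_{\frac{1}{2}}$; then $L^{\inn}\to L^0$ through $\Om^-$, yielding $\TTT_1,\PPP_1$ via $\PPP_1=\mathscr{P}^{\textrm{fwd}}_-(\PPP_{\frac{1}{2}},\TTT_{\frac{1}{2}})$ as in Remark~\ref{r.smooth}.

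\textbf{First passage $L^0\to L^{\inn}$ through $\Om^+$.} The reference orbit is $\x(t,\tau;\P_s(\tau))$; by \eqref{proprieta} it stays in $\Om^+\cup\Om^0$ and tends to $\vec{0}$ along $-\vec{v}_s^+$. Writing $\x(t,\tau;\Q_s(d,\tau))=\x(t,\tau;\P_s(\tau))+\xi(t)$ with $\xi(\tau)=\Q_s(d,\tau)-\P_s(\tau)$ (of size $d$ tangent to $\Om^0$), the variational equation along the reference orbit admits a uniform exponential dichotomy with rates arbitrarily close to $\la_u^+,\la_s^+$ near $\vec{0}$. A Lyapunov--Perron fixed-point argument in the spirit of \cite{Ha}, built on the Green's function split according to the dichotomy projectors, produces the unique orbit $\xi$ with the prescribed initial displacement; \assump{F1} ($c_u^{\perp,+}\ne 0$) ensures the initial $\vec{v}_u^+$-component has size $\sim d$, hence $\|\xi_u(t)\|\sim d\,e^{\la_u^+(t-\tau)}$ and $\|\xi_s(t)\|\lesssim e^{\la_s^+(t-\tau)}$, up to a multiplicative $e^{\pm\mu(t-\tau)}$ slack. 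The exit time $\TTT_{\frac{1}{2}}$ is recovered from $G(\x(t,\tau;\Q_s(d,\tau)))=0$, solvable by \assump{K} thanks to transversality with $\Om^0$. Solving the leading balance $d\,e^{\la_u^+ T}\sim e^{\la_s^+ T}$ in the linearization near $\vec{0}$ gives
\begin{equation*}
\TTT_{\frac{1}{2}}(d,\tau)-\tau \approx \tfrac{|\ln d|}{\la_u^+ + |\la_s^+|} = \sTfwd_+|\ln d|, \qquad \|\PPP_{\frac{1}{2}}(d,\tau)\|\sim d^{\sfwd_+},
\end{equation*}
the $\mu$-slack absorbing both the slack in the dichotomy rates and the $O(\ep)$ gap between $\P_s(\tau)$ and $\ga(0)$ (Remark~\ref{rMelnibis}).

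\textbf{Second passage $L^{\inn}\to L^0$ through $\Om^-$.} Starting from $\PPP_{\frac{1}{2}}(d,\tau)\in L^{\inn}$ close to $\vec{0}$, the trajectory enters $\Om^-$ in a neighborhood of $\tilde{W}^u$. Let $\P_u^\star\in W^{u,-}_{loc}(\TTT_{\frac{1}{2}})$ be the projection of $\PPP_{\frac{1}{2}}$ along $\vec{v}_s^-$, so that $\PPP_{\frac{1}{2}}=\P_u^\star+\eta\vec{v}_s^-$; \assump{F1} ($c_u^{\perp,-}\ne 0$) yields $|\eta|\sim\|\PPP_{\frac{1}{2}}\|\sim d^{\sfwd_+}$ with matching upper and lower bounds. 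Following \cite[\S 13]{SSTC}, decompose $\x(t,\TTT_{\frac{1}{2}};\PPP_{\frac{1}{2}})=\x(t,\TTT_{\frac{1}{2}};\P_u^\star)+L(t)+R(t)$ into the unstable-manifold shadow, a linearized contribution and a higher-order remainder, and close it by another dichotomy-based fixed-point argument along $\x(\cdot,\TTT_{\frac{1}{2}};\P_u^\star)$. The $\vec{v}_s^-$-deviation then contracts as $\eta\,e^{\la_s^-(t-\TTT_{\frac{1}{2}})}$ while the unstable component grows as $e^{\la_u^-(t-\TTT_{\frac{1}{2}})}$; the trajectory exits the linearization box when its unstable coordinate reaches $O(\beta)$, at $t-\TTT_{\frac{1}{2}}\approx\frac{\sfwd_+}{\la_u^-}|\ln d|$, and shadowing $\ga$ back to $\ga(0)$ on $L^0$ adds a bounded time. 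Hence
\begin{equation*}
\TTT_1(d,\tau)-\tau \approx \Big(\sTfwd_+ + \tfrac{\sfwd_+}{\la_u^-}\Big)|\ln d| = \sTfwd\,|\ln d|,
\end{equation*}
\begin{equation*}
\dist(\PPP_1(d,\tau),\P_u(\TTT_1)) \sim |\eta|\,e^{\la_s^-(\TTT_1-\TTT_{\frac{1}{2}})} \sim d^{\sfwd_+\sfwd_-} = d^{\sfwd}.
\end{equation*}

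\textbf{Uniformity and main obstacle.} Uniformity in $\tau$ follows from the uniform bounds on $\f^\pm,\g$, which propagate to uniform exponential dichotomy along the $\tau$-parametrized stable/unstable-leaf orbits, together with the uniform exponential decay estimates of Remarks~\ref{est.ga} and~\ref{rMelnibis}; the freedom to pick any $\mu<\mu_0$ comes from shrinking $\beta$ and $\ep_0$ so that the variational equations near $\vec{0}$ remain within any prescribed slack of their asymptotic saddle form. The main difficulty, in my view, is running both Lyapunov--Perron arguments with enough precision to obtain simultaneous matching upper and lower bounds on $\|\xi_u\|$ at the end of the first passage and on $|\eta|\,e^{\la_s^-(\TTT_1-\TTT_{\frac{1}{2}})}$ at the end of the second: these lower bounds rest on the non-tangency conditions of \assump{F1} at \emph{two distinct} saddles (the $\Om^+$-side and the $\Om^-$-side), separated by $\Om^0$, and any cancellation there would destroy the sharpness of the $d^{\sfwd\pm\mu}$ bound in \eqref{D1T1}.
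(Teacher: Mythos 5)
Your proposal is correct in outline and runs on the same analytical engine as the paper: linearization near the saddle with a fixed-point argument in an exponentially weighted space, transversality from \assump{F1} to get matching lower bounds on the expanding component, and an inversion-of-time argument for $\TTT_{-1},\PPP_{-1}$. The computations of the exponents ($\sTfwd_+|\ln d|$ and $d^{\sfwd_+}$ for the half-loop, $\sTfwd=\sTfwd_++\sfwd_+/\la_u^-$ and $\sfwd=\sfwd_+\sfwd_-$ for the full loop) are exactly those the paper obtains. The structural difference is granularity: you cut the loop in two at $L^{\inn}$, whereas the paper cuts it in four, inserting the sections $\tilde{S}^+$ and $\tilde{S}^-$ at distance $|\ln(\varpi)|^{-1}$ from the origin. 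That extra cut is not cosmetic. The saddle-passage fixed point (the paper's Lemma \ref{L-S-Lin-}) is a contraction only because the reference stable-leaf orbit satisfies $\|\y_s(\theta)\|\le c_s/|\ln(\varpi)|$ for \emph{all} $\theta\ge 0$, which fails if the reference orbit is launched from $L^0$, where it sits at distance $O(1)$ from the origin near $\ga(0)$. The paper therefore handles the stretch $L^0\to\tilde{S}^+$ by a separate finite-time variational argument in the spirit of \cite{Ha,HK} (Proposition \ref{estloglog}), which costs a $d$-independent but $\varpi$-dependent distortion $|\ln(\varpi)|^{\pm\tilde C}$ in the displacement and an additive $O(\ln|\ln(\varpi)|)$ in the flight time; these are then absorbed into $d^{\pm\mu}$ and $\mu|\ln d|$ only because $\varpi$ is fixed \emph{before} $\delta$ and $\delta$ is taken small enough that $|\ln(\varpi)|^{\tilde C}\le\delta^{-\mu_1}$. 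Your phrase ``the $\mu$-slack absorbing the slack in the dichotomy rates'' gestures at this, but as written your single Lyapunov--Perron step over the whole passage $L^0\to L^{\inn}$ either loses the contraction (if you linearize at the origin) or must separately quantify the finite-time transient distortion and its interplay with the choice of $\delta$ (if you linearize along the orbit). There is also a smaller mismatch your two-piece scheme must address: the endpoint of your first passage is compared with $\x(t,\tau;\P_s(\tau))$, while the natural reference for the second passage is the point $\tilde{W}^s\cap\tilde{S}^+$ (resp.\ $\tilde{W}^u\cap\tilde{S}^-$); the paper spends Lemmas \ref{junction}--\ref{bothstable+} estimating exactly this mismatch. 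None of this invalidates your strategy, but these are the steps you would have to supply to make the two lower bounds in \eqref{D1T1} — which you rightly identify as the crux — actually close.
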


In fact, with the same argument developed in \S \ref{S.key}, we are able to estimate the positions of the trajectories
$\x(t,\tau; \Q_s(d,\tau))$ and $\x(t,\tau; \Q_u(d,\tau))$ for any $t \in [\tau, \TTT_1(d,\tau)]$ and $t \in [ \TTT_{-1}(d,\tau), \tau]$,
respectively.

\begin{theorem}\label{keymissed}
Assume \assump{F0}, \assump{F1}, \mat{\assump{F2}}, \assump{K}, \assump{G}  and    let $\f^\pm$ and $\g$ be $C^r$ with $r>1$.
We can find $\ep_0>0$, $\delta>0$,  such that for any $0<\ep \le \ep_0$,   $0<d \le \delta$,   $0< \mu <\mu_0$   and any $\tau \in \R$
   we find
  \begin{equation}\label{keymissed.es-}
   \|\x(t,\tau; \Q_s(d,\tau))-\x(t,\tau; \P_s(\tau))\| \le  d^{\sfwd_+ - \mu }
  \end{equation}
  for any $\tau \le t \le \TTT_{\frac{1}{2}}(d,\tau)$,   and
  \begin{equation}\label{keymissed.es+}
  \|\x(t,\tau; \Q_s(d,\tau))-\x(t,\TTT_1(d,\tau); \P_u(\TTT_1(d,\tau)))\| \le  d^{\mat{\sfwd_-} - \mu }
  \end{equation}
    for any $\TTT_{\frac{1}{2}}(d,\tau) \le t \le \TTT_1(d,\tau)$.

Similarly, for any $0<\ep \le \ep_0$,   $0<d \le \delta$,   $0< \mu <\mu_0$   and any $\tau \in \R$  we find
\begin{equation}\label{keymissed.eu+}
	\|\x(t,\tau; \Q_u(d,\tau))-\x(t,\tau; \P_u(\tau))\| \le  d^{\sbwd_- - \mu}
\end{equation}
  for any $   \TTT_{-\frac{1}{2}}(d,\tau)  \le t \le \tau$,    and
  \begin{equation}\label{keymissed.eu-}
	\|\x(t,\tau; \Q_u(d,\tau))-\x(t,\TTT_{-1}(d,\tau); \P_s(\TTT_{-1}(d,\tau)))\| \le  d^{\mat{\sbwd_-} - \mu}
\end{equation}
    for any $\TTT_{-1}(d,\tau) \le t \le \TTT_{-\frac{1}{2}}(d,\tau)$.
\end{theorem}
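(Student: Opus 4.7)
The plan is to compare $\x(\cdot,\tau;\Q_s(d,\tau))$ with the base trajectories $\x(\cdot,\tau;\P_s(\tau))$ on $[\tau,\TTT_{\frac{1}{2}}(d,\tau)]$ and $\x(\cdot,\TTT_1(d,\tau);\P_u(\TTT_1(d,\tau)))$ on $[\TTT_{\frac{1}{2}}(d,\tau),\TTT_1(d,\tau)]$ via linearisation, and to feed the fly-time estimates of Theorem~\ref{key} into the growth bounds produced by exponential dichotomy. All four estimates follow the same scheme, so I only describe \eqref{keymissed.es-} and \eqref{keymissed.es+}; the backward bounds \eqref{keymissed.eu+}--\eqref{keymissed.eu-} are obtained by a symmetric argument obtained by reversing time and swapping the roles of $\P_s,\P_u$ and of the superscripts $+,-$.

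For \eqref{keymissed.es-}, by Remark~\ref{Wloc} the base trajectory $\x(\cdot,\tau;\P_s(\tau))$ lies in $\Omega^+\cup\Omega^0$ for every $t\ge\tau$ and converges exponentially to $\vec{0}$; by Lemma~\ref{L.loop} so does $\x(\cdot,\tau;\Q_s(d,\tau))$ throughout $[\tau,\TTT_{\frac{1}{2}}(d,\tau)]$. Hence $\vec{y}(t):=\x(t,\tau;\Q_s(d,\tau))-\x(t,\tau;\P_s(\tau))$ obeys a smooth ODE whose linear part is $\bs{A}(t)\vec{y}$, with $\bs{A}(t):=\bs{f_x^+}(\x(t,\tau;\P_s(\tau)))+\ep\,\bs{g_x}(t,\x(t,\tau;\P_s(\tau)),\ep)$. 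Since the base trajectory tends exponentially to the hyperbolic point $\vec{0}$, the roughness of exponential dichotomies provides, uniformly in $\tau\in\R$ and in $\ep\in(0,\ep_0]$, a dichotomy on $[\tau,+\infty)$ with rates arbitrarily close to $-|\la_s^+|$ and $\la_u^+$. A Gronwall-type argument in the direct-sum decomposition absorbing the quadratic remainder yields
\[
\|\vec{y}(t)\|\le K\,\|\vec{y}(\tau)\|\,\eu^{(\la_u^++\mu')(t-\tau)},\qquad \tau\le t\le \TTT_{\frac{1}{2}}(d,\tau),
\]
for uniform $K>0$ and any small $\mu'>0$. Since $\|\vec{y}(\tau)\|\le d$ and Theorem~\ref{key} gives $\TTT_{\frac{1}{2}}(d,\tau)-\tau\le(\sTfwd_++\mu')|\ln d|$, the identity $\la_u^+\,\sTfwd_+=1-\sfwd_+$, immediate from \eqref{defsigma}, proves \eqref{keymissed.es-} after choosing $\mu'$ small enough compared with $\mu$.

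For \eqref{keymissed.es+}, both $\x(\cdot,\tau;\Q_s(d,\tau))$ and the new base $\x(\cdot,\TTT_1;\P_u(\TTT_1))$ lie in $\Omega^-\cup\Omega^0$ throughout $[\TTT_{\frac{1}{2}},\TTT_1]$, and the latter tends to $\vec{0}$ as $t\to-\infty$. The same dichotomy argument applied in backward time to $\vec{z}(t):=\x(t,\tau;\Q_s(d,\tau))-\x(t,\TTT_1;\P_u(\TTT_1))$, starting from $\|\vec{z}(\TTT_1)\|\le C\,d^{\sfwd-\mu'}$ (which comes from Theorem~\ref{key}), yields
\[
\|\vec{z}(t)\|\le K\,d^{\sfwd-\mu'}\,\eu^{(|\la_s^-|+\mu')(\TTT_1-t)},\qquad \TTT_{\frac{1}{2}}(d,\tau)\le t\le \TTT_1(d,\tau).
\]
Combining with $\TTT_1-\TTT_{\frac{1}{2}}\le(\sTfwd-\sTfwd_++2\mu')|\ln d|$ from Theorem~\ref{key} and the key algebraic identity
\[
\sfwd - |\la_s^-|\,(\sTfwd-\sTfwd_+) = \sfwd_+,
\]
a short computation directly from \eqref{defsigma}, produces exactly \eqref{keymissed.es+}.

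The main obstacle is the tight matching of exponents on the two subintervals: the same bound $d^{\sfwd_+-\mu}$ must appear on both sides of $\TTT_{\frac{1}{2}}$ with no slack, and this rests on the nontrivial cancellation displayed above (the analogue needed for the backward part is $\sbwd-\la_u^+(\sTbwd-\sTbwd_-)=\sbwd_-$). This is also the reason why the estimate must be split precisely at the crossing time $\TTT_{\frac{1}{2}}$, where the vector field jumps from $\f^+$ to $\f^-$. A secondary technical point is the uniformity in $\tau\in\R$ and $\ep\in(0,\ep_0]$ of the dichotomy constant $K$ and of the perturbed rates, which follows from the uniform exponential convergence to $\vec{0}$ recorded in Remarks~\ref{est.ga} and \ref{rMelnibis} together with the fact that $\bs{f_x^\pm}(\vec{0})$ are fixed hyperbolic matrices, so that roughness applies with constants depending only on $\und\la$ and $\ov\la$.
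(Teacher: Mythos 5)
Your one-shot scheme---linearize along the reference trajectory, invoke roughness of exponential dichotomy, and feed the fly-time bounds of Theorem~\ref{key} into the resulting growth estimate---is a valid alternative to the paper's proof, which instead recycles the machinery built for Theorem~\ref{key}. Concretely, the paper splits $[\tau,\TTT_{\frac{1}{2}}(d,\tau)]$ at $\tau_1^f=\tau+T_1^f(d,\tau)$: on the first segment it applies the crude finite-time bound of Lemma~\ref{0keymissed}, and on the second it combines Lemma~\ref{1keymissed} (a byproduct of the fixed-point Lemma~\ref{L-S-Lin-}) with the mismatch estimates of Lemmas~\ref{junction} and \ref{bothstable}, which reconcile the reference solution $\y_s$ (anchored at the auxiliary section $\tilde{S}^+$) with the true stable trajectory $\x(\cdot,\tau;\P_s(\tau))$. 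Your route is cleaner in that it needs neither $\tilde{S}^+$ nor the mismatch lemmas; its price is that the growth estimate must be proved fresh rather than read off from Lemma~\ref{L-S-Lin-}.

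That growth estimate is where your sketch is thin. Writing $\vec{y}(t)=\x(t,\tau;\Q_s(d,\tau))-\x(t,\tau;\P_s(\tau))$, the remainder in the variational equation is only $O(\|\vec{y}\|^{1+\alpha})$ by assumption \textbf{H}; fed directly into Gronwall with the transition bound $\|\bs{X}(t)\bs{X}(s)^{-1}\|\lesssim\eu^{(\la_u^++\mu')(t-s)}$, this yields a superlinear integral inequality that does not close over an interval of length $O(|\ln d|)$. You need first to invoke the a priori confinement $\|\vec{y}(s)\|\lesssim\beta^{\underline{\sigma}-\mu}$ supplied by Lemma~\ref{L.loop} (both trajectories remain in $K_A^{\textrm{fwd},+}(t)$, whose diameter is controlled by Lemma~\ref{lemma0s}) so as to linearize the H\"older-quadratic remainder with a Lipschitz constant $\lesssim N_\alpha\beta^{\alpha(\underline{\sigma}-\mu)}$ and then swallow this into the $\mu'$-slack of the exponent; this plays exactly the role that the contraction constant $C|\ln(\varpi)|^{-\alpha/2}$ plays in Lemma~\ref{L-S-Lin-}. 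Also note that the dichotomy constant $K$ for the variational equation depends on $\mu'$ and blows up as $\mu'\to 0$, so ``uniform $K>0$'' is to be read with $\mu'$ fixed and with $K(\mu')$ absorbed into $d^{-O(\mu)}$ for $d\le\delta(\mu)$. With these points supplied, the two algebraic identities you isolated, $\la_u^+\sTfwd_+=1-\sfwd_+$ and $\sfwd-|\la_s^-|(\sTfwd-\sTfwd_+)=\sfwd_+$, are both correct and are indeed what makes the two subintervals deliver the same bound $d^{\sfwd_+-\mu}$.
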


\section{Proofs of Theorems \ref{key}  and \ref{keymissed}}\label{S.key}
 In this section \mat{we assume all the hypotheses of Theorems \ref{key}  and \ref{keymissed} without further mentioning,}  and we adopt the notation introduced in \S \ref{S.prel};
 we recall
  that $\f^{\pm}$ and $\g$ are $C^r$ with   $r \ge 1+\alpha$ for some $0<\alpha \le 1$, i.e., their derivatives are H\"older continuous.

For  simplicity we denote by
$$\vec{F}^\pm(\x,t,\ep)=\f^\pm(\x)+\ep\g(t,\x,\ep), \quad \x \in \Omega^{\pm} \cup \Omega^0$$
 and we assume the following:
\begin{description}
  \item[H] There are $0< \alpha \le 1$ and $N_{\al}>0$ such that,
 for $\| \vec{h} \|<1$,
 \begin{equation*}
\sup \{ \|\boldsymbol{F^{\pm}_{x}}(\P +\vec{h},t,\ep)- \boldsymbol{F^{\pm}_{x}}(\P,t,\ep) \|    \mid \vec{P} \in B(\bs{\Gamma},1) , \,
t \in \R, \,    |\ep| \le 1   \} \le N_{\al} \| \vec{h} \|^{\alpha}.
\end{equation*}
\end{description}

The  proofs of Theorems \ref{key}  and \ref{keymissed} is technical and lengthy and it is divided in several steps.

\begin{figure}[t]
\centerline{\epsfig{file=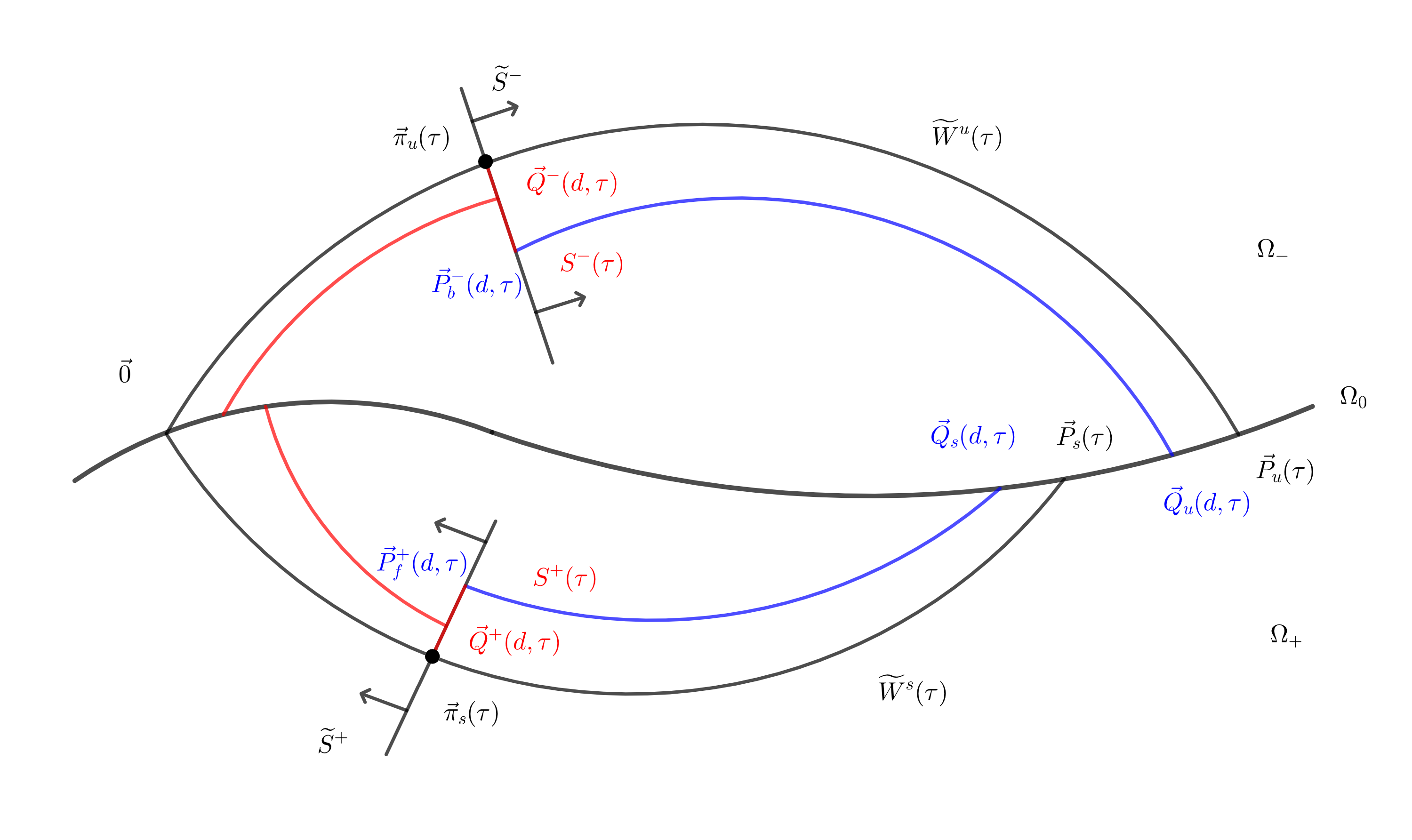, width = 9 cm} }
\caption{A scheme of the proofs of Lemmas \ref{key} and \ref{keymissed}.}
\label{fig.key}
\end{figure}

 Adapting \cite{FrPo}  we introduce a small auxiliary parameter
 $\varpi$,  $\varpi>\delta$,
  see Remark \ref{oneparam}, whose value will be fixed below, and the following set depending on it:
\begin{equation}\label{S-S+}
  \tilde{S}^+:= \left\{ d  \vec{v}_u^+  + \frac{ \vec{v}_s^+}{|\ln(\varpi)|}     \mid  |d| \le \frac{1}{|\ln(\varpi)|}
  \right\},
\end{equation}
 see Figure \ref{fig.key}.
Let us denote by $\PPs(\tau)$ the unique intersection between $\tilde{W}^s(\tau)$ and
$\tilde{S}^+$, whose existence follows from the fact that $\tilde{W}^s(\tau)$ is locally a graph on its tangent, see also Remark \ref{est.ys}.
Moreover we let $\y_s(\theta)=\x(\theta+\tau,\tau; \PPs(\tau))$;
notice that $\y_s(\theta) \in   K^{\textrm{fwd},+}$ for any $\theta \ge 0$ and $\lim_{\theta \to +\infty} \y_s(\theta)=\vec{0}$,
 see Figure \ref{Kforward}.
Then we denote by
\begin{equation}\label{S-S+tau}
  S^+(\tau):= \left\{ -d \vec{v}_u^+ + \PPs(\tau)     \mid  0<d\le \delta  \right\}.
\end{equation}
Since $\PPs(\tau)= \frac{\vec{v}_s^+}{|\ln(\varpi)|}+ O\left(\frac{1}{|\ln(\varpi)|^{1+\alpha}}    \right)$,
then $S^+(\tau) \subset \tilde{S}^+$.

Analogously we denote by
\begin{equation}\label{S+}
  \tilde{S}^-:= \left\{ d  \vec{v}_s^- + \frac{ \vec{v}_u^- }{|\ln(\varpi)|}     \mid  |d| \le \frac{1}{|\ln(\varpi)|}
  \right\}.
\end{equation}
Let $\PPu(\tau)$ be the unique intersection between $\tilde{W}^u(\tau)$ and
$\tilde{S}^-$, whose existence follows from the fact that $\tilde{W}^u(\tau)$ is locally a graph on its tangent.
We let $\y_u(\theta)=\x(\theta+\tau,\tau;\PPu(\tau))$, and
we observe that $\y_u(\theta) \in K^{\textrm{bwd},-}$ for any $\theta \le 0$ and $\lito \y_u(\theta)=\vec{0}$.
Then we set
\begin{equation}\label{S+tau}
  S^-(\tau):= \left\{ -d  \vec{v}_s^-  + \PPu(\tau)     \mid  0<d\le \delta  \right\}.
\end{equation}
Again by construction $S^-(\tau) \subset \tilde{S}^-$.

Roughly speaking, to prove the results of this section we consider a trajectory performing a loop and we estimate
flight time and space displacement: for this purpose we distinguish four different parts of the loop.
 Firstly in \S \ref{loop.easy} we follow the trajectories from $L^0$ to $\tilde{S}^+$
forward in time (and we deduce what happens backward in time  from $\tilde{S}^+$ to $L^0$): we get estimates of the fly time and the
displacement
with respect to $\tilde{W}^s$. Then with the same argument we follow the trajectories from $L^0$ to $\tilde{S}^-$  backward in time
 (and forward from $\tilde{S}^-$ to $L^0$) and we estimate  fly time and  displacement
with respect to $\tilde{W}^u$. In the proofs we will use the fact that the trajectories are close to $\tilde{W}$,
which is in turn close to~$\bs{\Gamma}$.

Then, in \S \ref{S-Lin} we follow the trajectories going from $\tilde{S}^+$ to $L^{\textrm{in}}$, forward and backward in time,
and we
evaluate
fly time and displacement from $\tilde{W}^s$.
Then in \S \ref{S+Lin},
 with an argument
analogous to the one of  \S \ref{S-Lin}, we follow  trajectories going from $\tilde{S}^-$ to $L^{\textrm{in}}$, forward and
backward in
time and we evaluate
fly time and displacement from $\tilde{W}^u$.
Eventually, putting all these results together, we get an estimate of the distance of the
 trajectories from $\tilde{W}$ and we prove Theorems \ref{key} and   \ref{keymissed}.

 \begin{remark}\label{oneparam}
In this section we introduce several small parameters, which need to be chosen in the right order.

 Equation \eqref{eq-disc} assigns
   $\ep>0$, which measures the size of the perturbation, and the constant   $\mu_0$   which depends only
   on the eigenvalues $\la^{\pm}_{s}$, $\la^{\pm}_{u}$,  see   \eqref{mu0}.

Then we introduce  a parameter $\beta> \ep^{\sigma^{\textrm{fb}}/2}$ and, via Lemma \ref{L.loop},
 we deduce the existence of $\ep_0$ and $\beta_0$ such that for any $0<\ep \le \ep_0$ and any $ 0< \ep^{\sigma^{\textrm{fb}}/2}< \beta \le \beta_0$
 we can construct the   maps $\PPP^{\textrm{fwd}}_+$, $\PPP^{\textrm{fwd}}$, $\PPP^{\textrm{bwd}}_-$ $\PPP^{\textrm{bwd}}$.

Nextly we set $\mu \in ]0, \mu_0]$ which   measures the size of the errors that we make in the estimates of flight time and displacement appearing in
 Theorems \ref{key} and~\ref{keymissed}.

In this section we introduce a further small parameter $\varpi \in ]0,1[$   which measures the distance between
the
origin and the lines $\tilde{S}^+$
and $\tilde{S}^-$, transversal to $\bs{\Gamma}$
(these distances are of order $|\ln(\varpi)|^{-1}$). This parameter is needed just for the proofs of Theorems \ref{key} and
\ref{keymissed} and it does not appear in the statement of the results.

Then we choose
 $\delta= \delta(\varpi,\ep) \in ]0,\varpi[$: $\delta$ measures the displacement of the trajectories performing a loop with respect to
 $\tilde{W}$:
in fact each trajectory appearing in Theorems \ref{key} and   \ref{keymissed}
 has distance $O(d)$ from  $\tilde{W}$ when it crosses $L^0$, where $0<d \le \de$. Finally we possibly reduce the size of $\ep_0$ so
 that the fixed point arguments described in \S \ref{loop.easy} and  \S \ref{S-Lin} work.

Let us spend a few more words about  the order in which these parameters need to be chosen. The parameter $\ep$ is assigned from
\eqref{eq-disc}; we choose $\beta_0>0$ and $\ep_0>0$ so that  Lemma \ref{L.loop} holds for any  $0< \ep^{\sigma^{\textrm{fb}}/2}<\beta \le \beta_0$ and
any $0< \ep \le \ep_0$, and the maps    appearing in Lemma \ref{L.loop},  Theorems \ref{key} and   \ref{keymissed}
are well defined.

Then we  choose $\mu>0$: in fact we have to choose a parameter $\mu_1>0$ which is needed for the estimates
of the trajectories going from $L^0$ to $\tilde{S}^+$ and from  $\tilde{S}^-$ to $L^0$, and a parameter $\mu_2>0$ which is
needed for the
estimates
of the trajectories going from   $\tilde{S}^+$ to $L^{\inn}$ and from $L^{\inn}$ to $\tilde{S}^-$.
Then we set $\mu_2=2 \mu_1$
 and $\mu= c_{\mu} \mu_2$ where $c_{\mu}  \geq 7$ is a constant
 (depending only on $\la^{\pm}_{s}$, $\la^{\pm}_{u}$, see \eqref{def-cmu} below),  and we need
to choose $\mu$ so that $0< \mu \le \mu_0$.  So, the choice of $\mu$ prescribes the values of $\mu_1$ and $\mu_2$.

Then we choose $\varpi=\varpi(\mu,\ep)$, nextly $\delta=\delta(\varpi, \mu,\ep)$;  finally
  for any fixed $0<\mu \le \mu_0$ we choose $\ep_0=\ep_0(\mu)$ so that our argument works for any $0< \ep \le \ep_0$.
 We think that in fact the estimates might be improved and it could be shown that $\mu$ could be chosen as a linear function of $\ep$, but
 this is
beyond the interest of this paper, see Remark \ref{totrash}. In fact we believe that one can improve further the estimates
if $\gx(t,0,\ep) \equiv  \bs{0}$.
\end{remark}

\begin{remark} \label{cumbersome}
By Lemma \ref{L.loop} we know that if $\Q \in A^{\textrm{fwd},+}(\tau)$, then $\x (t,\tau;\Q)$ is confined in the region
$K_A^{\textrm{fwd},+}(t)$ for any
$0<t<\tau_1(\Q,\tau)$.
To simplify the presentation and avoid cumbersome notation we will tacitly assume that the systems
\eqref{eq-disc+} and (\ref{eq-lin})$_+$
are well defined, through any smooth extension, even when $\x \in \Om^0 \cup \Om^-$ is ``sufficiently close'' to $\Om^+$. In fact, the choice
of the extension does not affect
our argument since we just focus on $\Om^0 \cup \Om^+$.

We will maintain the same kind of assumption when dealing with $\x (t,\tau;\Q)$ and $\Q \in A^{\textrm{bwd},-}(\tau)$.
\end{remark}

Before starting the actual proof we recall some facts
concerning exponential dichotomy.
The   roughness of exponential dichotomy (see \cite[Proposition~1, \S 4]{Co} and also \cite[Appendix]{CDFP})
yields that the linear systems
\begin{subequations}\label{eq-lin}
	\begin{equation}\tag*{(\ref{eq-lin})$_\pm$}
		\dot{\x}(t)=\boldsymbol{F^\pm_x}(\vec{0},t,\ep) \x(t)
	\end{equation}
\end{subequations}
admit exponential dichotomy on the whole of $\R$ with projections
$\Pe$, and exponents $\la_s^\pm+k\ep<0<\la_u^\pm-k\ep$ \michal{for some $k>0$}.

 Keeping in mind  the definitions of $\underline{\la}$ and $\overline{\la}$ given in \eqref{defsigma},
we shall assume w.l.o.g.~that $k\ep_0<\frac{1}{4}\underline{\la}$, so that
$$-2\overline{\la}< \la_s^\pm-k\ep<\la_s^\pm+k\ep<-
\frac12\underline{\la}<0<\frac12\underline{\la}<\la_u^\pm-k\ep<\la_u^\pm+k\ep<2\overline{\la}.$$

 In this paper we use the version of exponential dichotomy introduced in \cite[Appendix]{CDFP} in which we estimate the projections
 using actual solutions of \eqref{eq-lin}$_\pm$.

 In the whole section  (unless otherwise stated)
 $\ep>0$ is fixed: so from now on we leave this dependence unsaid.

Let us denote by
$\vec{w}_u^{\pm}(t)$ and by $\vec{w}_s^{\pm}(t)$ the unique solutions of  the linear equations \eqref{eq-lin}$_\pm$
satisfying the following conditions
\begin{equation}\label{wupm}
\begin{split}
    \|\vec{w}_u^{\pm}(0)\| = & 1 , \qquad \lito  \|\vec{w}_u^{\pm}(t)\|=0, \quad
    \lito \left( \frac{\vec{w}_u^{\pm}(t)}{\|\vec{w}_u^{\pm}(t)\|}\right)^* \, \vec{v}_u^{\pm}  >0,
\\
    \|\vec{w}_s^{\pm}(0)\| = & 1 , \qquad \lit  \|\vec{w}_s^{\pm}(t)\|=0,
     \quad
    \lit \left( \frac{\vec{w}_s^{\pm}(t)}{\|\vec{w}_s^{\pm}(t)\|} \right)^* \, \vec{v}_s^{\pm}  >0.
\end{split}
\end{equation}
Let us denote by $\vec{v}_u^{\pm}(t)$ and by $\vec{v}_s^{\pm}(t)$ the unit vectors
$\vec{v}_u^{\pm}(t)=\frac{ \vec{w}_u^{\pm}(t) }{\|\vec{w}_u^{\pm}(t)\|}$ and
$\vec{v}_s^{\pm}(t)=\frac{ \vec{w}_s^{\pm}(t) }{\|\vec{w}_s^{\pm}(t)\|}$.
Then, following \cite[\S 6.2]{CDFP} we define
\begin{equation}\label{defzus}
  z_u^{\pm}(t,s)=  \frac{\|\vec{w}_u^{\pm}(t)\|}{\|\vec{w}_u^{\pm}(s)\|} \, , \qquad    z_s^{\pm}(t,s)=
  \frac{\|\vec{w}_s^{\pm}(t)\|}{\|\vec{w}_s^{\pm}(s)\|}.
\end{equation}
Note that for any $t,r,s \in \R$ we have
\[
z_s^{\pm}(t,s)=z_s^{\pm}(t,r)z_s^{\pm}(r,s) \, , \qquad z_u^{\pm}(t,s)=z_u^{\pm}(t,r)z_u^{\pm}(r,s).
\]
Moreover, there are $C>0$  and $k_1 \geq 1$  such that for any $t,s \in \R$ we have
\begin{equation}\label{defzused}
\begin{gathered}
     \|\vec{v}_u^{\pm}(t) - \vec{v}_u^{\pm}\| \le C \ep \,, \qquad   \|\vec{v}_s^{\pm}(t) - \vec{v}_s^{\pm}\| \le C \ep\,, \\
  (k_1)^{-1}\eu^{\la_u^{\pm}(t-s)-k \ep|t-s|} \le  z_u^{\pm}(t,s) \le k_1 \eu^{\la_u^{\pm}(t-s)+k \ep|t-s|}\, , \\
  (k_1)^{-1}  \eu^{\la_s^{\pm}(t-s)-k \ep|t-s|} \le  z_s^{\pm}(t,s) \le k_1 \eu^{\la_s^{\pm}(t-s)+k \ep|t-s|}\,.
\end{gathered}
\end{equation}
So that, for any  $t<s$ we find
\begin{equation}\label{defk}
    z_u^{\pm}(t,s) \le k_1 \eu^{-\frac12\underline{\la} |t-s|} \, , \qquad
   z_s^{\pm}(s,t) \le k_1 \eu^{-\frac12\underline{\la} |t-s|} \,.
\end{equation}

 Denote by $\Xe(t)$ the fundamental solutions of \eqref{eq-lin}$_\pm$ satisfying $\Xe(0)=\I$.
We also need to define the shifted projections
$$\Pe(\tau):=\Xe(\tau)\Pe(\Xe(\tau))^{-1}.$$
Notice that both $\|\Pe (\tau)\|$ and $\|\I-\Pe(\tau) \|$ are bounded, so we can set

  $$k_2:= 2\sup [\{ \|\bs{P^{\pm,0}}(\tau)\| \mid \tau \in\R \} \cup \{ \|\bs{\I- P^{\pm,0}}(\tau) \| \mid \tau \in\R \}]$$ so that
$\max \{ \|\Pe(\tau)\|; \|\I-\Pe(\tau) \|\} \le k_2$  for any $\ep>0$ small enough and any $\tau \in \R$.
We need the following result.
\begin{lemma}\label{newdichot}
For any $s,t \in \R$ and any $\xxi \in \R^2$ we have the following.
\begin{gather*}
	\|\Xe(t)\Pe(\Xe(s))^{-1} \xxi \| \le k_2 z_s(t,s) \|\xxi\|,\\
	\|\Xe(t)(\I -\Pe)(\Xe(s))^{-1} \xxi \| \le k_2 z_u(t,s) \|\xxi\|
\end{gather*}
for some $k_2>0$.
\end{lemma}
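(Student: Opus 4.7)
My plan is to exploit the one-dimensional structure of the stable and unstable subspaces in $\R^2$ induced by the exponential dichotomy of (\ref{eq-lin})$_\pm$. Since $\Pe$ and $\I-\Pe$ are complementary rank-one projections, the shifted projection $\Pe(s) = \Xe(s)\Pe(\Xe(s))^{-1}$ sends $\R^2$ onto the stable fiber at time $s$, which coincides with $\lspan\{\vec{w}_s^\pm(s)\}$, while $(\I-\Pe)(s)$ maps $\R^2$ onto $\lspan\{\vec{w}_u^\pm(s)\}$ (here and below the symbols $z_s,z_u$ in the statement are read as $z_s^\pm,z_u^\pm$ matching the sign carried by $\Xe$).

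For the first inequality I would write $\Pe(s)\xxi = c_s\,\vec{w}_s^\pm(s)$ for a suitable scalar $c_s\in\R$. Invoking the evolution identity $\Xe(t)(\Xe(s))^{-1}\vec{w}_s^\pm(s)=\vec{w}_s^\pm(t)$, which is immediate from uniqueness for the linear Cauchy problem with initial datum $\vec{w}_s^\pm(s)$ at time $s$, one obtains
\[
  \Xe(t)\Pe(\Xe(s))^{-1}\xxi \;=\; \Xe(t)(\Xe(s))^{-1}\Pe(s)\xxi \;=\; c_s\,\vec{w}_s^\pm(t).
\]
Taking norms and using the definition \eqref{defzus} of $z_s^\pm$,
\[
  \|\Xe(t)\Pe(\Xe(s))^{-1}\xxi\| \;=\; |c_s|\,\|\vec{w}_s^\pm(t)\| \;=\; z_s^\pm(t,s)\,|c_s|\,\|\vec{w}_s^\pm(s)\| \;=\; z_s^\pm(t,s)\,\|\Pe(s)\xxi\|.
\]
I would then close the estimate by invoking the definition of $k_2$: its factor of $2$ ensures that, for $\ep_0$ chosen sufficiently small, $\|\Pe(s)\|\le k_2$ uniformly in $s\in\R$, and therefore $\|\Pe(s)\xxi\|\le k_2\|\xxi\|$, as required.

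The second inequality would be obtained by a verbatim repetition of the previous argument after replacing the stable data by the unstable data throughout: $\Pe \leftrightarrow \I-\Pe$, $\vec{w}_s^\pm \leftrightarrow \vec{w}_u^\pm$, and $z_s^\pm \leftrightarrow z_u^\pm$; the bound on $\|\I-\Pe(s)\|$ is again built into the definition of $k_2$. I do not anticipate any real obstacle: the heart of the proof is simply the observation that, in dimension two, the rank-one projections $\Pe(s)$ and $(\I-\Pe)(s)$ align their images with the single directions $\vec{w}_s^\pm(s)$ and $\vec{w}_u^\pm(s)$, so the ratios $\|\vec{w}_s^\pm(t)\|/\|\vec{w}_s^\pm(s)\|$ and $\|\vec{w}_u^\pm(t)\|/\|\vec{w}_u^\pm(s)\|$ arise naturally and are precisely $z_s^\pm(t,s)$ and $z_u^\pm(t,s)$ by \eqref{defzus}.
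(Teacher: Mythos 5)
Your argument is correct and is essentially the proof given in the paper: both rewrite $\Xe(t)\Pe(\Xe(s))^{-1}$ as $\Xe(t)(\Xe(s))^{-1}\Pe(s)$ via invariance of the dichotomy projections, observe that $\Pe(s)\xxi$ lies in the one-dimensional span of $\vec{w}_s^\pm(s)$ so that propagation by $\Xe(t)(\Xe(s))^{-1}$ scales its norm exactly by $z_s^\pm(t,s)$, and finish with the uniform bound $\|\Pe(s)\xxi\|\le k_2\|\xxi\|$ built into the definition of $k_2$. The only cosmetic difference is that the paper phrases the key step as a decomposition $\xxi=\xxi_s^\pm(s)+\xxi_u^\pm(s)$ with $\xxi_s^\pm(s)=\Pe(s)\xxi$, whereas you write $\Pe(s)\xxi=c_s\vec{w}_s^\pm(s)$ directly; these are the same thing.
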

 \begin{proof}
 Let $\bs{A}$ be a linear operator, we denote by  $\mathcal{R}\bs{A}$ its range and by $\mathcal{N}\bs{A}$ its kernel.
   Fix $\tau \in \R$, then for any $\xxi \in \R^2$ we find
   $\xxi^{\pm}_s(\tau) \in \textrm{span}[ \vec{v}_s^{\pm}(\tau)] = \mathcal{R}\Pe(\tau)$ and
   $\xxi^{\pm}_u(\tau) \in \textrm{span}[ \vec{v}_u^{\pm}(\tau)] = \mathcal{N}\Pe(\tau)$ such that
   $\xxi= \xxi^{\pm}_s(\tau)+ \xxi^{\pm}_u(\tau)$. Further $\|\xxi^{\pm}_s(\tau)\| \le k_2 \| \xxi\|$ and
   $\|\xxi^{\pm}_u(\tau)\| \le k_2 \| \xxi\|$.

   Next by the invariance of the projections of exponential dichotomy we have
   \begin{gather*}
   		\Xe(t)\Pe(\Xe(s))^{-1} = \Xe(t) (\Xe(s))^{-1}\Pe(s),\\
   		\Xe(t)[\I -\Pe](\Xe(s))^{-1} = \Xe(t) (\Xe(s))^{-1}[\I -\Pe(s)].
   	\end{gather*}
   Hence for any $\xxi \in \R^2$ we find
   \begin{equation*}
     \begin{split}
         \|\Xe(t)\Pe(\Xe(s))^{-1} \xxi \| &= \|\Xe(t) (\Xe(s))^{-1} \xxi^{\pm}_s(s)\| \\
         &=z_s(t,s) \|\xxi^{\pm}_s(s)\| \le
  k_2 z_s(t,s)  \| \xxi \|, \\
            \|\Xe(t)[\I -\Pe](\Xe(s))^{-1} \xxi \| &= \|\Xe(t) (\Xe(s))^{-1} \xxi^{\pm}_u(s)\| \\
         &=z_u(t,s) \|\xxi^{\pm}_u(s)\| \le
   k_2    z_u(t,s) \| \xxi \| ,
     \end{split}
   \end{equation*}
   so the lemma is proved.
 \end{proof}

Using standard arguments from exponential dichotomy theory  (see again \cite[\S 6.2]{CDFP}),
we get that  there is $c_k>1$ such that
\begin{equation}\label{est.stableunstable}
\begin{split}
   &  (c_k)^{-1}\eu^{-2\overline{\la}\theta_s}< \|\x(\theta_s+\tau,\tau; \P_s(\tau))\| < c_k \eu^{-\frac12\underline{\la}\theta_s}, \\
  & (c_k)^{-1}\eu^{-2\overline{\la}|\theta_u|} < \|\x(\theta_u+\tau,\tau; \P_u(\tau))\| < c_k \eu^{-\frac12\underline{\la}|\theta_u|}
\end{split}
\end{equation}
whenever $\theta_u \le 0 \le \theta_s$, for any $\tau \in \R$.

\subsection{The loop: from $\boldsymbol{L^0}$ to $\boldsymbol{S^+(\tau)}$ and from
$\boldsymbol{S^-(\tau)}$ to $\boldsymbol{L^0}$}\label{loop.easy}

In the whole section  $\varpi>0$ is a small parameter  and we choose $\delta=\delta(\varpi)<\varpi$
according to Lemmas \ref{defTf-}, \ref{defTb+},  and then we let $0<d \le \delta$.
We recall that $\P_s(\tau)$ is the unique intersection between $\tilde{W}^s(\tau)$ and $L^0$ while
$\P_u(\tau)$ is the unique intersection between $\tilde{W}^u(\tau)$ and $L^0$; further $\Q_s(d,\tau)$ and $\Q_u(d,\tau)$
are as defined at the beginning of \S \ref{s.lemmakey}.
We introduce the following notation
\begin{equation}\label{Q-Q+}
  \begin{split}
    \Q^+(d,\tau)= &-d \vec{v}_u^+ + \PPs(\tau) \in S^+(\tau) \, ,\\
     \Q^-(d,\tau)= & -d \vec{v}_s^- + \PPu(\tau) \in S^-(\tau) \, .
  \end{split}
\end{equation}

We focus firstly on the trajectory $\x(t,\tau; \Q_s(d,\tau))$
 going from $L^0$ to $\tilde{S}^+$.
 The next lemma is a consequence of Lemma \ref{L.loop}.

\begin{lemma}\label{defTf-}
  Let $ \varpi>0$ be  a small constant, then there is $\delta=\delta(\varpi)$ such that for any
  $0<d \le\delta$ and any $\tau \in \R$ there are $T^f_1(d,\tau)>0$ and
  $\P_f^+(d,\tau)$  such that $\x(t,\tau; \Q_s(d,\tau)) \in K_A^{\textrm{fwd},+}(t)$ for any
  $\tau<t \le \tau+T^f_1(d,\tau)$ and it crosses transversely $S^+(T^f_1(d,\tau)+\tau) \subset \tilde{S}^+$
  at $t=\tau+T^f_1(d,\tau)$ in $\P_f^+(d,\tau)=\x(T^f_1(d,\tau)+\tau,\tau; \Q_s(d,\tau))$.

  Conversely, going backwards in $t$,  there are $T^b_4(d,\tau)>0$ and
  $\P_b^+(d,\tau)$
  such that $\x(t,\tau; \Q^+(d,\tau)) \in K_A^{\textrm{\ev{bwd}},+}(t)$ for any
   $ \tau- T^b_4(d,\tau)< t\le\tau$ and it crosses transversely $L^0$
  at $t=\tau- T^b_4(d,\tau)$ in $\P_b^+(d,\tau)=\x(\tau-T^b_4(d,\tau),\tau; \Q^+(d,\tau))$.
\end{lemma}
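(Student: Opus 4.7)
The plan is to reduce both claims to a continuous-dependence argument applied to suitable reference trajectories lying inside $\tilde{W}^s(\cdot)$, combined with the transversality of $\tilde{S}^+$ to the stable leaf and of $L^0$ to $\bs{\Gamma}$; once the references behave as desired, Lemma~\ref{L.loop} and Remark~\ref{r.in1} will confine the perturbed loop inside the correct piece of the tube.

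For the forward claim I would first analyze the reference trajectory $\y^\star(t):=\x(t,\tau;\P_s(\tau))$. By \eqref{proprieta} it lies in $\tilde{W}^s(t)\subset\Om^+\cup\Om^0$ for all $t\ge\tau$, and by \eqref{est.stableunstable} its norm decays exponentially, so there is a unique first time $t^\star=t^\star(\tau,\varpi)>\tau$ at which $\y^\star(t^\star)=\PPs(t^\star)\in\tilde{S}^+$; the same exponential estimates show that $t^\star-\tau$ is uniformly bounded in $\tau$ by a constant depending only on $\varpi$. This crossing is transverse because, using \assump{G} and the position of $\PPs$, the velocity $\vec{F}^+(\PPs(t^\star),t^\star,\ep)$ equals $\la_s^+\vec{v}_s^+/|\ln\varpi|$ up to lower-order terms, whereas $\tilde{S}^+$ is spanned by $\vec{v}_u^+$ and $\vec{v}_s^+,\vec{v}_u^+$ are linearly independent by \assump{F0}. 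Since $\Q_s(d,\tau)$ is at distance $d\le\delta$ from $\P_s(\tau)$, uniform continuous dependence of the flow of \eqref{eq-disc} on initial data (valid on any compact time interval by the uniform bounds on $\f^\pm,\g$) makes $\x(t,\tau;\Q_s(d,\tau))$ arbitrarily close to $\y^\star(t)$ on $[\tau,t^\star+1]$ provided $\delta=\delta(\varpi)$ is small enough. The implicit function theorem applied to the defining equation of $\tilde{S}^+$ then produces $T^f_1(d,\tau)>0$ and a transverse crossing point $\P_f^+(d,\tau)$; confinement in $K_A^{\textrm{fwd},+}(t)$ for $\tau<t\le\tau+T^f_1(d,\tau)$ is inherited from Lemma~\ref{L.loop} (the trajectory stays there at least until it reaches $L^{\textrm{in}}$ at $\tau_1$, and $T^f_1(d,\tau)<\tau_1-\tau$ since $\tilde{S}^+$ is hit strictly earlier). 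Finally Remark~\ref{r.in1} traps the trajectory in the interior region $\tilde{V}(t)$, which forces $\P_f^+(d,\tau)\in S^+(\tau+T^f_1(d,\tau))$ rather than in the opposite half of $\tilde{S}^+$.

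The backward claim is symmetric. Here the natural reference is $\z^\star(t):=\x(t,\tau;\PPs(\tau))$: since $\PPs(\tau)\in\tilde{W}^s(\tau)$ lies on the smooth arc of $\tilde{W}^s(\tau)$ joining $\vec{0}$ to $\P_s(\tau)$, following $\z^\star$ backward reaches $\P_s(t^{\star\star})\in L^0$ at a unique $t^{\star\star}=t^{\star\star}(\tau,\varpi)<\tau$ with $\tau-t^{\star\star}\le C(\varpi)$ uniformly in $\tau$. Transversality at $L^0$ follows from $(\na G(\ga(0)))^*\f^+(\ga(0))>0$ in \assump{K} combined with $\|\P_s(t^{\star\star})-\ga(0)\|=O(\ep)$ from Remark~\ref{rMelnibis}. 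Going backward, the perturbation along $\vec{v}_u^+$ is damped, so for $\delta=\delta(\varpi)$ small enough $\x(t,\tau;\Q^+(d,\tau))$ stays close to $\z^\star(t)$ on $[t^{\star\star}-1,\tau]$, and the implicit function theorem again yields $T^b_4(d,\tau)>0$ and a transverse crossing point $\P_b^+(d,\tau)\in L^0$; confinement in $K_A^{\textrm{fwd},+}(t)$ is inherited from $\z^\star(t)\in\tilde{W}^s(t)\subset\Om^+\cup\Om^0$.

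The main technical obstacle I expect is uniformity in $\tau\in\R$: the single $\delta(\varpi)$ above must work for every $\tau$ simultaneously. This reduces to checking that $t^\star-\tau$ and $\tau-t^{\star\star}$ are bounded uniformly in $\tau$ by a constant depending only on $\varpi$, and that the continuous-dependence constant over such intervals does not blow up with $\tau$. Both facts follow from the uniform bounds on $\f^\pm,\g$ and from the uniform exponential estimates \eqref{defzused}, \eqref{est.stableunstable} produced by the roughness of exponential dichotomy of \eqref{eq-lin} on the whole of $\R$. Once this uniformity is secured, the implicit function theorem can be applied with constants independent of $\tau$, giving $\delta$ depending only on $\varpi$ as required.
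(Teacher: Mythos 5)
Your proof is correct and takes essentially the same route as the paper, which disposes of this lemma in one sentence as ``a consequence of Lemma~\ref{L.loop}.'' Both arguments rest on the same three pillars: Lemma~\ref{L.loop} confines $\x(t,\tau;\Q_s(d,\tau))$ in $K_A^{\textrm{fwd},+}(t)$ until $L^{\textrm{in}}$ is reached; the uniform exponential decay along $\tilde W^s$ (from \eqref{est.stableunstable}, produced by roughness of exponential dichotomy on all of $\R$) bounds the flight time to $\tilde S^+$ in terms of $\varpi$ alone, uniformly in $\tau$; and near $\tilde S^+$ the velocity is $\la_s^+\vec v_s^+/|\ln(\varpi)|$ up to lower order, hence transversal to the segment $\tilde S^+$ spanned by $\vec v_u^+$ --- this is exactly the computation the paper makes explicit a few lines later in \eqref{perhaps1}--\eqref{perhaps2}. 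Your use of continuous dependence on initial data followed by the implicit function theorem to extract $T^f_1,\P^+_f$ is the standard way to convert the paper's topological assertion into an analytic statement, and invoking Remark~\ref{r.in1} to put the crossing on the correct half $S^+$ of $\tilde S^+$ is the right move. For the backward part, your choice of $\z^\star(t)=\x(t,\tau;\PPs(\tau))$ is sound: going backward it stays in $W^s(t)$, moves away from the origin, and reaches $L^0$ exactly at $\P_s(t^{\star\star})$, with transversality from \assump{K} and Remark~\ref{rMelnibis}.

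The one thing worth keeping in mind is that the crude continuous-dependence bound you use only gives existence of $\delta=\delta(\varpi)$ with no quantitative control: the flight time is $O(\ln|\ln\varpi|)$, so the Gronwall constant is a power of $|\ln\varpi|$, and this is precisely what the paper sharpens in Lemma~\ref{close}, Remark~\ref{r.Tdiff} and Proposition~\ref{estloglog} via the variational equation \eqref{linlin}. For the bare existence/transversality content of Lemma~\ref{defTf-} your argument is complete; just be aware that the subsequent estimates are not a corollary of it and require the extra work the paper carries out.
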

 Now we proceed to estimate the functions $T^f_1$, $\P_f^+$, $T^b_4$, $\P^b_4$ defined in Lemma \ref{defTf-}.
We denote by $-\tilde{T}_4^b<0< \tilde{T}_1^f$,  $\tilde{T}_4^b= \tilde{T}_1^f$  the values such that
$\ga(\tilde{T}_1^f) \in \tilde{S}^+$.

\begin{remark}\label{r.close}
Observe that if $\ep=0$ then $T^f_1(0,\tau) \equiv \tilde{T}_1^f$ and $T^b_4(0,\tau) \equiv \tilde{T}_4^b$.
 From    \eqref{est.stableunstable} we find
$$\frac{1}{2\overline{\la}} \ln \left(\frac{1}{c_k\|\ga(\tilde{T}_1^f)\|} \right) \le    \tilde{T}^f_1 \le  \frac{2}{\underline{\la}}
\ln\left( \frac{c_k}{\|\ga(\tilde{T}_1^f)\|}\right).$$
Using $\|\ga(\tilde{T}_1^f)\|=\frac{c}{|\ln(\varpi)|}$ for some $0<c<2$, we derive
\begin{equation}\label{perhaps}
\begin{split}
     &    \frac{1}{2\overline{\la}} \ln \left(\frac{|\ln(\varpi)|}{c_k} \right) \le    \tilde{T}^f_1 \le  \frac{2}{\underline{\la}} \ln(  c_k
     |\ln(\varpi)|) , \\
     &  \frac{1}{2\overline{\la}} \ln \left(\frac{|\ln(\varpi)|}{c_k} \right) \le    \tilde{T}_4^b  \le  \frac{2}{\underline{\la}} \ln(
     c_k|\ln(\varpi)|).
\end{split}
\end{equation}
Hence for any $0<\ep \le \ep_0$ we have
\begin{equation}\label{perhapsbis}
\begin{split}
     &    \frac{1}{2\overline{\la}} \ln \left(\frac{|\ln(\varpi)|}{2c_k} \right) \le    T^f_1(0,\tau) \le  \frac{2}{\underline{\la}} \ln( 2
     c_k
     |\ln(\varpi)|) , \\
     &   \frac{1}{2\overline{\la}} \ln \left(\frac{|\ln(\varpi)|}{2c_k} \right) \le     T^b_4(0,\tau) \le  \frac{2}{\underline{\la}} \ln( 2
     c_k
     |\ln(\varpi)|) .
\end{split}
\end{equation}
\end{remark}

\begin{lemma}\label{close}
  Assume \assump{H}  and let $ \varpi>0$ be  a small constant, then there is $\delta=\delta(\varpi)$ such that for any
  $0<d \le \delta$ and any $\tau \in \R$ we have
   \begin{equation}\label{perhaps3}
    | T^f_1(d,\tau) -T^f_1(0,\tau)| \le 1,
    \end{equation}
 \begin{equation}\label{perhaps4}
    | T^b_4(d,\tau) -T^b_4(0,\tau)| \le 1.
    \end{equation}
\end{lemma}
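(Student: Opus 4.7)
The plan is to compare each perturbed trajectory with its $d=0$ analogue on a bounded time interval, and then to convert the resulting spatial closeness into a bound on the first-hitting time via the transversality already built into Lemma \ref{defTf-}. The reference trajectories are $\x(t,\tau;\P_s(\tau))$ for the forward statement (which, lying on $\tilde{W}^s(\tau)$, crosses $\tilde{S}^+$ exactly at $t=\tau+T^f_1(0,\tau)$) and $\x(t,\tau;\PPs(\tau))$ for the backward statement (whose backward orbit crosses $L^0$ at $t=\tau-T^b_4(0,\tau)$).

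For \eqref{perhaps3}, I will work on $I=[\tau,\tau+T^f_1(0,\tau)+1]$. By Lemma \ref{defTf-}, Remark \ref{Wloc}, and continuous dependence on initial conditions, for $d$ small enough both trajectories remain in $\Om^+\cup\Om^0$ throughout $I$, so they are governed by the smooth vector field $\vec{F}^+(\x,t,\ep)$, whose Lipschitz constant $L$ on $B(\bs{\Gamma},1)$ is uniform in $\tau$ and $\ep$ thanks to \assump{H} and the boundedness of $\g$. A standard Gronwall estimate will give
$$\|\x(t,\tau;\Q_s(d,\tau))-\x(t,\tau;\P_s(\tau))\|\le \eu^{L(t-\tau)}\,d,\qquad t\in I,$$
and, since by \eqref{perhapsbis} the length of $I$ is $O(\log|\ln\varpi|)$, the displacement at $t=\tau+T^f_1(0,\tau)$ will be bounded by $C(\varpi)\,d$ with $C(\varpi)$ growing only polynomially in $|\ln\varpi|$. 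Transversality at the crossing with $\tilde{S}^+$ is quantitative: $\tilde{S}^+$ is tangent to $\vec v_u^+$, and the velocity at the reference crossing point is to leading order $\f^+(\PPs(\tau+T^f_1(0,\tau)))\approx\la_s^+\vec v_s^+/|\ln\varpi|$ (the perturbation contribution is of lower order by \assump{G}), so its component along $\vec v_s^+$, transverse to $\tilde{S}^+$, is bounded below by a quantity of order $|\la_s^+|/|\ln\varpi|$. An implicit function theorem argument applied to the first-hit map will then yield
$$|T^f_1(d,\tau)-T^f_1(0,\tau)|\le \frac{2C(\varpi)|\ln\varpi|}{|\la_s^+|}\,d,$$
and choosing $\delta=\delta(\varpi)$ sufficiently small will make the right-hand side $\le 1$ for every $0<d\le\delta$.

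The proof of \eqref{perhaps4} will mirror the previous one backwards in $t$ on $[\tau-T^b_4(0,\tau)-1,\tau]$, comparing $\x(t,\tau;\Q^+(d,\tau))$ with $\x(t,\tau;\PPs(\tau))$. The Gronwall bound is identical, and the transversality is in fact more favourable here because the target is $L^0$ and assumption \assump{K} gives $(\na G(\ga(0)))^*\f^+(\ga(0))>0$, so the transverse speed at the $L^0$-crossing is bounded below by a positive constant \emph{independent} of $\varpi$. The hard part will be tracking uniformity of the constants: uniformity in $\tau\in\R$ is automatic from the uniform bounds on $\f^\pm,\g$ and their derivatives, while the blow-up of the Gronwall factor $\eu^{LT^f_1(0,\tau)}$ and the vanishing of the transverse speed $\sim 1/|\ln\varpi|$ in the forward case would each be dangerous separately; however both grow only polynomially in $|\ln\varpi|$ by Remark \ref{r.close}, and this is absorbed by selecting $\delta(\varpi)$ after $\varpi$ has been fixed, in the order prescribed by Remark \ref{oneparam}.
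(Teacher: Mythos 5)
Your proposal is correct and follows essentially the same route as the paper: continuous dependence (Gronwall) over the $O(\ln|\ln\varpi|)$ time window to get spatial closeness of order $C(\varpi)\,d$, combined with the quantitative transversality of the velocity to $\tilde{S}^+$ (transverse speed $\sim|\la_s^+|/|\ln\varpi|$, cf.\ \eqref{perhaps1}--\eqref{perhaps2}) resp.\ to $L^0$ (via \assump{K}), with $\delta(\varpi)$ chosen last. The paper merely phrases the final step as showing that a ball around $\x(1+T^f_1(0,\tau)+\tau,\tau;\Q_s(d,\tau))$ misses $\tilde{S}^+$ rather than invoking the implicit function theorem on the first-hit map, which is an inessential difference.
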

\begin{proof}

Let us set $\bar{T}= \frac{2}{\underline{\la}} \ln( 4 c_k |\ln(\varpi)|)$, using continuous dependence of \eqref{eq-disc} on initial data (see
Remark \ref{r.smooth}),
we see that there is  $\de=\de(\varpi)$ such that
 \begin{equation}\label{perhaps0}
   \| \x(\theta+\tau, \tau; \Q_s(d,\tau))- \x(\theta+\tau, \tau; \P_s(\tau))\|\le  |\ln(\varpi)|^{-4}
\end{equation}
for any $0 \le \theta \le \bar{T}$ and for any $0 \le d \le \delta$.
 Since $\f(\x(T^f_1(0,\tau)+\tau, \tau; \P_s(\tau)))$ is transversal to $\tilde{S}^+$ and
 $1+T^f_1(0,\tau)+\tau<\bar{T}$, using
 \eqref{perhapsbis}  we find
 \begin{equation}\label{perhaps1}
 	\begin{gathered}
     \dot{\x}(T^f_1(0,\tau)+\tau, \tau; \P_s(\tau)) \\
     =
      \bs{f^+_x}(\vec{0})\x(T^f_1(0,\tau)+\tau, \tau; \P_s(\tau)) + o(\x(T^f_1(0,\tau)+\tau, \tau; \P_s(\tau)))+O(\ep)\\
      =\frac{  \la_s^+ }{|\ln(\varpi)|}\left(   \vec{v}_s^+ +O(|\ln(\varpi)|^{-\alpha}) +O(\ep)   \right).
      \end{gathered}
\end{equation}
Further, a similar estimate holds when $\theta \in [0, 1]$, namely
 \begin{equation}\label{perhaps2}
     \left\|  \dot{\x}(\theta+ T^f_1(0,\tau)+\tau, \tau; \P_s(\tau)) -\frac{ \la_s^+  \eu^{ \la_s^+ \theta}
     }{|\ln(\varpi)|}    \vec{v}_s^+     \right\| =
     \frac{O(\ep)}{|\ln(\varpi)|}+O\left(\frac{1}{|\ln(\varpi)|^{1+\alpha}}\right).
\end{equation}
So, from \eqref{perhaps1} and \eqref{perhaps2}  we find that
$$B(\x(1+ T^f_1(0,\tau)+\tau, \tau; \P_s(\tau)); 2|\ln(\varpi)|^{-2}) \cap \tilde{S}^+ = \emptyset.$$
Then using \eqref{perhaps0} we find
$$B( \x(1+ T^f_1(0,\tau)+\tau, \tau; \Q_s(d,\tau)); |\ln(\varpi)|^{-2}) \cap \tilde{S}^+ = \emptyset,$$
so we get $T^f_1(d,\tau)< T^f_1(0,\tau) +1$. Using a specular argument we prove
\eqref{perhaps3}; the proof of \eqref{perhaps4} is analogous and it is omitted.
\end{proof}

Adapting slightly the proof of Lemmas 6.2 and 6.3 in \cite{FrPo}, which are in fact based on Theorem 12.15 in \cite{HK}
we obtain the following
 estimates of the fly time and displacement with respect to $\tilde{W}^s$, moving forward and backward in time.

\begin{proposition}\label{estloglog}
  Assume \assump{H}  and let $ \varpi>0$ be  a small constant, then there is $\delta=\delta(\varpi)$ such that for any
  $0<d \le \delta$ and any $\tau \in \R$ we have
   \begin{equation}\label{maybe}
    \frac{\ln(|\ln(\varpi)|)}{4\overline{\la}} < T^f_1(d,\tau) < \frac{4\ln(|\ln(\varpi)|)}{\underline{\la}}.
    \end{equation}
  Further
  $\P_f^+(d,\tau)= -D^f_1(d,\tau) \vec{v}_u^+ + \PPs(\tau+T^f_1(d,\tau))$ where
  \begin{equation}\label{maybe0}
    0< d|\ln(\varpi)|^{-\tilde{C}}\leq  D^f_1(d,\tau)\le  d|\ln(\varpi)|^{\tilde{C}}.
  \end{equation}
  Here and below $\tilde{C}>0$ is a constant independent of $\varpi$, $\de$ and $\ep$.

 Respectively,
  $$ \frac{\ln(|\ln(\varpi)|)}{4\overline{\la}} <  T^b_4 (d,\tau)< \frac{4\ln(|\ln(\varpi)|)}{\underline{\la}},$$
 and  $D^b_4(d,\tau)=\DDD(\P_b^+(d,\tau),  \P_s(\tau-T^b_4(d,\tau)))$  verifies
  $$  0< d|\ln(\varpi)|^{-\tilde{C}}\leq    D^b_4(d,\tau) \le d |\ln(\varpi)|^{\tilde{C}}.$$
\end{proposition}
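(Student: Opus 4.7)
The plan is to obtain \eqref{maybe} directly from Lemma~\ref{close} combined with Remark~\ref{r.close}, and to derive \eqref{maybe0} by linearizing the flow about the stable trajectory $\x(\cdot,\tau;\P_s(\tau))$ and tracking the difference $\vec{z}(t):=\x(t,\tau;\Q_s(d,\tau))-\x(t,\tau;\P_s(\tau))$ by means of the exponential dichotomy of the variational equation.

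\emph{Time estimate.} From \eqref{perhapsbis} one has $\tfrac{1}{2\ov{\la}}\ln(|\ln(\varpi)|/(2c_k)) \le T^f_1(0,\tau) \le \tfrac{2}{\und{\la}}\ln(2c_k|\ln(\varpi)|)$, while Lemma~\ref{close} gives $|T^f_1(d,\tau) - T^f_1(0,\tau)|\le 1$. For $\varpi$ small enough (independently of $d$) the constant $\ln(2c_k)$ and the extra unit are dominated by $\ln(|\ln(\varpi)|)$, yielding \eqref{maybe}; the same reasoning with \eqref{perhaps4} handles $T^b_4$.

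\emph{Displacement estimate.} Taylor expanding about $\x(\cdot,\tau;\P_s(\tau))$ and using \assump{H} produces
\[
\dot{\vec{z}}(t)=\bs{A}(t)\vec{z}(t)+\vec{R}(t),\qquad \bs{A}(t):=\bs{F^+_x}(\x(t,\tau;\P_s(\tau)),t,\ep),\qquad \|\vec{R}(t)\|\le N_{\al}\|\vec{z}(t)\|^{1+\al}.
\]
By \eqref{est.stableunstable} the reference trajectory decays at rate at least $\und{\la}/2$, so $\bs{A}(t)-\bs{F^+_x}(\vec{0},t,\ep)$ is $L^1$-integrable in $t$; roughness of exponential dichotomy then preserves dichotomy of the variational equation on $[\tau,\tau+T^f_1(d,\tau)]$ with projections $O(\ep)$-close to those of \eqref{eq-lin}$_+$ and rates close to $\la_u^+\pm k\ep$, $\la_s^+\pm k\ep$. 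Decompose $\vec{z}(\tau)=\vec{z}_s+\vec{z}_u$ along this splitting; since the tangent of $\Omega^0$ at $\P_s(\tau)$ has (by possibly shrinking $\delta$) non-degenerate components in both eigendirections uniformly in $\tau$, one gets $c_0 d\le\|\vec{z}_u\|,\|\vec{z}_s\|\le k_2 d$ for some $c_0>0$. A Grönwall estimate applied to the integral form of $\vec{z}$, closed thanks to the smallness $d^{\al}|\ln(\varpi)|^{C\al}\ll 1$ (ensured by the choice $\delta = \delta(\varpi)$), then gives
\[
\|\vec{z}(\tau+T^f_1)-z_s^+(\tau+T^f_1,\tau)\vec{z}_s-z_u^+(\tau+T^f_1,\tau)\vec{z}_u\|\le Cd^{1+\al}\,z_u^+(\tau+T^f_1,\tau).
\]
Since $\tilde{S}^+$ fixes the $\vec{v}_s^+$-coordinate of $\P_f^+(d,\tau)$, the displacement $D^f_1(d,\tau)$ equals, up to a relative error $\ll 1$, the $\vec{v}_u^+$-coordinate of $z_u^+(\tau+T^f_1,\tau)\vec{z}_u$; combining \eqref{defzused} with the time bounds \eqref{maybe} then yields \eqref{maybe0} with a suitable $\tilde{C}>0$.

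\emph{Backward statement and main obstacle.} The bounds for $T^b_4$ and $D^b_4$ follow by applying the same scheme to $\x(\cdot,\tau;\Q^+(d,\tau))$ integrated backward from $\tau$ down to $\tau-T^b_4(d,\tau)$, with the roles of $\vec{v}_s^+,\vec{v}_u^+$ and of the growth/decay rates interchanged. The principal technical difficulty is to ensure that the exponential dichotomy of the variational equation carries constants uniform in $\varpi,d,\ep,\tau$, so that Lemma~\ref{newdichot} and the bounds \eqref{defzused} remain applicable with the same $k_1,k_2$ up to a universal factor; roughness of dichotomy, the decay \eqref{est.stableunstable} of the reference trajectory, and the H\"older regularity \assump{H} are exactly what is needed to achieve this.
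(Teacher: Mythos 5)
Your treatment of the time estimate \eqref{maybe} is correct and is exactly the paper's argument: combine \eqref{perhapsbis} with Lemma~\ref{close} (and \eqref{perhaps4} for $T^b_4$), absorbing the constants into $\ln(|\ln(\varpi)|)$ for $\varpi$ small. For the displacement, however, you take a genuinely different route from the paper, and it contains a gap at the crucial point, namely the \emph{lower} bound in \eqref{maybe0}. The paper does not linearize along the perturbed stable trajectory; it expands $D^f_1(d,\tau)=\frac{\partial D^f_1}{\partial d}(0,\tau,0)\,d+o(d+\ep)$, computes the derivative at $\ep=0$ via the variational equation \eqref{linlin} along $\ga$, and extracts two-sided bounds from Liouville's formula $\det\bs{X}(\tilde T^f_1)=\exp\bigl(\int_0^{\tilde T^f_1}\operatorname{tr}\bs{f^+_x}(\ga)\,d\theta\bigr)$ together with the triangular matrix representation $\bs{Y}(\tilde T^f_1)$ in the bases adapted to $\f^+(\ga(0))$ and $\f^+(\ga(\tilde T^f_1))$ (cf.\ \eqref{Ytf}--\eqref{maybe2}, following Hale--Ko\c{c}ak). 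The virtue of that computation is that the time-reparametrization term $-\|\f^+(\ga(\tilde T^f_1))\|\,T'_d(\tau)$ sits in the off-diagonal entry of $\bs{Y}$ and therefore cancels out of the determinant, so the lower bound on $|\partial D^f_1/\partial d|$ comes for free from the lower bound \eqref{maybe1} on $\det\bs{X}$.

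Your argument, by contrast, identifies $D^f_1(d,\tau)$ with the $\vec{v}_u^+$-coordinate of $z_u^+(\tau+T^f_1,\tau)\vec{z}_u$ ``up to a relative error $\ll 1$'', and this is the step that fails as stated. The quantity $D^f_1$ is the $\vec v_u^+$-coordinate of $\P_f^+(d,\tau)-\PPs(\tau+T^f_1(d,\tau))$, whereas your $\vec z(\tau+T^f_1)$ compares $\P_f^+$ with $\x(\tau+T^f_1(d,\tau),\tau;\P_s(\tau))$; these reference points differ by the vector $\vec\upsilon_s(d,\tau)$ of Lemma~\ref{junction}, whose norm is only controlled by $C\,d\,|\ln(\varpi)|^{8\ov\la/\und\la}$ (it is driven by $T'_d(\tau)$, cf.\ \eqref{Tdiff1}). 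Your main term has the guaranteed size $c\,d\,z_u^+(\tau+T^f_1,\tau)\gtrsim d\,|\ln(\varpi)|^{\und\la/(4\ov\la)}$ in the worst case, and $\und\la/(4\ov\la)\le 1/4$ while $8\ov\la/\und\la\ge 8$, so the neglected junction term can be far larger than the term you keep; even using that $\vec\upsilon_s$ is nearly parallel to $\vec v_s^+$, its residual $\vec v_u^+$-component (of relative size $O(\ep)+O(|\ln(\varpi)|^{-\al})$) is not obviously dominated. To repair the lower bound within your framework you would have to prove that the $\vec v_u^+$-component of $\vec\upsilon_s$ is $o\bigl(d\,z_u^+(\tau+T^f_1,\tau)\bigr)$, which amounts to redoing the cancellation that the paper's determinant identity encodes; alternatively, adopt the paper's Dulac-map computation for the lower bound and keep your dichotomy estimate only for the upper bound (where the triangle inequality with $\|\vec\upsilon_s\|\le d\delta^{-\mu_1/2}$ is harmless). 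The remaining ingredients of your scheme --- the $L^1$-roughness of the dichotomy along $\x(\cdot,\tau;\P_s(\tau))$ guaranteed by \eqref{est.stableunstable} and \assump{H}, the transversality of $\Omega^0$ to $\tilde W^s(\tau)$ giving $\|\vec z_u\|\asymp d$, and the weighted Gr\"onwall closing under $d^\al|\ln(\varpi)|^{C\al}\ll1$ --- are sound, and the backward statements indeed follow by the same scheme with time reversed.
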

\begin{proof}
 The estimate  \eqref{maybe} follows immediately from \eqref{perhaps3} and  \eqref{perhapsbis}, and similarly for the estimates
concerning $T^b_4(d,\tau)$.

Let us now estimate the displacement
\begin{equation}\label{Df1}
	D^f_1(d,\tau)=D^f_1(d,\tau,\ep)=\frac{\partial D^f_1}{\partial d}(0,\tau,0)d
+o(d+\ep)
\end{equation}
since $D^f_1(0,\tau, \ep)=0$. Now we estimate  $\frac{\partial D^f_1}{\partial d}(0,\tau,0)$, so from now on in this proof we consider
$\ep=0$.
Let us set
\begin{equation*}
	\begin{split}
		M_f= &   \sup  \left\{ \left\{|\operatorname{tr} \bs{f^-_x}(\x)|  \mid \x \in B(\bs{\Gamma^-},1)\right\} \cup
\left\{|\operatorname{tr} \bs{f^+_x}(\x)|  \mid \x \in
		B(\bs{\Gamma^+},1)\right\}\right\},
	\end{split}
\end{equation*}
where $\bs{\Gamma^\pm}=\bs{\Gamma}\cap \Omega^\pm$.
In this proof we denote for the sake of brevity $\vec{p}(\theta,\P)=\x(\theta+\tau, \tau; \P, \ep=0)$. Notice that
$\vec{p}(\theta,\ga(0))=\ga(\theta)$  and
$\dot{\vec{p}}(\theta,\ga(0))=\f^{+}(\ga(\theta))$,
 if $\theta \ge 0$. Adapting Theorem 12.15 in \cite{HK} we consider the variational equation
\begin{equation}\label{linlin}
	\dot{\x}(\theta)= \bs{f^+_x}(\ga(\theta))\x(\theta)\,.
\end{equation}
Following \cite{HK}
we see that if  $\bs{X}(\theta)$ is the fundamental matrix of \eqref{linlin} then for any $T>0$ we get
$$ \det\bs{X}(T)= \exp \left( \int_0^T \textrm{tr} \bs{f^+_x}(\ga(\theta))  d \theta  \right). $$
Further, recalling that $\bs{f^+_x}(\ga(\theta))$ is close to  $\bs{f^+_x}(\vec{0})$
when $\theta \to +\infty$ we find
\begin{equation}\label{newmaybe1}
	\|\bs{X} (\tilde{T}^f_1)\| \le  \eu^{2\ov{\la}\tilde{T}^f_1} \le |\ln(\varpi)|^{8\ov{\la}/\und{\la}}.
\end{equation}
Next from \eqref{maybe} we get
\begin{equation}\label{maybe1}
	|\ln(\varpi)|^{-4M_f/\underline{\la}} \le \det(\bs{X} (\tilde{T}^f_1)) \le  |\ln(\varpi)|^{4M_f/\underline{\la}}
\end{equation}
for any $0 \le d \le \delta$.
Let now $\vec{v}$ be the
unit  vector tangent to $\Om^0$ in $\ga(0)$ aiming towards $E^{\inn}$: notice that
$\vec{v}$ is  transversal to $\dot{\ga}(0^{+})= \f^{+}(\ga(0^{+}))$, cf. \assump{K}.

Observe that by construction  $\Q_s(0,\tau)=\P_s(\tau)=\ga(0)$, $\frac{\partial \Q_s(0,\tau)}{\partial d}= \vec{v}$;
further, $\bs{\frac{\partial}{\partial \P}}\vec{p}(\theta,\ga(0))=\bs{X}(\theta)$ is
the fundamental matrix of \eqref{linlin}.

Now we proceed to evaluate $T'_d(\tau):=\tfrac{\partial}{\partial d} T_1^f(d,\tau)\lfloor_{d=0}$.
Note that, by construction,
$ [ \bs{P^+}\vec{p}(T_1^f(d,\tau),\Q_s(d,\tau))]^* \vec{v}_s^+  \equiv |\ln(\varpi)|^{-1}$ for any
$0<d \le \delta$ and any $\tau \in
\R$.
Hence differentiating we find
\begin{equation}\label{Tdiff}
	\begin{split}
		T'_d(\tau)  & = -
		\frac{ \left[\bs{P^+}\bs{\frac{\partial }{\partial \P}} \vec{p}(T_1^f(0,\tau), \P_s(\tau))  \frac{\partial
				\Q_s(0,\tau)}{\partial d}\right]^*\vec{v}_s^+}{
			\left[ \bs{P^+}\dot{\vec{p}}(T_1^f(0,\tau),\P_s(\tau)) \right]^*\vec{v}_s^+}\\
		& =  -\frac{  [ \bs{P^+}\bs{X}(\tilde{T}_1^f) \vec{v})]^*\vec{v}_s^+}{ [
\bs{P^+}\f^+(\ga(\tilde{T}_1^f))]^*\vec{v}_s^+  }.
	\end{split}
\end{equation}
Now, observing
that
$$\vec{p}( T^f_1(0,\tau),\P_s(\tau))=\ga(\tilde{T}^f_1) =  \frac{\vec{v}_s^+}{|\ln(\varpi)|} +
o\left(\frac{1}{|\ln(\varpi)|}\right)$$
we get
\begin{equation*}
	\begin{split}
		\f^+(\ga(\tilde{T}^f_1))= & \bs{f^+_x}(\vec{0})\ga(\tilde{T}_1^f)+o( \ga(\tilde{T}_1^f))
		=  \frac{ \la_s^+}{|\ln(\varpi)|} \vec{v}_s^+  + o\left(\frac{1}{|\ln(\varpi)|}\right)
	\end{split}
\end{equation*}
whence \begin{equation}\label{whence}
\begin{split}
&    [ \bs{P^+}\f^+(\ga(\tilde{T}_1^f))]^*\vec{v}_s^+ =  \frac{ \la_s^+}{|\ln(\varpi)|}
+o\left(\frac{1}{|\ln(\varpi)|} \right)  \qquad
\textrm{and}  \\
	& \frac{  \underline{\la}}{2|\ln(\varpi)|}\le \|\f^+(\ga(\tilde{T}^f_1))\| \le \frac{2 \overline{\la}}{|\ln(\varpi)|}.
	\end{split}
\end{equation}
 Recall that    $\max\{\| \bs{P^+}\| ; \| \I- \bs{P^-}\|\} \le k_2$;
then, using \eqref{newmaybe1} and \eqref{whence} in \eqref{Tdiff},
  we find
\begin{equation}\label{Tdiff1}
	|T'_d(\tau)| \le  \frac{ k_2}{\und{\la}} |\ln(\varpi)|^{8 \tfrac{\ov{\la}}{\und{\la}}+1}.
\end{equation}

To proceed in the   computation, it is convenient (cf.~p.~377--388 in \cite{HK}) to consider the matrix representation
$\bs{Y}(\tilde{T}^f_1)$ of
$\bs{X}(\tilde{T}^f_1)$ with respect to the bases $\big\{\frac{\f^+(\ga(0))}{\|\f^+(\ga(0))\|},\vec{v} \big\}$ in the domain and
$\big\{\frac{\f^+(\ga(\tilde{T}^f_1))}{\|\f^+(\ga(\tilde{T}^f_1))\|},\vec{v}_u^+ \big\}$ in the codomain,
i.e., to set $\bs{Y}(\tilde{T}^f_1)=\bs{B} \bs{X}(\tilde{T}^f_1) \bs{A}$ where
$$\bs{A}:= \left(
	\frac{\f^+(\ga(0))}{\|\f^+(\ga(0))\|}, \vec{v} \right)\, , \qquad  \bs{B}:=
\left(
	\frac{\f^+(\ga(\tilde{T}^f_1))}{\|\f^+(\ga(\tilde{T}^f_1))\|}, \vec{v}_u^+
\right)^{-1}. $$
Since $\dot{\vec{p}}(\theta,\ga(0))$ is a solution of the variational equation \eqref{linlin} we get
$$\f^+(\ga(\tilde{T}^f_1))=\dot{\vec{p}}(\tilde{T}^f_1,\ga(0))=\bs{X}(\tilde{T}^f_1) \dot{\vec{p}}(0,\ga(0))=
\bs{X}(\tilde{T}^f_1)\f^+(\ga(0))$$
or equivalently
$$  \left( \frac{\|\f^+(\ga(\tilde{T}^f_1))\|}{\|\f^+(\ga(0))\|},0 \right)^*= \bs{Y}(\tilde{T}^f_1) \, \cdot \, (1,0)^*. $$
We recall that
$$-D^f_1(d,\tau)  \vec{v}_u^+ =  \vec{p}( T^f_1(d,\tau),\Q_s(d,\tau))- \PPs(T^f_1(d,\tau)+\tau).  $$

Notice that $\PPs(\tau)\equiv\ga(\tilde{T}^f_1) \in \tilde{S}^+$ for any $\tau$ when $\ep=0$,
 so $\frac{d \PPs}{dt}(\tau) \equiv \vec{0}$ if $\ep=0$.

Now,
recalling that
$$ \dot{\vec{p}}(T^f_1(d,\tau),\Q_s(d,\tau))= \f^+(\vec{p}(T^f_1(d,\tau),\Q_s(d,\tau)))$$
we find
\begin{gather*}
	-\frac{\partial D^f_1}{\partial d}(0,\tau) \vec{v}_u^+=
\bs{\frac{\partial }{\partial \P}} \vec{p}(T_1^f(0,\tau), \P_s(\tau))  \frac{\partial
	\Q_s(0,\tau)}{\partial d}\\
{}+\dot{\vec{p}}(T_1^f(0,\tau),\P_s(\tau))
\frac{\partial T_1^f(0,\tau)}{\partial d}
=\bs{X}(\tilde{T}^f_1)\vec{v} +\f^+(\ga(\tilde{T}^f_1))T'_d(\tau).
\end{gather*}
Thus, $\bs{Y}(\tilde{T}^f_1)$ is the  matrix
\begin{equation}\label{Ytf}
\bs{Y}(T^f_1(0,\tau)) = \begin{pmatrix}
	\frac{\|\f^+(\ga(\tilde{T}^f_1))\|}{\|\f^+(\ga(0))\|} &  - \|\f^+(\ga(\tilde{T}^f_1))\| T'_d(\tau)   \\
	0 &   -\frac{\partial D^f_1}{\partial d}(0,\tau)
\end{pmatrix}.
\end{equation}
Now let $K_A$ and $K_B$ equal the determinants respectively of $\bs{A}$ and $\bs{B}$: since their columns are made up
by unit vectors which are transversal, then  $|K_A|>0$ and $|K_B|>0$.  Hence
\begin{equation}\label{maybe2}
	\det \bs{X}(\tilde{T}^f_1)=\frac{\det \bs{Y}(\tilde{T}^f_1)}{K_A K_B}=
	-\frac{\partial D^f_1}{\partial d}(0,\tau)
		\frac{\|\f^+(\ga(\tilde{T}^f_1))\|}{\|\f^+(\ga(0))\| K_A K_B}.
\end{equation}
So from  \eqref{maybe1}  and \eqref{whence}   we find $0<c_1<c_2$ such that
$$ c_1 |\ln(\varpi)|^{-\frac{4M_f }{\underline{\la}} +1} \le \left|\frac{\partial D^f_1}{\partial d}(0,\tau)\right| \le c_2
|\ln(\varpi)|^{\frac{4 M_f}{\underline{\la}} +1}.$$
So, taking into account \eqref{Df1} and setting $\tilde{C}=
\tfrac{4 M_f}{\underline{\la}} +2$
we conclude the proof of \eqref{maybe0}.

Notice that
\eqref{maybe} is independent of $\delta$ and $\tau$ so the   estimate of
$T^b_4(d,\tau)$ is obtained from \eqref{maybe}.
Then the proof concerning $D^b_4(d,\tau)$ is obtained from \eqref{maybe0} reversing the role of $d$ and $D^f_1$.
\end{proof}
We emphasize that we have proved also the following result, cf. \eqref{Tdiff1} which allows us to improve \eqref{perhaps3}.
\begin{remark}\label{r.Tdiff}
  Let the assumptions of Proposition \ref{estloglog} be satisfied. Then
  $$|T'_d(\tau)|:= \left| \frac{\partial}{\partial d} T^f_1(0,\tau)\right|\le  \frac{  k_2  }{\und{\la}} |\ln(\varpi)|^{8
  \tfrac{\ov{\la}}{\und{\la}}+1};$$
 so,    for any $0<d \le \delta \le \varpi$,  we have
  $$|T^1_f(d,\tau)- T^f_1(0,\tau)| \le  \frac{  2 k_2}{\und{\la}} |\ln(\delta)|^{8 \tfrac{\ov{\la}}{\und{\la}}+1} d  \le    d
  \delta^{-\mu_1/2}.$$
\end{remark}

Analogously let us consider the trajectory $\x(t,\tau; \Q_u(d,\tau))$ backward from $L^0$ to $\tilde{S}^-$;
using again Lemma \ref{L.loop} we obtain the following.

 \begin{lemma}\label{defTb+}
   Let $ \varpi>0$ be  a small constant, then there is $\delta=\delta(\varpi)$ such that for any
  $0<d \le\delta$ and any $\tau \in \R$ there are  $T^b_1(d,\tau)>0$ and
  $\P_b^-(d,\tau)$  such that $\x(t,\tau; \Q_u(d,\tau)) \in K_A^{\textrm{bwd},-}(t)$ for any
  $\tau- T^b_1(d,\tau)\le t <\tau$ and it crosses transversely $S^-( \tau-T^b_1(d,\tau)) \subset \tilde{S}^-$
  at $t=\tau-T^b_1(d,\tau)$ in $\P_b^-(d,\tau)=\x(\tau-T^b_1(d,\tau),\tau; \Q_u(d,\tau))$.

  Conversely, going forward in $t$,
   there are $T^f_4(d,\tau)>0$ and
  $\P_f^-(d,\tau)$  such that $\x(t,\tau; \Q^-(d,\tau)) \in K_A^{\michal{\textrm{fwd}},-}(t)$ for any
  $\tau \leq t < \tau+ T^f_4(d,\tau)$ and it crosses transversely $L^0$
  at $t=\tau+T^f_4(d,\tau)$ in $\P_f^-(d,\tau)=\x(\tau+T^f_4(d,\tau),\tau;  \Q^- (d,\tau))$.
\end{lemma}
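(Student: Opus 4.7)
The plan is to derive this lemma as a direct consequence of Lemma \ref{L.loop} combined with continuous dependence on initial conditions, mirroring the argument for Lemma \ref{defTf-} with time reversed. The key geometric fact is that $\tilde{S}^-$, defined in \eqref{S+}, is transversal to $\ga^-$ at the unique time $-\tilde{T}^b_1<0$ at which $\ga^-$ reaches it, and that $\PPu(\tau)$ is the unique point of $\tilde{W}^u(\tau)\cap\tilde{S}^-$, depending smoothly on $\tau$ and $\ep$ and lying within $O(\ep)$ of $\ga^-(-\tilde{T}^b_1)$ by Remark \ref{rMelnibis} and Remark \ref{Wloc}.

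For the first statement, fix $\Q_u(d,\tau) \in A^{\textrm{bwd},-}(\tau)$ (cf.\ Remark \ref{RJ0}) and invoke Lemma \ref{L.loop}: going backward in $t$, the trajectory $\x(t,\tau;\Q_u(d,\tau))$ stays in $K^{\textrm{bwd},-}_A(t)$ on the interval $(\tau_{-1}(\Q_u(d,\tau),\tau),\tau)$. Since $\|\Q_u(d,\tau)-\P_u(\tau)\|\le d$ and $\|\x(t,\tau;\P_u(\tau))-\ga^-(t-\tau)\|\le \bar{c}^* \ep$ for $t\le\tau$ by Remark \ref{rMelnibis}, continuous dependence on initial data gives that the perturbed trajectory remains uniformly close to $\ga^-(\cdot-\tau)$ on any fixed compact interval of backward time containing $-\tilde{T}^b_1$. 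Transversality of $\ga^-$ to $\tilde{S}^-$ then persists under small perturbations: choosing $\delta=\delta(\varpi)$ and $\ep_0(\varpi)$ sufficiently small, the perturbed trajectory crosses $\tilde{S}^-$ transversely at a first backward time $\tau-T^b_1(d,\tau)$ close to $\tau-\tilde{T}^b_1$. Finally, the crossing point lies in $S^-(\tau-T^b_1(d,\tau))$ because its distance from $\PPu(\tau-T^b_1(d,\tau))$ is $O(d)<\delta$.

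For the second statement, recall $\Q^-(d,\tau)=-d\vec{v}_s^-+\PPu(\tau)$ is at distance $d$ from $\PPu(\tau)$, and the unperturbed trajectory $\y_u(\theta)=\x(\theta+\tau,\tau;\PPu(\tau))$ continues forward along $\tilde{W}^u(\tau+\theta)$, eventually crossing $L^0$ transversely at $\P_u(\tau+\theta_u^*)$ for some $\theta_u^*>0$ of order $\ln|\ln(\varpi)|$ (by estimates analogous to \eqref{perhapsbis}). Smooth dependence of the flow of \eqref{eq-disc} on initial data (Remark \ref{data.smooth}), applied to a tube around $\y_u([0,\theta_u^*])$, ensures that the trajectory from $\Q^-(d,\tau)$ also crosses $L^0$ transversely at a unique time $\tau+T^f_4(d,\tau)$ close to $\tau+\theta_u^*$. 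Throughout the transit, the trajectory stays inside $K^{\textrm{bwd},-}_A(t)$: by Lemma \ref{lemma0u} the auxiliary curves $Z^{\textrm{bwd},\textrm{in}}$ and $Z^{\textrm{bwd},\textrm{out}}$ have flow aiming outward from $K^{\textrm{bwd}}$, so no exit through them is possible, while exit through $\Om^0$ is prevented by proximity to $\y_u$, which remains in $\Om^-$ on $[0,\theta_u^*]$ in the Scenario 1 configuration fixed in this paper.

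The main obstacle is ensuring that all the constants---those controlling continuous dependence on the time intervals of length $O(\ln|\ln(\varpi)|)$, the transversality of the crossings of $\tilde{S}^-$ and $L^0$, and the confinement inside $K^{\textrm{bwd},-}_A$---are uniform in $\tau\in\R$, so that $\delta$ and $\ep_0$ can be chosen uniformly in $\tau$ as required for the applications in Theorems \ref{key} and \ref{keymissed}. This uniformity follows from the uniform boundedness of the $C^r$ norms of $\f^\pm$ and $\g$ in $t$, together with the uniform-in-$\tau$ estimates on $\PPu(\tau)$ and $\P_u(\tau)$ granted by Remark \ref{rMelnibis} and the $C^r$ dependence of $\tilde{W}^u(\tau)$ on $\tau$ described in Remark \ref{Wloc}.
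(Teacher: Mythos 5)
Your overall route is the paper's: Lemma \ref{defTb+} is stated there without a written proof, as a direct consequence of Lemma \ref{L.loop} together with continuous dependence on initial data and the transversality of $\tilde{S}^-$ (exactly as for Lemma \ref{defTf-}), and your first paragraph and the backward-time half are a faithful expansion of that.

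There is, however, one step in your forward-time confinement argument that is logically inverted. By Lemma \ref{lemma0u} the flow on $Z^{\textrm{bwd},\textrm{in}}$ and $Z^{\textrm{bwd},\textrm{out}}$ aims towards the \emph{exterior} of $K^{\textrm{bwd}}$; this is precisely what makes these curves barriers for trajectories followed \emph{backward} in time (a backward trajectory cannot leave $K^{\textrm{bwd}}$ through a curve on which the forward flow points outward). In \emph{forward} time the implication is the opposite: a trajectory reaching $Z^{\textrm{bwd},\textrm{in}}$ would exit through it, so ``no exit through them is possible'' does not follow from the stated barrier property for $\x(t,\tau;\Q^-(d,\tau))$ with $t\ge\tau$. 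The confinement in $K^{\textrm{bwd},-}_A(t)$ on $[\tau,\tau+T^f_4(d,\tau)]$ must instead rest entirely on the two ingredients you already have: (i) the trajectory cannot cross $\tilde{W}^u(t)$, by uniqueness of solutions, since $\tilde{W}^u$ is a union of trajectories of \eqref{eq-disc}; and (ii) over the bounded transit time $O(\ln|\ln(\varpi)|)$ it stays within a distance of $\y_u$ which, after shrinking $\delta=\delta(\varpi,\beta)$, is smaller than the distance from $\tilde{W}^u(t)$ to $Z^{\textrm{bwd},\textrm{in}}$ controlled via \eqref{constant.u}. With that repair the argument closes. A smaller cosmetic point: the displacement of the backward crossing point from $\PPu(\tau-T^b_1(d,\tau))$ is of order $d|\ln(\varpi)|^{\tilde{C}}$ rather than $O(d)$ with a constant independent of $\varpi$ (cf.\ Proposition \ref{estloglog+}), so membership in $S^-(\tau-T^b_1(d,\tau))$ as defined in \eqref{S+tau} requires either restricting the range of $d$ or reading $S^-$ with a slightly enlarged parameter range, as the paper implicitly does.
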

Then, arguing as in Proposition  \ref{estloglog} we find
\begin{proposition}\label{estloglog+}
  Assume \assump{H}  and let $ \varpi>0$ be  a small constant. Then there is $\delta=\delta(\varpi)$ such that for any
  $0<d \le\delta$ we have
  $$ \frac{\ln(|\ln(\varpi)|)}{4\overline{\la}} < T^b_1(d,\tau)< \frac{4\ln(|\ln(\varpi)|)}{\underline{\la}},$$
  for any $\tau \in\R$.
  Further
  $\P_b^-(d,\tau)= -D^b_1(d,\tau) \vec{v}_s^- + \PPu(\tau-T^b_1(d,\tau))$ where
  $$  0<d |\ln(\varpi)|^{-\tilde{C}}\leq  D^b_1(d,\tau) \le d |\ln(\varpi)|^{\tilde{C}},$$
  and $\tilde{C}>0$ is as in Proposition \ref{estloglog}.
  Respectively,
   $$ \frac{\ln(|\ln(\varpi)|)}{4\overline{\la}} < T^f_4(d,\tau) < \frac{4\ln(|\ln(\varpi)|)}{\underline{\la}},$$
  and
  $D^f_4(d,\tau)= \DDD(\P_f^-(d,\tau),  \P_u(\tau+ T^f_4(d,\tau))$ verifies
  $$  0< d |\ln(\varpi)|^{-\tilde{C}}\leq    D^f_4(d,\tau) \le d|\ln(\varpi)|^{\tilde{C}}.$$
\end{proposition}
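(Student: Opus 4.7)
The plan is to mirror the proof of Proposition \ref{estloglog} exchanging the roles of the stable and unstable directions and reversing the direction of time. The trajectory $\x(t,\tau; \Q_u(d,\tau))$ traced backward from $L^0$ to $\tilde{S}^-$ plays the same role as $\x(t,\tau; \Q_s(d,\tau))$ traced forward from $L^0$ to $\tilde{S}^+$; correspondingly $\PPu$ replaces $\PPs$, $\vec{v}_s^-$ replaces $\vec{v}_u^+$, and the branch $\ga^-$ of the homoclinic orbit plays the role of $\ga^+$. Since the whole proof of Proposition \ref{estloglog} is formulated in terms of estimates that are symmetric in the stable and unstable directions (exponential dichotomy, transversality of $\f^\pm$ to $\tilde{S}^\pm$, Hartman's variational formula), it adapts verbatim after these substitutions.

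First I would establish the time bounds. Define $\tilde{T}_1^b>0$ by $\ga(-\tilde{T}_1^b)\in\tilde{S}^-$; by the unstable-manifold analog of \eqref{est.stableunstable} and $\|\ga(-\tilde{T}_1^b)\|\sim|\ln(\varpi)|^{-1}$ one gets a two-sided bound on $\tilde{T}_1^b$ identical in form to \eqref{perhaps}. Combining it with continuous dependence of the backward flow on initial data on $A^{\textrm{bwd},-}(\tau)$ (which gives $|T^b_1(d,\tau)-T^b_1(0,\tau)|\le 1$ by the same cone-of-transversality argument as in Lemma \ref{close}) yields the announced inequalities for $T^b_1(d,\tau)$. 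The corresponding estimate for $T^f_4(d,\tau)$ follows from the same bound applied to the reversed-in-time flow from $\tilde{S}^-$ back to $L^0$.

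Next, for the displacement estimate, I would follow the Hartman variational computation of Proposition \ref{estloglog}. Expand
\begin{equation*}
D^b_1(d,\tau,\ep)=\tfrac{\partial D^b_1}{\partial d}(0,\tau,0)\,d+o(d+\ep),
\end{equation*}
and reduce to $\ep=0$. Let $\bs{X}(\theta)$ be the fundamental matrix at $\theta=0$ of the variational equation $\dot{\x}=\bs{f^-_x}(\ga(\theta))\x$ along $\ga^-$ on $[-\tilde{T}_1^b,0]$. The trace formula and the bound on $\tilde{T}_1^b$ give $|\ln(\varpi)|^{-4M_f/\und{\la}}\le |\det\bs{X}(-\tilde{T}_1^b)|\le |\ln(\varpi)|^{4M_f/\und{\la}}$, while $\|\bs{X}(-\tilde{T}_1^b)\|\le |\ln(\varpi)|^{8\ov{\la}/\und{\la}}$. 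Representing $\bs{X}(-\tilde{T}_1^b)$ in the bases $\{\f^-(\ga(0))/\|\f^-(\ga(0))\|,\vec{v}\}$ in the domain and $\{\f^-(\ga(-\tilde{T}_1^b))/\|\f^-(\ga(-\tilde{T}_1^b))\|,\vec{v}_s^-\}$ in the codomain produces an upper-triangular matrix $\bs{Y}(-\tilde{T}_1^b)$ with $(1,1)$ entry $\|\f^-(\ga(-\tilde{T}_1^b))\|/\|\f^-(\ga(0))\|$ and $(2,2)$ entry $-\tfrac{\partial D^b_1}{\partial d}(0,\tau)$. Since $\|\f^-(\ga(-\tilde{T}_1^b))\|\sim|\ln(\varpi)|^{-1}$ by the linearization at the origin, the determinant identity forces the sandwich $d|\ln(\varpi)|^{-\tilde{C}}\le D^b_1(d,\tau)\le d|\ln(\varpi)|^{\tilde{C}}$ with the same $\tilde{C}$ as in Proposition \ref{estloglog}.

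The forward half (from $\tilde{S}^-$ to $L^0$, giving $T^f_4$ and $D^f_4$) is handled identically: exchange domain and codomain of the variational map along $\ga^-\lfloor_{[-\tilde{T}_1^b,0]}$, so that the roles of $d$ and the displacement are swapped and the new $(2,2)$ entry becomes $-\tfrac{\partial d}{\partial D^f_4}(0,\tau)$; the sandwich for $D^f_4$ follows from the reciprocal of the previous estimate. The main obstacle is purely bookkeeping: keeping track of signs, of which of the vectors $\vec{v}_u^\pm,\vec{v}_s^\pm$ appears in the domain versus the codomain, and of the proper orientation of $\vec{v}$ on $\Om^0$; once these are fixed, every quantitative step is the direct analog of the corresponding step in Proposition \ref{estloglog}.
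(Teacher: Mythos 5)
Your proposal is correct and follows exactly the route the paper intends: Proposition \ref{estloglog+} is stated in the paper with the note ``the proof is analogous to the one of Proposition \ref{estloglog} and it is omitted,'' and your substitutions (stable for unstable, backward for forward time, $\PPu$ for $\PPs$, $\vec{v}_s^-$ for $\vec{v}_u^+$, $\ga^-$ for $\ga^+$) together with the Hartman variational computation and the reversal of the roles of $d$ and the displacement for the $T^f_4$, $D^f_4$ half are precisely the intended adaptation. All the quantitative steps (trace formula, norm bound on the fundamental matrix, triangular form of $\bs{Y}$, determinant sandwich) carry over with the same constant $\tilde{C}$, as you claim.
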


The proof is analogous to the one of Proposition  \ref{estloglog} and it is omitted.

 From an analysis of the proof of Proposition \ref{estloglog} we get the following.

\begin{lemma}\label{0keymissed}
Assume \assump{H} and  fix $0<\mu_1<1/2$ and $\tau \in \R$, then there are $\varpi=\varpi(\mu_1)$ and $\de=\delta(\mu_1,\varpi)$  such that
$$ \| \x(\theta+\tau, \tau; \Q_s(d,\tau))- \x(\theta+\tau, \tau; \P_s(\tau))\|\le  d\de^{-\mu_1},$$
$$ \| \x(\phi+\tau, \tau; \Q^+(d,\tau))- \x(\phi+\tau, \tau; \vec{\pi}_s(\tau))\|\le  d\de^{-\mu_1}$$
for any $- T^b_4(d,\tau) \le \phi \le 0\le \theta \le T^f_1(d,\tau)$, and any $0\le d \le \delta$; further
$$ \| \x(\phi+\tau, \tau; \Q_u(d,\tau))- \x(\phi+\tau, \tau; \P_u(\tau))\|\le d \de^{-\mu_1},$$
$$ \| \x(\theta+\tau, \tau; \Q^-(d,\tau))- \x(\theta+\tau, \tau; \vec{\pi}_u(\tau))\|\le d \de^{-\mu_1}$$
for any $-T^b_1(d,\tau)\le \phi \le 0 \le \theta \le T^f_4(d,\tau)$, and any $0\le d \le \delta$.

\end{lemma}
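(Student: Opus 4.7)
The plan is to deduce all four inequalities from a single Gronwall-type estimate combined with the upper time bounds already established in Propositions \ref{estloglog} and \ref{estloglog+}. I first treat the pair $\x(\theta+\tau,\tau;\Q_s(d,\tau))$ and $\x(\theta+\tau,\tau;\P_s(\tau))$. By Lemma \ref{defTf-} together with the invariance property \eqref{proprieta}, both curves lie in $\Omega^+\cup\Omega^0$ throughout $0\le\theta\le T^f_1(d,\tau)$, so both solve \eqref{eq-disc+} with right-hand side $\vec{F}^+$. By \assump{H} and the uniform boundedness of $\f^+,\g$ together with their derivatives, there is a Lipschitz constant $L>0$ for $\vec{F}^+(\cdot,t,\ep)$ on $B(\bs{\Gamma},1)$, uniform in $t\in\R$ and $|\ep|\le\ep_0$; moreover the initial gap at $\theta=0$ is of order $d$ since the directed distance of \eqref{dist} is arc length on the smooth curve $\Omega^0$.

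Gronwall's inequality then yields
$$\|\x(\theta+\tau,\tau;\Q_s(d,\tau))-\x(\theta+\tau,\tau;\P_s(\tau))\|\le C_0\, d\, \eu^{L\theta}$$
for $0\le\theta\le T^f_1(d,\tau)$ and some absolute constant $C_0>0$. Inserting the upper bound $T^f_1(d,\tau)\le \frac{4}{\underline{\la}}\ln|\ln(\varpi)|$ from Proposition \ref{estloglog} gives the polynomial factor
$$\|\x(\theta+\tau,\tau;\Q_s(d,\tau))-\x(\theta+\tau,\tau;\P_s(\tau))\|\le C_0\, d\,|\ln(\varpi)|^{4L/\underline{\la}}.$$
Following the parameter order of Remark \ref{oneparam}, I first fix $\varpi=\varpi(\mu_1)$ small enough so that Lemmas \ref{defTf-} and \ref{defTb+} apply, and then pick $\delta=\delta(\mu_1,\varpi)$ so small that $C_0|\ln(\varpi)|^{4L/\underline{\la}}\le\delta^{-\mu_1}$. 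This is always feasible since $\delta$ is chosen after $\varpi$ and $\mu_1$ and the required inequality amounts to $\delta\le|\ln(\varpi)|^{-4L/(\mu_1\underline{\la})}$ times a constant.

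The remaining three inequalities follow by the very same mechanism. For $\x(\phi+\tau,\tau;\Q^+(d,\tau))$ versus $\x(\phi+\tau,\tau;\PPs(\tau))$ the curves stay in $\Omega^+\cup\Omega^0$ for $-T^b_4(d,\tau)\le\phi\le 0$ by Lemma \ref{defTf-}, the initial separation at $\phi=0$ is of order $d$ by construction of $\Q^+(d,\tau)$ and $\PPs(\tau)$, and Proposition \ref{estloglog} supplies the corresponding logarithmic bound on $T^b_4$. The pairs involving $\Q_u,\P_u$ and $\Q^-,\PPu$ are treated identically in $\Omega^-\cup\Omega^0$, using $T^b_1$ and $T^f_4$ from Lemma \ref{defTb+} and Proposition \ref{estloglog+}, with time reversal where needed.

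The only real obstacle is confirming that the Lipschitz constant $L$ can be chosen uniformly in $\ep\in[0,\ep_0]$ and $\tau\in\R$, which is immediate from \assump{H} and the standing boundedness assumptions on $\f^\pm,\g$. A secondary check is that the four relevant arcs do not traverse $\Omega^0$ during the time window in question, so that only one branch of the piecewise smooth field is active and standard Gronwall applies without modification; this is ensured by the inclusion of each trajectory in $K_A^{\textrm{fwd},\pm}(t)$ or $K_A^{\textrm{bwd},\pm}(t)$ furnished by Lemmas \ref{defTf-} and \ref{defTb+}, together with the convention of Remark \ref{cumbersome}.
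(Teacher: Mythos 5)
Your proof is correct and takes a somewhat more direct route than the one the paper gestures at. The paper introduces the lemma with the remark that it follows ``from an analysis of the proof of Proposition~\ref{estloglog}'', which suggests rerunning the variational-equation argument: bound the fundamental matrix $\|\bs{X}(\theta)\|\le\eu^{2\ov{\la}\theta}\le|\ln(\varpi)|^{8\ov{\la}/\und{\la}}$ uniformly for $0\le\theta\le T^f_1(d,\tau)$ (as in \eqref{newmaybe1} and Remark~\ref{r.Tdiff}) and then control the second-order error in $d$. You instead apply Gronwall's inequality directly to the full nonlinear field $\vec{F}^\pm$, trading the eigenvalue-based exponent $8\ov{\la}/\und{\la}$ for the cruder Lipschitz exponent $4L/\und{\la}$; since in both cases the bound is polynomial in $|\ln(\varpi)|$ and $\delta$ is chosen last (after $\varpi$), this costs nothing, and your version avoids having to separately estimate the remainder of the linearization. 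The one delicate point is that both trajectories must lie on a single branch of the piecewise smooth vector field over the whole time window, which requires not only the containment of $\x(\cdot,\tau;\Q_s(d,\tau))$ in $K_A^{\textrm{fwd},+}(\cdot)$ from Lemma~\ref{defTf-} but also that $\x(\cdot,\tau;\P_s(\tau))$ stays in $\Om^+\cup\Om^0$ via \eqref{proprieta}, and, near the endpoints $\phi=-T^b_4(d,\tau)$ and the analogous ones, one trajectory may clear $\Om^0$ an instant before the other; you correctly defuse this with the smooth-extension convention of Remark~\ref{cumbersome}, which is exactly the paper's own device for such mismatches.
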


\subsection{The loop: from $\boldsymbol{S^+(\tau)}$ to $\boldsymbol{L^\textrm{in}}$}\label{S-Lin}
In this section we consider the fly time and the displacement of trajectories traveling from
$S^+(\tau)$ to $L^{\inn}$,  see Figure \ref{fig.key}.  In this case we linearize
centering in the origin
and we decompose the solution as a sum  of three terms: a
``stable'' nonlinear component,
i.e., a trajectory of the stable manifold,
 a linear  ``unstable'' component, i.e., a trajectory of the linearization \eqref{eq-lin}  of \eqref{eq-disc} in the origin, and a remainder.

Let us recall  that $\vec{Q}^{\pm}(d,\tau) \in S^{\pm}(\tau)$ are defined in \eqref{Q-Q+},  see also Figure \ref{fig.key}.

In the whole section we assume that $0<\delta< \varpi$ are suitably small parameters, see Remark \ref{defvarpi} (in fact for the results in
this section we do not need to  find a function $\de= \de(\varpi)$ small enough: we just need to assume $\de<\varpi$).

\begin{lemma}\label{L.0}
 Assume \textbf{H} and fix $\tau \in \R$; for any $0<d \le \delta$ there are $C^{r}$ functions $T^f_2(d,\tau)$  and $D^f_2(d,\tau)$ such that
  $\x (\theta+\tau,\tau;\Q^+(d,\tau))\in K_A^{\textrm{fwd},+}(\theta+\tau)$ for any $0 \leq \theta<T^f_2(d,\tau)$ and it crosses
  transversely
   $\Om^0$    at $\theta=T^f_2(d,\tau)$ in a point
  such that
  $\| \x(T^f_2(d,\tau)+\tau,\tau;\Q^+(d,\tau))\|=D^f_2(d,\tau)$.
\end{lemma}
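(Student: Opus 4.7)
The plan is to track the trajectory $\x(t,\tau;\Q^+(d,\tau))$ starting from the transversal $S^+(\tau)\subset \tilde{S}^+$ forward in time until it first exits $\Omega^+$ through $\Omega^0$. Following the strategy announced just before the statement of Lemma~\ref{L.0}, I would decompose the solution as
$$\x(t) = \y_s(t-\tau) + \vec{u}(t) + \vec{r}(t),$$
where $\y_s(\theta) = \x(\theta+\tau,\tau;\PPs(\tau))$ is the trajectory on the stable leaf (which decays to $\vec{0}$ as $\theta\to +\infty$), $\vec{u}$ is a solution of the linearization $\dot{\vec{u}} = \bs{F^+_x}(\vec{0},t,\ep)\vec{u}$ whose initial datum lies in the unstable subspace $\mathcal{N}\bs{P^{+,\ep}}(\tau)$, and $\vec{r}$ is a small remainder.

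First I would rewrite \eqref{eq-disc+} near the origin as $\dot{\x} = \bs{F^+_x}(\vec{0},t,\ep)\x + \vec{h}(t,\x,\ep)$ with $\|\vec{h}(t,\x,\ep)\|\le C\|\x\|^{1+\alpha}$, which follows from \assump{H}, \assump{F0} and \assump{G}. Substituting the decomposition and using $\dot{\y}_s = \vec{F}^+(\y_s,t,\ep)$ and $\dot{\vec{u}} = \bs{F^+_x}(\vec{0},t,\ep)\vec{u}$, one finds that $\vec{r}$ satisfies a linear equation with inhomogeneous term $\vec{\phi}(t,\y_s,\vec{u},\vec{r},\ep)$ collecting the Taylor remainder of $\vec{F}^+$ around $\vec{0}$ and the difference $\bs{F^+_x}(\y_s,t,\ep)-\bs{F^+_x}(\vec{0},t,\ep)$. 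The initial condition $\x(\tau) = \PPs(\tau)-d\vec{v}_u^+$ becomes $\vec{u}(\tau)+\vec{r}(\tau) = -d\vec{v}_u^+$, which I would split via the projections $\bs{P^{+,\ep}}(\tau)$, $\I-\bs{P^{+,\ep}}(\tau)$: the unstable part of $-d\vec{v}_u^+$ is absorbed into $\vec{u}(\tau)$ (so that $\vec{u}(t)$ is a small perturbation of a multiple of $\vec{w}_u^+(t)$), while the stable part is assigned to $\vec{r}(\tau)$, with $\vec{r}$ required to have no growing component at $+\infty$. This yields a Lyapunov--Perron integral equation for $\vec{r}$ built from the dichotomy kernels $\Xe(t)\bs{P^{+,\ep}}(\Xe(s))^{-1}$ and $\Xe(t)(\I-\bs{P^{+,\ep}})(\Xe(s))^{-1}$.

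Next I would run a Banach contraction on this integral equation in the space of continuous functions $\vec{r}:[\tau,+\infty)\to\R^2$ endowed with a weighted sup-norm allowing at most very mild exponential growth (much slower than $\eu^{\underline{\la}(t-\tau)/4}$). Using Lemma~\ref{newdichot} and the bound $\|\vec{\phi}\| = O((\|\y_s\|+\|\vec{u}\|+\|\vec{r}\|)^{1+\alpha})$, the associated operator maps a ball of radius $O(d^{1+\alpha/2})$ into itself and is a strict contraction for $\delta$ and $\ep$ small. The unique fixed point, which by uniqueness coincides with $\x(t,\tau;\Q^+(d,\tau))$, depends $C^r$ on $(d,\tau,\ep)$ through the standard smoothness of the contraction principle in parameters. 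In particular $\|\vec{u}(t)\|\sim d\,z_u^+(t,\tau)$ grows exponentially and dominates both $\|\y_s(t-\tau)\|$ and $\|\vec{r}(t)\|$ once $t-\tau$ is moderately large. The trajectory stays in $K^{\textrm{fwd},+}_A(t)$ for as long as it lies in $\Omega^+$, because by Lemma~\ref{lemma0s} the flow on $\partial K^{\textrm{fwd}}$ aims into $K^{\textrm{fwd}}$ and $\Q^+(d,\tau)\in K^{\textrm{fwd},+}_A(\tau)$ by construction.

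Finally, $T^f_2(d,\tau)$ is defined as the first zero, for $t>\tau$, of the scalar function $H(t,d,\tau):=G(\x(t,\tau;\Q^+(d,\tau)))$. Such a zero exists by continuity because the leading contribution to $G(\x(t))$ is $[\nabla G(\vec{0})]^*\vec{u}(t)\simeq -d\,z_u^+(t,\tau)c_u^{\perp,+}$, which tends to $-\infty$ since $c_u^{\perp,+}>0$ by \assump{F1}. At this first zero,
$$\partial_t H(T^f_2,d,\tau) = [\nabla G(\x(T^f_2+\tau))]^*\vec{F}^+(\x(T^f_2+\tau),T^f_2+\tau,\ep) \neq 0,$$
because near the exit the velocity is dominated by the unstable direction $\vec{v}_u^+$, whose inner product with $\nabla G(\vec{0})$ equals $c_u^{\perp,+}\neq 0$. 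The implicit function theorem then produces a unique $C^r$ function $T^f_2(d,\tau)$, and $D^f_2(d,\tau):=\|\x(T^f_2(d,\tau)+\tau,\tau;\Q^+(d,\tau))\|$ is $C^r$ as a composition of $C^r$ maps (non-vanishing since the exit point has norm comparable to $|\ln(\varpi)|^{-1}$). The main obstacle I foresee is the careful tuning of the weighted function space so that the contraction yields a remainder genuinely smaller than the unstable component while still depending $C^r$ on $(d,\tau,\ep)$ on the unbounded interval $[\tau,+\infty)$; once this is arranged, the exit time, transversality, and smoothness of $D^f_2$ follow routinely from the implicit function theorem.
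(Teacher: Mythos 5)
Your proposal reconstructs, for this one lemma, essentially the whole analytic machinery that the paper deploys only later (in Lemma~\ref{L-S-Lin-} and Proposition~\ref{estTD}) to get \emph{quantitative} bounds on $T^f_2$ and $D^f_2$. The paper's own proof of Lemma~\ref{L.0} is a one-liner: the qualitative facts (confinement in $K_A^{\textrm{fwd},+}$, existence of a first transversal crossing of $\Om^0$, $C^r$ dependence) follow from the barrier construction of Lemma~\ref{lemma0s} together with Lemma~\ref{L.loop} and Remarks~\ref{r.smooth}, \ref{data.smooth}. Your final paragraph (trapping by the barriers, non-vanishing of $\partial_t G(\x(t))$ at the exit because the velocity there is dominated by $\vec{v}_u^+$ and $c_u^{\perp,+}\neq 0$, then the implicit function theorem) is correct and is all that is actually needed.

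The genuine gap is in the contraction you set up to justify the decomposition. You pose the Lyapunov--Perron equation for the remainder $\vec{r}$ on the unbounded interval $[\tau,+\infty)$, requiring $\vec{r}$ to have no growing component at $+\infty$, in a space allowing only mild exponential growth, with a ball of radius $O(d^{1+\alpha/2})$. But the inhomogeneity $\vec{\phi}$ contains the term of size $\|\vec{u}(s)\|^{1+\alpha}\sim d^{1+\alpha}\,\eu^{(1+\alpha)\la_u^+(s-\tau)}$, and the unstable part of the Lyapunov--Perron operator integrates this from $+\infty$:
\begin{equation*}
\int_t^{+\infty}\eu^{\la_u^+(t-s)}\,d^{1+\alpha}\eu^{(1+\alpha)\la_u^+(s-\tau)}\,ds
=d^{1+\alpha}\eu^{\la_u^+(t-\tau)}\int_t^{+\infty}\eu^{\alpha\la_u^+(s-\tau)}\,ds=+\infty ,
\end{equation*}
so the operator is not even well defined, let alone a contraction into a small ball with mild growth. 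This is precisely why the paper's fixed point (Lemma~\ref{L-S-Lin-}) is run on the \emph{finite} interval $[0,M]$, with $M$ tied to the exit time and with the weight $z_u^+(\theta+\tau,M+\tau)$ normalized at $\theta=M$, so that both the linear unstable term $\vec{\ell}^+$ and the remainder are measured relative to their size at exit and all the integrals close up. If you insist on the analytic route you must truncate at $M=T^f_2$ (or an a priori upper bound for it, supplied by the barriers of Lemma~\ref{lemma0s}); alternatively, drop the contraction entirely and prove Lemma~\ref{L.0} the paper's way, reserving the fixed-point argument for the quantitative estimates where it belongs.
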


\begin{proof}
   It follows from the argument of Lemma \ref{L.loop}, taking into account the  mutual positions of
  $\vec{v}_u^\pm$, $\vec{v}_s^\pm$.
\end{proof}

\begin{remark}\label{RemarkDT-injective}
   Note that the mapping $\Psi^+:(d,\tau)\mapsto (D^f_2(d,\tau),T^f_2(d,\tau)+\tau)$
 is injective on $]0,\delta]\times \R$ due to the reversibility of the flow.
\end{remark}

Let us recall that $\y_s(\theta)=\x(\theta+\tau,\tau; \PPs(\tau))$;
following \cite[\S 4.2]{CDFP} we can estimate $\y_s$ as indicated in the next remark.

\begin{remark}\label{est.ys}
  There   are $0<\bar{c}_s<c_s/2$, $\tilde{a}^+_u(\theta)$ and $\tilde{a}^+_s(\theta)$ such that
\begin{equation}\label{def:ys}
  \y_s(\theta)= \tilde{a}^+_s(\theta)  \frac{z_s^+(\theta+\tau,\tau)}{|\ln(\varpi)|} \,\vec{v}_s^+(\theta+\tau)
   + \tilde{a}^+_u(\theta) \left(\frac{z_s^+(\theta+\tau,\tau)}{|\ln(\varpi)|} \right)^{1+\alpha} \,\vec{v}_u^+(\theta+\tau)
\end{equation}
and $\bar{c}_s<\tilde{a}^+_s(\theta)<c_s/2$, $|\tilde{a}^+_u(\theta)|<c_s/2$ for any $\theta>0$.
\end{remark}
\noindent
Since $\ep>0$ is fixed in this section we leave this dependence unsaid, so we write $\bs{X^+}(\theta)$
for the fundamental matrix of \eqref{eq-lin}$_+$ and $\vec{F}^+(\x,\tau)$ instead of $\vec{F}^+(\x,\tau,\ep)$.
Denote by
\begin{equation}\label{def:ell-}
\vec{\ell}^+(\theta):=- \bs{X^+}(\theta+\tau) (\bs{X^+}(\tau))^{-1}   d\vec{v}_u^+(\tau)=
  -d z^+_u(\theta+\tau,\tau) \vec{v}_u^+(\theta+\tau).
  \end{equation}
Notice that by definition  $\dot{\vec{\ell}}^+(\theta)= \boldsymbol{F_x^+}(\vec{0},\theta+\tau)\vec{\ell}^+(\theta)$,
$\vec{\ell}^+(0)=-d\vec{v}_u^+(\tau)$.

Now, using \eqref{def:ys} and \eqref{def:ell-}, and taking into account Remark \ref{cumbersome} we   expand
  $\x(\theta+\tau,\tau;\Q^+(d,\tau))$ as
 \begin{equation}\label{exp.h-}
    \x(\theta+\tau,\tau;\Q^+(d,\tau))=\y_s(\theta)+\vec{\ell}^+(\theta)+\vec{h}^+(\theta),
 \end{equation}
where  $\theta \ge0$  and $\vec{h}^+(\theta)$ is a remainder.

We want to write a fixed-point equation for the remainder, in a suitable exponentially weighted space.
Notice first that $\vec{h}^+(\theta)$ satisfies
$$\vec{h}^+(0)=\Q^+(d,\tau)-\PPs(\tau)+d \vec{v}_u^+(\tau)=\vec{0}$$
and the equation
\begin{gather*}
 \dot{\h}^+(\theta)=\dot\x(\theta+\tau,\tau;\Q^+(d,\tau))-\dot\y_s(\theta)- \dot{\vec{\ell}}^+(\theta)\\
=\vec{F}^+(\y_s(\theta)+\vec{\ell}^+(\theta)+\h^+(\theta),\theta+\tau) -\vec{F}^+(\y_s(\theta),\theta+\tau)-
\boldsymbol{F_x^+}(\vec{0},\theta+\tau)\vec{\ell}^+(\theta) \\
=\boldsymbol{F_x^+}(\vec{0},\theta+\tau)\vec{h}^+(\theta)+ \vec{R}^+_1(\theta, \vec{h}^+(\theta))+  \vec{R}^+_2(\theta, \vec{h}^+(\theta))
\end{gather*}
where the remainder terms $\vec{R}^+_1(\theta, \vec{h}^+(\theta))$ and $\vec{R}^+_2(\theta, \vec{h}^+(\theta))$ equal
\begin{gather*}
\vec{R}^+_1(\theta, \vec{h}):= \left[\boldsymbol{F_x^+}(\y_s(\theta),\theta+\tau)-\boldsymbol{F_x^+}(\vec{0},\theta+\tau)
\right](\vec{\ell}^+(\theta)+\vec{h}),
\\
\vec{R}^+_2(\theta,  \vec{h})
	:= \vec{F}^+(\y_s(\theta)+\vec{\ell}^+(\theta)+\vec{h},\theta+\tau) -\vec{F}^+(\y_s(\theta),\theta+\tau)\\
	{}-\boldsymbol{F_x^+}(\y_s(\theta),\theta+\tau) (\vec{\ell}^+(\theta)+\vec{h}).
\end{gather*}

 Let us set
\begin{equation}\label{D0M-}
  D_0:= \frac{1}{|\ln(\delta)|^2}, \qquad M_0:=  \frac{2|\ln(\delta)|}{ \underline{\la}}.
\end{equation}

Let now $M \ge M_0$ be fixed, and  $X^+=C([0,M],\R^2)$ be the Banach space of continuous functions equipped with the norm
{\begin{equation*}
	\|\vec{u}\|_{X^+}:=\max_{\theta\in[0,M]}\frac{\|\vec{u}(\theta)\|}{z_u^+(\theta+\tau,M+\tau)}.
\end{equation*}}

Fix $0<D_{\ell} \le D_0$ and set $ d= D_{\ell}/ z_u^+(M+\tau,\tau)$, so that from \eqref{defk} we find
  $$ d \le \frac{D_0}{z_u^+(M_0+\tau,\tau)} \le  \frac{k_1\eu^{-\frac12\underline{\la}M_0}}{  | \ln(\delta)|^2}=  \frac{k_1}{  |
  \ln(\delta)|^2} \delta \leq \delta$$
since we can assume $\delta   \le \eu^{-\sqrt{k_1}}$, see Remark \ref{oneparam}.

From \eqref{def:ys} and \eqref{def:ell-}, we see that
\begin{equation}\label{norms-}
	\begin{gathered}
		\|\y_s(\theta)\| \le \frac{c_s}{|\ln(\varpi)|}\,z_s^+(\theta+\tau,\tau) \le \frac{c_s k_1}{|\ln(\varpi)|}
\eu^{-\frac12\underline{\la}\theta} , \\
		\|\vec{\ell}^+(\theta)\|=dz_u^+(\theta+\tau,\tau)
		=dz_u^+(M+\tau,\tau)z_u^+(\theta+\tau,M+\tau)\\
=D_{\ell} z_u^+(\theta+\tau,M+\tau)  \le \frac{k_1}{|\ln(\delta)|^2} \eu^{-\frac12\underline{\la}(M-\theta)}  .
	\end{gathered}
\end{equation}

 Moreover, assumption \assump{H} implies
\begin{equation}\label{est:ball}
\begin{split}
\|\vec{R}_1^+(\theta, \vec{h})\| &
	  \le    N_{\alpha} \|\y_s(\theta)\|^{\al} \cdot \|\vec{\ell}^+(\theta)+\vec{h} \|   , \\
\|\vec{R}_2^+(\theta, \vec{h})  \| & = \left\|\int_{0}^{1} \left[\boldsymbol{F_x^+}(\y_s(\theta)+\sigma [\vec{\ell}^+(\theta)+\vec{h}]
,\theta+\tau) \right.\right.\\
&\left.\left. {}-\boldsymbol{F_x^+}(\y_s(\theta)  ,\theta+\tau)
\right](\vec{\ell}^+(\theta)+\vec{h})d \sigma \right\| \le    N_{\alpha} \|\vec{\ell}^+(\theta)+\vec{h} \|^{1+ \al}   .
\end{split}
 \end{equation}

On the other hand,
\begin{gather*}
 \vec{R}^+_1(\theta,  \vec{h}_2)-
\vec{R}^+_1(\theta,  \vec{h}_1) \\
=    \left[\boldsymbol{F_x^+}(\y_s(\theta),\theta+\tau)-\boldsymbol{F_x^+}(\vec{0},\theta+\tau) \right](\h_2-\h_1).
\end{gather*}
Further
\begin{gather*}
\vec{R}^+_2(\theta,  \vec{h}_2)-
\vec{R}^+_2(\theta, \vec{h}_1)=
 \vec{F}^+(\y_s(\theta)+\vec{\ell}^+(\theta)+ \vec{h}_2,\theta+\tau) \\
 {}- \vec{F}^+(\y_s(\theta)+\vec{\ell}^+(\theta)+ \vec{h}_1,\theta+\tau)
- \boldsymbol{F_x^+}(\y_s(\theta),\theta+\tau)(\h_2-\vec{h}_1)
\\
=
\int_0^1 \left[\boldsymbol{F_x^+}\Big(\y_s(\theta)+\vec{\ell}^+(\theta)+ \vec{h}_1 +\sigma(\vec{h}_2-\vec{h}_1),\theta+\tau\Big)
-\boldsymbol{F_x^+}(\y_s(\theta),\theta +\tau)\right](\vec{h}_2-\vec{h}_1) d\sigma
\end{gather*}
Thus,
\begin{equation}\label{NN}
\begin{split}
&   \left\|\vec{R}^+_1(\theta,  \vec{h}_2)-
\vec{R}^+_1(\theta,  \vec{h}_1) \right\|     \le  N_{\al} \|\y_s(\theta)\|^{\al} \, \|\vec{h}_2-\vec{h}_1\|,
 \\
 &  \left\|\vec{R}^+_2(\theta,  \vec{h}_2)-
\vec{R}^+_2(\theta,  \vec{h}_1) \right\|       \\
& \le N_{\al} \left(\|\vec{\ell}^+(\theta)\|+\max\{ \|\vec{h}_1\|,\|\vec{h}_2\|\} \right)^{\al} \|\vec{h}_2-\vec{h}_1\|.
\end{split}
\end{equation}

Let us define the operator $\FF^+:X^+\to X^+$ as
\begin{gather*}
	\FF^+(\vec{u})(\theta)=
	\int_0^{\theta}\bs{X^+}(\theta+\tau)(\bs{X^+}(s+\tau))^{-1}
	\left(\vec{R}^+_1(s,\vec{u}(s))+  \vec{R}^+_2(s,\vec{u}(s)) \right) \,ds\\
	=\int_0^{\theta}\bs{X^+}(\theta+\tau)\boldsymbol{P^{+}}(\bs{X^+}(s+\tau))^{-1}
	\left(\vec{R}^+_1(s,\vec{u}(s))+  \vec{R}^+_2(s,\vec{u}(s)) \right) \,ds\\
	+\int_0^{\theta}\bs{X^+}(\theta+\tau)(\I-\boldsymbol{P^{+}})(\bs{X^+}(s+\tau))^{-1}
	\left(\vec{R}^+_1(s,\vec{u}(s))+  \vec{R}^+_2(s,\vec{u}(s)) \right) \,ds.
\end{gather*}
 Notice that $\h^+(\theta)$ is a fixed point of $\FF^+$ if and only if the function $\x(\theta+\tau,\tau;\Q^+(d,\tau))$ given
 by \eqref{exp.h-} solves \eqref{eq-disc} and  $\h^+(0)=\vec 0$.

\begin{remark}\label{defvarpi}
   In the next Lemma \ref{L-S-Lin-} we let
\begin{equation}\label{est.varpi1}
\begin{split}
& \ln(\varpi) \le -   \max \{(2C)^{2/\al}; 4\}, \qquad \qquad \textrm{where}\\
&  C=      (2c_s^{\al}+1) N_\al\frac{2k_2 k_1^{2+2\al}(\al+1)}{\underline{\la}\al}.
\end{split}
\end{equation}
Observe that this  is not  an optimal constant: however, the relevant fact is that both $C$ and $\varpi$ are independent of $d$, $\de$
and
$\ep$.
\end{remark}

 \begin{lemma}\label{L-S-Lin-}
 	Assume \textbf{H},  fix $\varpi>0$ as in \eqref{est.varpi1}
	let $\de=\de(\varpi)< \varpi$ be as in
 Lemma \ref{defTf-} and Remark \ref{defTb+}. With the above notation,
 	for any $0<d \le \delta$,
 	the fixed point equation
 	$\FF^+(\vec{u})=\vec{u}$
 	admits a unique solution $\vec{h}$ in $\overline{B}_r^{X^+}:= \{\vec{v}\in X^+\mid \|\vec{v}\|_{X^+}\leq r\}$ where $r= D_{\ell}
 |\ln(\varpi)|^{-\al/2}$.
 \end{lemma}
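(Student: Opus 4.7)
The plan is to apply the Banach contraction mapping theorem to $\FF^+$ restricted to $\overline{B}_r^{X^+}$ with $r = D_\ell |\ln(\varpi)|^{-\alpha/2}$. First I would verify the self-map property. Fix $\vec{u} \in \overline{B}_r^{X^+}$, so $\|\vec{u}(s)\| \le r\, z_u^+(s+\tau, M+\tau)$ for every $s \in [0,M]$. Inserting this together with \eqref{norms-} into \eqref{est:ball} yields
\begin{align*}
 \|\vec{R}_1^+(s, \vec{u}(s))\| &\le N_\alpha \Big(\tfrac{c_s}{|\ln(\varpi)|}\Big)^\alpha z_s^+(s+\tau, \tau)^\alpha (D_\ell + r) z_u^+(s+\tau, M+\tau), \\
 \|\vec{R}_2^+(s, \vec{u}(s))\| &\le N_\alpha (D_\ell + r)^{1+\alpha} z_u^+(s+\tau, M+\tau)^{1+\alpha}.
\end{align*}
I would then split $\FF^+$ via Lemma \ref{newdichot}, estimating the stable piece by $k_2 z_s^+(\theta+\tau, s+\tau)$ and the unstable piece by $k_2 z_u^+(\theta+\tau, s+\tau)$, and exploit the cocycle identities
$$z_s^+(\theta+\tau, s+\tau)\, z_s^+(s+\tau,\tau)^\alpha \le k_1^{1+\alpha} \eu^{-\frac{\und{\la}}{2}(\theta-s)-\frac{\und{\la}\alpha}{2}s}, \qquad \frac{z_u^+(s+\tau, M+\tau)}{z_u^+(\theta+\tau, M+\tau)} = z_u^+(s+\tau,\theta+\tau).$$
After integrating in $s\in[0,\theta]$ and dividing by $z_u^+(\theta+\tau,M+\tau)$, the exponential weights in $s$ collapse to a geometric-type bound of the form $2(k_2 k_1^{2+2\alpha}(\alpha+1))/(\und{\la}\alpha)$, and I expect to obtain
$$\frac{\|\FF^+(\vec{u})(\theta)\|}{z_u^+(\theta+\tau, M+\tau)} \le C \Big(\tfrac{c_s}{|\ln(\varpi)|}\Big)^\alpha (D_\ell + r) + C (D_\ell + r)^{1+\alpha},$$
with $C$ as in \eqref{est.varpi1}. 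The choice of $\varpi$ in \eqref{est.varpi1} guarantees $C(c_s/|\ln(\varpi)|)^\alpha \le \tfrac{1}{2}|\ln(\varpi)|^{-\alpha/2}$, while $D_\ell + r \le 2D_\ell \le 2 D_0 = 2|\ln(\delta)|^{-2}$ makes the second summand negligible. Recalling $r = D_\ell|\ln(\varpi)|^{-\alpha/2}$, this gives $\|\FF^+(\vec{u})\|_{X^+} \le r$.

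For contractivity I would run the same dichotomy/cocycle bookkeeping on
$$\FF^+(\vec{u}_2)(\theta)-\FF^+(\vec{u}_1)(\theta)=\int_0^\theta \bs{X^+}(\theta+\tau)(\bs{X^+}(s+\tau))^{-1}\bigl[\vec{R}_1^+(s,\vec{u}_2)-\vec{R}_1^+(s,\vec{u}_1)+\vec{R}_2^+(s,\vec{u}_2)-\vec{R}_2^+(s,\vec{u}_1)\bigr]ds$$
using \eqref{NN}: the $\vec{R}_1^+$-difference carries the small factor $\|\y_s(s)\|^\alpha$, while the $\vec{R}_2^+$-difference carries $(\|\vec{\ell}^+(s)\| + r z_u^+(s+\tau, M+\tau))^\alpha$, both of which are essentially the same factors that appeared in Step 1. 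The same integral estimate would then yield
$$\|\FF^+(\vec{u}_2) - \FF^+(\vec{u}_1)\|_{X^+} \le \Big[C\Big(\tfrac{c_s}{|\ln(\varpi)|}\Big)^\alpha + C(D_\ell + r)^\alpha\Big]\|\vec{u}_2 - \vec{u}_1\|_{X^+},$$
and \eqref{est.varpi1} combined with the smallness of $D_0$ gives a contraction constant $\le \tfrac{1}{2}$. Existence and uniqueness of the fixed point $\vec{h}$ in $\overline{B}_r^{X^+}$ then follow from the Banach theorem.

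The main obstacle will be the careful bookkeeping of exponents in the cocycle identities: the integrand mixes a decaying $z_s^+$-factor raised to the H\"older power $\alpha$ with a $z_u^+$-factor that grows in $M-\theta$, and one has to verify that after dividing by $z_u^+(\theta+\tau, M+\tau)$ no hidden $M$-dependence survives. The crucial structural point is that the choice $d = D_\ell / z_u^+(M+\tau,\tau)$ exactly absorbs the unstable growth of $\vec{\ell}^+$, so that the remainder $\vec{h}^+$ stays controlled by $r = D_\ell |\ln(\varpi)|^{-\alpha/2}$ uniformly in $M \ge M_0$, provided $\varpi$ is chosen as in \eqref{est.varpi1}. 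Once this is established, the fixed point $\vec{h}$ coincides with the desired remainder $\vec{h}^+$ of the expansion \eqref{exp.h-} and the lemma is proved.
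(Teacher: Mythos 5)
Your proposal is correct and takes essentially the same route as the paper: Banach fixed point in the exponentially weighted space $X^+$, splitting $\FF^+$ along the dichotomy projections, bounding $\vec{R}_1^+$ and $\vec{R}_2^+$ via assumption \textbf{H}, and invoking \eqref{est.varpi1} to make the resulting factor a contraction of rate $\le 1/2$. The only cosmetic difference is that you keep the $c_s^\alpha$ factor outside the constant $C$ while the paper absorbs it into the definition of $C$ in \eqref{est.varpi1}; the bookkeeping otherwise matches.
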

 \begin{proof}
 	First we show that $\FF^+$  maps  $\overline{B}_r^{X^+}$ into itself.
 	For this purpose, notice that if $\vec{u}\in \overline{B}_r^{X^+}$ then
 	$\|\vec{u}(\theta)\|\leq rz_u^+(\theta+\tau,M+\tau)$ for any $\theta\in [0,M]$.
 	Now, using
  $$\|\vec{\ell}^+(\theta)+\vec{u}(\theta) \|   \le (D_{\ell}+r) z_u^+(\theta+\tau,M+\tau)   \leq 2r|\ln(\varpi)|^{\al/2}
 z_u^+(\theta+\tau,M+\tau), $$
  from
 	\eqref{norms-} and \eqref{est:ball} we get
 	\begin{gather*}
 		\|\vec{R}_1^+(\theta, \vec{u}(\theta))\|
 		\le           2N_{\al} (c_s)^{\al}    r|\ln(\varpi)|^{-\al/2} [z_s^+(\theta+\tau,\tau)]^{\al}\,
 			z_u^+(\theta+\tau,M+\tau),\\
 		\|\vec{R}_2^+(\theta, \vec{u}(\theta))\|
 		\le  N_{\al} \big[ 2r |\ln(\varpi)|^{\al/2}  z_u^+(\theta+\tau,M+\tau) \big]^{1+\al}.
 	\end{gather*}
 	Moreover, from \eqref{defk} we get the following estimates:
 	\begin{gather*}
 		\int_0^\theta z_s^+(\theta+\tau,s+\tau)  [z_s^+(s+\tau,\tau)]^{\al} z_u^+(s+\tau,M+\tau)ds\\
 		 \le k_1^{1+\al}   z_u^+(\theta+\tau,M+\tau) \int_0^\theta    z_u^+(s+\tau,\theta+\tau)  ds \\
 		\leq   \frac{2(k_1)^{2+\al} }{\underline{\la}}\,z_u^+(\theta+\tau,M+\tau),
 	\end{gather*}

 \begin{gather*}
 		\int_0^\theta z_s^+(\theta+\tau,s+\tau) \big[z_u^+(s+\tau,M+\tau)\big]^{1+\al}ds\\
 		=\big[z_u^+(\theta+\tau,M+\tau)\big]^{1+\al} \int_0^\theta  z_s^+(\theta+\tau,s+\tau) \big[z_u^+(s+\tau,\theta+\tau)\big]^{1+\al}ds\,
 		\\ \le k_1^{2+2\al} z_u^+(\theta+\tau,M+\tau) \int_0^\theta \eu^{-\frac12\underline{\la}(2+\al)(\theta-s) }ds
 		\leq   \frac{2k_1^{2+2\al}}{\underline{\la}(2+\al)} z_u^+(\theta+\tau,M+\tau),
 	\end{gather*}

\begin{gather*}
 		\int_0^\theta z_u^+(\theta+\tau,s+\tau)\big[z_s^+(s+\tau,\tau)\big]^{\al} z_u^+(s+\tau,M+\tau)ds\\
 		=z_u^+(\theta+\tau,M+\tau)\int_0^\theta \big[z_s^+(s+\tau,\tau)\big]^{\al}ds\, \\
		\leq \frac{2(k_1)^{\alpha}}{\underline{\la} \alpha}\,z_u^+(\theta+\tau,M+\tau),
 	\end{gather*}
 	and
 	\begin{gather*}
 		\int_0^\theta z_u^+(\theta+\tau,s+\tau)\big[z_u^+(s+\tau,M+\tau)\big]^{1+\al} ds\\
 		=  z_u^+(\theta+\tau,M+\tau) \int_0^\theta \big[z_u^+(s+\tau,M+\tau)\big]^{\al}ds \,\\
 		\leq \frac{2(k_1)^{\al}}{ \underline{\la} \al }\,z_u^+(\theta+\tau,M+\tau).
 	\end{gather*}

 	Using the above, we get
 	\begin{gather*}
 		\left\|\int_0^\theta \bs{X^+}(\theta+\tau)\boldsymbol{P^{+}}(\bs{X^+}(s+\tau))^{-1}
 		\vec{R}^+_1(s,\vec{u}(s))ds\right\|\\
 		\leq  2N_{\al} (c_s)^{\al} k_2 r |\ln(\varpi)|^{-\al/2}
 		\int_0^\theta z_s^+(\theta+\tau,s+\tau)[z_s^+(s+\tau,\tau)]^{\al} z_u^+(s+\tau,M+\tau)ds\\
 		\leq  2N_{\al} (c_s)^{\al} k_2     \frac{2k_1^{2+\al}}{\underline{\la}} \,  r |\ln(\varpi)|^{-\al/2} \,z_u^+(\theta+\tau,M+\tau),
 	\end{gather*}

	\begin{gather*}
 		\left\|\int_0^\theta \bs{X^+}(\theta+\tau)\boldsymbol{P^+}(\bs{X^+}(s+\tau))^{-1}
 		\vec{R}^+_2(s,\vec{u}(s))ds\right\|\\
 		\leq  N_{\al}  k_2 \big[2r |\ln(\varpi)|^{\al/2}\big]^{1+\alpha}
 		\int_0^\theta z_s^+(\theta+\tau,s+\tau)[z_u^+(s+\tau,M+\tau)]^{1+\alpha}ds\\
 		\leq N_{\al}  k_2   \frac{2k_1^{2+2\al}}{\underline{\la}(2+\al)}  \,  \big[2r |\ln(\varpi)|^{\al/2}\big]^{1+\alpha}  \,
 \,z_u^+(\theta+\tau,M+\tau),
 	\end{gather*}

 	\begin{gather*}
 		\left\|\int_0^\theta \bs{X^+}(\theta+\tau)(\I-\boldsymbol{P^+})(\bs{X^+}(s+\tau))^{-1}
 		\vec{R}^+_1(s,\vec{u}(s))ds\right\|\\
 		\leq  2N_{\al} (c_s)^{\al} k_2 r   |\ln(\varpi)|^{-\al/2}
 		\int_0^\theta z_u^+(\theta+\tau,s+\tau) \big[ z_s^+(s+\tau,\tau) \big]^{\al}  z_u^+(s+\tau,M+\tau)ds\\
 		\leq 2N_{\al} (c_s)^{\al} k_2    \frac{2k_1^{\al}}{\underline{\la} \al} \,  r   |\ln(\varpi)|^{-\al/2} \,z_u^+(\theta+\tau,M+\tau),
 	\end{gather*}

 and

 	\begin{gather*}
 		\left\|\int_0^\theta \bs{X^+}(\theta+\tau)(\I-\boldsymbol{P^+})(\bs{X^+}(s+\tau))^{-1}
 		\vec{R}^+_2(s,\vec{u}(s))ds\right\|\\
 		\leq  N_{\al}  k_2 \big[2r |\ln(\varpi)|^{\al/2}\big]^{1+\alpha}
 		\int_0^\theta z_u^+(\theta+\tau,s+\tau) \big[ z_u^+(s+\tau,M+\tau) \big]^{1+\al}  ds\\
 		\leq  N_{\al}  k_2  \frac{2k_1^{\al}}{\underline{\la} \al}    \,   \big[2r
 |\ln(\varpi)|^{\al/2}\big]^{1+\alpha}\,z_u^+(\theta+\tau,M+\tau).
 	\end{gather*}

	Since $ r |\ln(\varpi)|^{\al/2}=   D_{\ell}\leq D_0=|\ln\de|^{-2}< |\ln(\varpi)|^{-2}$, we have
\begin{equation}\label{rde}
\begin{split}
\big[2 r |\ln(\varpi)|^{\al/2}\big]^{1+\alpha} & = r |\ln(\varpi)|^{-\al/2} 2^{1+\al}[r|\ln(\varpi)|^{\al/2}\cdot |\ln(\varpi)|]^{\al} \,
 \\ & \le r |\ln(\varpi)|^{-\al/2}  2^{1+\al} |\ln(\varpi)|^{-\al} \ll  r |\ln(\varpi)|^{-\al/2};
\end{split}
\end{equation}
further
 \begin{equation}\label{rde2}
   2r |\ln(\varpi)|^{\alpha/2}=2D_{\ell} \le 2|\ln(\varpi)|^{-2} \le |\ln(\varpi)|^{-1/2} ,
 \end{equation}
so in particular $ r |\ln(\varpi)|^{\alpha/2}<1/2$.
Hence  from the above estimates and Remark~\ref{defvarpi} it follows  that
$$\|\FF^+(\vec{u})\|_{X^+}\leq  C|\ln(\varpi)|^{-\al/2}\, r \le \frac{r}{2}.$$

	Next, we show that $\FF^+$ is a contraction.
	First, by   \eqref{norms-} and \eqref{NN} we have the estimates:
	\begin{gather*}
		\|\vec{R}_1^+(\theta, \vec{u}_1(\theta))-\vec{R}_1^+(\theta, \vec{u}_2(\theta))\|
		\leq  N_\al (c_s)^{\al} |\ln(\varpi)|^{-\al/2}\, \big[z_s^+(\theta+\tau,\tau)\big]^{\al}\|\vec{u}_1(\theta)-\vec{u}_2(\theta)\|,\\
		\|\vec{R}_2^+(\theta, \vec{u}_1(\theta))-\vec{R}_2^+(\theta, \vec{u}_2(\theta))\|
		\leq N_\al  \big[2r |\ln(\varpi)|^{\al/2} \big]^{\al} \big[z_u^+(\theta+\tau,M+\tau)
\big]^{\al}\|\vec{u}_1(\theta)-\vec{u}_2(\theta)\|
	\end{gather*}
	for any $\vec{u}_1,\vec{u}_2\in \overline{B}_r^{X^+}$.
	Consequently, using
	$$\|\vec{u}_1(s)-\vec{u}_2(s)\|=z_u^+(s+\tau,M+\tau)\frac{\|\vec{u}_1(s)-\vec{u}_2(s)\|}{z_u^+(s+\tau,M+\tau)}\leq
z_u^+(s+\tau,M+\tau)\|\vec{u}_1-\vec{u}_2\|_{X^+}$$
	for all $s\in[0,M]$,
	we derive
	\begin{gather*}
		\left\|\int_0^\theta \bs{X^+}(\theta+\tau)\boldsymbol{P^+}(\bs{X^+}(s+\tau))^{-1}
		\left(\vec{R}^+_1(s,\vec{u}_1(s))ds-\vec{R}^+_1(s,\vec{u}_2(s))\right)ds\right\|\\
		\leq \frac{k_2 N_\al (c_s)^{\al}}{|\ln(\varpi)|^{\al/2}}\int_0^\theta z_s^+(\theta+\tau,s+\tau)
		[z_s^+(s+\tau,\tau)]^{\al}\|\vec{u}_1(s)-\vec{u}_2(s)\|ds\\
		\leq  \frac{2k_1^{2+\al}}{\underline{\la}} \frac{k_2 N_\al (c_s)^{\al}}{  |\ln(\varpi)|^{\al/2}
}\|\vec{u}_1-\vec{u}_2\|_{X^+}z_u^+(\theta+\tau,M+\tau),
	\end{gather*}
	\begin{gather*}
		\left\|\int_0^\theta \bs{X^+}(\theta+\tau)\boldsymbol{P^+}(\bs{X^+}(s+\tau))^{-1}
		\left(\vec{R}^+_2(s,\vec{u}_1(s))ds-\vec{R}^+_2(s,\vec{u}_2(s))\right)ds\right\|\\
		\leq    k_2  N_\al  \big[2r |\ln(\varpi)|^{\al/2} \big]^{\al}   \int_0^\theta z_s^+(\theta+\tau,s+\tau)
		\big[ z_u^+(s+\tau,M+\tau)\big]^{\al} \|\vec{u}_1(s)-\vec{u}_2(s)\|ds\\
		\leq   \frac{2k_1^{2+2\al}}{\underline{\la}(2+\al)}   k_2  N_\al   \,
\big[2r |\ln(\varpi)|^{ \al/2} \big]^{\al} \|\vec{u}_1-\vec{u}_2\|_{X^+}z_u^+(\theta+\tau,M+\tau),
	\end{gather*}
	\begin{gather*}
		\left\|\int_0^\theta \bs{X^+}(\theta+\tau)(\I-\boldsymbol{P^+})(\bs{X^+}(s+\tau))^{-1}
		\left(\vec{R}^+_1(s,\vec{u}_1(s))ds-\vec{R}^+_1(s,\vec{u}_2(s))\right)ds\right\|\\
		\leq \frac{k_2 N_\al (c_s)^{\al}}{  |\ln(\varpi)|^{\al/2} } \int_0^\theta z_u^+(\theta+\tau,s+\tau)
		\big[z_s^+(s+\tau,\tau) \big]^{\al} \|\vec{u}_1(s)-\vec{u}_2(s)\|ds\\
		\leq   \frac{2k_1^{\al}}{\underline{\la} \al }   \frac{k_2 N_\al (c_s)^{\al}}{  |\ln(\varpi)|^{\al/2} }
\|\vec{u}_1-\vec{u}_2\|_{X^+}z_u^+(\theta+\tau,M+\tau),
	\end{gather*}
 	and
 	\begin{gather*}
 		\left\|\int_0^\theta \bs{X^+}(\theta+\tau)(\I-\boldsymbol{P^+})(\bs{X^+}(s+\tau))^{-1}
 		\left(\vec{R}^+_2(s,\vec{u}_1(s))ds-\vec{R}^+_2(s,\vec{u}_2(s))\right)ds\right\|\\
 		\leq   k_2  N_\al  \big[2r |\ln(\varpi)|^{\al/2} \big]^{\al}  \int_0^\theta z_u^+(\theta+\tau,s+\tau)
 		[z_u^+(s+\tau,M+\tau)]^{\alpha} \|\vec{u}_1(s)-\vec{u}_2(s)\|ds\\
 		\leq   \frac{2k_1^{\al}}{\underline{\la} \al }    k_2  N_\al  \big[2r |\ln(\varpi)|^{\al/2} \big]^{\al}
 \|\vec{u}_1-\vec{u}_2\|_{X^+}z_u^+(\theta+\tau,M+\tau).
 	\end{gather*}
 Now  using \eqref{rde2}
 	and the previous four estimates,  recalling Remark \ref{defvarpi} and \eqref{rde2}
 we find
 	$$\|\FF^+(\vec{u}_1)-\FF^+(\vec{u}_2)\|_{X^+}\leq \frac{C}{|\ln(\varpi)|^{\al/2}}\,\|\vec{u}_1-\vec{u}_2\|_{X^+}\leq
 \frac{1}{2}\|\vec{u}_1-\vec{u}_2\|_{X^+}.$$

 	The proof is finished by the Banach fixed point theorem.
\end{proof}

\begin{remark}
	By fixing $\de>0$, the values of $M_0$ and $D_0$ are given. Then for any fixed $M\geq M_0$, $\vec{h}\in \overline{B}_r^{X^+}$ if and only
if
	$\|\vec{h}(\theta)\|\leq rz_u^+(\theta+\tau,M+\tau)$ for all $\theta\in[0,M]$. Here one may consider a sufficiently smooth extension of
\eqref{eq-disc+} beyond $\Om^0$ if needed. On the other hand, if $0<T<M_0$, taking $M\geq T+\frac{2}{\und{\la}}\ln k_1$ yields that if
$\vec{h}\in \overline{B}_r^{X^+}$ then
	\begin{gather*}
		\|\vec{h}(\theta)\|\leq rz_u^+(\theta+\tau,T+\tau)z_u^+(T+\tau,M+\tau)\\
		\leq rz_u^+(\theta+\tau,T+\tau)k_1\eu^{-\frac{1}{2}\und{\la}(M-T)}
		\leq rz_u^+(\theta+\tau,T+\tau)
	\end{gather*}
	for any $\theta\in[0,T]$.
\end{remark}

Now, the trajectory $\x(\theta+\tau,\tau; \Q^{+}(d,\tau))$   crosses the curve
  $\Om^0$   when its projection on the
``stable'' and on the ``unstable'' linear spaces are comparable.
This idea allows us to estimate the crossing time and displacement: see Lemma~\ref{estTD} below.

Observe that there are $K^{\pm}>0$ such that the unit vector
\begin{equation}\label{tangent}
  \frac{\vec{v}_s^+ - K^+ \vec{v}_u^+}{\|\vec{v}_s^+ - K^+ \vec{v}_u^+\|}
  =\frac{\vec{v}_u^- - K^- \vec{v}_s^-}{\|\vec{v}_u^- - K^- \vec{v}_s^-\|}
\end{equation}
is  tangent to
$\Omega^0$ in the origin and aims towards $E^{\textrm{in}}$.

Hence the vector $\rho(\vec{v}_s^+ - 2K^+\vec{v}_u^+)$ points towards $\Omega^-$ and $\rho(\vec{v}_s^+ -
\frac{1}{2}K^-\vec{v}_u^+)$ points
towards $\Omega^+$
for $\rho>0$ small enough, see Figure \ref{scenario123}.
Fix $\delta<\varpi$  and let $0<d<\delta$; then
\begin{align*}
	\rho \left(\vec{v}_s^{+}(\theta)- 2K^+\vec{v}_u^{+}(\theta) \right)  &\in \Omega^{-},\\
	\rho \left(\vec{v}_s^{+}(\theta)- \frac{1}{2}K^+\vec{v}_u^{+}(\theta) \right)  &\in \Omega^{+},
\qquad \textrm{for any $\theta \in \R$,}
\end{align*}
for $\rho>0$ small enough.
Hence there is $\tilde{K}^+=\tilde{K}^+(d,\tau) \in [\frac{1}{2}K^+,2 K^+]$ such that
$\x(T^f_2+\tau, \tau; \Q^+(d,\tau)) \in \Om^0$, where $T^f_2=T^f_2(d,\tau)$, verifies
$$\x(T^f_2+\tau, \tau; \Q^+(d,\tau)) \in \operatorname{span}\{\vec{v}_s^+  -\tilde{K}^+  \vec{v}_u^+   \}, $$
i.e., there is $\tilde{\rho}^+=\tilde{\rho}^+(T^f_2+\tau,d)>0$ such that
\begin{equation}
	\begin{gathered}\label{eq:span}
		\vec{\ell}^+(T^f_2)+\y_s(T^f_2)+\vec{h}^+(T^f_2) \\
		= \tilde{\rho}^+\{\vec{v}_s^+(T^f_2+\tau) -\tilde{K}^+  \vec{v}_u^+(T^f_2+\tau) \}.
	\end{gathered}
\end{equation}

\begin{remark}\label{defvarpibis}
   In the next Proposition \ref{estTD} we fix $\mu_0$ such that \eqref{mu0} holds, then
  for $\mu_2 \in ]0 , \mu_0/2]$ we let $\varpi=\varpi(\mu_2)$ be
   such that the following estimates hold true:
\begin{equation}\label{varpibis}
  |\ln(\varpi)| > K^+ c_sk_1^2 , \qquad \quad  \frac{k_1^2|\ln(\varpi)|}{K^+c_s} <\varpi^{-\mu_2/4}.
\end{equation}
\end{remark}

\begin{proposition}\label{estTD}
Assume \textbf{H}; let  $\mu_2 \in ]0, \mu_0/2]$  where $\mu_0$ satisfies \eqref{mu0} and choose $\varpi=\varpi(\mu_2)$ satisfying both
\eqref{est.varpi1} and
\eqref{varpibis}.
 Then there is  $\delta=\delta(\mu_2,\varpi)>0$ such that
for any  $0<d \le \delta $ and any $\tau \in \R$  we find
\begin{equation}\label{est.T2}
	\begin{gathered}
		\sTfwd_+\left(1-\frac{\mu_2}{3}\right) |\ln(d)|
		\le T^f_2(d,\tau)
		\le  \sTfwd_+\left(1+\frac{\mu_2}{3}\right)  |\ln(d)|
	\end{gathered}
\end{equation}
and
\begin{equation}\label{est.D-d}
	   d^{\sfwd_+  \left( 1+ \frac{\mu_2 \la_u^+}{|\la_s^+|} \right) }
	\le   D^f_2(d,\tau)
	\le  d^{\sfwd_+\left( 1- \frac{\mu_2 \la_u^+}{|\la_s^+|} \right) }
\end{equation}
where
\begin{equation}\label{D-d}
D^f_2(d,\tau):= \|\x(T^f_2(d,\tau)+\tau,\tau ; \Q^+(d,\tau))\|.
\end{equation}
\end{proposition}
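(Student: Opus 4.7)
My plan is to project the crossing identity \eqref{eq:span} at $\theta=T^f_2$ onto the stable and unstable one-dimensional spaces of \eqref{eq-lin}$_+$ via the shifted projector $\boldsymbol{P^+}(T^f_2+\tau)$, extract a pair of scalar equations in the unknowns $T^f_2$ and $\tilde\rho^+$, and then read off the asymptotics with the help of \eqref{defzused}. The three ingredients on the left of \eqref{eq:span} already have explicit sizes. Remark \ref{est.ys} gives, in the frame $(\vec{v}_s^+(T^f_2+\tau),\vec{v}_u^+(T^f_2+\tau))$, a stable component of $\y_s(T^f_2)$ of size $\tilde a^+_s(T^f_2)z_s^+(T^f_2+\tau,\tau)/|\ln(\varpi)|$ with $\tilde a^+_s\in[\bar c_s,c_s/2]$, and an unstable component of order $(z_s^+(T^f_2+\tau,\tau)/|\ln(\varpi)|)^{1+\al}$. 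By \eqref{def:ell-}, $\vec{\ell}^+(T^f_2)=-d\,z_u^+(T^f_2+\tau,\tau)\vec{v}_u^+(T^f_2+\tau)$. Choosing $M\ge M_0$ slightly larger than the a priori expected size $\sTfwd_+|\ln(d)|$ of $T^f_2$ and invoking Lemma \ref{L-S-Lin-} together with $D_\ell=dz_u^+(M+\tau,\tau)$ and the cocycle $z_u^+(T^f_2+\tau,M+\tau)z_u^+(M+\tau,\tau)=z_u^+(T^f_2+\tau,\tau)$ rewrites the remainder bound as
\begin{equation*}
\|\vec{h}^+(T^f_2)\|\le d|\ln(\varpi)|^{-\al/2}\,z_u^+(T^f_2+\tau,\tau),
\end{equation*}
which is smaller than $\|\vec{\ell}^+(T^f_2)\|=d\,z_u^+(T^f_2+\tau,\tau)$ by the factor $|\ln(\varpi)|^{-\al/2}$ and whose stable component is smaller still by a further factor $z_s^+/z_u^+$.

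Applying $\boldsymbol{P^+}(T^f_2+\tau)$ and $\I-\boldsymbol{P^+}(T^f_2+\tau)$ to \eqref{eq:span} (via Lemma \ref{newdichot} and the bound $\|\vec{v}_{u,s}^+(t)-\vec{v}_{u,s}^+\|\le C\ep$ from \eqref{defzused}) then yields, up to a multiplicative correction of the form $1+O(|\ln(\varpi)|^{-\al/2})+O(\ep)$, the scalar pair
\begin{equation*}
\frac{\tilde a^+_s(T^f_2)}{|\ln(\varpi)|}\,z_s^+(T^f_2+\tau,\tau)=\tilde\rho^+,\qquad d\,z_u^+(T^f_2+\tau,\tau)=\tilde K^+\tilde\rho^+.
\end{equation*}
Eliminating $\tilde\rho^+$, taking logarithms and using \eqref{defzused} (so that the exponent $\la_u^+-\la_s^+=\la_u^++|\la_s^+|$ is perturbed by at most $\pm 2k\ep$) one finds $T^f_2=\sTfwd_+(|\ln(d)|+O(\ln|\ln(\varpi)|))(1+O(\ep))$. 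By the order of choices of Remark \ref{oneparam} (first $\varpi=\varpi(\mu_2)$ as in \eqref{varpibis}, then $\de=\de(\mu_2,\varpi)$, and finally $\ep_0=\ep_0(\mu_2)$ small enough), both the $O(\ln|\ln(\varpi)|)$ and the $O(\ep)$ corrections are bounded by $(\mu_2/3)|\ln(d)|/\sTfwd_+$, giving \eqref{est.T2} and in particular justifying the initial choice of $M$ a posteriori.

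For the displacement, the equality \eqref{D-d}, the first scalar equation above, and the bound $\tfrac12\|\vec{v}_s^+-K^+\vec{v}_u^+\|\le\|\vec{v}_s^+(t)-\tilde K^+\vec{v}_u^+(t)\|\le 2\|\vec{v}_s^+-K^+\vec{v}_u^+\|$ imply that
\begin{equation*}
D^f_2(d,\tau)=\tilde\rho^+\,\|\vec{v}_s^+(T^f_2+\tau)-\tilde K^+\vec{v}_u^+(T^f_2+\tau)\|
\end{equation*}
is trapped between fixed multiples of $z_s^+(T^f_2+\tau,\tau)/|\ln(\varpi)|$, which in turn lies between $k_1^{-1}\eu^{(\la_s^+\pm k\ep)T^f_2}/|\ln(\varpi)|$ and $k_1\eu^{(\la_s^+\mp k\ep)T^f_2}/|\ln(\varpi)|$. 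Inserting \eqref{est.T2} expresses $z_s^+(T^f_2+\tau,\tau)/|\ln(\varpi)|$ as $d^{\sfwd_+}$ times a loss $d^{\pm\eta}$ in which $\eta$ bundles the $T^f_2$-error of size $\sfwd_+\mu_2/3$ with the $k\ep$ correction; via the identity $\sfwd_+\la_u^+/|\la_s^+|=\la_u^+/(\la_u^++|\la_s^+|)$ and a rescaling of $\mu_2$ within $\mu_0$, this loss is absorbed into the exponent $\sfwd_+\mu_2\la_u^+/|\la_s^+|$ of \eqref{est.D-d}. The main obstacle will be precisely this bookkeeping: the factor $|\ln(\varpi)|^{-\al/2}$ from $\vec{h}^+$, the $C\ep$ and $\eu^{\pm k\ep T}$ corrections from \eqref{defzused}, the boundary term $\ln|\ln(\varpi)|$, and the non-constancy of $\tilde K^+=\tilde K^+(d,\tau)$ and $\tilde a^+_s(T^f_2)$ must all be absorbed simultaneously into a single $\mu_2$-loss in the exponent, which only succeeds if the parameters are fixed in the strict order prescribed by Remark \ref{oneparam}.
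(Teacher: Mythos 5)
Your proposal follows essentially the same route as the paper's proof: decompose the solution as $\y_s+\vec{\ell}^++\vec{h}^+$, use Lemma \ref{L-S-Lin-} to show the remainder is subordinate by a factor $|\ln(\varpi)|^{-\al/2}$, extract from the crossing identity \eqref{eq:span} the balance $d\,z_u^+(T^f_2+\tau,\tau)\asymp z_s^+(T^f_2+\tau,\tau)/|\ln(\varpi)|$, solve logarithmically for $T^f_2$ via \eqref{defzused} and \eqref{varpibis}, and then read off $D^f_2$ from $\tilde\rho^+$. The only cosmetic differences are that you project with $\boldsymbol{P^+}(T^f_2+\tau)$ where the paper writes the components $h^+_u,h^+_s$ directly, and you fix $M$ a priori and justify it a posteriori where the paper sets $M=T^f_2(d,\tau)$; the bookkeeping you flag as the main obstacle is carried out in the paper exactly as you describe.
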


\begin{proof}
Let us fix $\tau \in \R$; we choose $\varpi$ and $\delta<\varpi$ so that Lemma \ref{L-S-Lin-} holds
and we set $M=T^f_2(d,\tau)$, so that
 for any $0<d< \delta$  we find
$D_{\ell}(d,\tau):=dz_u^+(T^f_2(d,\tau)+\tau,\tau)$.  In this proof we write  $\Q^+$, $T^f_2$ and $D^+$ for $T^f_2(d,\tau)$,
$\Q^+(d,\tau)$ and $D^f_2(d,\tau)$
 to deal with less cumbersome notation.
  Notice that $T^f_2 \ge M_0$ (cf.~\eqref{D0M-})  so that the expansion \eqref{exp.h-} and Lemma
\ref{L-S-Lin-} hold for $0< t \le T^f_2$. Let
 $h^+_u(T^f_2)$ and $h^+_s(T^f_2)$ be such that
$$ \h^+(T^f_2)=  h^+_u(T^f_2) \vec{v}_u^+(T^f_2+\tau)+ h^+_s(T^f_2) \vec{v}_s^+(T^f_2+\tau).  $$
 Using \eqref{def:ys} and \eqref{def:ell-}, from \eqref{eq:span} we see that
\begin{equation}\label{est.T}
\begin{gathered}
    -\frac{1}{\tilde{K}^+(T^f_2+\tau)} \left[-dz_u^+(T^f_2+\tau,\tau)+ \tilde{a}^+_u(T^f_2) \left(\frac{z_s^+(T^f_2+\tau,\tau)}{|\ln(\varpi)|}
    \right)^{1+\alpha}+ h^+_u(T^f_2)\right]\\
     = \tilde{\rho}^+(T^f_2+\tau,d)
    = \tilde{a}^+_s(T^f_2) \frac{z_s^+(T^f_2+\tau,\tau)}{|\ln(\varpi)|}     \, + h^+_s(T^f_2).
\end{gathered}
\end{equation}
We want to compare the highest order terms of \eqref{est.T}.
First note that \\$\tilde{a}^+_u(T^f_2) \left(\frac{z_s^+(T^f_2+\tau,\tau)}{|\ln(\varpi)|} \right)^{1+\al}$ is negligible with respect
to $ \tilde{a}^+_s(T^f_2)\frac{z_s^+(T^f_2+\tau,\tau)}{|\ln(\varpi)|} $. Moreover,
$|h^+_s(T^f_2)|= \|\bs{P^{+,\ep}}(T^f_2+\tau) \h^+(T^f_2)\| \leq k_2 \|\h^+(T^f_2)\|$ and $|h^+_u(T^f_2)|=
\| (\I- \bs{P^{+,\ep}}(T^f_2+\tau) )\h^+(T^f_2)\| \leq k_2 \|\h^+(T^f_2)\|$, and using Lemma \ref{L-S-Lin-} we
get
\begin{gather*}
	 \|\h^+(T^f_2)\|\leq \frac{D_{\ell} z_u^+(T^f_2+\tau,T^f_2+\tau)}{ |\ln(\varpi)|^{\al/2}}
	=\frac{D_{\ell}}{ |\ln(\varpi)|^{\al/2}}
	\ll D_{\ell}.
\end{gather*}
Hence also $h^+_s(T^f_2)$ and $h^+_u(T^f_2)$ are negligible with respect to $D_{\ell}= d z_u^+(T^f_2+\tau,\tau)$.
 Thus the highest order terms in \eqref{est.T} are
$\frac{D_{\ell}}{\tilde{K}^+(T^f_2+\tau)}$
and $\tilde{a}^+_s(T^f_2)\frac{z_s^+(T^f_2+\tau,\tau)}{|\ln(\varpi)|} $.

So, picking up the highest order terms and neglecting the others, we can find a constant $\tilde{C}_K \in
\left[\frac{K^+\bar{c}_s}{2},K^+c_s\right]$ (cf.~\eqref{def:ys})
such that
\begin{equation}\label{D-}
   D_{\ell}(d,\tau)= d z_u^+(T^f_2+\tau,\tau) =\tilde{C}_K\frac{z_s^+(T^f_2+\tau,\tau)}{|\ln(\varpi)|}.
\end{equation}
Now using
$$
\frac{1}{k_1^2}\eu^{(\la_u^++|\la_s^+|-2k\ep)T^f_2}
	\leq \frac{z_u^+(T^f_2+\tau,\tau)}{z_s^+(T^f_2+\tau,\tau)}=
   \frac{\tilde{C}_K}{d|\ln(\varpi)|}\leq k_1^2\eu^{(\la_u^++|\la_s^+|+2k\ep)T^f_2}
$$
we get

\begin{equation}\label{est.T22}
\begin{gathered}
  \frac{\ln\left(\frac{\tilde{C}_K}{k_1^2d|\ln\varpi|}\right)}{\la_u^++|\la_s^+|+2k\ep}
\le  T^f_2 \le \frac{\ln\left(\frac{\tilde{C}_Kk_1^2}{d|\ln(\varpi)|}\right)}{\la_u^++|\la_s^+|-2k\ep},
\end{gathered}
\end{equation}
$$ \ln\left(\frac{\tilde{C}_Kk_1^2}{d|\ln(\varpi)|}\right) = |\ln(d)| + \ln(\tilde{C}_Kk_1^2)-\ln(|\ln(\varpi)|)< |\ln(d)|$$
by the first inequality in \eqref{varpibis};
further from the second inequality in \eqref{varpibis} we get
$$\ln\left(\frac{\tilde{C}_K}{k_1^2d|\ln\varpi|}\right) \ge -\ln(d\varpi^{-\mu_2/4}) \ge  \left( 1-\frac{\mu_2}{4}\right) |\ln(d)|.
$$
So plugging these inequalities in \eqref{est.T22} we get
\begin{equation}\label{est.T22a}
\begin{gathered}
\frac{1-\mu_2/3}{\la_u^++|\la_s^+|} |\ln(d)|   \le \frac{(1-\mu_2/4)(1-c_1 \ep)}{\la_u^++|\la_s^+|}
|\ln(d)| \\
\le \frac{1-\mu_2/4}{\la_u^++|\la_s^+|+2k\ep}|\ln(d)|
\le  T^f_2 \le \frac{1}{\la_u^++|\la_s^+|-2k\ep}|\ln(d)|\\
\le  \frac{1+c_1 \ep}{\la_u^++|\la_s^+|} |\ln(d)| \le \frac{1+ \mu_2/3}{\la_u^++|\la_s^+|}|\ln(d)|
\end{gathered}
\end{equation}
where $c_1= 4k /(|\la_s^+|+\la_u^+)$; so \eqref{est.T2} is proved.

Then plugging \eqref{est.T2} in \eqref{D-}     we find
\begin{gather*}
	D_{\ell}(d,\tau)\leq k_1 d \left(\eu^{T^f_2}\right)^{\la_u^++k\ep} \le k_1 d^{1- \left(
\frac{1+\mu_2/3}{\la_u^++|\la_s^+|}
\right)(\la_u^++k\ep)}  \\
= k_1 d^{\sfwd_+ -  \left( \frac{\mu_2 \la_u^+}{3(\la_u^++|\la_s^+|)}  \right)- \left( \frac{k \ep (1+\mu_2/3)
}{\la_u^++|\la_s^+|}  \right) }
\\
\le  d^{\sfwd_+ -    \frac{\mu_2 \la_u^+}{2(\la_u^++|\la_s^+|)} }= d^{\sfwd_+ \left(1 -    \frac{\mu_2
\la_u^+}{2|\la_s^+|} \right)}.
\end{gather*}
Similarly, plugging again \eqref{est.T2} in \eqref{D-},
\begin{gather*}
	D_{\ell}(d, \tau)\geq \frac{d}{k_1}\left(\eu^{T^f_2}\right)^{\la_u^+-k\ep}
	\geq \frac{d^{1- \left( \frac{1-\mu_2/3}{\la_u^++|\la_s^+|}  \right)(\la_u^+-k\ep)}}{k_1} \\
=  \frac{d^{\sfwd_+ +  \left( \frac{\la_u^+\mu_2}{3(\la_u^++|\la_s^+|)}  \right)+\left( \frac{k \ep (1-\mu_2/3)
}{\la_u^++|\la_s^+|}  \right)
}}{k_1}\\
\ge  d^{\sfwd_+ +    \frac{\mu_2 \la_u^+}{2(\la_u^++|\la_s^+|)} }=  d^{\sfwd_+ \left(1  +  \frac{\mu_2
\la_u^+}{2|\la_s^+|} \right)}.
\end{gather*}

Summing up we have shown that
\begin{equation}\label{newDell}
d^{\sfwd_+ \left(1  + \frac{\mu_2 \la_u^+}{2|\la_s^+|} \right)} \le  D_{\ell}(d, \tau)
\le  d^{\sfwd_+ \left(1  - \frac{\mu_2 \la_u^+}{2|\la_s^+|} \right)}.
\end{equation}
Now, again from \eqref{est.T} we see that there are $\bar{c}_2> \bar{c}_1>0$ (independent of
$d$ and $\tau$) such that
$$\bar{c}_1 D_{\ell}(d, \tau) \le  \tilde{\rho}^+(T^f_2+\tau,d) \le \bar{c}_2 D_{\ell}(d, \tau);$$
further from \eqref{eq:span} and \eqref{D-d} we see that there are $\bar{c}_4> \bar{c}_3>0$ such that
$$\bar{c}_3  \tilde{\rho}^+(T^f_2+\tau,d) \le  D^f_2(d, \tau)
\le \bar{c}_4  \tilde{\rho}^+(T^f_2+\tau,d) .  $$
So, these last two inequalities  imply that there are $C>c>0$, independent of $d$, $\tau$, $\ep$   and $\mu_2$,
such that
\begin{equation}\label{DandDell0}
c D_{\ell}(d,\tau) \le D^f_2(d,\tau) \le C D_{\ell}(d,\tau).
\end{equation}
Now assuming  that $\delta^{  \frac{\mu_2 \la_u^+}{2|\la_s^+|} \sfwd_+} \le  \min  \{ C^{-1}, c \}$,
 the proof of \eqref{est.D-d} follows from \eqref{newDell} and \eqref{DandDell0}.
 This completes the proof of Proposition \ref{estTD}.
\end{proof}

For future reference it is convenient to state the following remark concerning estimate \eqref{DandDell0}.
\begin{remark}\label{DandDell}
  There are $C>c>0$ independent of $d$, $\tau$, $\ep$   and $\mu_2$  such that
  $$c D_{\ell}(d,\tau) \le D^f_2(d,\tau) \le C D_{\ell}(d,\tau).$$
\end{remark}

Now we want to prove a result which is, to some extent, the converse of Proposition \ref{estTD}. Let us set
$\de_1=\de^{\sfwd_+\left( 1 + \frac{\mu_2 \la_u^+}{|\la_s^+|} \right)}$
and consider the set $L^{\inn}(\de_1)$.
For any $0<D<\de_1$ we denote by  $\vec{R}(D)$  the unique point in $ L^{\inn}(\de_1)$ such that
$\|\vec{R}(D)\|=D$.

Then, adapting slightly Lemma \ref{L.loop}, we prove the following.
\begin{remark}\label{welldefined}
Fix $\tau_1 \in \R$,
$0<D< \de_1=\de^{\sfwd_+\left( 1 + \frac{\mu_2 \la_u^+}{|\la_s^+|}
 \right) }$ and follow  the trajectory $\x(t,\tau_1; \vec{R}(D))$ backward in time.
Then there are  $D^b_3(D,\tau_1)>0$, $T^b_3(D,\tau_1)>0$ and $\tau_0^+(D,\tau_1)= \tau_1-T^b_3(D,\tau_1)$
such that $\x(t,\tau_1; \vec{R}(D)) \in K_A^{\textrm{bwd},+}(t)$ for any $  \tau^+_0(D,\tau_1) \le t < \tau_1$
and it crosses transversely $\tilde{S}^+(\tau^+_0(D,\tau_1) )$ in the point $\Q^+(D^b_3(D,\tau_1),\tau_0^+(D,\tau_1))$,
where  $\Q^+(\cdot,\cdot)$ is as defined in \eqref{Q-Q+}.
\end{remark}

Using the uniformity in $\tau$ of the estimates in \eqref{estTD} we show the following.

\begin{figure}[t]
\centerline{\epsfig{file=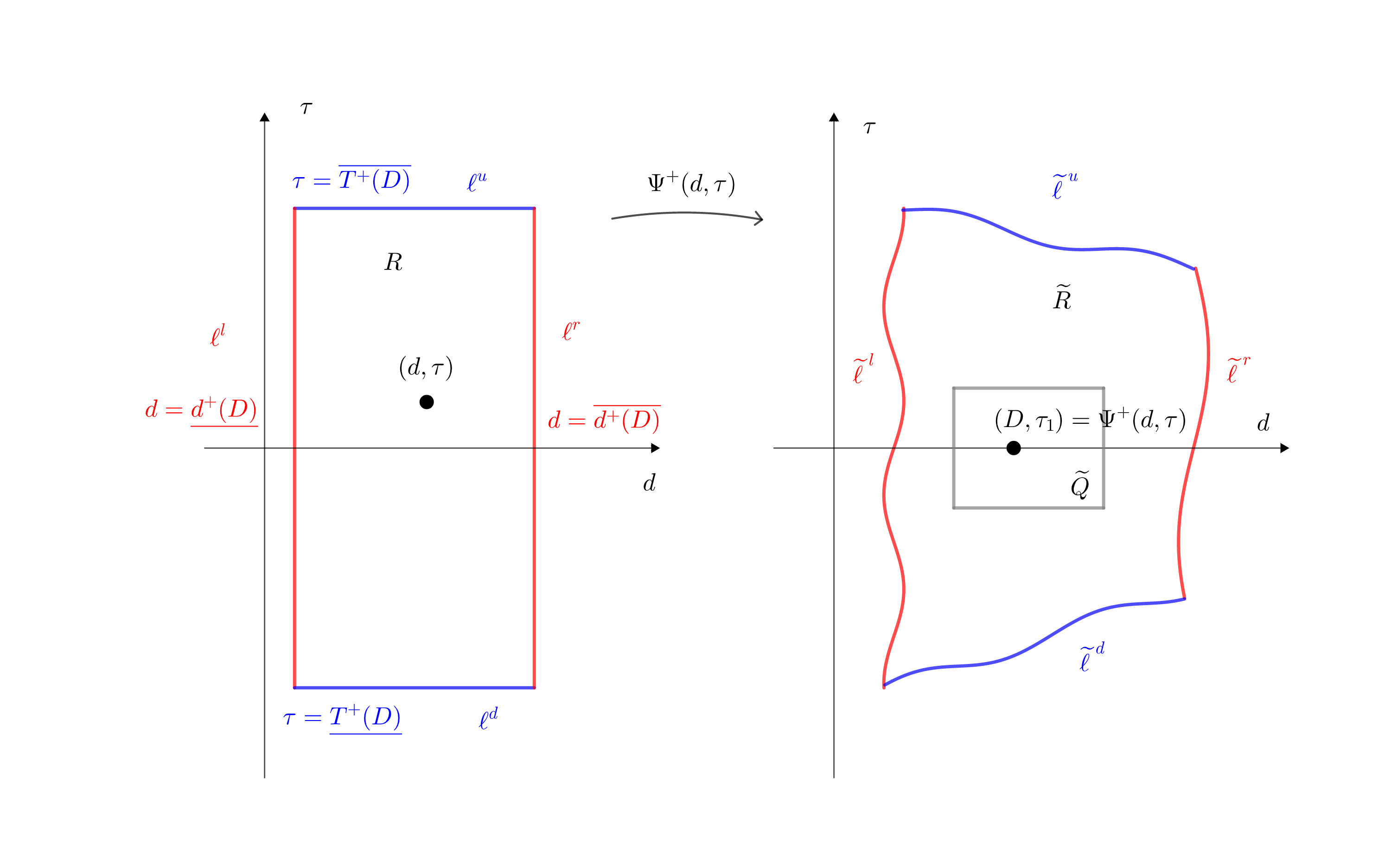, width = 11 cm}}
\caption{An explanation of the proof of Proposition \ref{lemmaconverse-}.}
\label{nextstable}
\end{figure}

\begin{proposition}\label{lemmaconverse-}
Assume \textbf{H}; let  $\mu_2 \in ]0, \mu_0/2]$  where $\mu_0$ satisfies \eqref{mu0} and choose  $\varpi=\varpi(\mu_2)$
satisfying both \eqref{est.varpi1} and \eqref{varpibis}. Then there is  $\delta=\delta(\mu_2,\varpi)>0$ such that
for any $0<D< \de_1=\de^{\sfwd_+\left( 1 + \frac{\mu_2 \la_u^+}{|\la_s^+|} \right) }$ and any $\tau_1 \in \R$  the functions
$T^b_3(D,\tau_1)$ and $D^b_3(D,\tau_1)$ are well
defined, $C^{r}$ and one-to-one. Further
    \begin{equation*}
      \begin{split}
          &    \frac{1}{|\la_s^+|} \left[ 1-   \mu_2 \left( 2 \tfrac{\la_u^+}{|\la_s^+|} +\tfrac{1}{2} \right)
          \right] |\ln(D)| \le
          T^b_3(D,\tau_1) \le
          \frac{1}{|\la_s^+|} \left[ 1+   \mu_2 \left( 2
          \tfrac{\la_u^+}{|\la_s^+|} +\tfrac{1}{2} \right) \right] |\ln(D)|,
           \\
           &      D^{\frac{1}{\sfwd_+}\left(1+2\frac{\mu_2 \la_u^+}{|\la_s^+|} \right)} \le D^b_3 (D,\tau_1) \le
           D^{\frac{1}{\sfwd_+}\left(1-2\frac{\mu_2
           \la_u^+}{|\la_s^+|} \right) }.
      \end{split}
    \end{equation*}
    In particular $\lim_{D \to 0} D^b_3(D,\tau_1)=0$ uniformly in $\tau_1 \in \R$.
\end{proposition}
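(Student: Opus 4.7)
The proof amounts to inverting Proposition~\ref{estTD}: by construction and the reversibility of the flow, the backward map $(D,\tau_1)\mapsto (D^b_3(D,\tau_1),\tau_0^+(D,\tau_1))$ is exactly the inverse of the forward map $\Psi^+:(d,\tau)\mapsto (D^f_2(d,\tau),\tau+T^f_2(d,\tau))$ introduced in Remark~\ref{RemarkDT-injective}. Indeed, Remark~\ref{welldefined} grants the existence of $(D^b_3,T^b_3,\tau_0^+)$, and since $\vec{R}(D)\in\Om^0$ is precisely the first crossing of $\Om^0$ by the trajectory emanating from $\Q^+(D^b_3,\tau_0^+)$, we automatically have $D^f_2(D^b_3,\tau_0^+)=D$ and $T^f_2(D^b_3,\tau_0^+)=T^b_3$. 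Consequently, the plan is to read off the qualitative properties of the backward map from those of $\Psi^+$ and to obtain the quantitative bounds by inverting \eqref{est.T2} and \eqref{est.D-d}.

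Smoothness of the backward map will follow from the smoothness of $\Psi^+$ (a consequence of Remark~\ref{data.smooth}) together with invertibility of its Jacobian at each point. The latter holds because the flow of \eqref{eq-disc} is a local $C^r$ diffeomorphism and both $\tilde{S}^+$ and $\Om^0$ are transversal to it at the relevant crossings: on $\tilde{S}^+$ the velocity is $\dot{\x}\approx\f^+(\y_s)$, which is almost parallel to $\vec{v}_s^+$ and therefore transversal to the direction $\vec{v}_u^+$ spanning $\tilde{S}^+$; on $\Om^0$ transversality is ensured by \assump{K}. The inverse function theorem then delivers $(D^b_3,\tau_0^+)$ as a $C^r$ function of $(D,\tau_1)$, and injectivity of this inverse is inherited from Remark~\ref{RemarkDT-injective}.

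The quantitative bounds will be obtained by setting $d:=D^b_3(D,\tau_1)$ and applying \eqref{est.D-d} at $(d,\tau_0^+)$:
\begin{equation*}
d^{\sfwd_+\left(1+\mu_2\la_u^+/|\la_s^+|\right)}\le D\le d^{\sfwd_+\left(1-\mu_2\la_u^+/|\la_s^+|\right)}.
\end{equation*}
Taking logarithms (both $d,D\in(0,1)$) and using the elementary inequality $(1-x)^{-1}\le 1+2x$, valid for $0<x\le 1/2$, with $x=\mu_2\la_u^+/|\la_s^+|$, one obtains the claimed bounds on $D^b_3$. The use of this inequality is legitimate because \eqref{mu0} together with $\sfwd_+=|\la_s^+|/(\la_u^++|\la_s^+|)$ and the AM--GM estimate $4|\la_s^+|\la_u^+\le(\la_u^++|\la_s^+|)^2$ forces $\mu_2\la_u^+/|\la_s^+|\le\mu_0\la_u^+/(2|\la_s^+|)\le 1/32$. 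Inserting the bound on $|\ln d|$ into \eqref{est.T2} and exploiting the identity $\sTfwd_+/\sfwd_+=1/|\la_s^+|$ then yields the stated bounds on $T^b_3=T^f_2(d,\tau_0^+)$.

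The only real technical subtlety is the bookkeeping in this final step: the product $(1\pm\mu_2/3)(1\pm 2\mu_2\la_u^+/|\la_s^+|)$ emerging from the combination of \eqref{est.T2} and the inversion of \eqref{est.D-d} has to be absorbed into $1\pm\mu_2\bigl(2\la_u^+/|\la_s^+|+\tfrac12\bigr)$, which is legitimate for $\mu_2\le\mu_0/2$ once the cross term $O(\mu_2^2)$ is dominated by $\mu_2/6$ via the smallness of $\mu_0$ from \eqref{mu0}. Finally, uniformity of $\lim_{D\to 0}D^b_3(D,\tau_1)=0$ in $\tau_1$ is immediate from the upper bound $D^b_3\le D^{(1/\sfwd_+)(1-2\mu_2\la_u^+/|\la_s^+|)}$, whose exponent is a strictly positive constant depending only on $\la_u^+$, $\la_s^+$ and $\mu_2$.
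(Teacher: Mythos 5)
Your proposal takes a genuinely different route from the paper's proof, which is built around a topological degree argument. Where you appeal to Remark~\ref{welldefined} and reversibility of the flow to obtain the preimage $(d,\tau_0^+)$ with $d=D^b_3\le\delta$, and then invert \eqref{est.T2}--\eqref{est.D-d} directly, the paper instead constructs a rectangle $\mathcal{R}=[\underline{d^+}(D),\overline{d^+}(D)]\times[\underline{T^+}(D),\overline{T^+}(D)]$ in $(d,\tau)$-space, shows via Proposition~\ref{estTD} that $\Psi^+(\partial\mathcal{R})$ misses a neighbourhood $\tilde{\mathcal{Q}}$ of $(D,\tau_1)$, and then invokes Brouwer's domain invariance theorem to conclude that $(D,\tau_1)\in\Psi^+(\mathcal{R})$. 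That step establishes that $]0,\delta_1]\times\R\subset\Psi^+(]0,\delta]\times\R)$ and, at the same time, localizes the preimage to $\mathcal{R}$, from which the stated bounds follow directly.

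The two approaches buy different things. Your route is shorter and more transparent, and your quantitative inversion (using $(1-x)^{-1}\le1+2x$ together with the numerical check $\mu_2\la_u^+/|\la_s^+|\le1/32$ coming from \eqref{mu0} and AM--GM) is correct and matches the paper's constants. The paper's Brouwer argument, by contrast, is self-contained: it re-derives, with explicit quantitative control, precisely the content that you extract from Remark~\ref{welldefined} --- namely, that the backward trajectory from $\vec{R}(D)$ meets $\tilde{S}^+$ at a point of the form $\Q^+(D^b_3,\tau_0^+)$ with $D^b_3\le\delta$, so that Proposition~\ref{estTD} can legitimately be applied at $(D^b_3,\tau_0^+)$. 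Since Remark~\ref{welldefined} is stated with only a one-line justification (``adapting slightly Lemma~\ref{L.loop}''), and since the bound $D^b_3\le\delta$ is exactly what makes the forward estimates available, your argument is only as watertight as that remark: if one does not fully accept it, the application of \eqref{est.D-d} at $(D^b_3,\tau_0^+)$ is circular (the estimate holds only for $d\le\delta$, but $D^b_3\le\delta$ is deduced from the estimate). That circularity is what the paper's topological step is designed to break. If you retain your route, you should at least flag that Remark~\ref{welldefined} is carrying the a priori bound $D^b_3\le\delta$, and say where that bound comes from; otherwise the cleanest fix is to reinstate the domain-invariance argument, which both establishes the needed surjectivity and localizes $(d,\tau_0)$ in one stroke.
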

\begin{proof}
	The proof is easier to follow while keeping in mind Figure \ref{nextstable}.
  From Remark \ref{welldefined} we see that the functions $ T^b_3 (D,\tau_1)$ and $D^b_3(D,\tau_1)$ are well defined; further they are $C^r$
  and
  one-to-one due to the smoothness of the flow, see Remark \ref{data.smooth}.

  Let us consider the following function, as in Remark \ref{RemarkDT-injective}:
     \begin{equation*}
      \begin{split}
           &      \Psi^+(d,\tau): ]0,\de] \times \R \to  ]0,1] \times \R, \qquad  \Psi^+(d,\tau)= (D^f_2(d,\tau),  T^f_2(d,\tau)+\tau ).
      \end{split}
    \end{equation*}
  $\bullet$\ \emph{We claim that $\big( ]0,\delta_1] \times \R \big) \subset \Psi^+ \big( ]0,\de] \times \R     \big)$}.\\
  Let $(D,\tau_1) \in \big( ]0,\delta_1] \times \R \big)$: we need to show that there is $(d,\tau) \in \big( ]0,\de] \times \R \big)$
  such  that $\Psi^+(d,\tau)=(D,\tau_1)$.
  We denote by $d_0= D^{1/\sfwd_+}$; then we set
  $$\underline{d^+}(D)=   D^{\frac{1}{\sfwd_+} \left(1+\frac{3\mu_2 \la_u^+}{|\la_s^+|}\right)} = d_0^{1+\frac{3\mu_2
  \la_u^+}{|\la_s^+|} } \,
  , $$
  $$\overline{d^+}(D)=  D^{\frac{1}{\sfwd_+} \left(1-\frac{3\mu_2 \la_u^+}{|\la_s^+|} \right)}= d_0^{1-\frac{3\mu_2
  \la_u^+}{|\la_s^+|} }\,
  ,$$
  $$\underline{T^+}(D)= \tau_1 - \left[  \sTfwd_+ \left(1  + \frac{\mu_2}{3} \right) \left( 1+\frac{3\mu_2 \la_u^+}{
  |\la_s^+|}  \right) +2
  \sfwd_+ \right] |\ln(d_0)|   \, ,
  $$
  $$ \overline{T^+}(D)= \tau_1 -   \left[ \sTfwd_+\left(1  - \frac{\mu_2}{3} \right) \left( 1-\frac{3\mu_2 \la_u^+}{
  |\la_s^+|}  \right) -2
  \sfwd_+  \right] |\ln(d_0)|. $$
  Then consider the rectangle
  $$\mathcal{R}=  \big([\underline{d^+}(D), \overline{d^+}(D)] \times [\underline{T^+}(D), \overline{T^+}(D)] \big)\subset ]0, 1/10] \times
  \R, $$
  and denote by $\partial \mathcal{R}$ its border. Further set
  $$\ell^l= \{\underline{d^+}(D)\} \times [\underline{T^+}(D), \overline{T^+}(D)],$$
  $$\ell^r= \{\overline{d^+}(D)\} \times [\underline{T^+}(D), \overline{T^+}(D)],$$
  $$\ell^d= [\underline{d^+}(D), \overline{d^+}(D)]  \times \{\underline{T^+}(D)\},$$
  $$\ell^u= [\underline{d^+}(D), \overline{d^+}(D)]  \times \{\overline{T^+}(D)\},$$
   so that
  $\partial \mathcal{R}= \big(\ell^l \cup \ell^d \cup \ell^r \cup \ell^u \big)$.

  Now observe that the restriction $\tilde{\Psi}^+$ of $\Psi^+$ defined by $\tilde{\Psi}^+: \mathcal{R} \to \tilde{\mathcal{R}}$
  where $\tilde{\mathcal{R}}= \Psi^+(\mathcal{R})$, is a $C^r$ diffeomorphism,  see Remark \ref{data.smooth}.
  Hence, from Brouwer's domain invariance theorem (cf.~\cite[Proposition 16.9]{Zeidler}), we see that
  $\tilde{\mathcal{R}}$ is a compact and simply connected set and its border $\partial \tilde{\mathcal{R}}$ satisfies
    $\partial\tilde{\mathcal{R}}= \Psi(\partial\mathcal{R})$.

  Let $\tilde{\ell}^{l}= \Psi(\ell^{l})$ and similarly for  $\tilde{\ell}^{r}$, $\tilde{\ell}^{d}$,
  $\tilde{\ell}^{u}$: all these sets are compact images of curves.
  Let
  $$\tilde{\mathcal{Q}}= \left[ D^{1+  \frac{\mu_2 \la_u^+}{|\la_s^+|} }, D^{1-  \frac{\mu_2
  \la_u^+}{|\la_s^+|} } \right] \times [\tau_1 -
  |\ln(D)|, \tau_1 +   |\ln(D)|]$$
  and denote by $\partial  \tilde{\mathcal{Q}}$ its border: obviously $(D, \tau_1) \in \tilde{\mathcal{Q}}$.
  Let $\tau',\tau'' \in [\underline{T^+}(D), \overline{T^+}(D)]$  and consider  $(\underline{d^+}(D),\tau') \in \ell^l$ and
$(\overline{d^+}(D),\tau'') \in \ell^r$;  from Proposition \ref{estTD} we find

\begin{equation*}
  \begin{split}
  D^f_2 (\underline{d^+}(D),\tau') &  \le [\underline{d^+}(D)]^{\sfwd_+ \left( 1- \mu_2 \frac{ \la_u^+ }{|\la_s^+|}  \right)
  }  = D^{\left(
  1- \mu_2 \frac{ \la_u^+
  }{|\la_s^+|}  \right) \left( 1+ 3\mu_2 \frac{ \la_u^+ }{|\la_s^+|}  \right)  }         \\
& = D^{  1  +2\mu_2 \frac{ \la_u^+ }{|\la_s^+|}- 3\mu_2^2 \frac{ (\la_u^+)^2 }{(\la_s^+)^2}  } <
D^{  1  + \mu_2 \frac{ \la_u^+
}{|\la_s^+|}  },
  \end{split}
\end{equation*}
\begin{equation*}
  \begin{split}
     D^f_2 (\overline{d^+}(D),\tau'') & \ge  [\overline{d^+}(D)]^{\sfwd_+ \left( 1+ \mu_2 \frac{ \la_u^+ }{|\la_s^+|}
     \right)}  =
   D^{\left( 1+ \mu_2 \frac{ \la_u^+ }{|\la_s^+|}  \right) \left( 1- 3\mu_2 \frac{ \la_u^+ }{|\la_s^+|}
   \right)  }  \\ &
= D^{  1  -2\mu_2 \frac{ \la_u^+ }{|\la_s^+|}- 3\mu_2^2 \frac{ (\la_u^+)^2 }{(\la_s^+)^2}  } >
D^{  1  - \mu_2 \frac{ \la_u^+
}{|\la_s^+|}  }.
  \end{split}
\end{equation*}

Whence  $\tilde{\ell}^{l} \cap \tilde{\mathcal{Q}} = \emptyset= \tilde{\ell}^{r} \cap \tilde{\mathcal{Q}} $.

Similarly observe that for any $d',d'' \in  [\underline{d^+}(D), \overline{d^+}(D)]$ (so that $(d',\underline{T^+}(D)) \in \ell^d$ and
$(d'',\overline{T^+}(D)) \in \ell^u$), from Proposition \ref{estTD} we find
\begin{equation*}
  \begin{split}
         T^f_2(d',\underline{T^+}(D))+\underline{T^+}(D)& \le \underline{T^+}(D)+ \sTfwd_+\left(1 +  \frac{\mu_2}{3}\right)
         |\ln(\underline{d^+}(D))|\\
       & = \tau_1 -  2\sfwd_+ |\ln(d_0)| < \tau_1 -  \sfwd_+ |\ln(d_0)|=\tau_1- |\ln(D)| , \\
         T^f_2(d'',\overline{T^+}(D)) +\overline{T^+}(D)& \ge \overline{T^+}(D)+ \sTfwd_+\left(1 -  \frac{\mu_2}{3}\right)
         |\ln(\overline{d^+}(D))| \\
      & = \tau_1 +  2\sfwd_+ |\ln(d_0)| > \tau_1 +  \sfwd_+ |\ln(d_0)|=\tau_1+ |\ln(D)| .
  \end{split}
\end{equation*}
Whence  $\tilde{\ell}^{d} \cap \tilde{\mathcal{Q}} = \emptyset= \tilde{\ell}^{u} \cap \tilde{\mathcal{Q}} $.
\\
Summing up we have shown that $(\tilde{\mathcal{Q}} \cap \partial \tilde{\mathcal{R}})= \emptyset$;
in fact we easily see that $\tilde{\mathcal{Q}}$ is contained in the bounded set enclosed by $\partial \tilde{\mathcal{R}}= \Psi(\partial
\mathcal{R})$.
So we see that there is $(d,\tau)\in \mathcal{R}$ such that
$\Psi^+(d,\tau)=(D,\tau_1)$
  and the claim is proved, by the
arbitrariness of $(D,\tau_1) \in \big(]0,\de_1] \times \R \big)$.

Now from the claim we see that for any $(D,\tau_1) \in ]0,\delta_1] \times \R$ there is a unique couple
$(d,\tau_0) \in ]0, \de] \times \R$  such that
$(D,\tau_1) = \Psi^+(d,\tau_0)$, since $\Psi^+$ is  defined through the flow of \eqref{eq-disc}.

 Hence from Proposition \ref{estTD} we get
$$ d^{\sfwd_+ \left( 1+\frac{\mu_2 \la_u^+}{ |\la_s^+|}\right)}\le D \le
d^{\sfwd_+ \left( 1-\frac{\mu_2 \la_u^+}{ |\la_s^+|}\right)}$$
from which we get
$$ D^{   \frac{1}{\sfwd_+} \left( 1+\frac{2\mu_2 \la_u^+}{ |\la_s^+|}\right) }\le  D^{ \frac{1}{\sfwd_+} \left(
1-\frac{\mu_2  \la_u^+}{
|\la_s^+|}\right)^{-1}} \le     d \le
D^{  \frac{1}{\sfwd_+} \left( 1+\frac{\mu_2 \la_u^+}{ |\la_s^+|}\right)^{-1}} \le D^{   \frac{1}{\sfwd_+} \left(
1-\frac{2\mu_2 \la_u^+}{
|\la_s^+|}\right) }.
$$
So the assertion concerning $D$ is proved.

Again from Proposition \ref{estTD}  we find
$$\sTfwd_+ \left( 1-\frac{\mu_2}{3} \right)   |\ln(d)|  \le
\tau_1-\tau= T^b_3 (D,\tau_1) = T^f_2(d,\tau) \le \sTfwd_+ \left( 1+\frac{\mu_2}{3} \right)  |\ln(d)|.
$$
So we   conclude
\begin{equation*}
  \begin{split}
T^b_3 (D,\tau_1) \le & \frac{\sTfwd_+}{\sfwd_+}  \left( 1+\frac{\mu_2}{3} \right) \left(1 +2\mu_2 \frac{\la_u^+}{|\la_s^+|}
\right)
|\ln(D)|
\\ \le &\frac{1}{|\la_s^+|}
\left[ 1+   \mu_2 \left( 2 \frac{\la_u^+}{|\la_s^+|} +\frac{1}{2} \right) \right] |\ln(D)|,\\
T^b_3 (D,\tau_1) \ge & \frac{\sTfwd_+}{\sfwd_+}  \left( 1-\frac{\mu_2}{3} \right) \left(1 -2\mu_2 \frac{\la_u^+}{|\la_s^+|}
\right)
|\ln(D)|
\\ \ge & \frac{1}{|\la_s^+|}
\left[ 1-   \mu_2 \left( 2 \frac{\la_u^+}{|\la_s^+|} +\frac{1}{2} \right) \right] |\ln(D)|.
  \end{split}
\end{equation*}
   Then the assertion concerning $\michal{T^b_3} (D,\tau_1)$ is  proved.
\end{proof} 

\begin{lemma}\label{1keymissed}
Assume \textbf{H} and  let $\mu_2$, $\varpi$ and $\de=\delta(\mu_2,\varpi)$ be as in Proposition~\ref{estTD}.
  Let $0<d \le \de$, $\tau_0 \in \R$, and consider the trajectory
  $$\x(t,\tau_0; \Q^+(d,\tau_0)) \equiv x(t,\tau_1; \vec{R}(D)) \qquad  \tau_0 \le t \le \tau_1$$
  where $D=D^f_2(d,\tau_0)$, $\vec{R}(D)$ is the point of $L^{\textrm{in}}$ such that $\|\vec{R}(D)\|=D \in ]0, \delta_1[$  and $\tau_1=\tau_0
  +T^f_2(d,\tau_0)$.
  Then
  \begin{gather*}
  	\|\x(t,\tau_0; \Q^+(d,\ev{\tau_0}))- \vec{y}_s(t-\tau_0)\| \\
  \le
   \frac{2}{c} d^{\sfwd_+\left(1-\mu_2 \frac{\la_u^+}{|\la_s^+|}\right)  } z_u^+(t, \tau_1) \le  \frac{2k_1}{c}d^{\sfwd_+\left(1-\mu_2
   \frac{\la_u^+}{|\la_s^+|}\right)  } \eu^{-\frac12\underline{\la}(\tau_1-t)}
  \end{gather*}
   whenever $\tau_0 \le t \le \tau_1$,  where $c>0$ is as in Remark \ref{DandDell}.

     Analogously let  $0<D \le \de_1$, $\tau_1 \in \R$, and consider the trajectory
  $$\x(t,\tau_0; \Q^+(d,\tau_0)) \equiv x(t,\tau_1; \vec{R}(D)) \qquad  \tau_0 \le t \le \tau_1$$
  where $d= D^b_3 (D,\tau_1)$ and $\tau_0=\tau_1 - T^b_3 (D,\tau_1)$.
  Then
  $$\|x(t,\tau_1; \vec{R}(D))- \vec{y}_s(t-\tau_0)\| \le
   \frac{2}{c} D z_u^+(t, \tau_1) \le  \frac{2k_1}{c} D \eu^{-\frac12\underline{\la}(\tau_1-t)}  $$
   whenever $\tau_0 \le t \le \tau_1$.
\end{lemma}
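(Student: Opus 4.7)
The plan is to bootstrap directly from the decomposition \eqref{exp.h-} together with the fixed point estimate supplied by Lemma~\ref{L-S-Lin-}, and to interpret the conclusions of Propositions \ref{estTD} and \ref{lemmaconverse-} in terms of the auxiliary quantity $D_{\ell}(d,\tau_0)=dz_u^+(\tau_1,\tau_0)$ that controlled the fixed point argument. Concretely, setting $\theta=t-\tau_0\in[0,T^f_2(d,\tau_0)]$ and $M=T^f_2(d,\tau_0)$ (so $M+\tau_0=\tau_1$), identity \eqref{exp.h-} gives
\[
\x(t,\tau_0;\Q^+(d,\tau_0))-\y_s(t-\tau_0)=\vec{\ell}^+(\theta)+\vec{h}^+(\theta),
\]
so everything reduces to bounding the two summands on the right.

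For $\vec{\ell}^+$ the definition \eqref{def:ell-} rewritten with the cocycle identity in \eqref{norms-} yields exactly $\|\vec{\ell}^+(\theta)\|=D_{\ell}(d,\tau_0)\,z_u^+(t,\tau_1)$. For $\vec{h}^+$, Lemma~\ref{L-S-Lin-} places the fixed point in the ball of radius $r=D_{\ell}(d,\tau_0)|\ln(\varpi)|^{-\alpha/2}$ in the weighted space $X^+$, which translates pointwise into $\|\vec{h}^+(\theta)\|\le r\, z_u^+(t,\tau_1)\le D_{\ell}(d,\tau_0)\,z_u^+(t,\tau_1)$. Adding the two contributions gives, uniformly in $\tau_0 \le t\le\tau_1$,
\[
\|\x(t,\tau_0;\Q^+(d,\tau_0))-\y_s(t-\tau_0)\|\le 2 D_{\ell}(d,\tau_0)\,z_u^+(t,\tau_1).
\]

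To convert this to the stated form one invokes Remark~\ref{DandDell}, which yields $D_{\ell}(d,\tau_0)\le c^{-1}D^f_2(d,\tau_0)$, together with the upper bound $D^f_2(d,\tau_0)\le d^{\sfwd_+(1-\mu_2\la_u^+/|\la_s^+|)}$ from Proposition~\ref{estTD}; the exponential bound $z_u^+(t,\tau_1)\le k_1\eu^{-\frac12\underline{\la}(\tau_1-t)}$ is then just \eqref{defzused}. This proves the first inequality of the lemma. For the converse statement, one applies Proposition~\ref{lemmaconverse-} to realize the same trajectory as $\x(t,\tau_1;\vec{R}(D))$ with $d=D^b_3(D,\tau_1)$ and $\tau_0=\tau_1-T^b_3(D,\tau_1)$; Remark~\ref{DandDell} now gives the cleaner bound $D_{\ell}(d,\tau_0)\le c^{-1}D$ directly, and plugging this into the displayed inequality above yields the second claim.

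The only real point requiring care is that the fixed point ball in Lemma~\ref{L-S-Lin-} is set up with $M=T^f_2(d,\tau_0)$, so one must check that the normalization $D_{\ell}=dz_u^+(M+\tau_0,\tau_0)$ matches the value appearing in Remark~\ref{DandDell} and in the proof of Proposition~\ref{estTD}; this is precisely how $D_{\ell}$ was defined there, so the matching is tautological. Everything else is bookkeeping: no new estimates are needed, and uniformity in $\tau_0$ (resp.\ $\tau_1$) is inherited from the uniformity already established for $D^f_2$, $T^f_2$, $D^b_3$ and $T^b_3$.
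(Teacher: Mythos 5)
Your proposal is correct and follows essentially the same route as the paper's own proof: decompose via \eqref{exp.h-}, bound $\vec{\ell}^+$ and $\vec{h}^+$ separately through \eqref{norms-} and Lemma \ref{L-S-Lin-}, then convert $D_{\ell}$ to $D^f_2$ (resp.\ $D$) via Remark \ref{DandDell} and Proposition \ref{estTD} (resp.\ Proposition \ref{lemmaconverse-}). The only difference is cosmetic — you carry $D_{\ell}$ through to the end before converting, while the paper converts to $D$ at each step — so nothing further is needed.
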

\begin{proof}
  Let us decompose $\x(t,\tau_0; \Q^+(d,\tau))$ as in \eqref{exp.h-}, and set $\tau_1=\tau_0 +T^f_2(d,\tau_0)$. From Lemma \ref{L-S-Lin-} and
  Remark \ref{DandDell}
  we find
  $$ \|\vec{h}^+(\theta)\| \le \| \vec{h}^+ \|_{X^+} z_u^+(t,  \tau_1) \le
  \frac{k_1}{c} |\ln( \varpi )|^{-\al/2} D  \eu^{-\frac12\underline{\la} (\tau_1-t)} $$
  where $t=\theta+\tau_0$.
  Further from \eqref{norms-} and Remark \ref{DandDell}
  $$\|\vec{\ell}^+(\theta)\| \le D_{\ell} z_u^+(t,  \tau_1) \le
  \frac{k_1}{c}  D  \eu^{-\frac12\underline{\la} (\tau_1-t)} .$$
  Therefore
  \begin{equation}\label{justD}
  \begin{split}
      &  \|\x(\theta+ \tau_0,\tau_0; \Q^+(d,\tau))- \vec{y}_s(\theta)\| \le  2\frac{k_1}{c} \eu^{-\frac12\underline{\la}
      (T^f_2(d,\tau_0)-\theta)}  D.
  \end{split}
  \end{equation}
   Whence, using Proposition \ref{estTD}, we conclude the first part of the  proof; the second part of the proof is analogous and it is
   omitted.
\end{proof}

 \subsection{The loop: from  $\boldsymbol{S^-(\tau)}$ to $\boldsymbol{L^\textrm{in}}$}\label{S+Lin}

Let $\tilde{S}^-$ and $S^-(\tau)$ be as in \eqref{S+} and \eqref{S+tau};
let us choose $\Q^-(d,\tau)=-d \vec{v}_s^- + \PPu(\tau) \in S^-(\tau)$.
We aim to follow the trajectory $\x(t,\tau; \Q^-(d,\tau))$ from  $\tilde{S}^-$ backwards to $\Omega^0$.

The next lemma is analogous to Lemma \ref{L.0} and again it follows from the argument of Lemma \ref{L.loop}, taking into account the  mutual
positions of
  $\vec{v}_u^\pm$, $\vec{v}_s^\pm$.
\begin{lemma}\label{L.0bis}
 Assume \textbf{H} and fix $\tau\in\R$.
  For any $0<d \le \delta$ there are $C^r$ functions $T^b_2(d,\tau)$  and $D^b_2(d,\tau)$ such that
  $\x (\theta+\tau,\tau;\Q^-(d,\tau))\in K_A^{\textrm{bwd},-}$ for any $-T^b_2(d,\tau)<\theta\leq 0$ and it crosses transversely
  $\Om^0$  at $\theta=-T^b_2(d,\tau)$ in a point such that
  $\| \x(-T^b_2(d,\tau)+\tau,\tau;\Q^-(d,\tau))\|=D^b_2(d,\tau)$.
\end{lemma}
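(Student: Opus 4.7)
My plan is to exploit the symmetry under time-reversal with the forward case already handled in Lemma \ref{L.0}. Specifically, the transformation $t \mapsto -t$ applied to \eqref{eq-disc-} yields a new piecewise smooth system on $\Omega^-$ for which the roles of $\vec{v}_s^-$ and $\vec{v}_u^-$ are swapped (the stable direction of the reversed system is $\vec{v}_u^-$, the unstable one is $\vec{v}_s^-$). Under this reversal, $\tilde{S}^-$ plays the role of $\tilde{S}^+$, the transversal intersection $\PPu(\tau)$ corresponds to the stable intersection $\PPs$, and the point $\Q^-(d,\tau) = -d\vec{v}_s^- + \PPu(\tau)$ plays the role of $\Q^+(d,\tau) = -d\vec{v}_u^+ + \PPs(\tau)$ from \eqref{Q-Q+}.

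First I would verify that the hypotheses \assump{F0}, \assump{F1}, \assump{F2}, \assump{K}, \assump{G} are preserved (in the appropriate sign-reversed form) under $t \mapsto -t$ on the $\Omega^-$ side, so that the configuration illustrated in Figure~\ref{fig.key} is mirrored. Then, invoking the argument of Lemma~\ref{L.loop} (restricted to the $\Omega^-$ piece) together with the fact that, by \assump{F2}, the vector $\rho(\vec{v}_u^- - 2K^-\vec{v}_s^-)$ points into $\Omega^+$ for small $\rho>0$, a backward orbit of $\x(t,\tau;\Q^-(d,\tau))$ starting in $K_A^{\textrm{bwd},-}$ is forced to leave $\Omega^-$ through $\Omega^0$ in finite time. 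This defines $T^b_2(d,\tau)>0$ as the first backward crossing time and $D^b_2(d,\tau) := \|\x(-T^b_2(d,\tau)+\tau,\tau;\Q^-(d,\tau))\|$ as the norm of the crossing point.

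Transversality of this crossing with $\Omega^0$ follows from \assump{K} together with the fact that, near the origin and after the hyperbolic expansion in the reversed time, the tangent vector to the trajectory aligns with a nontrivial combination of $\vec{v}_u^-$ and $\vec{v}_s^-$ that has nonzero $\nabla G(\vec{0})$ component, thanks to $c_u^{\perp,-}, c_s^{\perp,-}\neq 0$ from \eqref{cucs}. The $C^r$ regularity of $T^b_2$ and $D^b_2$ then follows from the smoothness of the flow on $\Omega^-$ together with the implicit function theorem applied at the transverse crossing, exactly as in Remarks \ref{r.smooth} and \ref{data.smooth}.

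The main (minor) obstacle will be bookkeeping the sign conventions and checking that the auxiliary parameter $\varpi$ and the choice $\delta=\delta(\varpi)<\varpi$ made in \S \ref{S-Lin} are still admissible after time-reversal; since the exponents $\la_s^-, \la_u^-$ only enter through $\underline{\la}, \overline{\la}$ and the roughness of exponential dichotomy applies symmetrically in $t$, no new smallness requirement on $\varpi, \delta$ beyond those in Lemma~\ref{L.0} arises. No fixed-point construction analogous to Lemma~\ref{L-S-Lin-} is needed here: it will be set up only later, when quantitative estimates of $T^b_2, D^b_2$ (analogous to Proposition~\ref{estTD}) are derived.
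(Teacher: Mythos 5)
Your proposal is correct in substance, but it organizes the argument a bit differently from the paper. The paper disposes of Lemma~\ref{L.0bis} with a one-line appeal to the same confinement argument of Lemma~\ref{L.loop} that gives Lemma~\ref{L.0}, run on the $\Omega^-$ piece backward in $t$; the explicit time-reversal $\f^\pm\mapsto\underline{\f}^\pm$, $\g\mapsto\underline{\g}$ is introduced only a few lines later, precisely to transport the \emph{quantitative} estimates of Proposition~\ref{estTD} into Proposition~\ref{estTDbis}. You bring the time-reversal forward and use it already for the bare existence of $T^b_2$ and $D^b_2$, turning Lemma~\ref{L.0bis} into a strict corollary of Lemma~\ref{L.0} rather than a re-run. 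That is a legitimate and arguably cleaner organization, and it costs nothing extra: as you note, $\Q^-(d,\tau)=-d\vec{v}_s^-+\PPu(\tau)$ and $S^-(\tau)\subset\tilde S^-$ correspond under $t\mapsto-t$ to $\Q^+(d,\tau)$ and $S^+(\tau)\subset\tilde S^+$ for the reversed system, whose hyperbolic data at $\vec 0$ are $(-\la_u^\pm,-\la_s^\pm)$ with eigenvectors unchanged, and the barriers $K^{\mathrm{bwd},\pm}$ of Lemma~\ref{lemma0u} are exactly the forward barriers for the reversed flow. One detail deserves correction: the transversality of the crossing near $\vec 0$ does not come from \assump{K}, which concerns the crossing at $\ga(0)$, and nonvanishing of $c_u^{\perp,-},c_s^{\perp,-}$ alone does not preclude a specific linear combination of $\vec v_u^-,\vec v_s^-$ from being tangent to $\Om^0$. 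What rules this out is the sign pattern in \assump{F1} together with \assump{F2} (no sliding): for $\x=c(\vec v_u^--K^-\vec v_s^-)$ on the tangent line to $\Om^0$ at $\vec 0$ one gets $(\na G(\vec 0))^*\bs{f_x^-}(\vec 0)\x=c\bigl(\la_u^-c_u^{\perp,-}-K^-\la_s^-c_s^{\perp,-}\bigr)$, and both summands have the same (nonzero) sign under \assump{F1}, so the normal velocity cannot vanish. This is the content implicitly inherited from Lemma~\ref{L.loop}; once it is in place, the $C^r$ regularity via the implicit function theorem follows as you say, exactly as in Remarks~\ref{r.smooth} and \ref{data.smooth}.
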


In order to get estimates of $T^b_2(d,\tau)$ and $D^b_2(d,\tau)$, we perform an inversion of time
argument.

Let us denote by $\underline{\f}^{\pm}(\x)=- \f^{\pm}(\x)$ and $\underline{\g}(t,\x,\ep)=-\g(-t,\x,\ep)$, and notice that
  if $\x(t)$ is a solution of \eqref{eq-disc}, then $\underline{\x}(t)=\x(-t)$ is a solution of
\begin{equation}\label{eq.modbis}
  \dot{\x}=\underline{\f}^\pm(\x)+\ep\underline{\g}(t,\x,\ep),\quad \x\in\Om^\pm.
\end{equation}
  Further, $\bs{\frac{\partial\underline{\f}^+}{\partial x}}(\vec{0})$ has eigenvalues $-\la_u^+<0<-\la_s^+$ with
  eigenvectors $\vec{v}_u^+$
  and  $\vec{v}_s^+$
  respectively, while $\bs{\frac{\partial\underline{\f}^-}{\partial x}}(\vec{0})$ has eigenvalues $-\la_u^-<0<-\la_s^-$ with
  eigenvectors
  $\vec{v}_u^-$ and  $\vec{v}_s^-$
  respectively.

  So applying Proposition \ref{estTD} to \eqref{eq.modbis} and then going back to the original system we obtain the following.
\begin{proposition}\label{estTDbis}
Assume \textbf{H}; let  $\mu_2 \in ]0, \mu_0/2]$  where $\mu_0$ satisfies \eqref{mu0} and choose $\varpi=\varpi(\mu_2)$ satisfying both \eqref{est.varpi1} and \eqref{varpibis}.
 Then there is  $\delta=\delta(\mu_2,\varpi)>0$ such that
for any  $0<d< \delta $ and any $\tau \in \R$  we find
\begin{equation}\label{est.T2bis}
	\begin{gathered}
		\sTbwd_- \left(1-\frac{\mu_2 }{3}\right) |\ln(d)|
		\le T^b_2(d,\tau)
		\le  \sTbwd_- \left(1+\frac{\mu_2 }{3}\right)  |\ln(d)|
	\end{gathered}
\end{equation}
and
\begin{equation}\label{est.D-d+}
	   d^{\sbwd_- \left( 1+ \frac{\mu_2 |\la_s^-|}{\la_u^-} \right)}
	\le   D^b_2(d,\tau)
	\le  d^{\sbwd_- \left( 1 -\frac{\mu_2 |\la_s^-|}{\la_u^-} \right)}
\end{equation}
where
\begin{equation}\label{D-d+}
D^b_2(d,\tau):= \|\x(\tau-T^b_2(d,\tau),\tau ; Q^-(d,\tau))\|.
\end{equation}
\end{proposition}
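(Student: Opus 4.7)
The plan is to deduce Proposition~\ref{estTDbis} from the already-proved Proposition~\ref{estTD} via the time-reversal argument hinted at in the remark just above. Concretely, I would introduce the reversed system \eqref{eq.modbis} with $\underline{\f}^{\pm}(\x)=-\f^{\pm}(\x)$ and $\underline{\g}(t,\x,\ep)=-\g(-t,\x,\ep)$, and recall that $\x(t)$ solves \eqref{eq-disc} if and only if $\underline{\x}(t):=\x(-t)$ solves \eqref{eq.modbis}. A routine check confirms that the reversed system satisfies the analogs of \assump{F0}, \assump{F1}, \assump{F2}, \assump{K}, \assump{G}, \assump{H}: the origin remains a critical point, the eigenvalues become $-\la_u^{\pm}<0<-\la_s^{\pm}$ with the same eigenvectors, stable and unstable roles swap, the homoclinic trajectory $\ga(t)$ is replaced by $\ga(-t)$ (which lies in $\Om^+$ for $t<0$ and in $\Om^-$ for $t>0$), and the H\"older constant $N_\al$ is unchanged.

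Next I would identify the objects of \eqref{eq.modbis} that play the role of those used in \S\ref{S-Lin}. Since $\vec{v}_u^{\pm}$ are now the (new) stable eigenvectors and $\vec{v}_s^{\pm}$ the (new) unstable ones, the original $\tilde{W}^u(\tau)$ becomes the new stable leaf of \eqref{eq.modbis} at time $-\tau$, so $\tilde{S}^-$ defined in \eqref{S+} plays the role of $\tilde{S}^+$ and $\y_u(-\theta)$ replaces $\y_s(\theta)$ in the expansion \eqref{exp.h-}. The trajectory $\x(t,\tau;\Q^-(d,\tau))$, which moves backward in $\Om^-$ from $S^-(\tau)$ to $\Om^0$ over the time interval $[\tau-T^b_2(d,\tau),\tau]$, corresponds via $s=-t$ to a forward trajectory of \eqref{eq.modbis} lying in $\Om^-$ and travelling from the same initial point at time $-\tau$ to $\Om^0$ at time $-\tau+T^b_2(d,\tau)$. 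I would then apply the ``minus'' analog of Proposition~\ref{estTD}, obtained by the symmetric argument of \S\ref{S-Lin} with all $+$ signs replaced by $-$ (cf.~Remark~\ref{rem:settings}), to this forward trajectory.

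Translating the resulting constants back via $\la_u^+\rightsquigarrow -\la_s^-$ and $\la_s^+\rightsquigarrow -\la_u^-$, those of Proposition~\ref{estTD} transform as
\begin{equation*}
\sfwd_+=\frac{|\la_s^+|}{\la_u^++|\la_s^+|}\;\rightsquigarrow\;\frac{\la_u^-}{\la_u^-+|\la_s^-|}=\sbwd_-,\quad \sTfwd_+\;\rightsquigarrow\;\sTbwd_-,\quad \frac{\la_u^+}{|\la_s^+|}\;\rightsquigarrow\;\frac{|\la_s^-|}{\la_u^-},
\end{equation*}
which are precisely the exponents appearing in \eqref{est.T2bis}--\eqref{est.D-d+}. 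The main (and essentially only) obstacle is bookkeeping: one must verify that the thresholds $\varpi(\mu_2)$ and $\delta(\mu_2,\varpi)$ chosen in Remarks~\ref{defvarpi}--\ref{defvarpibis} depend only on data that are symmetric under the $+/-$ swap combined with the time reversal, so that a single common choice of these parameters works simultaneously in Propositions~\ref{estTD} and~\ref{estTDbis}. Once that is checked, no new fixed-point or Gronwall argument is required and the estimates \eqref{est.T2bis}, \eqref{est.D-d+} follow immediately.
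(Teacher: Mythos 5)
Your proposal is correct and follows essentially the same route as the paper: the paper proves Proposition~\ref{estTDbis} precisely by the inversion-of-time argument, introducing \eqref{eq.modbis} and applying (the $\pm$-swapped version of) Proposition~\ref{estTD} to it, with $\la_u^+\rightsquigarrow -\la_s^-$, $\la_s^+\rightsquigarrow -\la_u^-$, so that $\sfwd_+\rightsquigarrow\sbwd_-$, $\sTfwd_+\rightsquigarrow\sTbwd_-$ and $\la_u^+/|\la_s^+|\rightsquigarrow |\la_s^-|/\la_u^-$, exactly as you indicate. Your added remarks on checking the hypotheses of the reversed system and on the compatibility of the thresholds $\varpi$, $\delta$ are the right sanity checks, though the paper leaves them implicit.
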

Similarly applying \michal{Remark \ref{welldefined} and} Proposition \ref{lemmaconverse-} to \eqref{eq.modbis} and then going back to the original system
we get the following.
\michal{
	\begin{remark}\label{welldefined+}
		Fix $\tau_{-1} \in \R$,
		$0<D< \de_{-1}=\de^{\sbwd_- \left(1+\frac{\mu_2 |\la_s^-|}{ \la_u^-} \right)}$
		and follow the trajectory $\x(t,\tau_{-1}; \vec{R}(D))$ forward in time.
		Then there are  $D^f_3(D,\tau_{-1})>0$, $T^f_3(D,\tau_{-1})>0$ and $\tau_0^-(D,\tau_{-1})= \tau_{-1}+T^f_3(D,\tau_{-1})$
		such that $\x(t,\tau_{-1}; \vec{R}(D)) \in K_A^{\textrm{fwd},-}(t)$ for any $ \tau_{-1}<t\leq \tau^-_0(D,\tau_{-1}) $
		and it crosses transversely $\tilde{S}^-(\tau^-_0(D,\tau_{-1}) )$ in the point $\Q^-(D^f_3(D,\tau_{-1}),\tau_0^-(D,\tau_{-1}))$,
		where  $\Q^-(\cdot,\cdot)$ is as defined in \eqref{Q-Q+}.
	\end{remark}
}
\begin{proposition}\label{lemmaconverse+}
Assume \textbf{H}; let  $\mu_2 \in ]0, \mu_0/2]$  where $\mu_0$ satisfies \eqref{mu0} and choose $\varpi=\varpi(\mu_2)$ satisfying both \eqref{est.varpi1} and  \eqref{varpibis}.
 Then there is  $\delta=\delta(\mu_2,\varpi)>0$ such that
for any $0<D< \de_{-1}=\de^{\sbwd_- \left(1+\frac{\mu_2 |\la_s^-|}{ \la_u^-} \right)}$ and any $\tau_{-1} \in \R$   the
functions $
T^{f}_3(D,\tau_{-1})$ and
$D^{f}_3(D,\tau_{-1})$ are well defined, $C^{r}$ and one-to-one. Further
    \begin{equation*}
      \begin{split}
           &    \frac{1}{\la_u^-}\left[ 1-   \mu_2 \left( 2 \frac{ |\la_s^-|}{\la_u^-} +\frac{1}{2} \right) \right]
           |\ln(D)| \le
           T^{f}_3(D,\tau_{-1}) \le
           \frac{1}{\la_u^-}\left[ 1+   \mu_2 \left( 2 \frac{ |\la_s^-|}{\la_u^-} +\frac{1}{2} \right) \right]
           |\ln(D)|,
           \\
           &      D^{\frac{1}{\sbwd_-}  \left(1+ 2 \frac{\mu_2 |\la_s^-|}{\la_u^-}   \right) } \le
           D^{f}_3(D,\tau_{-1}) \le
            D^{\frac{1}{\sbwd_-}  \left(1- 2 \frac{\mu_2 |\la_s^-|}{\la_u^-}   \right) }.
      \end{split}
    \end{equation*}
    In particular $\lim_{D \to 0} D^{f}_3(D,\tau_{-1})=0$ uniformly in $\tau_{-1} \in \R$.
\end{proposition}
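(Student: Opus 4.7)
The plan is to derive Proposition \ref{lemmaconverse+} from Proposition \ref{lemmaconverse-} via the time-reversal device introduced just before Proposition \ref{estTDbis}. Concretely, I would work with $\underline{\f}^{\pm}(\x) := -\f^{\pm}(\x)$ and $\underline{\g}(t,\x,\ep) := -\g(-t,\x,\ep)$, so that a solution $\underline{\x}(t)$ of \eqref{eq.modbis} corresponds to a solution $\x(t) = \underline{\x}(-t)$ of \eqref{eq-disc}. Under this involution, the forward trajectory $\x(t,\tau_{-1}; \vec{R}(D))$ travelling from $L^{\textrm{in}}$ to $\tilde{S}^-$ in \eqref{eq-disc} is transported into a trajectory of \eqref{eq.modbis} passing through $\vec{R}(D)$ at time $-\tau_{-1}$ and evolving forward to $\tilde{S}^-$; this is exactly the geometric picture to which Proposition \ref{lemmaconverse-} applies, with the $-$ half-plane of the reversed system now playing the role of the $+$ half-plane of the original one.

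Before invoking Proposition \ref{lemmaconverse-}, I would verify that \eqref{eq.modbis} still satisfies the standing hypotheses \textbf{F0}, \textbf{F1}, \textbf{K}, \textbf{G}, \textbf{H}. The main point is that $\bs{\frac{\partial \underline{\f}^{\pm}}{\partial x}}(\vec{0})$ has eigenvalues $-\la_u^{\pm} < 0$ and $-\la_s^{\pm} > 0$, with eigenvectors $\vec{v}_u^{\pm}$ (now stable) and $\vec{v}_s^{\pm}$ (now unstable). The signs of $[\na G(\vec{0})]^* \vec{v}_{s/u}^{\pm}$ are simply relabelled, so \textbf{F1} still holds; the no-sliding condition \textbf{F2} is preserved since $\mathcal{T}^u$ and the stable directions are swapped symmetrically; the homoclinic trajectory becomes $\underline{\ga}(t) = \ga(-t)$, satisfying \textbf{K} after relabeling $\Omega^{\pm}$; and \textbf{H} is invariant under the sign change since the Hölder constant of $\boldsymbol{F^{\pm}_x}$ is unchanged.

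Once the reversed system is set up, Proposition \ref{lemmaconverse-} applied to \eqref{eq.modbis} provides maps $\widehat{T}^b_3(D,\widehat{\tau})$ and $\widehat{D}^b_3(D,\widehat{\tau})$ satisfying the bounds stated there, but with $|\la_s^+|$ and $\la_u^+$ replaced by the corresponding ``stable'' and ``unstable'' eigenvalue magnitudes of the reversed system in $\Omega^-$, namely $\la_u^-$ and $|\la_s^-|$ respectively. Under this substitution
\[
\sfwd_+ = \frac{|\la_s^+|}{\la_u^++|\la_s^+|} \;\longmapsto\; \frac{\la_u^-}{\la_u^-+|\la_s^-|} = \sbwd_-,
\qquad \frac{\la_u^+}{|\la_s^+|} \;\longmapsto\; \frac{|\la_s^-|}{\la_u^-},
\]
and the time bounds $\tfrac{1}{|\la_s^+|}[1 \pm \mu_2(2\la_u^+/|\la_s^+|+\tfrac{1}{2})]|\ln(D)|$ transform into $\tfrac{1}{\la_u^-}[1 \pm \mu_2(2|\la_s^-|/\la_u^-+\tfrac{1}{2})]|\ln(D)|$, matching \eqref{est.T2bis}-type bounds in the target statement. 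Setting $T^{f}_3(D,\tau_{-1}) := \widehat{T}^b_3(D,-\tau_{-1})$ and $D^{f}_3(D,\tau_{-1}) := \widehat{D}^b_3(D,-\tau_{-1})$ transports the estimates back to the original system, and the $C^r$ regularity, injectivity, and uniformity in $\tau_{-1} \in \R$ descend immediately because $t \mapsto -t$ is a smooth involution.

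The main obstacle, though essentially bookkeeping, is keeping the correspondences between orientations consistent under time-reversal: in particular one has to confirm that $\Q^-(d,\tau_0)$ from \eqref{Q-Q+} corresponds under the involution to a point of the form $\Q^+$ in the reversed system lying on the reversed analogue of $\tilde{S}^+$, and that the oriented distance $\dist$ on $\Om^0$ and the eigenvector conventions in \eqref{wupm} survive the relabelling $\vec{v}_u^- \leftrightarrow \vec{v}_s^-$. Once these identifications are carried out, the estimates claimed in Proposition \ref{lemmaconverse+} follow by direct substitution in the conclusions of Proposition \ref{lemmaconverse-}, and no new fixed-point argument is required.
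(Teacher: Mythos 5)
Your proposal is correct and takes the same route as the paper: the paper's ``proof'' of Proposition \ref{lemmaconverse+} is the single sentence preceding it, namely that it follows by applying Proposition \ref{lemmaconverse-} to the time-reversed system \eqref{eq.modbis} and translating back, which is precisely what you carry out. Your eigenvalue bookkeeping ($|\la_s^+|\mapsto\la_u^-$, $\la_u^+\mapsto|\la_s^-|$, hence $\sfwd_+\mapsto\sbwd_-$) is the right correspondence; the only slip is a phrase saying the reversed trajectory evolves ``forward'' to $\tilde{S}^-$ when under $t\mapsto -t$ it of course runs backward, but this does not affect the argument.
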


 Let us recall that $\y_u(\theta)=\x(\theta+\tau,\tau;\PPu(\tau))$, see \eqref{S+}.
We conclude the section with the following result, obtained applying the inversion of time argument to Lemma \ref{1keymissed}.

\begin{lemma}\label{2keymissed}
Assume \textbf{H} and  let $\mu_2$, $\varpi$ and $\de=\delta(\mu_2,\varpi)$ be as in Proposition~\ref{estTDbis}.
  Let $0<d \le \de$, $\tau_0 \in \R$, and consider the trajectory
  $$\x(t,\tau_0; \Q^-(d,\tau_0)) \equiv x(t,\tau_{-1}; \vec{R}(D)) \qquad  \tau_{-1} \le t \le \tau_0$$
  where $D=D^b_2(d,\tau_0)$, $\vec{R}(D)$ is the point of $L^{\textrm{in}}$ such that
  $\|\vec{R}(D)\|=D\in ]0, \de_{-1}]$ where  $\de_{-1}=\de^{\sbwd_-\left( 1 + \frac{\mu_2 |\la_s^-|}{\la_u^-}\right)}$  and $\tau_{-1}=\tau_0
  -T^b_2(d,\tau_0)$.
  Then
  \begin{gather*}
  	\|\x(t,\tau_0; \Q^-(d,\tau))- \vec{y}_u(t-\tau_0)\| \le   \frac{2}{c} d^{\sbwd_- \left(1-\mu_2\frac{|\la_s^-|}{\la_u^-}\right)} z_s^-(t,
  \tau_{-1}) \\
  \le
  \frac{2k_1}{c} d^{\sbwd_- \left(1-\mu_2\frac{|\la_s^-|}{\la_u^-}\right)} \eu^{-\frac12\underline{\la}(t-\tau_{-1})}
  \end{gather*}
  whenever $\tau_{-1} \le t \le \tau_0$.

   Analogously let  $0<D \le \de_{-1} $, $\tau_{-1} \in
   \R$, and
   consider the trajectory
  $$\x(t,\tau_0; \Q^-(d,\tau_0)) \equiv x(t,\tau_{-1}; \vec{R}(D)) \qquad  \tau_{-1} \le t \le \tau_0$$
  where $d=D^{f}_3(D,\tau_{-1})$ and $\tau_0=\tau_{-1} +T^{f}_3(D,\tau_{-1})$.
  Then
  $$\|\x(t,\tau_{-1}; \vec{R}(D))- \vec{y}_u(t-\tau_0)\| \le   \frac{2}{c} D z_s^-(t, \tau_{-1}) \le  \frac{2k_1}{c} D
  \eu^{-\frac12\underline{\la}(t-\tau_{-1})}  $$
   whenever $\tau_{-1} \le t \le \tau_0$.
\end{lemma}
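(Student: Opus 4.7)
My plan is to prove Lemma \ref{2keymissed} by the same inversion-of-time argument that was used to derive Proposition \ref{estTDbis} from Proposition \ref{estTD} and Proposition \ref{lemmaconverse+} from Proposition \ref{lemmaconverse-}. The statements we are asked to prove are the exact time-reversed analogues of Lemma \ref{1keymissed}, transported from the ``$+$, forward'' side to the ``$-$, backward'' side. Since Lemma \ref{1keymissed} has already been established, the work is essentially a careful translation of notation.

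First I would set $\underline{\vec{x}}(s):=\vec{x}(-s)$ and recall that $\underline{\vec{x}}$ solves \eqref{eq.modbis}. The trajectory $\vec{x}(t,\tau_0;\vec{Q}^-(d,\tau_0))$ on $[\tau_{-1},\tau_0]$ corresponds to the forward trajectory $\underline{\vec{x}}(s,-\tau_0;\vec{Q}^-(d,\tau_0))$ of \eqref{eq.modbis} on $[-\tau_0,-\tau_{-1}]$. The line $\tilde{S}^-$ is geometric and hence unchanged; however, in \eqref{eq.modbis} restricted to $\Omega^-$ the stable direction at the origin is $\vec{v}_u^-$ (eigenvalue $-\lambda_u^-<0$) and the unstable direction is $\vec{v}_s^-$ (eigenvalue $-\lambda_s^->0$). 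Therefore, relative to \eqref{eq.modbis}, the line $\tilde{S}^-$ plays for the $\Omega^-$ side exactly the role that $\tilde{S}^+$ played for the $\Omega^+$ side in the original system; the point $\vec{\pi}_u(\tau_0)$ becomes the unique intersection of $\tilde{S}^-$ with the stable manifold of \eqref{eq.modbis} at time $-\tau_0$, and the trajectory $\vec{y}_u(\theta)=\vec{x}(\theta+\tau_0,\tau_0;\vec{\pi}_u(\tau_0))$ corresponds, via $s=-\theta-\tau_0$, to the stable trajectory of \eqref{eq.modbis} emanating from $\vec{\pi}_u(\tau_0)$.

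With these identifications in place, I would apply Lemma \ref{1keymissed} to the system \eqref{eq.modbis} on its $\Omega^-$ side. Under the correspondence, the eigenvalue substitutions are $|\lambda_s^+|\mapsto \lambda_u^-$ and $\lambda_u^+\mapsto |\lambda_s^-|$, which send $\sigma^{\mathrm{fwd}}_+=|\lambda_s^+|/(\lambda_u^++|\lambda_s^+|)$ to $\sigma^{\mathrm{bwd}}_-=\lambda_u^-/(\lambda_u^-+|\lambda_s^-|)$ and the factor $\lambda_u^+/|\lambda_s^+|$ to $|\lambda_s^-|/\lambda_u^-$, matching exactly the exponents appearing in Lemma \ref{2keymissed}. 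Likewise, $z_u^+$ of the original system corresponds (via the time-reversed dichotomy construction described in \S\ref{S-Lin}) to the ``unstable'' dichotomy function of \eqref{eq.modbis} on the $\Omega^-$ side, which when re-expressed in the original time variable is precisely $z_s^-$: concretely, if $\underline{z_u^-}$ denotes the dichotomy quantity built for \eqref{eq.modbis} as in \eqref{defzus}, then $\underline{z_u^-}(s,s_1)=z_s^-(-s,-s_1)$, so that setting $s=-t$, $s_1=-\tau_{-1}$ yields the weight $z_s^-(t,\tau_{-1})$ announced in the statement. The bound $\tfrac{2k_1}{c}\,\mathrm{e}^{-\tfrac12\underline{\lambda}(\tau_1-t)}$ of Lemma \ref{1keymissed} transforms into $\tfrac{2k_1}{c}\,\mathrm{e}^{-\tfrac12\underline{\lambda}(t-\tau_{-1})}$ by the same substitution, using that $\underline{\lambda}$ is the same for \eqref{eq-disc} and \eqref{eq.modbis}. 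The second (``inverse'') half of the lemma follows from the second half of Lemma \ref{1keymissed} by the identical dictionary, after identifying $\delta_{-1}$ with the analogue of $\delta_1$ for the reversed system and using Proposition \ref{lemmaconverse+} (obtained previously by the same inversion-of-time device) to guarantee that $D^b_2$ and $D^{f}_3$ are inverse maps on the appropriate ranges.

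The main conceptual obstacle is purely bookkeeping: one must verify that assumptions \assump{F0}, \assump{F1}, \assump{K}, \assump{G} and \assump{H} transfer to the system \eqref{eq.modbis}, that the constants $c$, $k_1$, $\underline{\lambda}$, $\overline{\lambda}$ and the parameters $\varpi$, $\delta$, $\mu_2$ do not change under the inversion (which is true since they depend only on eigenvalue moduli and bounds on $\vec{F}^\pm$ shared by the two systems), and that the curves $\tilde{S}^-$, $S^-(\tau)$ and the points $\vec{Q}^-(d,\tau)$, $\vec{\pi}_u(\tau)$ correspond, under the time reversal, to the objects $\tilde{S}^+$, $S^+(-\tau)$, $\vec{Q}^+(d,-\tau)$, $\vec{\pi}_s(-\tau)$ of Lemma \ref{1keymissed}. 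Once this dictionary is laid down the two estimates are immediate consequences of Lemma \ref{1keymissed}; accordingly I would present the argument as a one-page verification of the correspondences rather than a repetition of the fixed-point construction of \S\ref{S-Lin}.
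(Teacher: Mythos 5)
Your proposal is correct and coincides with the paper's own argument: the paper obtains Lemma \ref{2keymissed} precisely by "applying the inversion of time argument to Lemma \ref{1keymissed}," exactly as in the derivations of Propositions \ref{estTDbis} and \ref{lemmaconverse+} from their forward counterparts. Your dictionary of correspondences (eigenvalue swap $|\la_s^+|\mapsto\la_u^-$, $\la_u^+\mapsto|\la_s^-|$, hence $\sfwd_+\mapsto\sbwd_-$, and the identification of the reversed unstable dichotomy weight with $z_s^-$) is the verification the paper leaves implicit.
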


\subsection{Final step: estimate of time and space displacement}\label{SLinFinal}

Now we are ready to complete the proofs of Theorems \ref{key}, \ref{keymissed}.

We start with Theorem \ref{key}: the proof of the result follows immediately from combining Propositions  \ref{estloglog},
\ref{estTD},
\ref{lemmaconverse+},
\ref{estloglog+} in forward time and
Propositions  \ref{estloglog+}, \ref{estTDbis}, \ref{lemmaconverse-}, \ref{estloglog} in backward time.

However we proceed with the lengthy and tedious computation of the estimate of $\mu_0$, the upper bound in the errors
performed to evaluate time and space displacement. In some sense, the only relevant fact is that the value of
$\mu_0$ (that is the upper bound of $\mu$)  depends only on the eigenvalues $\la^{\pm}_{s}$, $\la^{\pm}_{u}$.

We make use of the following estimates
$$ \frac{\und{\la}}{2\ov{\la}} \le \underline{\sigma}\le   \overline{\sigma} \le  \frac{\ov{\la}}{2\und{\la}}, \quad
\qquad \frac{\und{\la}}{\ov{\la}^2} \le \underline{\Sigma}\le   \overline{\Sigma} \le  \frac{\ov{\la}}{ \und{\la}^2}.
$$

\begin{proof}[\textbf{Proof of Theorem \ref{key}}]
Let us set
\begin{equation}\label{def-cmu}
 c_{\mu}= c_T+c_d +\frac{1}{6}
\end{equation}
  where
\begin{equation}\label{cmu}
  c'_d = \frac{\overline{\lambda}}{\underline{\lambda}}   \, , \qquad
  c_d= 7    \frac{\overline{\lambda}^3}{ \underline{\lambda}^3 } \,,\qquad
  c_T = \frac{1}{6}+ \frac{1}{ 2\underline{\lambda}}.
\end{equation}
Notice that $c_{\mu} > c_d \ge 7$.
Assume that   $0<2 \mu_1  = \mu_2$ and let $0<\mu= c_{\mu} \mu_2< \mu_0$,
where $\mu_0$ is as in \eqref{mu0}.
We can choose $0<d \le \delta \le \varpi$ small enough so that
$$d^{\mu_1} \le \delta^{\mu_1} \le \varpi^{\mu_1} \le |\ln(\varpi)|^{-\tilde{C}}$$
where $\tilde{C}$ is a constant as in Proposition \ref{estloglog}.

Combining  Propositions \ref{estloglog}, \ref{estTD}  we find
the following estimates of the space displacement
$$ \|\PPP_{1/2}(d,\tau) \|= D^f_2(D^f_1(d,\tau), \tau+ T^f_1(d,\tau)), $$
\begin{equation}\label{d.half}
   d^{\sfwd_++\tilde{A}} \le \|\PPP_{1/2}(d,\tau) \| \le d^{\sfwd_++\tilde{B}}
\end{equation}
where, using $\frac{\la_u^+}{|\la_s^+|} \sfwd_+  \le \frac{ \overline{\lambda} }{ 2\underline{\lambda} }$, we find
\begin{equation}\label{shorten0}
  \tilde{B} = (1-\mu_1)\, \cdot \,   \sfwd_+ \left(1-\frac{\la_u^+}{|\la_s^+|}\mu_2\right)-   \sfwd_+ \ge - \frac{
  \overline{\lambda} }{
  2\underline{\lambda} }   (\mu_1+\mu_2
  ) \ge - c'_d \mu_2  \ge
   - \mu,
\end{equation}
\begin{equation}\label{shorten1}
\begin{gathered}
\tilde{A} =(1+\mu_1)\, \cdot \,  \sfwd_+ \left(1+\frac{\la_u^+}{|\la_s^+|}\mu_2 \right)-   \sfwd_+ \le  \frac{
\overline{\lambda} }{
2\underline{\lambda}
}(\mu_1+\mu_2
+\mu_1\mu_2) \\ \le   \frac{ \overline{\lambda} }{ 2\underline{\lambda} }   (2\mu_1+\mu_2) = c'_d \mu_2  \le
 \mu.
\end{gathered}
\end{equation}

So we have shown that
$$ d^{\sfwd_+ + \mu} \le \|\PPP_{1/2}(d,\tau) \| \le d^{\sfwd_+ - \mu}. $$

Let us set
 $$\tau^f_0(d,\tau)=\tau\, , \quad
 D^f_0(d,\tau)=d\,,\quad  \tau^f_{i}= T^f_i(D^f_{i-1}(d,\tau),\tau^f_{i-1}(d,\tau))+ \tau^f_{i-1}(d,\tau)$$ for $i=1,2,3,4$.

Now combining  Propositions \ref{estloglog}, \ref{estloglog+}, \ref{estTD} and \ref{lemmaconverse+}  we find
the following
   $$\DDD(\PPP_1(d,\tau), \vec{P}_u(\TTT_1(d,\tau)))=
D^f_4(D^f_3(D^f_2(D^f_1(d,\tau), \tau^f_1(d,\tau)),  \tau^f_2(d,\tau)), \tau^f_3(d,\tau)),
    $$
$$\DDD(\PPP_1(d,\tau), \vec{P}_u(\TTT_1(d,\tau)))=
D^f_4(D^f_3(\|\PPP_{1/2}(d,\tau) \|, \tau^2_f(d,\tau))    , \tau^f_3(d,\tau)),
    $$
$$ d^{\sfwd+A} \le \DDD(\PPP_1(d,\tau), \vec{P}_u(\TTT_1(d,\tau))) \le d^{\sfwd+B} $$
where,
using  \eqref{d.half} and $\frac{\sfwd_+}{ \sbwd_-}   \le \frac{ \overline{\lambda}^2 }{ \underline{\lambda}^2} $,
we find
\begin{equation}\label{muB}
\begin{gathered}
  B = (1-\mu_1)\, \cdot \,  \left(\sfwd_+- c'_d  \mu_2     \right)
  \, \cdot \,  \frac{1}{\sbwd_-} \left(1- 2\mu_2 \frac{|\la_s^-|}{\la_u^-}   \right)- \frac{\sfwd_+}{\sbwd_-}  \\
  \ge  - \mu_1\frac{\sfwd_+ }{\sbwd_- } - \mu_2 \frac{c'_d}{\sbwd_-}    -2 \mu_2 \frac{\sfwd_+|\la_s^-|}{\sbwd_- \la_u^-}\\
   \ge  -\mu_2 \left(\frac{1}{2}\frac{ \overline{\lambda}^2 }{ \underline{\lambda}^2}
   +\frac{2c'_d\overline{\lambda} }{ \underline{\lambda}}
   +\frac{2\overline{\lambda}^3 }{ \underline{\lambda}^3} \right)
   \ge -\mu_2\left(4+\frac{1}{2}\right)\frac{\overline{\lambda}^3 }{ \underline{\lambda}^3}
    > - c_d \mu_2 \ge - \mu,
\end{gathered}
\end{equation}

\begin{equation}\label{muA}
\begin{gathered}
  A
  =
   (1+\mu_1)\, \cdot \,  \left(\sfwd_++ c'_d  \mu_2     \right)
  \, \cdot \,  \frac{1}{\sbwd_-} \left(1+ 2\mu_2   \frac{|\la_s^-|}{\la_u^-} \right)- \frac{\sfwd_+}{\sbwd_-}  \\
  =   \frac{ \mu_1}{\sbwd_-} (\sfwd_++ c'_d \mu_2) \left(1+ 2\mu_2  \frac{|\la_s^-|}{\la_u^-} \right) + \frac{
  \mu_2
   c'_d}{\sbwd_-}
  \left(1+2 \mu_2
  \frac{|\la_s^-|}{\la_u^-} \right)
  + 2 \mu_2  \frac{ \sfwd_+}{\sbwd_-} \frac{|\la_s^-|}{\la_u^-}    \\
 =\mu_1\frac{\sfwd_{+}}{\sbwd_{-}}\left(1+2\mu_2\frac{|\la_s^-|}{\la_u^-}\right)
  	+c'_d\frac{\mu_2}{\sbwd_{-}}\left(1+2\mu_2\frac{|\la_s^-|}{\la_u^-}\right)(1+\mu_1)
  	+ 2 \mu_2  \frac{ \sfwd_+}{\sbwd_-} \frac{|\la_s^-|}{\la_u^-}\\
  \le\mu_1\frac{\overline{\lambda}^2 }{\underline{\lambda}^2}\left(1+\frac{1}{2}\frac{\overline{\lambda}}{ \underline{\lambda}}\right)
  +c'_d\mu_2\frac{2\overline{\lambda}}{ \underline{\lambda}}\left(1+\frac{1}{2}\frac{\overline{\lambda}}
  { \underline{\lambda}}\right)\left(1+\frac{1}{8}\right)
  +2\mu_2\frac{\overline{\lambda}^3}{ \underline{\lambda}^3}\\
 \le \mu_2\frac{\overline{\lambda}^3}{ \underline{\lambda}^3}
  \left(\frac{3}{2}+2\cdot\frac{3}{2}\cdot\frac{9}{8}+2\right)
  <7\mu_2\frac{\overline{\lambda}^3}{ \underline{\lambda}^3}
  =c_d \mu_2 \le \mu
\end{gathered}
\end{equation}
using $\mu_2\leq\frac{1}{4}$.
So, we have shown the first estimate of \eqref{D1T1}, i.e.
\begin{equation}\label{d.whole}
  d^{\sfwd +\mu} \le \DDD(\PPP_1(d,\tau), \vec{P}_u(\TTT_1(d,\tau))) \le d^{\sfwd -\mu} .
\end{equation}

Now we evaluate the space displacement backward in time: for this purpose we simply need to apply the previous analysis
to the modified system \eqref{eq.modbis}, i.e., to perform an inversion of time argument, analogous to the one detailed in \S \ref{S+Lin}.
This way from \eqref{d.half} and \eqref{d.whole} we get respectively
$$ d^{\sbwd_- + \mu} \le \|\PPP_{-1/2}(d,\tau) \| \le d^{\sbwd_- - \mu}, $$
  $$ d^{\sbwd + \mu} \le \DDD(\PPP_{-1}(d,\tau), \vec{P}_s(\TTT_{-1}(d,\tau))) \le d^{\sbwd -\mu} .$$

Now we consider the time displacement; again we start by following the trajectory forward in time.
Let us set
$$ \tau'_a + \tau''_a   \le  \TTT_{\frac{1}{2}}(d,\tau) -\tau\le  \tau'_b + \tau''_b,$$
$$ \tau'_a + \tau''_a+ \tau'''_a+ \tau^{iv}_a  \le  \TTT_1(d,\tau)- \tau \le  \tau'_b + \tau''_b+ \tau'''_b+ \tau^{iv}_b$$
where $\tau'_a$ and $\tau'_b$ are estimates from below and from above of the time spent by $\x(t,\tau; \Q_s(d,\tau))$
 to travel from $L^0$ to  $\tilde{S}^+$, $\tau''_a$ and $\tau''_b$ are estimates   of the time spent
 to travel from    $\tilde{S}^+$ to $L^{\inn}$, $\tau'''_a$ and $\tau'''_b$ are estimates   of the time spent
 to travel from    $L^{\inn}$ to $\tilde{S}^-$, $\tau^{iv}_a$ and $\tau^{iv}_b$ are estimates   of the time spent
 to travel from   $\tilde{S}^-$ to $L^0$.

 From Proposition \ref{estloglog}, possibly choosing  a smaller $\varpi>0$  we see that
\begin{equation}\label{est.tau'}
  0< \tau'_a  \le \tau'_b  \le  \frac{4}{\und{\la}} \ln(|\ln(\varpi)|) \le \frac{4}{\und{\la}}
  \ln(\varpi^{-\frac{\mu_1\und{\la}}{12}}) \le
  \frac{\mu_1}{3}   |\ln(\delta)|.
\end{equation}
 Analogously, from Proposition \ref{estloglog+} we see that
 $$
 0< \tau^{iv}_a \le   \tau^{iv}_b \le  \frac{\mu_1}{3} |\ln(\delta)|.$$
 Then combining  Propositions  \ref{estloglog} and \ref{estTD} we find
 $$\tau''_a \ge \sTfwd_+\left(1- \frac{\mu_2}{3}\right) (1-\mu_1)|\ln(d)| \ge \sTfwd_+\left(1- \frac{5\mu_2}{6}\right) |\ln(d)|\ge \sTfwd_+(1- \mu_2)
 |\ln(d)|,$$
 $$\tau''_b \le \sTfwd_+\left(1+ \frac{\mu_2}{3}\right) (1+\mu_1)|\ln(d)| $$
 $$
 = \sTfwd_+\left(1+ \frac{5\mu_2+\mu_2^2}{6}\right) |\ln(d)|   \le \sTfwd_+(1+ \mu_2) |\ln(d)|. $$

Summing up we have found
 \begin{equation*}
   \begin{split}
      \TTT_{\frac{1}{2}}(d,\tau)-\tau & \le
       \left(\sTfwd_+ + \frac{\mu_1}{3}+  \sTfwd_+ \mu_2  \right) |\ln(d)| \le
     \left[\sTfwd_+ +  \left(\frac{1}{6}+ \frac{1  }{2 \underline{\lambda}} \right) \mu_2 \right]  |\ln(d)| \\&
      = \left(\sTfwd_+ +    \michal{c_T} \mu_2  \right)  |\ln(d)| \le  (\sTfwd_+ + \mu) |\ln(d)|, \\
      \TTT_{\frac{1}{2}}(d,\tau)-\tau & \ge
       \left(\sTfwd_+ -  \sTfwd_+ \mu_2  \right) |\ln(d)| \ge
     \left(\sTfwd_+ - \frac{ \mu_2   }{2 \underline{\lambda}}  \right)  |\ln(d)| \\&
      \ge \left(\sTfwd_+ -    \michal{c_T} \mu_2  \right)  |\ln(d)| \ge  (\sTfwd_+ -\mu) |\ln(d)|.
   \end{split}
 \end{equation*}
Thus we find
\begin{equation}\label{TT.half}
  (\sTfwd_+ - \mu) |\ln(d)| \le \TTT_{\frac{1}{2}}(d,\tau) -\tau \le    (\sTfwd_+ + \mu) |\ln(d)|.
\end{equation}
  Finally  combining  \eqref{d.half}, \eqref{shorten1}, \eqref{shorten0} and  Proposition  \ref{lemmaconverse+}   we find
\begin{equation*}
	\begin{split}
		&   \tau'''_b \le \frac{1}{\la_u^-}\left[ 1+ \mu_2\left( 2\frac{|\la_s^-|}{\la_u^-} +\frac{1}{2} \right)
		\right] |\ln (d^{\sfwd_+
			+\tilde{A}})|
		\\ & \le \frac{1}{\la_u^-}\left[1 + \mu_2\left( 2\frac{\ov{\la}}{\und{\la}} +\frac{1}{2} \right)\right] (\sfwd_+  + c_d'
		\mu_2)
		|\ln(d)|
		\\  & \le   \left[ \frac{\sfwd_+}{ \la_u^-} + \frac{\mu_2}{\la_u^-} \left(  \sfwd_{+}\frac{5}{2}\frac{\ov{\la}}{\und{\la}} +
c'_d\left(1+\mu_2\frac{5}{2}
		\frac{\ov{\la}}{\und{\la}}\right)\right)\right] |\ln(d)| \\
		& \le \left[ \frac{\sfwd_+}{ \la_u^-} + \frac{\mu_2}{\la_u^-} \left(
		\frac{5}{4}\frac{\ov{\la}^2}{\und{\la}^2} + \frac{13}{8}		\frac{\ov{\la}^2}{\und{\la}^2}\right)\right] |\ln(d)| \\
		& < \left( \frac{\sfwd_+}{ \la_u^-} + 3\frac{\mu_2}{\la_u^-} \frac{\ov{\la}^2}{\und{\la}^2} \right) |\ln(d)| \le \left(
\frac{\sfwd_+}{\la_u^-}+ c_d \mu_2 \right)|\ln(d)|.
	\end{split}
\end{equation*}

  Analogously we find
  $$\tau'''_a \ge    \left( \frac{\sfwd_+}{\la_u^-}- c_d \mu_2 \right)|\ln(d)|.$$

Observe now that $\sTfwd= \sTfwd_+ + \frac{\sfwd_+}{\la_u^-}$, then,
  summing up all the estimates, we find

 \begin{equation*}
    \begin{split}
    & \TTT_1(d,\tau) -\tau \le  \tau'_b+\tau''_b+\tau'''_b+ \tau^{iv}_b \\
      &\le   (\sTfwd_+  + \michal{c_T} \mu_2) |\ln(d)| + \left( \frac{\sfwd_+}{\la_u^- }+ c_d \mu_2 \right)|\ln(d)| + \frac{\mu_1}{3}
      |\ln(\delta)|
      \le \\
    & \le \left[\sTfwd + \left(\michal{c_T}+c_d  + \frac{1}{6}  \right) \mu_2 \right] |\ln(d)| =
    (\sTfwd+ c_{\mu} \mu_2)|\ln(d)| \le
     (\sTfwd+ \mu)|\ln(d)|.
\end{split}
\end{equation*}
Analogously, using the fact that $\TTT_1(d,\tau) -\tau \ge  \tau'_a+\tau''_a+\tau'''_a+ \tau^{iv}_a$ and the previous estimates, we find
\begin{equation}\label{TT.whole}
  (\sTfwd - \mu) |\ln(d)| \le \TTT_{1}(d,\tau) -\tau \le    (\sTfwd + \mu) |\ln(d)|.
\end{equation}

 Then, applying again an inversion of time argument, from \eqref{TT.half} and \eqref{TT.whole} respectively we find
$$
    (\sTbwd_- - \mu) |\ln(d)| \le \tau-  \TTT_{-1/2}(d,\tau)    \le  (\sTbwd_- + \mu) |\ln(d)|,
$$
$$ (\sTbwd - \mu) |\ln(d)| \le \tau- \TTT_{-1}(d,\tau) \le   (\sTbwd + \mu) |\ln(d)| $$
and the proof is completed.
\end{proof}

\begin{remark}\label{totrash}
  Modifying slightly the proof of Proposition \ref{estTD} we might show that we can find a constant $K_2^\mu>1$
  (independent of all the small parameters) such that we can choose
  $\mu_2= K_2^\mu \ep$; then we could replace \eqref{est.T2} and \eqref{est.D-d} by the following
  \begin{equation}\label{e.trash1}
	\begin{gathered}
		\sTfwd_+\left(1-K_2^\mu \ep \right) |\ln(d)|- |\ln(\varpi)|
		\le T^f_2(d,\tau)
		\le  \sTfwd_+\left(1+K_2^\mu \ep\right)  |\ln(d)| \\
	|\ln(\varpi)|^{-2}   d^{\sfwd_+  \left( 1+ K_2^\mu \ep \right) }
	\le   D^f_2(d,\tau)
	\le |\ln(\varpi)|^2 d^{\sfwd_+\left( 1- K_2^\mu \ep   \right) }.
	\end{gathered}
\end{equation}
  Then, we conjecture that with a proper choice of a fixed $\varpi=\varpi_0>0$ satisfying \eqref{est.varpi1}, eventually we would obtain, as we said in Remark \ref{oneparam},
  	\begin{equation}\label{e.trash2}
	\begin{split}
	 	 C^{-1}  d^{\sfwd+ K^\mu \ep} & \le \dist (\PPP_1(d,\tau),\P_u(\TTT_1(d,\tau)))
 \le C d^{\sfwd+ K^\mu \ep}, \\
	 (\sTfwd - K^\mu \ep) |\ln(d)| -C& \le  (\TTT_1(d,\tau)-\tau)    \le  (\sTfwd + K^\mu \ep)  |\ln(d)|+C
	\end{split}
	\end{equation}
for suitable $K^\mu>K_2^\mu >0$ and $C>1$. Further we think that if $\bs{g_x}(t,\vec{0},\ep)=\bs{0}$, so that for the exponential dichotomy estimates in \eqref{defzused}
we can use $\la^{\pm}_s$ and $\la^{\pm}_u$ as exponents, we can even set $K^\mu=0$ in \eqref{e.trash2}.
\end{remark}

Now we prepare the proof of Theorem \ref{keymissed}.

 Notice that in \michal{Theorem} \ref{keymissed} we need the distance between $\x(t,\tau; \Q_s(d,\tau))$
 and $\x(t,\tau; \P_s(\tau))$ to be small but in fact in \S \ref{S-Lin} we measure the distance between
 $\x(t,\tau; \Q_s(d,\tau))$ and $\x(t, T_1^f(d,\tau)+\tau;  \vec{\pi}_s(T_1^f(d,\tau)+\tau))$. We need Lemmas \ref{junction} and
 \ref{bothstable}
 below to measure this mismatch.

With the notation of Lemma \ref{defTf-} we set
 $$\vec{\ell}_s(d,\tau):= \P^+_f(d,\tau) - \x(T^f_1(d,\tau)+\tau, \tau ; \P_s(\tau)) ,$$
 $$\vec{\upsilon}_s(d,\tau) := \PPs(T^f_1(d,\tau)+\tau)  - \x(T^f_1(d,\tau)+\tau, \tau ; \P_s(\tau));$$
then we have the following.
 \begin{lemma}\label{junction}
   Let the assumptions of Proposition \ref{estloglog} be satisfied;  recalling
   that $0<d \le \de \le \varpi$ we find
   \begin{equation}\label{est.ell}
   \begin{split}
  & \left\| \frac{\partial }{\partial d} \vec{\ell}_s(d,\tau) \lfloor_{d=0} \right\|  \le  |\ln(\varpi)|^{8 \ov{\la}/\und{\la}}  \, , \\
   &  \| \vec{\ell}_s(d,\tau)\| \le 2 |\ln(\varpi)|^{8 \ov{\la}/\und{\la}} \; d \le 2 |\ln(\de)|^{8 \ov{\la}/\und{\la}} \; d \le
   d\delta^{-\mu_1/2}.
    \end{split}
 \end{equation}
    Further
    \begin{equation}\label{est.needed}
      \begin{split}
          & \left\|  \frac{\partial }{\partial d} \vec{\upsilon}_s(d,\tau)\lfloor_{d=0} \right\|
    \le   \frac{2 k_2 \ov{\la}}{\und{\la}}     | \ln(\varpi)|^{ 8 \ov{\la}/{\und{\la}}}, \\
           & \|  \vec{\upsilon}_s(d,\tau)\| \le   \frac{4 k_2 \ov{\la}}{\und{\la}}    | \ln(\varpi)|^{8 \ov{\la}/{\und{\la}}}
           \; d
  \le \frac{4 k_2 \ov{\la}}{\und{\la}}   | \ln(\de)|^{ 8 \ov{\la}/{\und{\la}}} \; d  \le   d\delta^{-\mu_1/2}.
   \end{split}
 \end{equation}
 \end{lemma}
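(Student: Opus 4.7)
The plan is to use the mean value theorem after reducing everything to bounds on $\partial_d\vec{\ell}_s$ and $\partial_d\vec{\upsilon}_s$ at $d=0$. The vanishing $\vec{\ell}_s(0,\tau)=\vec{0}$ is immediate from $\Q_s(0,\tau)=\P_s(\tau)$, while $\vec{\upsilon}_s(0,\tau)=\vec{0}$ follows from the definition of $T^f_1(0,\tau)$ combined with the invariance \eqref{proprieta}: the flow of $\P_s(\tau)\in \tilde{W}^s(\tau)$ stays on $\tilde{W}^s(\cdot)$ and at time $T^f_1(0,\tau)+\tau$ crosses $\tilde{S}^+$, so it equals $\PPs(T^f_1(0,\tau)+\tau)$.

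For $\vec{\ell}_s$, writing it as $\x(T^f_1(d,\tau)+\tau,\tau;\Q_s(d,\tau))-\x(T^f_1(d,\tau)+\tau,\tau;\P_s(\tau))$, the crucial observation is that the ``time-argument'' contributions $\dot{\vec{p}}(T^f_1(0,\tau),\P_s(\tau))\,T'_d(\tau)$ to both summands are identical and cancel. What survives is $\partial_d\vec{\ell}_s(0,\tau)=\bs{X}(T^f_1(0,\tau))\vec{v}$, where $\bs{X}(\cdot)$ is the fundamental matrix of the variational equation along $\x(\cdot,\tau;\P_s(\tau))$ with $\bs{X}(0)=\I$, and $\vec{v}=\partial_d\Q_s(0,\tau)$ is the unit tangent to $\Om^0$ at $\ga(0)$. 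Liouville/Gronwall together with assumption \assump{H} (exactly as in \eqref{newmaybe1}) gives $\|\bs{X}(T^f_1(0,\tau))\|\le \eu^{2\ov{\la}\, T^f_1(0,\tau)}\le|\ln(\varpi)|^{8\ov{\la}/\und{\la}}$.

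For $\vec{\upsilon}_s$ both summands depend on $d$ only through $T^f_1(d,\tau)$, so
\[
\partial_d\vec{\upsilon}_s(0,\tau)=\Big[\tfrac{d\PPs}{dt}(T^f_1(0,\tau)+\tau)-\vec{F}^+\big(\PPs(T^f_1(0,\tau)+\tau),T^f_1(0,\tau)+\tau\big)\Big]\,T'_d(\tau).
\]
Since $\PPs\equiv\ga(\tilde{T}^f_1)$ at $\ep=0$ and is $C^r$ in $\ep$, one has $\|\tfrac{d\PPs}{dt}\|=O(\ep)$; and \eqref{whence} yields $\|\vec{F}^+(\PPs,\cdot)\|\le\tfrac{2\ov{\la}}{|\ln(\varpi)|}+O(\ep)$. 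Combined with $|T'_d(\tau)|\le\tfrac{k_2}{\und{\la}}|\ln(\varpi)|^{8\ov{\la}/\und{\la}+1}$ from Remark \ref{r.Tdiff}, and choosing $\ep_0$ small enough that $\ep\,|\ln(\varpi)|$ is negligible, this produces the pointwise bound $\le\tfrac{2k_2\ov{\la}}{\und{\la}}|\ln(\varpi)|^{8\ov{\la}/\und{\la}}$.

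The passage from the derivative bounds to the full norm bounds uses the mean value theorem and continuity of $\partial_d\vec{\ell}_s$, $\partial_d\vec{\upsilon}_s$ in $d$; the constant ``$2$'' in \eqref{est.ell}--\eqref{est.needed} comes from possibly shrinking $\delta$ so the derivative does not exceed twice its $d=0$ value on $[0,\delta]$. The final estimate ``$\cdots\le d\,\delta^{-\mu_1/2}$'' is the elementary polynomial-vs-exponential comparison $|\ln\delta|^c\le\delta^{-\mu_1/2}$, valid for $\delta$ small enough. The main technical subtlety is to verify the cancellation in $\partial_d\vec{\ell}_s$ and to ensure that the $O(\ep)$ contribution of $\tfrac{d\PPs}{dt}$ does not inflate the constant $\tfrac{2k_2\ov{\la}}{\und{\la}}$; both are handled by expanding at $\ep=0$ first and then using $C^r$ dependence on $\ep$.
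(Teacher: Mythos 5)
Your proof is correct and follows essentially the same route as the paper: you establish $\vec{\ell}_s(0,\tau)=\vec{\upsilon}_s(0,\tau)=\vec{0}$, differentiate at $d=0$ (working at $\ep=0$ via the $C^r$ expansion), obtain $\partial_d\vec{\ell}_s(0,\tau)=\bs{X}(T^f_1(0,\tau))\vec{v}$ from the same cancellation, and for $\vec{\upsilon}_s$ reduce to $-T'_d(\tau)\f^+(\ga(\tilde T^f_1))$ plus an $O(\ep)$ correction, exactly as in the paper's equations \eqref{elldiff1}--\eqref{elldiff2}, with the final bounds coming from \eqref{newmaybe1}, \eqref{whence}, \eqref{Tdiff1} and the mean value theorem.
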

 \begin{proof}
  Notice that $\x(T^f_1(0,\tau)+\tau, \tau ; \P_s(\tau)) \in [\tilde{W}^s(T^f_1(0,\tau)+\tau) \cap \tilde{S}^+]$
 and $\tilde{W}^s(T^f_1(0,\tau)+\tau) \cap \tilde{S}^+= \{\PPs(T^f_1(0,\tau)+\tau)\}= \{\P^+_f(0,\tau)\}$  for all $\ep$. So we use an
 expansion as in \eqref{Df1} and in the rest of the proof we consider $\ep=0$.
 Hence $\vec{\ell}_s(0,\tau)= \vec{0}= \vec{\upsilon}_s(0,\tau)$.

   Differentiating $\vec{\ell}_s(d,\tau)$ with respect to $d$ we get
   \begin{equation}\label{elldiff1}
   \begin{split}
        &  \frac{\partial }{\partial d} \vec{\ell}_s(d,\tau) \lfloor_{d=0}= \frac{\partial \x}{\partial \P}  (T^f_1(d,\tau)+\tau, \tau ;
        \Q_s(d,\tau))\lfloor_{d=0} \frac{\partial \Q_s(0,\tau)}{\partial d}  \\
        & {}+ [\dot{\x}(T^f_1(d,\tau)+\tau, \tau ; \Q_s(d,\tau)) - \dot{\x}(T^f_1(d,\tau)+\tau, \tau ; \P_s(\tau))]\lfloor_{d=0} T'_d(\tau)
        \\
        &  = \bs{X} (T^f_1(0,\tau)) \vec{v},
   \end{split}
   \end{equation}
  where $\bs{X}(t)$ is the fundamental matrix of \eqref{linlin}.
   So  the first inequality in \eqref{est.ell} follows from \eqref{newmaybe1}; then the second inequality in \eqref{est.ell} follows
    immediately.

    Now notice that if $\ep=0$, then $\vec{\pi}_s(\tau)\equiv  \vec{\pi}_{s,\Gamma}= \bs{\Gamma} \cap \tilde{S}^+$ for any $\tau
   \in\R$,
    so that   $\frac{\partial}{\partial\tau}\vec{\pi}_s(\tau) \equiv \vec{0}$. Hence
     \begin{equation}\label{elldiff2}
   \begin{split}
        &  \frac{\partial }{\partial d} \vec{\upsilon}_s(0,\tau)= -T'_d(\tau) \f^+(\ga(\tilde{T}_1^f)) ,
   \end{split}
   \end{equation}
     then the first \michal{estimate} in  \eqref{est.needed} follows from \eqref{whence} and \eqref{Tdiff1}.
 Finally the second inequality in \eqref{est.needed} follows immediately from the first and the fact that $\|\vec{\upsilon}_s(0,\tau)\|=0$.
 \end{proof}

 We denote by   $\tau^f_1= \tau^f_1(d,\tau)= \tau + T^f_1(d,\tau)$  and
 $\vec{\pi}_{s,1}(\tau^f_1) =\x(\tau^f_1,\tau; \vec{P}_s(\tau))$.
 Notice that $\vec{\pi}_{s,1}(\tau^f_1) \in \tilde{W}^s(\tau^f_1)$
  but it is not necessarily in  $\tilde{S}^+$ and   recall that by construction
$\{\vec{\pi}_s(\tau^f_1)\}=  \tilde{W}^s(\tau^f_1) \cap \tilde{S}^+$, cf.~\eqref{S-S+}.
Let us set
$$\vec{\upsilon}_s(\theta;d,\tau) := \x(\theta+ \tau^f_1, \tau^f_1 ; \vec{\pi}_{s,1} (\tau^f_1))  - \x(\theta+ \tau^f_1, \tau^f_1 ;
\vec{\pi}_{s} (\tau^f_1)),$$
and recall that $\f^{\pm}$ and $\g$ are $C^r$ and  $\alpha= \min \{ r-1 ;1 \}$.
Then we have the following estimate.
\begin{lemma}\label{bothstable}
Let $0<d \le \delta$, then
\begin{equation}\label{estupsilon}
\|\vec{\upsilon}_s(\theta;d,\tau)\| \le d^{1-\mu_1} \eu^{ \frac{\la_s^+ \theta}{1+\al/2} }
\end{equation}
 for any $\theta \ge 0$.
\end{lemma}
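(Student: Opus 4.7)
The plan is to exploit the fact that both trajectories lie on the stable leaf, and to derive the estimate via a fixed-point argument in an exponentially weighted space. First, observe that $\y_2(\theta) := \x(\theta+\tau^f_1, \tau^f_1 ; \vec{\pi}_s(\tau^f_1))$ belongs to $\tilde W^s(\theta+\tau^f_1)$ since $\vec{\pi}_s(\tau^f_1) \in \tilde W^s(\tau^f_1)$ and the stable leaf is forward-invariant, see \eqref{proprieta}. Similarly $\y_1(\theta) := \x(\theta+\tau^f_1, \tau^f_1 ; \vec{\pi}_{s,1}(\tau^f_1))$ is on $\tilde W^s(\theta+\tau^f_1)$ because $\vec{\pi}_{s,1}(\tau^f_1) = \x(\tau^f_1,\tau; \vec{P}_s(\tau))$ and $\vec{P}_s(\tau) \in \tilde{W}^s(\tau)$. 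Therefore $\vec{h}(\theta) := \vec{\upsilon}_s(\theta;d,\tau) = \y_1(\theta) - \y_2(\theta) \to \vec 0$ as $\theta \to +\infty$. At $\theta=0$, Lemma \ref{junction} gives $\|\vec{h}(0)\| = \|\vec{\upsilon}_s(d,\tau)\| \le d\de^{-\mu_1/2}$; since $d \le \de$ implies $\de^{-\mu_1/2} \le d^{-\mu_1}$, we get the initial bound $\|\vec{h}(0)\| \le d^{1-\mu_1}$ consistent with \eqref{estupsilon}.

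The function $\vec{h}$ satisfies
\[
\dot{\vec{h}}(\theta) = \bs{F^+_x}(\vec 0,\theta+\tau^f_1)\, \vec{h}(\theta) + \tilde{\vec R}(\theta,\vec{h}(\theta)),
\]
with remainder
\[
\tilde{\vec R}(\theta,\vec{h}) = \big[\bs{F^+_x}(\y_2(\theta),\theta+\tau^f_1) - \bs{F^+_x}(\vec 0,\theta+\tau^f_1)\big]\vec{h} + \vec{R}_2(\theta,\vec{h}),
\]
where $\vec{R}_2$ is the second-order Taylor remainder of $\vec{F}^+$ at $\y_2$. By \assump{H} and the computations leading to \eqref{est:ball}--\eqref{NN}, one gets $\|\tilde{\vec R}(\theta,\vec{h})\| \le N_\al\big(\|\y_2(\theta)\|^\al + \|\vec{h}\|^\al\big)\|\vec{h}\|$. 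Applying Remark \ref{est.ys} with $\tau$ replaced by $\tau^f_1$, we have $\|\y_2(\theta)\| \le (c_s k_1/|\ln(\varpi)|)\eu^{-\frac12\underline\la \theta}$.

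Next, using \eqref{eq-lin}$_+$ and exponential dichotomy, rewrite $\vec{h}$ as the fixed point of the operator
\[
\GG(\vec u)(\theta) := \bs{X^+}(\theta+\tau^f_1)\boldsymbol{P^+}(\bs{X^+}(\tau^f_1))^{-1}\vec h(0)
+ \int_0^\theta \bs{X^+}(\theta+\tau^f_1)\boldsymbol{P^+}(\bs{X^+}(s+\tau^f_1))^{-1}\tilde{\vec R}(s,\vec u(s))\,ds
\]
\[
\qquad\qquad - \int_\theta^{+\infty}\bs{X^+}(\theta+\tau^f_1)(\I-\boldsymbol{P^+})(\bs{X^+}(s+\tau^f_1))^{-1}\tilde{\vec R}(s,\vec u(s))\,ds,
\]
where the unstable integral runs to $+\infty$ because $\vec{h}$ decays; the free term only carries the stable projection of $\vec{h}(0)$, forcing the unstable component of $\vec h(0)$ to match $\int_0^{+\infty}\bs{X^+}(\tau^f_1)(\I-\boldsymbol{P^+})(\bs{X^+}(s+\tau^f_1))^{-1}\tilde{\vec R}(s,\vec h(s))\,ds$, which is automatic since both endpoints of $\vec{h}$ lie on $\tilde W^s$. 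Set up the Banach space $Y = C([0,+\infty),\R^2)$ with the weighted norm $\|\vec u\|_Y := \sup_{\theta\ge 0}\|\vec u(\theta)\|\eu^{-\la_s^+\theta/(1+\al/2)}$, and the closed ball $\overline B_r^Y$ with $r = d^{1-\mu_1}$. Using Lemma \ref{newdichot} together with the dichotomy bounds \eqref{defzused}, show that $\GG$ maps $\overline B_r^Y$ into itself and is a contraction.

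The main technical obstacle, and the origin of the exponent $1/(1+\al/2)$, is to evaluate the convolution integrals
\[
\int_0^\theta z_s^+(\theta+\tau^f_1,s+\tau^f_1)\, \eu^{\al\la_s^+ s/2}\eu^{\la_s^+ s/(1+\al/2)}\,ds, \qquad \int_\theta^{+\infty} z_u^+(\theta+\tau^f_1,s+\tau^f_1)\, \eu^{\al\la_s^+ s/2}\eu^{\la_s^+ s/(1+\al/2)}\,ds,
\]
and show that both are bounded above by a constant multiple of $\eu^{\la_s^+\theta/(1+\al/2)}$ times a factor of size $\eu^{-c_0\theta}$, $c_0>0$, coming from $\|\y_2(\theta)\|^\al$. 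The factor $1+\al/2$ is chosen precisely so that $\frac{\la_s^+}{1+\al/2} > \la_s^+$ (slower decay than the stable rate) yet $\al\la_s^+/2 + \la_s^+/(1+\al/2) < \la_s^+$ (faster decay than the kernel), which is the only way both integrals close up. The nonlinear contribution $\|\vec{h}\|^\al\|\vec{h}\|$ produces a factor $r^\al |\ln(\varpi)|^{-\ast}$, negligible by taking $\de$ (hence $d$) small enough. Once $\GG$ is shown to be a contraction on $\overline B_r^Y$, its unique fixed point is $\vec{h}$ itself, so $\|\vec{h}(\theta)\| \le r\eu^{\la_s^+\theta/(1+\al/2)} = d^{1-\mu_1}\eu^{\la_s^+\theta/(1+\al/2)}$, as claimed.
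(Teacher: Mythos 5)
Your argument is correct and reaches the same conclusion, but takes a genuinely different decomposition. The paper linearizes the equation for $\vec h=\vec{\upsilon}_s$ \emph{along the reference solution} $\y_s(\theta)$, i.e.\ it writes $\dot{\vec h}=\bs{F^+_x}(\theta+\tau^f_1,\y_s(\theta),\ep)\vec h+\vec N$ with a single remainder $\|\vec N\|\lesssim\|\vec h\|^{1+\al}$. Since $\y_s(\theta)\to\vec 0$, this time-varying system still admits an exponential dichotomy on $[0,\infty)$, whose exponents the paper deliberately weakens to $\la_s^+/(1+\al/2)$ and $\la_u^+/(1+\al/2)$; with that, the contraction constant is governed purely by the ball radius $R^\al$, $R=6k_1d^{1-\mu_1/2}$. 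You instead linearize at the origin, reusing the constant-coefficient dichotomy of \eqref{eq-lin}$_+$ as in Lemma~\ref{L-S-Lin-}; this forces you to carry the extra forcing $[\bs{F^+_x}(\y_s(\theta),\cdot)-\bs{F^+_x}(\vec 0,\cdot)]\vec h=O(\|\y_s\|^\al\|\vec h\|)$ alongside the quadratic piece, and your contraction constant is made small by the factor $\|\y_s\|^\al\lesssim|\ln\varpi|^{-\al}$ rather than by the radius. Both routes are valid; the paper's is a bit leaner (one remainder term instead of two), while yours is more uniform with the machinery of \S\ref{S-Lin}. Two bookkeeping caveats in your convolution heuristics: the bound \eqref{norms-} actually gives $\|\y_2(\theta)\|\lesssim|\ln\varpi|^{-1}\eu^{-\tfrac12\underline{\la}\theta}$, and since $\underline{\la}$ may be strictly smaller than $|\la_s^+|$ in the piecewise case, $\eu^{\al\la_s^+ s/2}$ is not a valid upper bound for $\|\y_2(s)\|^\al$; and the inequality $\al\la_s^+/2+\la_s^+/(1+\al/2)<\la_s^+$ is not actually ``the only way'' the integrals close --- what is essential is the strict gap $|\la_s^+|/(1+\al/2)<|\la_s^+|-k\ep$ between the weight rate and the dichotomy rate, which the factor $1+\al/2$ secures regardless of how fast $\y_s$ decays; the extra decay from $\|\y_s\|^\al$ is helpful but not the pivot.
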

 \begin{proof}
Observe that both $\x(t, \tau^f_1 ; \vec{\pi}_{s,1} (\tau^f_1))$ and $\x(t, \tau^f_1 ; \vec{\pi}_{s} (\tau^f_1))$
belong to $\tilde{W}^s(t)$ for any $t \ge \tau^f_1$, and set $\y_s(\theta)=\x(\theta+ \tau^f_1, \tau^f_1 ; \vec{\pi}_{s} (\tau^f_1))$ for
short.
From Lemma~\ref{junction} we know that
\begin{equation}\label{hzero}
  \| \vec{\upsilon}_s(0;d,\tau) \|=  \| \vec{\upsilon}_s(d,\tau) \| =  \|\vec{\pi}_{s,1}(\tau^f_1)- \vec{\pi}_{s}(\tau^f_1)\| \le
  d^{1-\mu_1/2}.
\end{equation}
  To prove \eqref{estupsilon}  we use exponential dichotomy and a fixed point argument in an exponentially weighted space.
Notice that $\vec{\upsilon}_s(\theta;d,\tau)$ is a solution of
\begin{equation}\label{newnonlin}
  \dot{\vec{h}}(\theta)= \bs{F_x^+}(\theta+\tau^f_1,\y_s(\theta), \ep) \vec{h}(\theta) + \vec{N} (\theta+\tau^f_1, \vec{h}(\theta), \ep)
\end{equation}
where
\begin{equation*}
  \begin{split}
      \vec{N}(\theta+\tau^f_1, \vec{h}(\theta), \ep)= & \vec{F}^+(\theta+\tau^f_1,\y_s(\theta)+\vec{h}(\theta), \ep)-
 \vec{F}^+(\theta+\tau^f_1,\y_s(\theta), \ep) \\
       & - \bs{F_x^+}(\theta+\tau^f_1,\y_s(\theta), \ep) \vec{h}(\theta).
  \end{split}
\end{equation*}
Recall that $\vec{F}^+ \in C^r$, so there is $C_{\alpha}>0$ independent of $\ep$, $t$,   such that
\begin{equation*}
  \begin{split}
  &   \| \vec{N}(t, \vec{h} , \ep) \| \le C_{\alpha}
      \|\vec{h} \|^{1+\alpha}, \\
  &     \| \vec{N}(t, \vec{h}_1 , \ep)- \vec{N}(t, \vec{h}_2 , \ep)  \| \le C_{\alpha}
      [\max\{\|\vec{h}_1 \|, \|\vec{h}_2 \|\}]^{\alpha} \|\vec{h}_1 -\vec{h}_2 \|
  \end{split}
\end{equation*}
(cf.~$\vec{R}_2^+(\theta,\vec{h})$ in \eqref{est:ball} and \eqref{NN}).

Observe further, that $\|\y_s(\theta)- \ga(\tilde{T}_1^f+\theta)\|=O(\ep)$ uniformly for $\theta \ge 0$, so
\begin{equation}\label{newlin}
  \dot{\vec{h}}(\theta)= \bs{F_x^+}(\theta+\tau^f_1,\y_s(\theta), \ep) \vec{h}(\theta)
\end{equation}
 admits exponential dichotomy in $[0,+\infty[$,
and  the exponents are $\ep$-close to the respective ones of $\dot{\x}= \bs{f_x^+}(\ga(\tilde{T}_1^f+\theta)) \x$, i.e., to
 $\la_s^+$, $\la_u^+$. Further the optimal dichotomy constant $\bar{k}_1$ is $|\ln(\varpi)|^{-1}$ close to $k_1$ since
 $\bs{f_x^+}(\ga(\tilde{T}_1^f+\theta))$ is $|\ln(\varpi)|^{-1}$ close to $\bs{f_x^+}(\vec{0})$, so we can assume
 $\bar{k}_1= 2k_1$.

  So let $\bs{X_h}(\theta)$ and $\bs{P_h}$ be the fundamental matrix and the projection (of the exponential dichotomy)
of \eqref{newlin}. We can assume that
\begin{equation}\label{dicesth}
  \begin{split}
      & \|\bs{X_h}(t)\bs{P_h}[\bs{X_h}(s)]^{-1} \vec{\xi}\| \le 2 k_1\eu^{\tfrac{\la_s^+}{1+\al/2} (t-s)} \qquad \text{if $t>s>0$}, \\
       & \|\bs{X_h}(t) [\I- \bs{P_h}] [\bs{X_h}(s)]^{-1} \vec{\xi}\| \le 2 k_1\eu^{\tfrac{\la_u^+}{1+\al/2} (t-s)} \qquad \text{if
       $s>t>0$}.
  \end{split}
\end{equation}
Further, since  $\vec{\upsilon}_s(\theta;d,\tau)$ converges to $0$ exponentially, for $\theta \to +\infty$, we see that it is on the stable
manifold of
\eqref{newnonlin}.
In particular $ \vec{\upsilon}_s(\theta;d,\tau)\eu^{\tfrac{|\la_s^+|}{1+\al/2} \theta}$ is bounded for $\theta \ge 0$.

    In fact $\vec{\upsilon}_s(\theta;d,\tau)$ is the unique bounded solution of \eqref{newnonlin}
    such that
 \begin{equation}\label{newnonlindata}
  \bs{P_h}  \dot{\vec{h}}(0)=  \bs{P_h}  [\vec{\pi}_{s,1} (\tau^f_1)-\vec{\pi}_{s} (\tau^f_1)] =: \vec{c}_0.
\end{equation}

So let $\mathcal{H}$ be the space of \michal{all} continuous functions from $[0,+\infty[$ to $\R^2$ endowed with the norm
$\|\y\|_{\mathcal{H}}= \sup_{\theta \ge 0} \{\|\y(\theta)\|\textrm{exp}[\tfrac{|\la_s^+|}{1+\al/2} \theta]\}$.

Let us consider the operator $\mathcal{T}: \mathcal{H} \to \mathcal{H}$ defined by
\begin{equation}\label{fixpoint}
\begin{split}
     & \mathcal{T}(\y)(\theta)= \bs{X_h}(\theta) \bs{P_h}\vec{c}_0 + \int_{0}^{\theta} \bs{X_h}(\theta)\bs{P_h} [\bs{X_h}(s)]^{-1}
     \vec{N}(s+\tau^f_1, \y(s), \ep) ds \\
     &  - \int_{\theta}^{+\infty} \bs{X_h}(\theta)[\I-\bs{P_h}][\bs{X_h}(s)]^{-1} \vec{N}(s+\tau^f_1, \y(s), \ep) ds.
\end{split}
\end{equation}
Notice that $\y(\theta)$ is a fixed point of $\mathcal{T}$ if and only if it is a bounded  solution of \eqref{newnonlin},
\eqref{newnonlindata}.
We aim to show that $\mathcal{T}$ maps the ball of radius $R= 6k_1d^{1-\mu_1/2}$ of $\mathcal{H}$ in itself and that it is a contraction.
Then it follows that  $\vec{\upsilon}_s(\theta;d,\tau)$ is the unique fixed point of $\mathcal{H}$ in this ball, and that
$\|\vec{\upsilon}_s(\theta;d,\tau)\|_{\mathcal{H}} \le R$.

In fact from \eqref{hzero} and \eqref{dicesth} we find
\begin{equation}\label{estuno}
  \|\bs{X_h}(\theta) \bs{P_h}\vec{c}_0 \| \le 2k_1  d^{1-\mu_1/2}\eu^{\tfrac{\la_s^+}{1+\al/2}\theta}= \frac{R}{3}
  \eu^{\tfrac{\la_s^+}{1+\al/2} \theta}  .
\end{equation}
Further, possibly choosing a smaller $ \delta>d$, and consequently a smaller $R$, we can assume
$R \le \left(\frac{\al |\la_s^+|}{6 k_1 C_{\al} (1+\al/2)} \right)^{1/\alpha}$, so that
whenever $\| \vec{y} \|_{\mathcal{H}} \le R$ we find
\begin{equation}\label{estdue}
\begin{split}
    &   \left\|\int_{0}^{\theta} \bs{X_h}(\theta)\bs{P_h} [\bs{X_h}(s)]^{-1}
     \vec{N}(s+\tau^f_1, \y(s), \ep) ds  \right\| \\
     &\le 2k_1 C_\al R^{1+\al} \int_{0}^{\theta} \eu^{\tfrac{\la_s^+}{1+\al/2}(\theta-s)}
     \cdot \eu^{\tfrac{\la_s^+}{1+\al/2}(1+\al)s} ds\\
     & \le \frac{2k_1 C_\al (1+\al/2)}{\al |\la_s^+|}  R^{1+\al}\eu^{\tfrac{\la_s^+}{1+\al/2}\theta}
     \le \frac{R}{3} \eu^{\tfrac{\la_s^+}{1+\al/2} \theta}  ;
\end{split}
\end{equation}
and finally
\begin{equation}\label{esttre}
\begin{split}
&\left\|\int_{\theta}^{+\infty} \bs{X_h}(\theta)[\I-\bs{P_h}][\bs{X_h}(s)]^{-1} \vec{N}(s+\tau^f_1, \y(s), \ep) ds\right\| \\
&  \le 2k_1 C_\al R^{1+\al} \int_{\theta}^{+\infty}   \eu^{\tfrac{\la_u^+}{1+\al/2}(\theta-s)} \cdot
\eu^{\tfrac{\la_s^+}{1+\al/2}(1+\al) s} ds\\
  &   \le     \frac{2k_1 C_\al (1+\al/2)}{\la^{+}_u+ (1+\al) |\la_s^+|}  R^{1+\al}\eu^{\tfrac{\la_s^+}{1+\al/2}(1+\al)\theta}
  \le
     \frac{R}{3} \eu^{\tfrac{\la_s^+}{1+\al/2} \theta}   .
\end{split}
\end{equation}
So from \eqref{estuno}, \eqref{estdue} and \eqref{esttre} we conclude that $\mathcal{T}$ maps the ball of radius $R$ in itself.
With a similar argument we show that $\mathcal{T}$ is a contraction in $R$ and we conclude
that $\|\vec{\upsilon}_s(\theta;d,\tau)\|_{\mathcal{H}} \le R$ so
$$\|\vec{\upsilon}_s(\theta;d,\tau)\|_{\mathcal{H}} \le 6 k_1 d^{1-\mu_1/2} \le d^{1-\mu_1}$$
if $0<d \le \delta$ is small enough; thus
\eqref{estupsilon} is proved.
\end{proof}

Analogously, let us set
 $$\vec{\ell}_u(d,\tau):= \P_b^-(d,\tau) - \x(\tau-T^b_1(d,\tau), \tau ; \P_u(\tau)) ,$$
 $$\vec{\upsilon}_u(d,\tau) := \PPu(\tau-T^b_1(d,\tau))  - \x(\tau-T^b_1(d,\tau), \tau ; \P_u(\tau));$$
then, as in Lemma \ref{junction}, we have the following.
 \begin{lemma}\label{junction+}
   Let the assumptions of Proposition \ref{estloglog+} be satisfied,
  recalling
   that $0<d \le \de \le \varpi$ we find
   \begin{equation}\label{est.ell.bw}
   \begin{split}
  & \left\| \frac{\partial }{\partial d} \vec{\ell}_u(d,\tau)\lfloor_{d=0} \right\|   \le  |\ln(\varpi)|^{8 \ov{\la}/\und{\la}}  \, , \\
   &  \| \vec{\ell}_u(d,\tau)\| \le 2 |\ln(\varpi)|^{8 \ov{\la}/\und{\la}} \; d \le 2 |\ln(\de)|^{8 \ov{\la}/\und{\la}} \; d \le
   d\delta^{-\mu_1/2}.
    \end{split}
 \end{equation}
  Further
      \begin{equation}\label{est.needed.bw}
      \begin{split}
          & \left\|  \frac{\partial }{\partial d} \vec{\upsilon}_u(d,\tau)\lfloor_{d=0} \right\|
    \le   | \ln(\varpi)|^{8 \ov{\la}/{\und{\la}}}, \\
           & \|  \vec{\upsilon}_u(0,\tau)\|
    \le   4 k_2 \frac{\ov{\la}}{\und{\la}}   | \ln(\varpi)|^{8 \ov{\la}/{\und{\la}}} \; d
  \le  4 k_2 \frac{\ov{\la}}{\und{\la}}   | \ln(\de)|^{8 \ov{\la}/{\und{\la}}} \; d  \le   d\delta^{-\mu_1/2}.
   \end{split}
 \end{equation}
 \end{lemma}

We denote by  $\tau^b_1= \tau^b_1(d,\tau)= \tau - T^b_1(d,\tau)$  and
 $\vec{\pi}_{u,1}(\tau^b_1) =\x(\tau^b_1,\tau; \vec{P}_u(\tau))$.
 Again,  note that $\vec{\pi}_{u,1}(\tau^b_1) \in \tilde{W}^u(\tau^b_1)$
   but it is not necessarily in  $\tilde{S}^-$ while
$\{\vec{\pi}_u(\tau^b_1)\} = \tilde{W}^u(\tau^b_1)\cap \tilde{S}^-$ by construction, cf.~\eqref{S+}.
Let us set
$$\vec{\upsilon}_u(\theta;d,\tau) := \x(\theta+ \tau^b_1, \tau^b_1 ; \vec{\pi}_{u,1} (\tau^b_1))  - \x(\theta+ \tau^b_1, \tau^b_1 ;
\vec{\pi}_{u} (\tau^b_1)).$$

Then, as in Lemma \ref{bothstable} we have the following estimate.
\begin{lemma} \label{bothstable+}
Let $0<d \le \delta$ then
\begin{equation}\label{estupsilon+}
\|\vec{\upsilon}_u(\theta;d,\tau)\| \le d^{1-\mu_1} \eu^{\frac{\la_u^- \theta}{1+\al/2}}
\end{equation}
 for any $\theta \le 0$.
\end{lemma}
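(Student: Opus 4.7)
The plan is to obtain this result by applying Lemma \ref{bothstable} to the time-reversed system \eqref{eq.modbis}. Recall that if $\x(t)$ solves \eqref{eq-disc} then $\underline{\x}(t)=\x(-t)$ solves \eqref{eq.modbis}, and this transformation swaps the roles of stable and unstable objects: $\underline{\f}^{\pm}$ has eigenvalues $-\la_u^{\pm}<0<-\la_s^{\pm}$ at the origin, with the same eigenvectors, so $\underline{\la}_{s}^{\pm} = -\la_u^{\pm}$, $\underline{\la}_{u}^{\pm}=-\la_s^{\pm}$, and in particular $|\underline{\la}_{s}^{-}| = \la_u^{-}$. Moreover, the unstable leaf $\tilde{W}^u(\tau)$ of \eqref{eq-disc} coincides with the stable leaf $\underline{\tilde{W}}^s(-\tau)$ of \eqref{eq.modbis}, the transversal $\tilde{S}^-$ of \eqref{S+} is exactly in the role played by $\tilde{S}^+$ of \eqref{S-S+} for \eqref{eq.modbis} (after relabeling $\vec{v}_u^{-}$, $\vec{v}_s^{-}$), and the trajectory $\y_u(\theta)$ is the image under the time reversal of the trajectory playing the role of $\y_s$ for \eqref{eq.modbis}.

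Concretely, I would set $\sigma:=-\theta\ge 0$ and let
$$\underline{\vec{\upsilon}}_s(\sigma;d,\tau):=\underline{\x}(\sigma-\tau^b_1,-\tau^b_1;\vec{\pi}_{u,1}(\tau^b_1))-\underline{\x}(\sigma-\tau^b_1,-\tau^b_1;\vec{\pi}_{u}(\tau^b_1))=\vec{\upsilon}_u(-\sigma;d,\tau).$$
By the duality above, $\underline{\vec{\upsilon}}_s$ is precisely the object to which Lemma \ref{bothstable} applies for the system \eqref{eq.modbis}, with initial bound at $\sigma=0$ provided by Lemma \ref{junction+} in place of Lemma \ref{junction} (they are identical statements up to the stable/unstable swap and give $\|\underline{\vec{\upsilon}}_s(0;d,\tau)\|\le d^{1-\mu_1/2}$). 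Thus Lemma \ref{bothstable}, applied with $\underline{\la}_{s}^{+}=-\la_u^{-}$ as the relevant stable eigenvalue, yields
$$\|\underline{\vec{\upsilon}}_s(\sigma;d,\tau)\|\le d^{1-\mu_1}\eu^{\frac{\underline{\la}_{s}^{+}\,\sigma}{1+\al/2}}= d^{1-\mu_1}\eu^{-\frac{\la_u^{-}\sigma}{1+\al/2}}\qquad\text{for all }\sigma\ge 0.$$
Substituting back $\sigma=-\theta$ gives exactly \eqref{estupsilon+}.

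The main thing to verify carefully is that the hypotheses of Lemma \ref{bothstable} are genuinely satisfied by \eqref{eq.modbis} on the half-line $\sigma\ge 0$: one has to check that the variational equation around $\underline{\y}_s(\sigma)$ admits exponential dichotomy on $[0,+\infty[$ with dichotomy constant close to the one used before (this is immediate since \eqref{eq.modbis} inherits smoothness and hyperbolicity from \eqref{eq-disc}), and that the trajectory $\underline{\vec{\upsilon}}_s(\sigma;d,\tau)$ is the unique bounded solution in the appropriate exponentially weighted space -- which follows from the fact that $\vec{\upsilon}_u(\theta;d,\tau)$ decays exponentially as $\theta\to-\infty$ because both its constituent trajectories lie in $\tilde{W}^u(t)$ and hence converge to $\vec{0}$. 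Once this bookkeeping is done, the contraction estimates \eqref{estuno}--\eqref{esttre} transfer verbatim (with $\la_s^+$ replaced by $-\la_u^-$ and the integration domain reflected), so no new fixed-point argument is required; the only potential obstacle is keeping track of the orientation of the transversals $\tilde{S}^{\pm}$ under the time reversal, which is handled by the observation that swapping stable and unstable directions in assumptions \assump{F0}--\assump{F2} preserves the geometric picture used to define $\vec{\pi}_u$ and $\vec{\pi}_{u,1}$.
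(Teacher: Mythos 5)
Your proposal is correct and matches the paper's intent: the paper omits the proof with the phrase ``as in Lemma~\ref{bothstable}'', and throughout \S~\ref{S+Lin} the authors systematically obtain the backward-time analogues via the inversion-of-time substitution $\underline{\x}(t)=\x(-t)$ into~\eqref{eq.modbis}, which is exactly your route. The only cosmetic point worth tightening is the labelling of the stable eigenvalue of the time-reversed system: since $\underline{\ga}(t)=\ga(-t)$ lies in $\Om^-$ for $t>0$, the subsystem that plays the role of ``$+$'' after reversal is $\underline{\f}^{-}$, whose stable eigenvalue is $-\la_u^-$; writing this as ``$\underline{\la}_s^{+}$'' only makes sense once you explicitly state that the $\pm$ labels are also swapped alongside the stable/unstable swap (cf.\ Remark~\ref{rem:settings}). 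With that notational caveat, the identification $\underline{\vec{\upsilon}}_s(\sigma;d,\tau)=\vec{\upsilon}_u(-\sigma;d,\tau)$ and the transfer of the initial bound from Lemma~\ref{junction+} and the dichotomy estimates~\eqref{estuno}--\eqref{esttre} are all correct, and the argument is complete.
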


\begin{proof}[\textbf{Proof of Theorem \ref{keymissed}}]
Let us fix  $\mu_2=2 \mu_1$, $\mu= c_{\mu} \mu_2 \le \mu_0$;    then fix $\varpi$ and $\delta$ so that Lemmas \ref{0keymissed},
\ref{1keymissed} and  \ref{junction+}  hold
true,
and finally let $0<d \le \delta$ and $\tau \in \R$.
We start from \eqref{keymissed.es-}; we denote by $\tau_1^f= \tau_1^f(d,\tau)= \tau + T^f_1(d,\tau)$, $\vec{\pi}_{s,1} (\tau_1^f)
=\x(\tau_1^f,\tau; \vec{P}_s(\tau))$. Notice that    $\PPs(\tau_1^f)= \tilde{W}^s(\tau) \cap \tilde{S}^+$, see \eqref{S-S+}, while
$\vec{\pi}_{s,1}(\tau_1^f) \in \tilde{W}^s(\tau_1^f)$, but
it might not be in $\tilde{S}^+$.
Let us set, as in \S \ref{SLinFinal}, $\y_s(\theta):= \x(\theta+\tau_1^f, \tau_1^f; \vec{\pi}_s(\tau_1^f))$ and
$$\y_{s,1}(\theta):= \x(\theta+\tau_1^f, \tau_1^f; \vec{\pi}_{s,1}(\tau_1^f))  \equiv \x(\theta+ \tau_1^f,\tau; \vec{P}_s(\tau)).$$

 From Lemmas \ref{junction+} and \ref{1keymissed}, setting   $t=\theta+\tau_1^f$  we see that
\begin{equation}\label{end}
\begin{split}
& \|\x(t, \tau; \Q_s(d,\tau))-  \x(t, \tau; \P_s(\tau)) \| \le \|\x(\theta+\tau_1^f, \tau; \Q_s(d,\tau)) -\y_s(\theta)\| + \\
&+
\|\y_{s,1}(\theta)-\y_s(\theta)\| \le
  \frac{2 k_1}{c} d^{\sfwd_+ \left(1-\mu_2 \frac{\la_u^+}{|\la_s^+|}\right)} +
  d^{1-\mu_1}
\le  d^{\sfwd_+ -\mu}
\end{split}
\end{equation}
for any $\tau_1^f \le t \le \TTT_{\frac{1}{2}}(d,\tau)$,
since $\sfwd_+<1$ and   $\mu= c_{\mu} \mu_2$ (see the proof of Theorem \ref{key}).

Further, from Lemma \ref{0keymissed} we find
\begin{equation*}
\|\x(t, \tau; \Q_s(d,\tau))-  \x(t, \tau; \P_s(\tau)) \| \le d\delta^{-\mu_1} \le  d^{\sfwd_+ -\mu}
\end{equation*}
for any $\tau \le t \le \tau_1^f$. So \eqref{keymissed.es-} is proved.

The proofs of \eqref{keymissed.es+},
 \eqref{keymissed.eu+} and
 \eqref{keymissed.eu-} are similar and are omitted.
\end{proof}

\section{Some remarks and future developments}\label{s.open}
In this section we spend a few more words concerning some projects we wish to develop using the results of Theorems \ref{key} and \ref{keymissed}, as we said in the Introduction.

Firstly, it is well known that, if $\la_s+\la_u<0$ and $\ep=0$, in the framework of Scenario 1 the set $\bs{\Gamma}$ of \eqref{eq-smooth}  is asymptotically stable from inside
(stable from outside in Scenario 2). We think that the results of this paper could be of use in exploring the possibility to construct an
integral manifold which will be asymptotically stable from inside, when we have just one zero of the Melnikov function $\MM(\tau)$, see \eqref{melni-disc}. Notice that the presence of chaotic phenomena prevents the
possibility of asymptotic stability  if we have infinitely many zeros of $\MM(\tau)$.

Secondly,   we plan to use Theorems \ref{key} and \ref{keymissed} to study the possibility to establish a  sub-harmonic  Melnikov theory.
More precisely we think that if the function $\MM(\tau)$ has a unique zero at $\tau=\tau_0$ then we should find two different type of results:
if $\MM'(\tau_0)>0$ then we expect that the homoclinic trajectory $\x_b(t,\ep)$ bifurcating from $\ga(t)$ will be unique, while if
$\MM'(\tau_0)<0$  then we think there should be a monotone decreasing sequence of $\ep_k>0$ such that if $0<\ep \le \ep_k$ then there are
$k$ homoclinic trajectories, each of them performing exactly $j$ loops (i.e., passing $O(\ep)$ close to $\ga(0)$ exactly $j$ times), where $j =1, \ldots ,k$.
In fact we expect that this will be the case if $\MM(\tau)$ does not converge to $0$ when $\tau$ converges either to $+\infty$ or to $-\infty$,
while some extra conditions on the speed of convergence to $0$ is needed if $\MM(\tau) \to 0$.
This dynamical result is suggested also by   the bubble-tower phenomena appearing in quasi-linear elliptic equations, see e.g.\ \cite{CL1, DFS1, DFS2}.

 Thirdly,  we believe that  Theorems \ref{key} and \ref{keymissed} could be used to find \emph{safe regions} in systems characterized by Melnikov chaos, i.e., chaos generated by the
 non-autonomous perturbation of an autonomous homoclinic trajectory. More precisely we aim to find subsets
of initial conditions close to  $\bs{\Gamma}$,
which will remain far from the chaotic set for a long, possibly infinite time.
Roughly speaking we think we can use these ideas to give some conditions sufficient to  answer the question:\\ \indent
\emph{How far should I stay from $\bs{\Gamma}$ to be sure not to finish in a chaotic pattern?}
\\
 We think this kind of result
should be quite useful from an engineering point of view.

 Finally, as we said in the Introduction,
 Melnikov theory has been extended to the context  of piecewise smooth systems \eqref{eq-disc} assuming that $\vec{0} \in \Om^0$ in \cite{CaFr}, where we proved that the classical Melnikov condition is enough to ensure the persistence
to perturbation of the homoclinic trajectory. However quite unexpectedly, it does not guarantee the persistence of chaos if $\g$ is periodic in $t$, as it happens in
a smooth setting, see e.g.\ \cite{Pa84} but also in a non-smooth setting if the origin does not belong to the discontinuity surface $\Om^0$, see e.g.\ \cite{BF10, BF11, BF12}. In fact in \cite{FrPo} we have found a big class of counterexamples to the presence of chaos: roughly speaking a geometrical obstruction
forbids chaotic phenomena whenever we have sliding close to the origin.

In a forthcoming paper we plan to show that, if such a  geometrical obstruction is removed then the usual Melnikov conditions guarantee chaos as in the smooth setting,
and Theorems \ref{key} and \ref{keymissed} of the present paper are essential in this project.
We also think that these kinds of results should give us a better insight of what happens close to $\bs{\Gamma}$ in a smooth setting, possibly allowing us to give some
results concerning the size and the position of  the Cantor-like   a set giving rise to chaos.

\appendix
\section{Sketch of the proofs of Lemmas \ref{lemma0s} and  \ref{lemma0u}}\label{A.technic}

This section is devoted to the construction of the curves
$Z^{\textrm{fwd,in}}$, $Z^{\textrm{fwd,out}}$, $Z^{\textrm{bwd,in}}$, $Z^{\textrm{bwd,out}}$, i.e., to
 the proofs of Lemmas \ref{lemma0s} and  \ref{lemma0u},
which are adaption of the argument in \cite[\S 6.2]{FrPo}, see in particular
\cite[Lemmas 6.4, 6.7]{FrPo} and \cite[\S 6.2.2]{FrPo}.
We invite the reader to compare the references with Figures \ref{fig-Zui}, \ref{fig-Zuo}.

\mat{In the whole section we assume the hypotheses of Lemmas \ref{lemma0s} and  \ref{lemma0u} without further mentioning.}
Following \cite[\S 6.2.2]{FrPo} we denote by
$\f^{\perp,\pm}(\x)$ the $\R^2$--valued function such that $\langle\f^{\perp, \pm}(\x), \f^{\pm}(\x)\rangle=0$,
$\|\f^{\perp,\pm}(\x)\|=\| \f^{\pm}(\x)\|$,
i.e.~$\f^{\perp,\pm}(\x)$ is the $C^r$ function in $\Om^{\pm}$ obtained by
  rotating  $\f^{\pm}(\x)$ by $\pi/2$. We choose the orientation
 in such a way that $\ga(t)+c\f^{\perp,\pm}(\ga(t)) \in E^{\textrm{in}}$ if $c>0$ is small enough.

Then we set
\begin{align*}
	\mathcal{K}:= 2 \max\{ &\sup \{ \|\g(t,\x,0)\| /\| \f^+(\x)\| \mid \x \in B(\bs{\Gamma},1) \cap(\Om^+\cup\Om^0), \, t \in
\R\},\\
	&\sup \{ \|\g(t,\x,0)\| /\| \f^-(\x)\| \mid \x \in B(\bs{\Gamma},1) \cap(\Om^-\cup\Om^0), \, t \in \R\}\}.
\end{align*}
  Observe that $\mathcal{K}>0$ is bounded, see \cite[Page 1457]{FrPo}.
 Then we denote by
    \begin{equation}\label{finout}
    \begin{aligned}
      \f_{a}^\pm(\x) &= \f^\pm(\x)+  \ep  \mathcal{K} \f^{\perp,\pm}(\x), \\
          \f_{b}^\pm(\x) &= \f^\pm(\x) -  \ep   \mathcal{K} \f^{\perp,\pm}(\x).
    \end{aligned}
    \end{equation}
    The  curves  $Z^{\textrm{fwd,in}}$, $Z^{\textrm{fwd,out}}$, $Z^{\textrm{bwd,in}}$, $Z^{\textrm{bwd,out}}$ are in fact obtained as orbits
    of trajectories of the autonomous
    systems $\dot{\x}= \f_{a}^\pm(\x)$
and $\dot{\x}= \f_{b}^\pm(\x)$.
From now on we denote with the subscript ${a}$ quantities referred to the former system and with the
subscript ${b}$ quantities referred to the latter system.
In particular we denote by $\y_{a}(t,\P)$ and $\y_{b}(t,\P)$, resp.,  the
 trajectory of $\dot{\y}= \f_{a}^\pm(\y)$
and of  $\dot{\y}= \f_{b}^\pm(\y)$ leaving from $\P$ at $t=0$.
Notice that $\f_{a}^\pm(\x)$ on $ \bs{\Gamma}$ aims towards  $E^\inn$ while
$\f_{b}^\pm(\x)$ on $\bs{\Gamma}$ aims towards  $E^\outt$.

Note that the origin is still a saddle for both the systems so it admits unstable and stable leaves
$W^u_{a}$, $W^u_{b}$, $W^s_{a}$, $W^s_{b}$. Due to the discontinuity $W^u_{a}$ (and similarly for
 $W^u_{b}$, $W^s_{a}$, $W^s_{b}$)  has a corner in the origin
and it may be split on a component $W^{u,-}_{a}$ entering $\Om^-$, the one we are interested in, and another, say $W^{u,+}_{a}$ entering
$\Om^+$.

If we follow $W^u_{a}$, $W^s_{a}$, $W^u_{b}$, $W^s_{b}$ from the origin towards $L^0$, from a continuity argument,  we see that
they intersect $L^0$ transversely the first time in points denoted respectively by
$\vec{\zeta}^u_{a}$, $\vec{\zeta}^s_{a}$, $\vec{\zeta}^u_{b}$, $\vec{\zeta}^s_{b}$.

Further from \cite[Lemma 6.9]{FrPo} we get the following.
  \begin{remark}\label{zeasy}
        There   are     positive constants  $c^u_{a}, c^u_{b},c^s_{a}, c^s_{b} $ such that
     $$\begin{array}{cc}
         \vec{\zeta}^u_{a}= \ga(0) + \left(c^u_{a} \ep +o(\ep)\right)\vec{w} , & \vec{\zeta}^u_{b}= \ga(0) - \left(c^u_{b} \ep
         +o(\ep)\right)\vec{w},  \\
         \vec{\zeta}^s_{a}= \ga(0) - \left(c^s_{a} \ep +o(\ep)\right)\vec{w} , & \vec{\zeta}^s_{b}= \ga(0) +\left(c^s_{b} \ep
         +o(\ep)\right)\vec{w}
       \end{array}$$
       where $\vec{w}$ is a normalized vector, tangent to $\Om^0$ in $\ga(0)$ and oriented so that $\ga(0)+c \vec{w} \in E^\inn$ for $c>0$
       small enough.
\end{remark}

\smallskip

 \begin{figure}[t]
	\begin{center}
		\epsfig{file=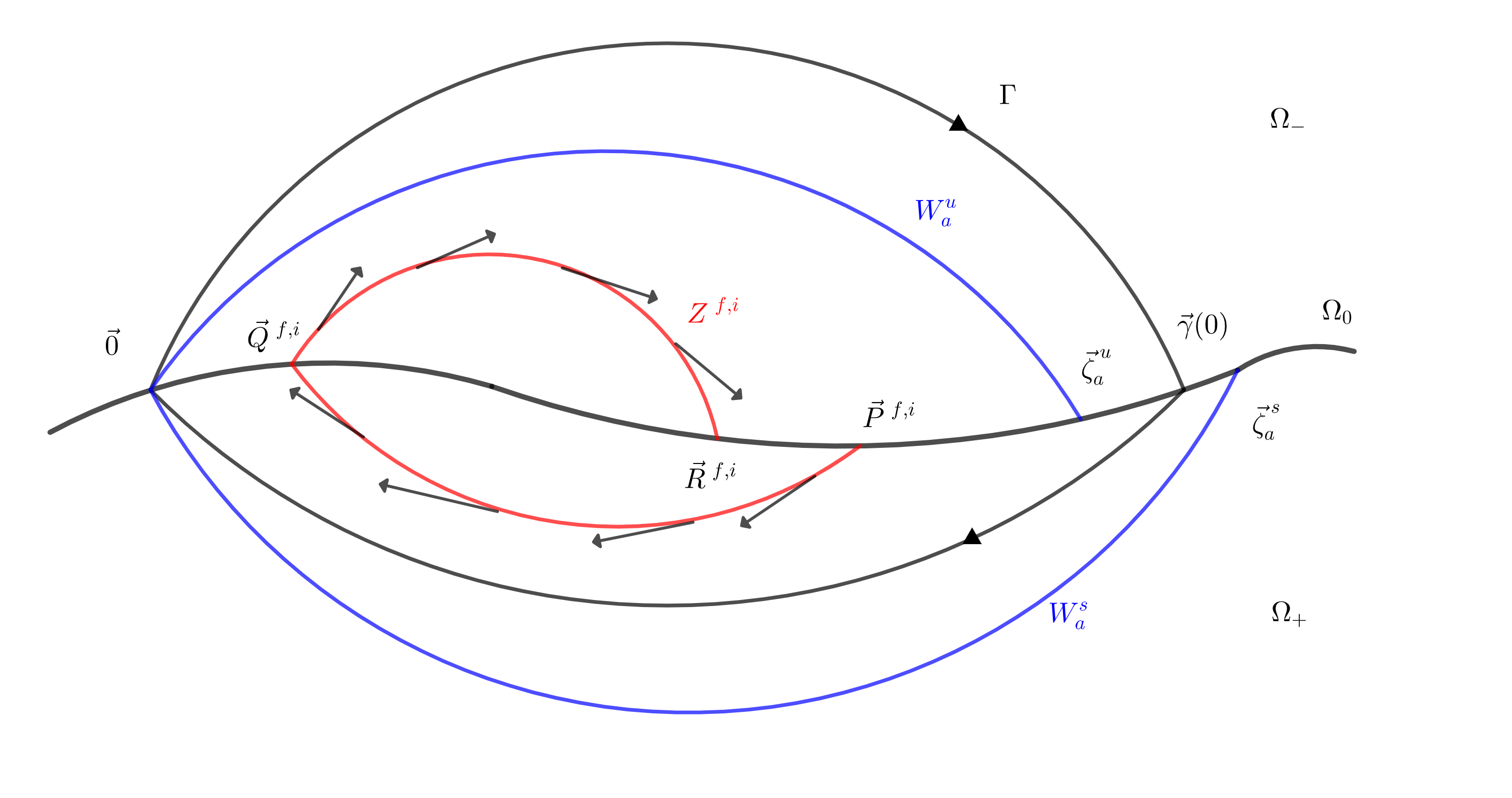, width=0.8\textwidth}
		\caption{$Z^{\textrm{fwd},\inn}$ constructed via Lemma \ref{forZui}.}
		\label{fig-Zui}
	\end{center}
\end{figure}

\noindent
 Let us start by a sketch of the proof of Lemma \ref{lemma0s}.

Let $\P^{\textrm{fwd,in}}(\Di)$ be the unique point such that
$$\P^{\textrm{fwd,in}}(\Di) \in L^0, \qquad \|\P^{\textrm{fwd,in}}(\Di)- \zz^s_{a} \|=\Di>0,
\qquad \langle\P^{\textrm{fwd,in}}(\Di)- \zz^s_{a}, \vec{w}\rangle  >0$$
 where $\Di>0$ is small enough.
The following result  is the key ingredient to construct the curve $Z^{\textrm{fwd,in}}$ (see Figure \ref{fig-Zui}).
\begin{lemma}\label{forZui}
  There is $\Delta>0$ such that for any $0<\Di \le \Delta$ there are $0<T^{u,1}_{a}(\Di)<T^{u,2}_{a}(\Di)$
  such that $\y_{a}(t,\P^{\textrm{fwd,in}}(\Di)) \in  (\Om^+ \cap E^\inn)$ for any $0<t<T^{u,1}_{a}(\Di)$,
  $\y_{a}(t,\P^{\textrm{fwd,in}}(\Di)) \in (\Om^- \cap E^\inn)$ for any $T^{u,1}_{a}(\Di)<t<T^{u,2}_{a}(\Di)$,
  and it crosses transversely $L^\inn$ at $t= T^{u,1}_{a}(\Di)$ in
  $\Q^{\textrm{fwd,in}}(\Di):=\y_{a}(T^{u,1}_{a}(\Di),\P^{\textrm{fwd,in}}(\Di))$
  and $L^0$ at $t= T^{u,2}_{a}(\Di)$ in $\vec{R}^{\textrm{fwd,in}}(\Di):=\y_{a}(T^{u,2}_{a}(\Di),\P^{\textrm{fwd,in}}(\Di))$.
  Further, for any $0< \mu \le \mu_0$, any $0< \Di \le \Delta$ and $0<\ep\le \ep_0$ we find
  \begin{equation}\label{useless1}
\begin{gathered}
         \Di^{\sfwd_++\mu}\le \|\Q^{\textrm{fwd,in}}(\Di)\| \le \Di^{\sfwd_+-\mu},\\
       \Di^{\sfwd +\mu} \le \|\vec{R}^{\textrm{fwd,in}}(\Di)- \zz^u_{a}\|
       \le \Di^{\sfwd-\mu} .
  \end{gathered}
  \end{equation}
\end{lemma}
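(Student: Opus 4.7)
The plan is to analyze the trajectory of $\dot{\x} = \f_a^\pm(\x)$ leg by leg, first in $\Om^+$ from $\P^{\textrm{fwd,in}}(\Di)$ to its first crossing of $L^{\inn}$, and then in $\Om^-$ from that crossing back to $L^0$. Because $\f_a^\pm$ differ from $\f^\pm$ only by an $O(\ep)$ perturbation, the origin is still a hyperbolic saddle for both systems with eigenvalues $\ep$-close to $\la_s^\pm<0<\la_u^\pm$, and the stable/unstable leaves of $\f_a^\pm$ depend continuously on $\ep$. We can therefore replay, with obvious modifications, the saddle-passage analysis performed in \S \ref{S-Lin} for \eqref{eq-disc}, now applied to the \emph{autonomous} systems $\dot{\x}=\f_a^\pm(\x)$, which is strictly simpler since there is no $\tau$-dependence and no non-autonomous perturbation.

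For the first leg I would fix $\varpi>0$ small (depending only on $\mu$), introduce the auxiliary transversal $\tilde{S}^+_a \subset \Om^+$ at distance $|\ln(\varpi)|^{-1}$ from the origin along $\vec{v}_s^+$, and decompose the trajectory of $\dot{\x}=\f_a^+(\x)$ starting at $\P^{\textrm{fwd,in}}(\Di)$ as a ``stable'' nonlinear piece (along the stable leaf of $\f_a^+$ through the displaced starting point) plus a ``linear unstable'' piece of size $\Di$ along $\vec{v}_u^+(0)$ plus a higher-order remainder in an exponentially weighted space, exactly as in Lemma \ref{L-S-Lin-} with $\tau=0$. The crossing condition with $L^{\inn}$ is then, up to an $O(\Di^{1+\al})$ correction, the condition of Proposition \ref{estTD} that the unstable component matches the stable component; balancing the two gives
\[
T^{u,1}_a(\Di) = \frac{|\ln(\Di)|}{\la_u^+ + |\la_s^+|}\bigl(1 + O(\mu)\bigr),\qquad
\|\Q^{\textrm{fwd,in}}(\Di)\| \asymp \Di^{\sfwd_+ \pm O(\mu)},
\]
which yields the first inequality in \eqref{useless1}. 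Existence and transversality of the crossing with $L^{\inn}$ come from the fact that $\f_a^+$ is transverse to $\Om^0$ near the origin (by \assump{F0}, \assump{F1}) together with the non-degeneracy of the $(\vec{v}_s^+,\vec{v}_u^+)$-frame used in \assump{F2}; this provides $T^{u,1}_a(\Di)>0$ via the implicit function theorem applied to the coordinate $[\na G(\vec{0})]^*\x(t)=0$.

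For the second leg, at $t=T^{u,1}_a(\Di)$ the trajectory enters $\Om^-$ with distance $\sim \Di^{\sfwd_+}$ from the origin and with leading direction close to $\vec{v}_u^+$, which by \assump{F2} has a \emph{nonzero} component along both $\vec{v}_s^-$ and $\vec{v}_u^-$ (this is the geometric content of Scenario 1). Writing $d_1 := \|\Q^{\textrm{fwd,in}}(\Di)\|$ for the distance to the origin at entry and decomposing along the eigendirections of $\f_a^-$, the coefficient $a_s^-(d_1)$ along $\vec{v}_s^-$ is $\asymp d_1$ while the coefficient $a_u^-(d_1)$ along $\vec{v}_u^-$ is also $\asymp d_1$. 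A second application of the Shilnikov-type analysis in $\Om^-$ (identical to the proof of Proposition \ref{estTDbis}, again autonomous and hence cleaner) shows that the trajectory exits $\Om^-$ crossing $L^0$ transversely at a time $T^{u,2}_a(\Di)-T^{u,1}_a(\Di) \asymp \sTbwd_-|\ln(d_1)|$ in a point whose displacement from $\zz^u_a$ is $\asymp d_1^{\sbwd_- \pm O(\mu)}$. Combining with $d_1 \asymp \Di^{\sfwd_+ \pm O(\mu)}$ and using $\sfwd = \sfwd_+\sfwd_-$ (from \eqref{defsigma}) yields $\|\vec{R}^{\textrm{fwd,in}}(\Di)-\zz^u_a\| \asymp \Di^{\sfwd \pm O(\mu)}$, which after absorbing constants into $\mu$ gives the second inequality in \eqref{useless1}.

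The main obstacle is bookkeeping the accumulation of $\mu$-errors through the two saddle passages: just as in the derivation of \eqref{muA}--\eqref{muB}, the product of the two $(1\pm O(\mu))$-factors together with the prefactors coming from switching reference frames ($\vec{v}_u^+$ versus $\vec{v}_u^-,\vec{v}_s^-$) and from the $|\ln(\varpi)|^{\pm\tilde{C}}$ constants of Proposition \ref{estloglog} must be absorbed into a single $\mu$. This is handled exactly as in the choice of $c_\mu$ in \eqref{def-cmu}, by requiring $\mu \le \mu_0$ as in \eqref{mu0} and taking $\Di \le \Delta = \Delta(\mu,\varpi)$ small enough so that all logarithmic corrections satisfy $\Di^{-\mu/2}$-type bounds. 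Once these constants are fixed, the transversality statements at $t=T^{u,1}_a$ and $t=T^{u,2}_a$ follow from the same implicit function theorem argument as in Lemma \ref{L.loop}.
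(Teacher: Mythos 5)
Your overall strategy is the right one and is essentially what the paper intends: the paper itself does not prove Lemma \ref{forZui} in detail but defers to \cite[\S 6.2.2, Lemma 6.4]{FrPo}, and reconstructing it by running the $L^0\to\tilde S^+\to L^{\inn}\to\tilde S^-\to L^0$ saddle-passage machinery of \S\ref{S.key} on the autonomous fields $\f_a^\pm$ is a legitimate (and cleaner, $\tau$-free) route. However, your second leg contains a concrete error that breaks the final estimate as written. The passage you need through $\Om^-$ is the \emph{forward} one, from $L^{\inn}$ out along the unstable direction to $\tilde S^-$ and then to $L^0$; this is governed by Proposition \ref{lemmaconverse+} (exponent $1/\sbwd_-=\sfwd_-=(\la_u^-+|\la_s^-|)/\la_u^->1$, transit time $\sim|\ln d_1|/\la_u^-$), not by Proposition \ref{estTDbis}, which describes the \emph{backward} passage $S^-(\tau)\to L^{\inn}$ and has the reciprocal exponent $\sbwd_-$. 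Concretely: the stable component at entry is $\asymp d_1$ and decays by the factor $e^{\la_s^- T}\asymp d_1^{|\la_s^-|/\la_u^-}$ over the exit time $T\asymp|\ln d_1|/\la_u^-$, giving a displacement $d_1^{1+|\la_s^-|/\la_u^-}=d_1^{\sfwd_-}$ from the unstable leaf at $\tilde S^-$. With your exponent $\sbwd_-$ the combination gives $\Di^{\sfwd_+\sbwd_-}=\Di^{\sfwd_+/\sfwd_-}\neq\Di^{\sfwd}$, so the step ``using $\sfwd=\sfwd_+\sfwd_-$'' does not close; with $\sfwd_-$ it does. The same inversion affects your time estimate ($\sTbwd_-|\ln d_1|$ should be $|\ln d_1|/\la_u^-$), though the time is not part of the conclusion.

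A second, logical point: you cannot invoke Lemmas \ref{L.loop}, \ref{L.0} or Propositions \ref{estTD}, \ref{lemmaconverse+} as stated, because their proofs (via Lemma \ref{defTf-} and Lemma \ref{L.0}) rest on the confinement provided by the very barrier curves $Z^{\textrm{fwd,in}}$, $Z^{\textrm{fwd,out}}$ that Lemma \ref{forZui} is being used to build; citing ``the same implicit function theorem argument as in Lemma \ref{L.loop}'' at the end reintroduces this circularity. You mostly avoid it by saying you will \emph{replay} the analysis for the autonomous systems $\dot\x=\f_a^\pm$, and indeed for these the existence and transversality of the crossings of $L^{\inn}$ and $L^0$ follow directly from the phase-plane structure of a planar saddle under \assump{F0}--\assump{F2} (the fixed-point expansion $\x=\y_s+\vec\ell+\vec h$ with $\|\vec h\|\ll\|\vec\ell\|$ already forces the unstable component to dominate and push the orbit across $\Om^0$), without any appeal to the barriers. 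Make that self-containedness explicit, fix the exponent in the second leg, and the sketch is in line with the paper's intended argument.
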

The proof of this lemma is obtained by adapting the argument in \cite[\S 6.2.2]{FrPo}
 which derives from the estimates in \cite[Lemma 6.4]{FrPo}.

  \begin{remark}\label{R.forZui}
   Note that from Remark \ref{zeasy} and Lemma \ref{useless1} we find
     \begin{equation}\label{useless2}
       \Di^{\sfwd+\mu}-\bar{c}\ep \le \|\vec{R}^{\textrm{fwd,in}}(\Di)- \ga(0) \| \le \Di^{\sfwd-\mu}+ \bar{c} \ep
  \end{equation}
  where $\bar{c}:= 2 \max \{c^u_{a}, c^u_{b}, c^s_{a}, c^s_{b} \}$.
 \end{remark}

  Let  $\beta \ge \ep^{\sigma^{\mathrm{fb}}/2}$ be as in \eqref{def-sifb} and recall
  that
   $\sigma^{\mathrm{fb}} < 1$; it is easy to check that there is $\bar{\Di}^{\textrm{f,i}} \in [\beta- 2 c^s_a  \ep , \beta-
   \frac{c^s_a}{2} \ep]$ such that
   $\vec{P}^{\textrm{fwd},\inn}:=\vec{P}^{\textrm{fwd},\inn}(\bar{\Di}^{\textrm{f,i}})$ satisfies
   $\|\vec{P}^{\textrm{fwd},\inn}- \ga(0)\| = \beta$ and $\vec{P}^{\textrm{fwd},\inn}\in E^{\inn}$.

 Now the curve $Z^{\textrm{fwd},\inn}$ is defined as follows
  \begin{equation*}
Z^{\textrm{fwd},\inn}:= \{ \y_{a}(t,\P^{\textrm{fwd,in}}) \mid 0 \le t \le T^{u,2}_{a}(\bar{\Di}^{\textrm{f,i}}) \},
  \end{equation*}
and we set $\vec{Q}^{\textrm{fwd},\inn}:=\vec{Q}^{\textrm{fwd},\inn}(\bar{\Di}^{\textrm{f,i}})$,
$\vec{R}^{\textrm{fwd},\inn}:=\vec{R}^{\textrm{fwd},\inn}(\bar{\Di}^{\textrm{f,i}})$.

  In fact by construction the flow of \eqref{eq-disc} on $Z^{\textrm{fwd},\inn}$ points towards the  exterior
  of the bounded set enclosed by $Z^{\textrm{fwd},\inn}$ and the segment between $\P^{\textrm{fwd,in}}$ and
  $\vec{R}^{\textrm{fwd,in}}$, see Figure \ref{fig-Zui}.
  Now the estimates concerning  $\P^{\textrm{fwd,in}}$,
  $\Q^{\textrm{fwd,in}}$,
  $\vec{R}^{\textrm{fwd,in}}$
  in Lemma  \ref{lemma0s}   follow from Lemma \ref{forZui} and Remark \ref{R.forZui}.

 \smallskip

\noindent
Let $\P^{\textrm{fwd,out}}(\Di)$ be the unique point such that
$$\P^{\textrm{fwd,out}}(\Di) \in L^0, \qquad
\|\P^{\textrm{fwd,out}}(\Di)- \zz^u_{b} \|=\Di>0, \qquad  \langle\P^{\textrm{fwd,out}}(\Di)- \zz^u_{b}, \vec{w}\rangle \, <0$$
 where $\Di>0$ is small enough.
Rephrasing the argument of \cite[\S 6.2.2]{FrPo} which is based on \cite[Lemma 6.7]{FrPo}, we get the following result
needed to construct $Z^{\textrm{fwd},\outt}$ (see
Figure \ref{fig-Zuo}):
 \begin{lemma}\label{forZuo}
  There is $\Delta>0$ such that for any $0<\Di<\Delta$ there are $\tau^{u,-1}_{b}(\Di)<0<\tau^{u,1}_{b}(\Di)$
  such that $\y_{b}(t,\P^{\textrm{fwd,out}}(\Di)) \in (\Om^- \cap E^\outt)$ for any $\tau^{u,-1}_{b}(\Di) \le t<0$,
  $\y_{b}(t,\P^{\textrm{fwd,out}}(\Di)) \in (\Om^+ \cap E^\outt)$ for any $0<t \le\tau^{u,1}_{b}(\Di)$, and it crosses transversely
  $L^{-,\outt}$ at $t=\tau^{u,-1}_{b}(\Di)$
  in $\vec{O}^{\textrm{fwd,out}}(\Di):=\y_{b}(\tau^{u,-1}_{b}(\Di),\P^{\textrm{fwd,out}}(\Di))$ and $L^{+,\outt}$ at $t=\tau^{u,1}_{b}(\Di)$
  in $\vec{Q}^{\textrm{fwd,out}}(\Di):=\y_{b}(\tau^{u,1}_{b}(\Di),\P^{\textrm{fwd,out}}(\Di))$.
Further, for any $0< \mu \le \mu_0$, any $0< \Di \le \Delta$ and $0<\ep\le \ep_0$ we find
  \begin{equation}\label{useless3}
  \begin{gathered}
\Di^{\sbwd_-+\mu}
  	\le \|\vec{O}^{\textrm{fwd,out}}(\Di)\| \le \Di ^{\sbwd_--\mu} ,
   	\\
   	[\Di+(c_{b}^s+c_{b}^u)\ep]^{\sfwd_++\mu} \le \|\vec{Q}^{\textrm{fwd,out}}(\Di)\| \le
   		[\Di+(c_{b}^s+c_{b}^u)\ep]^{\sfwd_+ -\mu}.
  \end{gathered}
  \end{equation}
\end{lemma}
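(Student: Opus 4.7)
The plan is to adapt the proof of Lemma \ref{forZui} just above, which in turn derives from \cite[Lemma 6.7]{FrPo} and \cite[\S 6.2.2]{FrPo}, by examining the forward and backward evolutions from $\P^{\textrm{fwd,out}}(\Di)$ separately. The structural fact I would rely on is that $\vec{\zeta}^u_b = W^u_b \cap L^0$, so $\P^{\textrm{fwd,out}}(\Di)$ sits at distance $\Di$ from the unstable leaf of $\dot{\x}=\f_b^{\pm}(\x)$, on the $E^{\textrm{out}}$ side, while by Remark \ref{zeasy} its distance from $W^s_b$ along $L^0$ is $\Di+(c_b^s+c_b^u)\ep+o(\ep)$.

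For the forward branch $\y_b(t,\P^{\textrm{fwd,out}}(\Di))$, $t>0$, since $\f_b^+$ on $\bs{\Gamma}$ aims into $E^{\textrm{out}}$ the trajectory enters $\Om^+\cap E^{\textrm{out}}$ immediately. Linearizing near the origin in the basis $\{\vec{v}_s^+,\vec{v}_u^+\}$, the stable coordinate decays like $\eu^{(\la_s^++O(\ep))t}$ and the unstable one grows like $\eu^{(\la_u^++O(\ep))t}$, with initial unstable component of order $\|\ga(0)\|$ and stable component of order $\Di+(c_b^s+c_b^u)\ep$. Since $L^{+,\textrm{out}}$ has direction $\vec{v}_u^++\vec{v}_s^+$, the transversal crossing occurs when the two components become comparable, so a crossing time $\tau^{u,1}_b(\Di)\sim \sTfwd_+\,|\ln[\Di+(c_b^s+c_b^u)\ep]|$ and norm $\|\vec{Q}^{\textrm{fwd,out}}(\Di)\|\sim [\Di+(c_b^s+c_b^u)\ep]^{\sfwd_+}$ follow. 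The precise bounds with the $\mu$-tolerance would be obtained by the same matching-of-linear-vs-stable-components fixed point argument as in Proposition~\ref{estTD}, the constraint $\Di\ge\ep^{\sigma^{\textrm{fb}}/2}$ from \eqref{def-sifb} being needed exactly so that the $\ep$-perturbation of the eigenvalues can be absorbed into $\mu$, as in \eqref{est.T22a}.

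For the backward branch in $\Om^-$ I would apply an inversion-of-time argument as in \S \ref{S+Lin}. Setting $\underline{\f}^-_b(\x)=-\f_b^-(\x)$, the linearization at the origin has eigenvalues $-\la_u^-<0<-\la_s^-$, so stable and unstable roles are swapped, and the starting point now sits at ``stable'' offset $\Di$ from the image of $W^u_b$ under time reversal. The same linearization and transversality argument applied to the reversed system, and then reinterpreted in the original time, yields $|\tau^{u,-1}_b(\Di)|\sim \sTbwd_-\,|\ln\Di|$ and $\|\vec{O}^{\textrm{fwd,out}}(\Di)\|\sim \Di^{\sbwd_-}$ with $\sbwd_-=\la_u^-/(\la_u^-+|\la_s^-|)$ as in \eqref{defsigma}. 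Here there is no $(c_b^s+c_b^u)\ep$ shift, because in the reversed framework the relevant starting distance is measured directly from the ``unstable'' manifold, which is just $W^u_b$ and hence at exactly $\Di$; this matches the form of \eqref{useless3}.

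The main obstacle, as for Lemma \ref{forZui}, is not the leading-order linearization but the \emph{uniform control} of the error terms so as to produce the exponents $\sfwd_+\pm\mu$ and $\sbwd_-\pm\mu$ for every admissible $\mu$; this is what forces the bookkeeping of Proposition~\ref{estTD} rather than a soft hyperbolic-linearization estimate. Secondary issues are transversality of the crossings, which follows from the trajectory asymptotically aligning with $\pm\vec{v}_u^{\pm}$, and confinement to $\Om^{\pm}\cap E^{\textrm{out}}$ until those crossings, which is guaranteed by the outward orientation of $\f_b$ on $\bs{\Gamma}$ together with a barrier argument showing that $\y_b$ cannot return toward $\bs{\Gamma}$ once it has entered $E^{\textrm{out}}$.
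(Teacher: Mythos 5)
Your proposal is correct and follows essentially the same route as the paper, whose own proof of Lemma~\ref{forZuo} consists of a citation to \cite[Lemma 6.7]{FrPo} and \cite[\S 6.2.2]{FrPo}; your identification of the $\ep$-shift via Remark~\ref{zeasy} (distance from $W^s_b$ is $\Di+(c_b^s+c_b^u)\ep$ while distance from $W^u_b$ is exactly $\Di$) correctly explains why the shift appears only in the estimate for $\vec{Q}^{\textrm{fwd,out}}$. One small correction: Lemma~\ref{forZui} is the one based on \cite[Lemma 6.4]{FrPo}, whereas it is the present Lemma~\ref{forZuo} that derives from \cite[Lemma 6.7]{FrPo}; also, the transversality at the exit on $L^{\pm,\outt}$ comes not from alignment with $\vec{v}_u^\pm$ but from the fact that near $\rho(\vec{v}_u^\pm+\vec{v}_s^\pm)$ the linearized field $\rho(\la_u^\pm\vec{v}_u^\pm+\la_s^\pm\vec{v}_s^\pm)$ is transversal to the diagonal direction because $\la_s^\pm<0<\la_u^\pm$.
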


Again, there is $\bar{\Di}^{\textrm{f,o}} \in [\beta- 2 c^u_b  \ep , \beta- \frac{c^u_b}{2} \ep]$ such that
   $\vec{P}^{\textrm{fwd},\outt}:=\vec{P}^{\textrm{fwd},\outt}(\bar{\Di}^{\textrm{f,o}})$ satisfies
   $\|\vec{P}^{\textrm{fwd},\outt}- \ga(0)\| = \beta$ and $\vec{P}^{\textrm{fwd},\outt}\in E^{\outt}$.

Then the curve
  $Z^{\textrm{fwd},\outt}$ is defined as follows
    \begin{equation*}
Z^{\textrm{fwd},\outt}:= \{ \y_{b}(t,\P^{\textrm{fwd,out}})\mid \tau^{u,-1}_{b}(\bar{\Di}^{\textrm{f,o}})
\le t \le
  \tau^{u,1}_{b}(\bar{\Di}^{\textrm{f,o}}) \},
  \end{equation*}
and we set $\vec{O}^{\textrm{fwd},\outt}:=\vec{O}^{\textrm{fwd},\outt}(\bar{\Di}^{\textrm{f,o}})$,
$\vec{Q}^{\textrm{fwd},\outt}:=\vec{Q}^{\textrm{fwd},\outt}(\bar{\Di}^{\textrm{f,o}})$.

\begin{figure}[t]
 	\begin{center}
 		\epsfig{file=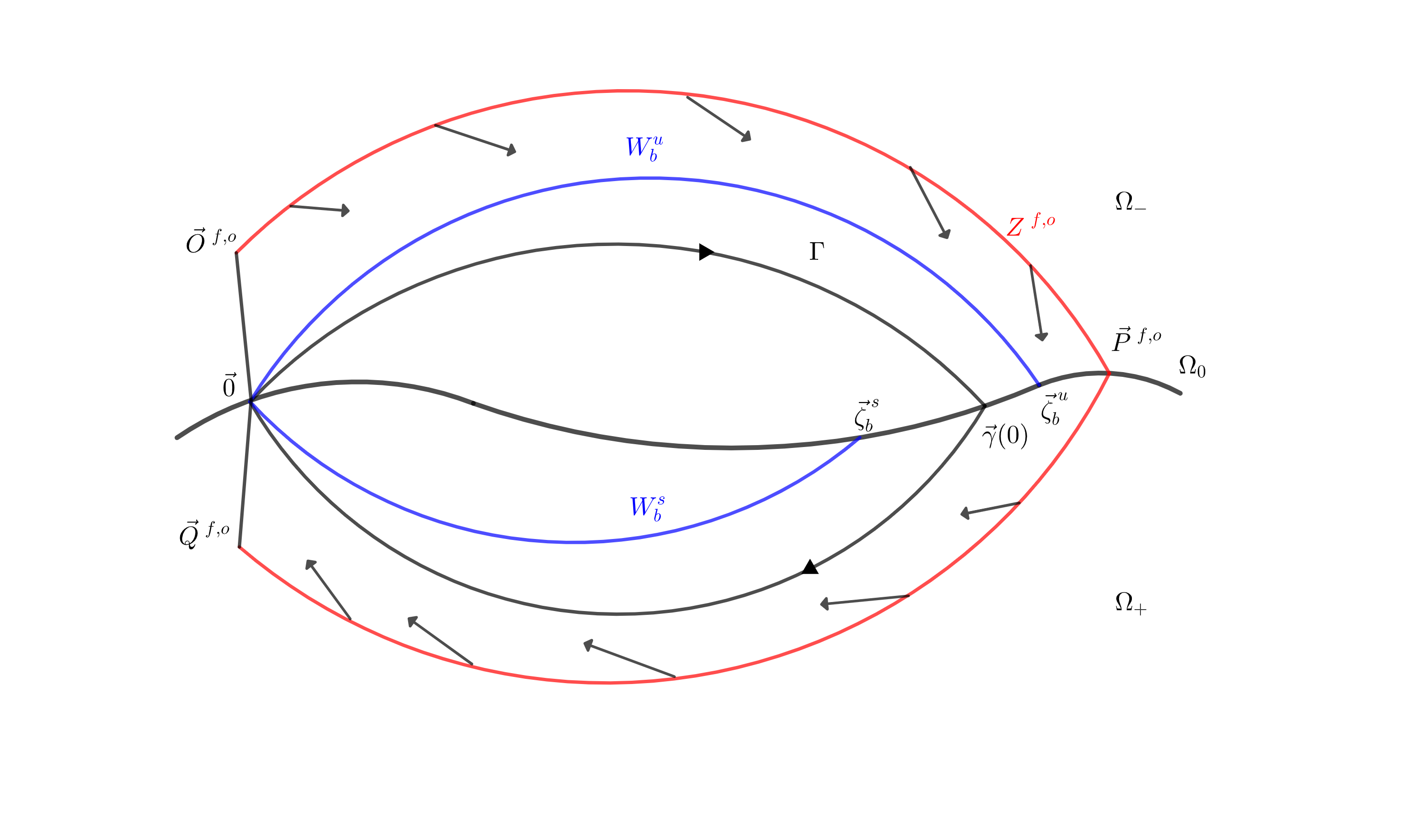, width=0.8\textwidth}
 		\caption{$Z^{\textrm{fwd},\outt}$ constructed in Lemma \ref{forZuo}.}
 		\label{fig-Zuo}
 	\end{center}
\end{figure}

  In fact by construction the flow of \eqref{eq-disc} on $Z^{\textrm{fwd},\outt}$ points towards the  interior
  of the bounded set enclosed by $Z^{\textrm{fwd},\outt}$ and the segments $L^{-,\outt}$ between $\vec{0}$ and
  $\vec{O}^{\textrm{fwd,out}}$, and
  $L^{+,\outt}$ between $\vec{0}$ and
  $\vec{Q}^{\textrm{fwd,out}}$.
  Further the estimates concerning $\vec{O}^{\textrm{fwd,out}}$,
  $\P^{\textrm{fwd,out}}$, $\vec{Q}^{\textrm{fwd,out}}$
  in Lemma \ref{lemma0s}  follow from Lemma \ref{forZuo}.
  So Lemma \ref{lemma0s} is proved.

\smallskip

\noindent

\begin{figure}[t]
	\begin{center}
		\epsfig{file=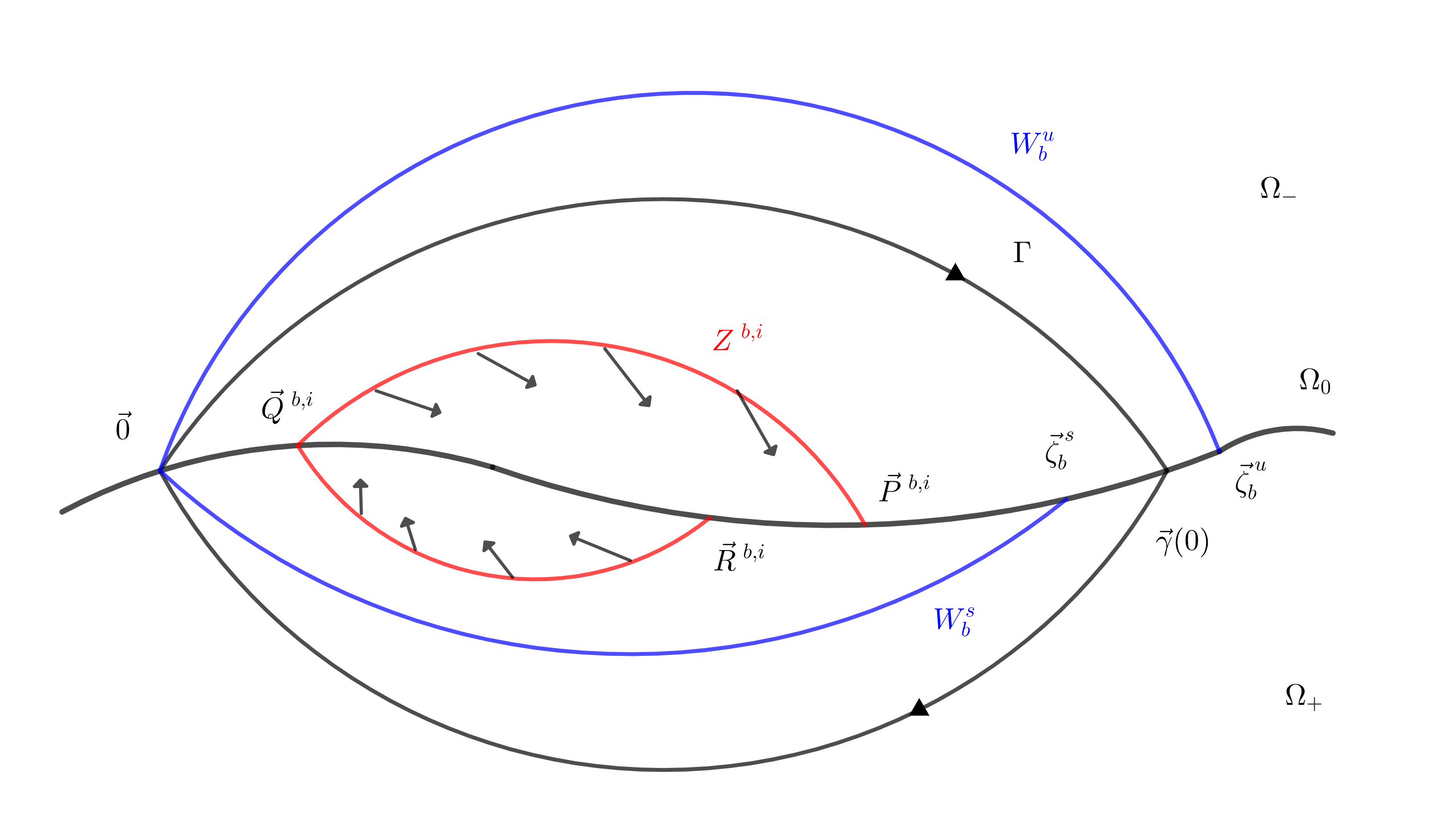, width=0.8\textwidth}
		\caption{$Z^{\textrm{bwd},\inn}$ constructed in Lemma \ref{forZsi}.}
		\label{fig-Zsi}
	\end{center}
\end{figure}

The construction of the curve $Z^{\textrm{bwd},\inn}$ (see Figure \ref{fig-Zsi}) is analogous to $Z^{\textrm{fwd},\inn}$.
Namely, let   $\vec{P}^{\textrm{bwd,in}}(\Di)$ be the unique point such that
$$ \vec{P}^{\textrm{bwd,in}}(\Di) \in L^0, \qquad
\| \vec{P}^{\textrm{bwd,in}}(\Di)- \zz^u_{b} \|=\Di>0, \qquad  \langle \vec{P}^{\textrm{bwd,in}}(\Di)- \zz^u_{b}, \vec{w}\rangle
>0.$$
 Reasoning as in Lemma \ref{forZui} we get the following.
 \begin{lemma}\label{forZsi}
  There is $\Delta>0$ such that for any $0<\Di \le \Delta$ there are $T^{s,-2}_{b}(\Di)<T^{s,-1}_{b}(\Di)<0$
  such that $\y_{b}(t,\vec{P}^{\textrm{bwd,in}}(\Di)) \in (\Om^-\cap E^\inn)$ for any $T^{s,-1}_{b}(\Di)< t<0$,
  $\y_{b}(t, \vec{P}^{\textrm{bwd,in}}(\Di)) \in (\Om^+ \cap E^\inn)$ for any $T^{s,-2}_{b}(\Di)<t<T^{s,-1}_{b}(\Di)$.
  Let us set $\vec{Q}^{\textrm{bwd,in}}(\Di):=\y_{b}(T^{s,-1}_{b}(\Di),\vec{P}^{\textrm{bwd,in}}(\Di))$
  and   $\vec{R}^{\textrm{bwd,in}}(\Di):=\y_{b}(T^{s,-2}_{b}(\Di),\vec{P}^{\textrm{bwd,in}}(\Di))$.
Then, for any $0< \mu \le \mu_0$, for any $0< \Di \le \Delta$ and $0<\ep\le \ep_0$ we find
  \begin{equation}\label{useless4}
  \begin{gathered}
     \Di^{\sbwd_- +\mu}  \le \|\vec{Q}^{\textrm{bwd,in}}(\Di)\| \le \Di^{\sbwd_--\mu} , \\
    \Di^{\sbwd+\mu}   \le \| \vec{R}^{\textrm{bwd,in}}(\Di)- \zz^s_{b}\| \le \Di^{\sbwd-\mu}.
  \end{gathered}
  \end{equation}
\end{lemma}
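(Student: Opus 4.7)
The plan is to mirror the proof of Lemma \ref{forZui} exactly, with three symmetry-based modifications: replace the autonomous field $\f^{\pm}_a$ by $\f^{\pm}_b$ from \eqref{finout}, integrate backward in time, and interchange the roles of the stable and unstable manifold intersections on $L^0$ (start near $\vec{\zeta}^u_b$, end near $\vec{\zeta}^s_b$). An equivalent reformulation is to run the argument in forward time for the reversed field $-\f^{\pm}_b$: the origin remains a saddle with the same eigenvectors but with new unstable eigenvalues $|\la_s^{\pm}|$ and new stable eigenvalues $-\la_u^{\pm}$, the manifolds $W^{s}$ and $W^{u}$ interchange, all of \assump{F0}--\assump{F2}, \assump{K}, \assump{G} are preserved, and $\vec{P}^{\textrm{bwd,in}}(\Di)$ lies near the intersection of the new stable manifold with $L^0$, exactly as in the hypotheses of Lemma \ref{forZui}.

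I would then split $Z^{\textrm{bwd,in}}$ into its two smooth pieces and handle each by the same linearize-and-fix-point technique used in Section \ref{S-Lin}. For the branch in $\Om^-$, from $\vec{P}^{\textrm{bwd,in}}(\Di)$ backward to the crossing of $L^{\inn}$ at $\vec{Q}^{\textrm{bwd,in}}(\Di)$, I would decompose $\y_b(t,\vec{P}^{\textrm{bwd,in}}(\Di))$ along $\vec{v}_u^-, \vec{v}_s^-$ and use roughness of exponential dichotomy to close a fixed-point argument in an exponentially weighted space analogous to $X^+$. Closest approach to the origin occurs when the stable component (of initial size $\Di$, growing backward as $e^{|\la_s^-||t|}$) balances the unstable component (of initial size $O(1)$, shrinking as $e^{-\la_u^-|t|}$); balance at $|t|\sim \sTbwd_-\,|\ln\Di|$ yields common magnitude $\Di^{\sbwd_-}$, giving the first half of \eqref{useless4}. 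For the branch in $\Om^+$, starting at $\vec{Q}^{\textrm{bwd,in}}(\Di)$ with both eigen-components of size $\Di^{\sbwd_-}$ (up to a $\Di^{\mu}$ factor), continuing backward until the backward-expanding stable component reaches $O(1)$ requires an additional time $\sim \sbwd_-|\ln\Di|/|\la_s^+|$, during which the unstable component shrinks by a factor $\Di^{\sbwd_-\la_u^+/|\la_s^+|}$, so the final distance from $\vec{\zeta}^s_b$ is $\Di^{\sbwd_-\sbwd_+} = \Di^{\sbwd}$, which is the second half of \eqref{useless4}. The $\Di^{\pm\mu}$ slack absorbs the nonlinear remainder and the $O(\ep)$ perturbation of $\f_b^\pm$ from $\f^{\pm}$, exactly as in the derivation of \eqref{est.D-d} in Proposition \ref{estTD}.

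The main obstacle is the matching step at $\vec{Q}^{\textrm{bwd,in}}(\Di)\in L^{\inn}$: after the first fixed-point argument, one must verify that the two projections of this point onto the eigendirections $\vec{v}_u^+, \vec{v}_s^+$ of $\f_b^+$ are both of size $\Di^{\sbwd_-}$ (up to a $\Di^{\mu}$ factor) in order to feed the second fixed-point scheme, and that the crossing of $L^{\inn}$ is transverse. The transversality is precisely the role played by \assump{F2} (which distinguishes Scenarios 1-2 from the sliding Scenarios 3-4), while the size control comes from tracking the outgoing direction of the trajectory as it exits the linearized saddle picture, along the lines of \cite[Lemma 6.4]{FrPo}. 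Uniformity in $0<\ep\le\ep_0$ and $0<\mu\le\mu_0$ follows from dichotomy roughness, the estimates are $\tau$-independent because $\f^\pm_b$ is autonomous, and the outward-pointing property of the flow of \eqref{eq-disc} on $Z^{\textrm{bwd,in}}$ follows from the sign choice in \eqref{finout}.
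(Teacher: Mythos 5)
Your proposal is correct and follows essentially the same route as the paper: the paper's own proof of Lemma \ref{forZsi} consists of the single line ``Reasoning as in Lemma \ref{forZui} we get the following,'' which is exactly the time-reversal/field-swap symmetry you articulate, and the underlying quantitative estimates are obtained by citing the adapted arguments of \cite[\S 6.2.2, Lemma 6.4]{FrPo} rather than by redoing the linearize-and-contract scheme of \S\ref{S-Lin}. Your sketch of the balancing argument (crossing $L^{\inn}$ at magnitude $\Di^{\sbwd_-}$, exiting at distance $\Di^{\sbwd_-\sbwd_+}=\Di^{\sbwd}$ from $\zz^s_b$) reproduces exactly the exponents of \eqref{useless4}, so it is a valid self-contained replacement for the citation, though one should note explicitly that the time reversal also interchanges the labels $\Om^+\leftrightarrow\Om^-$ (and $\f^+\leftrightarrow\f^-$), which is implicit in your statement that \assump{F0}--\assump{F2}, \assump{K}, \assump{G} are preserved.
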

\begin{remark}\label{R.forZui2}
	Analogously to Remark \ref{R.forZui} we see that
	 $$ \Di^{\sbwd +\mu}  -\bar{c}\ep\leq \|\vec{R}^{\textrm{bwd,in}}(\Di)- \ga(0)\| \le \Di^{\sbwd -\mu}+\bar{c}\ep .$$
\end{remark}

Again we see that there is $\bar{\Di}^{\textrm{b,i}} \in [\beta+\frac{c^u_b}{2}\ep, \beta+2c^u_b\ep]$ such that
   $\vec{P}^{\textrm{bwd},\inn}:=\vec{P}^{\textrm{bwd},\inn}(\bar{\Di}^{\textrm{b,i}})$ satisfies
   $\|\vec{P}^{\textrm{bwd},\inn}- \ga(0)\| = \beta$ and $\vec{P}^{\textrm{bwd},\inn}\in E^{\inn}$.

So the curve $Z^{\textrm{bwd},\inn}$ is defined as follows
    \begin{equation*}
Z^{\textrm{bwd},\inn}:= \{ \y_{b}(t,\vec{P}^{\textrm{bwd,in}}) \mid T^{s,-2}_{b}(\bar{\Di}^{\textrm{b,i}}) \le t
\le 0
\},
  \end{equation*}
and we set  $\vec{Q}^{\textrm{bwd},\inn}:=\vec{Q}^{\textrm{bwd},\inn}(\bar{\Di}^{\textrm{b,i}})$ and
$\vec{R}^{\textrm{bwd},\inn}:=\vec{R}^{\textrm{bwd},\inn}(\bar{\Di}^{\textrm{b,i}})$.

 In fact by construction the flow of \eqref{eq-disc} on
$Z^{\textrm{bwd},\inn}$ aims toward the  interior of the bounded set enclosed by $Z^{\textrm{bwd},\inn}$ and the
segment
between
$\vec{P}^{\textrm{bwd,in}}$ and  $\vec{R}^{\textrm{bwd,in}}$.
Further the estimates concerning
  $\vec{P}^{\textrm{bwd,in}}$,
   $\Q^{\textrm{bwd,in}}$,
   $\vec{R}^{\textrm{bwd,in}}$
  in Lemma   \ref{lemma0u}    follow from Lemma \ref{forZsi} and Remark~\ref{R.forZui2}.

\begin{figure}[t]
	\begin{center}
		\epsfig{file=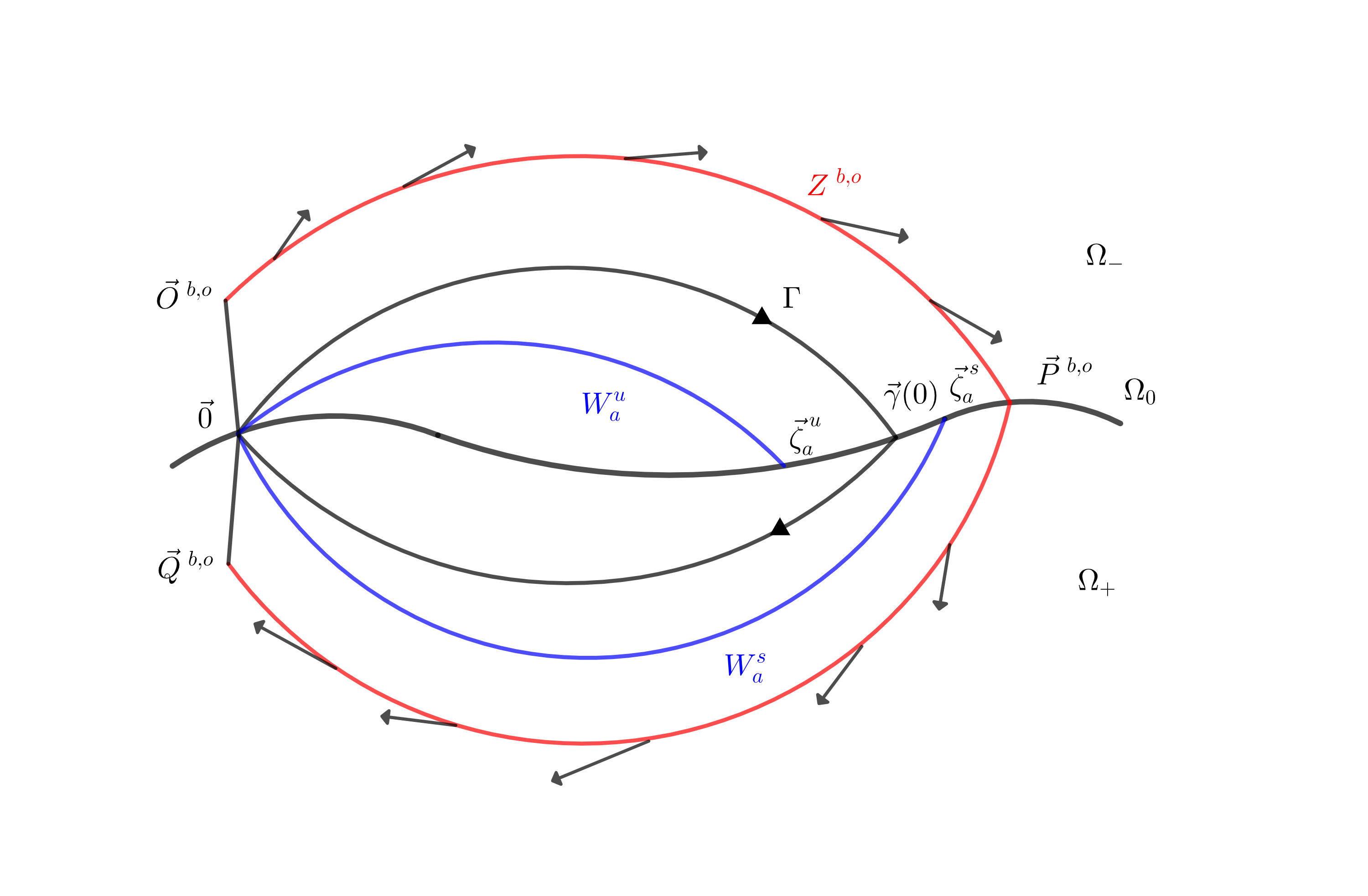, width=0.8\textwidth}
		\caption{$Z^{\textrm{bwd,out}}$ constructed in Lemma \ref{forZso}.}
		\label{fig-Zso}
	\end{center}
\end{figure}

\indent
Now, let $\P^{\textrm{bwd,out}}(\Di)$ be the unique point such that
$$\P^{\textrm{bwd,out}}(\Di) \in L^0, \qquad
\|\P^{\textrm{bwd,out}}(\Di)- \zz^{s}_{a} \|=\Di>0, \qquad  \langle\P^{\textrm{bwd,out}}(\Di)- \zz^{s}_{a}, \vec{w}\rangle <0$$
 where $\Di>0$ is small enough.
 Reasoning as in Lemma \ref{forZuo} we get $Z^{\textrm{bwd,out}}$ (see Figure \ref{fig-Zso}):
 \begin{lemma}\label{forZso}
  There is $\Delta>0$ such that for any $0<\Di \le \Delta$ there are $\tau^{s,-1}_{a}(\Di)<0<\tau^{s,1}_{a}(\Di)$
  such that $\y_{a}(t,\P^{\textrm{bwd,out}}(\Di)) \in (\Om^- \cap E^\outt)$ for any $\tau^{s,-1}_{a}(\Di) \le t<0$,
  $\y_{a}(t,\P^{\textrm{bwd,out}}(\Di)) \in (\Om^+ \cap E^\outt)$ for any $0<t \le\tau^{s,1}_{a}(\Di)$,
  and it crosses transversely $L^{-,\outt}$ at $t=\tau^{s,-1}_{a}(\Di)$ in
  $\vec{O}^{\textrm{bwd,out}}(\Di):=\y_{a}(\tau^{s,-1}_{a}(\Di),\P^{\textrm{bwd,out}}(\Di))$ and
  $L^{+,\outt}$ at $t=\tau^{s,1}_{a}(\Di)$
  in $\Q^{\textrm{bwd,out}}(\Di):=\y_{a}(\tau^{s,1}_{a}(\Di),\P^{\textrm{bwd,out}}(\Di))$.
Further, \michal{for any $0<\mu\leq\mu_0$,} any $0< \Di \le \Delta$ and $0<\ep\le \ep_0$ we find
  \begin{equation}\label{useless5}
  \begin{gathered}
    \left[\Di+(c_{a}^s+c_{a}^u)\ep\right]^{\sbwd_- +\mu}      \le \|\vec{O}^{\textrm{bwd,out}}(\Di)\| \le \left[\Di+(c_{a}^s+c_{a}^u)\ep\right]^{\sbwd_- -\mu}  , \\
   \Di^{\sfwd_+ +\mu} \le \|\Q^{\textrm{bwd,out}}(\Di)\| \le
     \Di^{\sfwd_+ -\mu}   .
  \end{gathered}
  \end{equation}
\end{lemma}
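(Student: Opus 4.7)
The plan is to prove Lemma \ref{forZso} by adapting the argument used for Lemma \ref{forZuo}, which in turn follows \cite[Lemma 6.7]{FrPo} and \cite[\S 6.2.2]{FrPo}. The two lemmas are structurally identical: both construct a transversal curve in $E^{\textrm{out}}$ passing through $L^0$ with endpoints on $L^{\pm,\textrm{out}}$, but Lemma \ref{forZuo} uses the autonomous system $\dot{\x}=\f_b^\pm(\x)$ anchored at $\zz^u_b$, while Lemma \ref{forZso} uses $\dot{\x}=\f_a^\pm(\x)$ anchored at $\zz^s_a$. The only modifications concern which eigendirection plays which role in the hyperbolic analysis at the origin.

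First I would record that, by roughness of exponential dichotomy applied to the autonomous systems $\dot{\x}=\f_a^\pm(\x)$, the origin is still a hyperbolic saddle with eigenvalues $O(\ep)$-close to $\la_{s,u}^\pm$ and eigenvectors $O(\ep)$-close to $\vec{v}_{s,u}^\pm$. Local stable and unstable leaves $W^s_a$, $W^u_a$ exist, and by definition of $\zz^s_a$ the trajectory $\y_a(t,\zz^s_a)$ converges to $\vec{0}$ as $t\to+\infty$. Since $\P^{\textrm{bwd,out}}(\Di)$ is at distance $\Di$ from $\zz^s_a$ on the $E^{\textrm{out}}$ side of $L^0$ (by $\langle\P^{\textrm{bwd,out}}(\Di)-\zz^s_a,\vec{w}\rangle<0$), it is not on $W^s_a$, so the forward orbit eventually leaves a neighborhood of the origin along the unstable direction $\vec{v}_u^+$, after entering $\Om^+$.

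For the forward branch $t\in[0,\tau^{s,1}_a(\Di)]$, I would decompose $\y_a(t,\P^{\textrm{bwd,out}}(\Di))$ in the $\{\vec{v}_s^{+},\vec{v}_u^{+}\}$ basis near the origin, writing
\[
\y_a(t,\P^{\textrm{bwd,out}}(\Di))=\y^s(t)+\y^u(t)+\text{higher order},
\]
where $\y^s(t)$ tracks the stable manifold and decays like $\eu^{\la_s^+ t}$, while $\y^u(t)$ has initial size of order $\Di$ and grows like $\Di\,\eu^{\la_u^+ t}$. The trajectory crosses $L^{+,\textrm{out}}$ at the time when these two components become comparable, i.e. at $\tau^{s,1}_a(\Di)\sim\frac{|\ln\Di|}{\la_u^+ + |\la_s^+|}$, so the crossing point $\Q^{\textrm{bwd,out}}(\Di)$ has size $\sim\Di^{\sfwd_+}$, up to the loss $\mu$ absorbed by the tiny $\ep$- and $\varpi$-corrections on the exponents; this parallels word-for-word the computation leading to \eqref{est.D-d} in Proposition \ref{estTD}. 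Transversality of the crossing of $L^{+,\textrm{out}}$ follows from the fact that at that point $\f_a^+$ has a nonvanishing component along $\vec{v}_s^+$, transversal to the direction $\vec{v}_u^++\vec{v}_s^+$ spanning $L^{+,\textrm{out}}$.

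For the backward branch $t\in[\tau^{s,-1}_a(\Di),0]$, I would do the analogous analysis in $\Om^-$ for $-\f_a^-$. Here one must first observe that $\zz^s_a$ lies $O(\ep)$-close to $\zz^u_a$ along $L^0$, with the precise offset $(c_a^s+c_a^u)\ep$ appearing in Remark \ref{zeasy}; therefore the effective distance from the trajectory's starting point to the unstable manifold $W^u_a$ (which governs the backward dynamics in $\Om^-$) is $\Di+(c_a^s+c_a^u)\ep$. Applying the same hyperbolic estimate as before, with the roles of $\la_s^+,\la_u^+$ replaced by $\la_s^-,\la_u^-$ and stable/unstable swapped, yields the crossing time and the displacement estimate for $\vec{O}^{\textrm{bwd,out}}(\Di)$ with the exponent $\sbwd_-$. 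Transversality of the crossing of $L^{-,\textrm{out}}$ is again immediate from the direction of $\f_a^-$ at that point.

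The main technical obstacle is ensuring that the $\ep$-perturbation $\ep\mathcal{K}\f^{\perp,\pm}$ does not wreck any of the hyperbolic estimates; but since this perturbation is $C^r$ with uniformly bounded derivatives (the constant $\mathcal{K}$ is finite) and preserves the origin as a saddle, all the standard tools—roughness of exponential dichotomy, smooth dependence on initial data, fixed-point argument in exponentially weighted norms as in the proof of Lemma \ref{L-S-Lin-}—carry over with only cosmetic changes. Since all details are a direct relabeling of the already-published proof of \cite[Lemma 6.7]{FrPo}, in the final write-up I would cite that reference rather than reproduce the whole hyperbolic machinery.
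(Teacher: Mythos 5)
Your proposal is correct and follows essentially the same route as the paper, which itself proves Lemma \ref{forZso} by "reasoning as in Lemma \ref{forZuo}," i.e.\ by the saddle-passage estimates of \cite[Lemma 6.7, \S 6.2.2]{FrPo} applied to the autonomous system $\dot{\x}=\f_a^\pm(\x)$. In particular you correctly identify the one non-cosmetic point, namely that the forward branch is governed by the distance $\Di$ to $W^s_a$ while the backward branch is governed by the distance $\Di+(c_a^s+c_a^u)\ep$ to $W^u_a$, which via Remark \ref{zeasy} explains the asymmetry between the two estimates in \eqref{useless5}.
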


Again, there is $\bar{\Di}^{\textrm{b,o}} \in [\beta- 2 c^s_a  \ep , \beta- \frac{c^s_a}{2} \ep]$ such that
   $\vec{P}^{\textrm{bwd},\outt}:=\vec{P}^{\textrm{bwd},\outt}(\bar{\Di}^{\textrm{b,o}})$ satisfies
   $\|\vec{P}^{\textrm{bwd},\outt}- \ga(0)\| = \beta$ and $\vec{P}^{\textrm{bwd},\inn}\in E^{\outt}$.

Then the curve
  $Z^{\textrm{bwd},\outt}$ is defined as follows
    \begin{equation*}
Z^{\textrm{bwd},\outt}:= \{ \y_{a}(t,\P^{\textrm{bwd,out}}(\bar{\Di}^{\textrm{b,o}}) \mid \tau^{s,-1}_{a}(\bar{\Di}^{\textrm{b,o}})
\le t \le
  \tau^{s,1}_{a}(\bar{\Di}^{\textrm{b,o}}) \},
  \end{equation*}
and we set $\vec{O}^{\textrm{bwd},\outt}:=\vec{O}^{\textrm{bwd},\outt}(\bar{\Di}^{\textrm{b,o}})$,
$\vec{Q}^{\textrm{bwd},\outt}:=\vec{Q}^{\textrm{bwd},\outt}(\bar{\Di}^{\textrm{b,o}})$.

  The flow of \eqref{eq-disc} on $Z^{\textrm{bwd,out}}$ aims towards the  exterior
  of the bounded set enclosed by $Z^{\textrm{bwd,out}}$ and the segments $L^{-,\outt}$ between $\vec{0}$ and
  $\vec{O}^{\textrm{bwd,out}}$, and
   $L^{+,\outt}$ between $\vec{0}$ and
  $\Q^{\textrm{bwd,out}}$, see Figure \ref{fig-Zso}.
	Moreover, the estimates concerning $\vec{O}^{\textrm{bwd,out}}$,
	$\P^{\textrm{bwd,out}}$, $\Q^{\textrm{bwd,out}}$
	 follow from Lemma \ref{forZso}.
  So Lemma \ref{lemma0u} is proved.


\begin{thebibliography}{99}
\bibitem{BV01} {\sc S. Banerjee and G. Verghese}: {\it Nonlinear Phenomena in Power
Electronics: Attractors, Bifurcations, Chaos and Nonlinear
Control}, Wiley-IEEE Press, New York, 2001.

%
%
	
	\bibitem{BF10} {\sc  F. Battelli and M. Fe\v ckan}: {\it   An example of chaotic behaviour in presence of a sliding homoclinic orbit},
Ann. Mat. Pura Appl. \textbf{189}
(2010), no.~4, 615--642.
	
	\bibitem{BF11} {\sc  F. Battelli and M. Fe\v ckan}: {\it On the chaotic behaviour of discontinuous systems},
	J. Dyn. Differ. Equations {\bf 23} (2011), no. 3, 495--540.
	
	\bibitem{BF12} {\sc  F. Battelli and M. Fe\v ckan}: {\it  Nonsmooth homoclinic orbits, Melnikov functions and chaos in discontinuous
systems}, Phys. D \textbf{241}
(2012), no. 22, 1962--1975.


%

\bibitem {BGV02} {\sc M.D. Bernardo, L. Garofalo and F. Vasca}: {\it Bifurcations in
piecewise-smooth feedback systems}. Int. J. Control \textbf{75} (2002),
1243-1259.

\bibitem{Bo}  \textsc{G. Borg}:
{\it A condition for the existence of orbitally stable solutions of dynamical systems}.
Kungl.\ Tekn.\ H\"ogsk.\ Handl.\ Stockholm, \textbf{153} (1960),   1--12.

\bibitem{B99} {\sc B. Brogliato}: {\it Nonsmooth Mechanics}, Springer, London,
1999.

	\bibitem{CaFr} {\sc A. Calamai and M. Franca}: {\it Melnikov methods and homoclinic orbits in discontinuous systems},
	J. Dyn. Differ. Equations {\bf 25} (2013), no. 3, 733--764.
	
	\bibitem{CDFP}  {\sc A. Calamai, J. Dibl\' ik, M. Franca and M. Posp\' i\v sil}: {\it On the position of chaotic trajectories},  J. Dyn.
Differ. Equations {\bf 29}
(2017), no. 4, 1423--1458.

\bibitem{CL1}
 {\sc C.C. Chen and C.S. Lin}:
 {\it Blowing up with infinite energy of conformal metrics on $\mathbb{S}^n$},
Comm. Partial Differential Equations, {\bf 24} (1999), 785--799.

	
	\bibitem{Co} {\sc W.A. Coppel}: {\it Dichotomies in Stability
		Theory},  Lecture Notes in Mathematics, vol. 629, Springer-Verlag,
	New York, 1978.

\bibitem{DFS1} {\sc F. Dalbono, M. Franca and A. Sfecci}: \textit{Multiplicity of ground states for the scalar curvature equation},
Ann. Mat. Pura Appl. \textbf{199} (2020), No.~1, 273--298.

\bibitem{DFS2}  {\sc F. Dalbono, M. Franca and A. Sfecci}: \textit{Multiplicity of ground states for the scalar curvature equation without reciprocal symmetry}, J. Dyn. Diff. Equations \textbf{34} (2022), 701--720.

  \bibitem{DE13}    {\sc L. Dieci, C. Elia and L. Lopez}: {\it A Filippov sliding vector field on an attracting co-dimension 2 discontinuity
      surface, and a limited
      loss-of-attractivity analysis}, J. Differ. Equations \textbf{254} (2013), no. 4, 1800--1832.
	
	\bibitem{FrPo} {\sc  M. Franca and M. Posp\' i\v sil}: {\it New global bifurcation diagrams for piecewise smooth systems: Transversality
of homoclinic points does not
imply chaos},
	J.  Differ. Equations {\bf 266} (2019), 1429--1461.

\bibitem{mFaS}
{\sc M. Franca and A. Sfecci}: {\it On a diffusion model with absorption and production},
Nonlinear Anal. Real World Appl. \textbf{34} (2017), 41--60.

\bibitem{GCRC98} {\sc M. Garcia, A. Chatterjee, A. Ruina, and M. Coleman}: {\it The
simplest walking model: stability, complexity and scaling},
ASME. J. Biomech. Eng. \textbf{120} (1998), 281--288.

\bibitem{Gi1}
\textsc{P. Giesl}:
\textit{Necessary conditions for a limit cycle and its basin of attraction},
Nonlinear Analysis T.M.A., \textbf{56} (5) (2004),  643--677.

\bibitem{Gi2}
\textsc{P. Giesl}:
\textit{The basin of attraction of periodic orbits in nonsmooth differential equations},
Zeitschrift fur Angewandte Mathematik und Mechanik, \textbf{85} (2) (2005),   89--104.

\bibitem{GiRa}
{\sc P. Giesl and  M. Rasmussen}:
{\it Borg's criterion for almost periodic differential equations},
Nonlinear Analysis  T.M.A.
  \textbf{69} 11 (2008),
 3722--3733.


%

\bibitem{GH} {\sc J. Guckenheimer and P. Holmes}: {\it Nonlinear Oscillations,
Dynamical Systems, and Bifurcations of Vector Fields},
Springer--Verlag, New York, 1983.

 \bibitem{HK} {\sc J. Hale and H. Ko\c{c}ak}: {\it Dynamics and Bifurcations}, Volume 3, Springer--Verlag, New York, 1991.

\bibitem{Ha} {\sc P. Hartman and C. Olech}: {\it On global asymptotic stability of solutions of differential equations}, Trans. Amer. Math. Soc. \textbf{104} (1962) 154--178.





\bibitem{Jsell}
{\sc R. Johnson}: {\it Concerning a theorem of Sell}, J. Differ. Equations \textbf{30} (1978),
324--339.	

\bibitem{JPY} {\sc R. Johnson, X.B. Pan and Y.F. Yi}: {\it The Melnikov method and
elliptic equations with critical exponent},
Indiana Univ. Math. J. {\bf 43} (1994), no. 3, 1045--1077.

%

\bibitem{Leo}  {\sc G.A. Leonov, I.M. Burkin and A.I. Shepelyavyi}:  {\it Frequency Methods in Oscillation Theory}, in: Series in
Mathematics and Its Applications, Vol. 357, Kluwer, Dordrecht/Boston/London, 1996.


\bibitem{LSZH16} {\sc S. Li, C. Shen, W. Zhang and Y. Hao}: {\it The Melnikov method of heteroclinic orbits for a class of planar hybrid
    piecewise-smooth systems and
    application}, Nonlinear Dynam. \textbf{85} (2016), no. 2, 1091--1104.

%
%

\bibitem{Me}
{\sc V.K. Melnikov}:  {\it On the stability of the center for time-periodic perturbations},
Trans. Mosc. Math. Soc. {\bf 12} (1963), 1--56.


\bibitem{Pa84} {\sc K.J. Palmer}: {\it Exponential dichotomies and transversal
homoclinic points}, J. Differ. Equations {\bf 55} (1984), no. 2, 225--256.

%
%


\bibitem{PK} {\sc E. Plathe and S. Kj{\o}glum}: {\it  Analysis and genetic properties of gene regulatory networks with graded response
    functions}, Phys.
D 201 (2005) 150-176.
	

\bibitem{SSTC} {\sc L.P. Shilnikov, A.L. Shilnikov, D.V. Turaev and L.O. Chua}: {\it Methods of Qualitative Theory in Nonlinear Dynamics, Part
    II}, World Scientific Publishing
    Company, Singapore, 2001.



\bibitem{WigBook} {\sc S. Wiggins}: {\it Introduction to Applied Nonlinear Dynamical Systems and Chaos},
Springer, New York, 2003.


 \bibitem{Zeidler}
{\sc E. Zeidler}: {\it Nonlinear Functional Analysis and its Applications, I: Fixed Point Theorems}, Springer, New York, 1986.





\end{thebibliography}
\end{document}